\DeclareMathAlphabet{\mathpzc}{OT1}{pzc}{m}{it}
\def\co{\colon\thinspace}
\newcommand\scalemath[2]{\scalebox{#1}{\mbox{\ensuremath{\displaystyle #2}}}}
\newcommand{\Z}{\mathbb{Z}}
\newcommand{\Q}{\mathbb{Q}}
\newcommand{\R}{\mathbb{R}}
\newcommand{\F}{\mathbb{F}}
\newcommand{\C}{\mathbb{C}}
\newcommand{\RP}{\R P^1}
\newcommand{\sA}{\mathcal{A}}
\newcommand{\sB}{\mathcal{B}}
\newcommand{\sC}{\mathcal{C}}
\newcommand{\sI}{\mathcal{I}}
\newcommand{\sL}{\mathcal{L}}
\newcommand{\sE}{\mathcal{E}}
\newcommand{\sV}{\mathcal{V}}
\newcommand{\sF}{\mathcal{F}}
\newcommand{\sM}{\mathcal{M}}
\newcommand{\sP}{\mathcal{P}}
\newcommand{\bv}{\mathbf{v}}
\newcommand{\bx}{\mathbf{x}}
\newcommand{\by}{\mathbf{y}}
\newcommand{\bz}{\mathbf{z}}
\newcommand{\bw}{\mathbf{w}}
\newcommand{\half}{\frac{1}{2}}
\newcommand{\sAt}{\widetilde{\sA}}
\newcommand{\td}{\widetilde{\delta}}
\newcommand{\tN}{\widetilde{N}}
\DeclareMathOperator{\Hom}{Hom}
\DeclareMathOperator{\End}{End}
\DeclareMathOperator{\coker}{coker}
\newcommand{\bu}{\bullet}
\newcommand{\ci}{\circ}
\newcommand{\Tp}{T} 
\newcommand{\tildeT}{\widetilde{T}}
\newcommand{\barT}{\overline{T}}
\newcommand{\Tbar}{\barT}
\newcommand{\spinc}{\operatorname{Spin}^c}
\newcommand{\Ztwo}{\Z/2\Z}
\newcommand{\gr}{\operatorname{gr}}
\newcommand{\balpha}{\mathbf{\alpha}}
\newcommand{\bbeta}{\mathbf{\beta}}
\newcommand{\rhoarrow}{\vec{\rho}}
\newcommand{\bargamma}{\overline{\gamma}}
\newcommand{\bgamma}{{\boldsymbol \gamma}}
\newcommand{\tracks}{{\boldsymbol\vartheta}}
\newcommand{\bartracks}{{\overline{\boldsymbol\vartheta}}}
\newcommand{\barp}{\overline{p}}
\newcommand{\rk}{\operatorname{rk}}
\newcommand{\im}{\operatorname{im}}
\newcommand{\HF}{\mathit{HF}}
\newcommand{\CF}{\mathit{CF}}
\newcommand{\HFhat}{\widehat{\mathit{HF}}}
\newcommand{\CFhat}{\widehat{\mathit{CF}}}
\newcommand{\HFKhat}{\widehat{\mathit{HFK}}}
\newcommand{\CFD}{\widehat{\mathit{CFD}}}
\newcommand{\CFA}{\widehat{\mathit{CFA}}}
\newcommand{\HFK}{\widehat{\mathit{HFK}}}
\newcommand{\HFa}{\mathit{HF}}
\newcommand{\Alg}{\mathcal{A}}
\newcommand{\Ainfty}{\mathcal{A}_\infty}
\newcommand{\AlgExt}{\widetilde{\Alg}}
\newcommand{\spin}{\mathfrak{s}}
\newcommand{\spinbar}{\overline{\spin}}
\newcommand{\curves}[1]{\HFhat({#1})}
\newcommand{\barcurves}[1]{\HFhat({#1})}
\DeclareMathOperator{\Sym}{Sym}
\def\Extend{\mathcal{D}}	
\def\Curves{\mathcal{C}}	
\newcommand{\wtp}{\check{w}}
\newcommand{\wtm}{\hat{w}}
\newtheorem{theorem}{Theorem}[section]
\newtheorem{corollary}[theorem]{Corollary}
\newtheorem{proposition}[theorem]{Proposition}
\newtheorem{lemma}[theorem]{Lemma}
\newtheorem{conjecture}[theorem]{Conjecture}
\newtheorem*{namedtheorem}{\theoremname}
\newcommand{\theoremname}{testing}
\theoremstyle{definition}
\newtheorem{definition}[theorem]{Definition}
\newtheorem{question}[theorem]{Question}
\newtheorem{remark}[theorem]{Remark}
\newtheorem{example}[theorem]{Example} 
\title[Bordered Floer homology via immersed curves]{Bordered Floer homology for manifolds with torus boundary via immersed curves}
\date{March 8, 2023}
\author[Jonathan Hanselman]{Jonathan Hanselman}
\address {Department of Mathematics, Princeton University.\newline \it{E-mail address:} \tt{jh66@princeton.edu}}
\author[Jacob Rasmussen]{Jacob Rasmussen}
\address {Department of Pure Mathematics and Mathematical Statistics, University of Cambridge.\newline \it{E-mail address:} \tt{J.Rasmussen@dpmms.cam.ac.uk}}
\author[Liam Watson]{Liam Watson}
\thanks{JH was partially supported by NSF RTG grant DMS-1148490; JR was partially supported by EPSRC grant EP/M000648/1; LW was partially supported by a Marie Curie career integration grant, by a CIRGET research fellowship, and by a Canada Research Chair; JR and LW were Isaac Newton Institute program participants while part of this work was completed and acknowledge partial support from EPSRC grant EP/K032208/1; additionally, LW was partially supported by a grant from the Simons Foundation while at the Isaac Newton Institute}
\address {Department of Mathematics, University of British Columbia.\newline \it{E-mail address:} \tt{liam@math.ubc.ca}}
\begin{document}
\maketitle

\begin{abstract}
This paper gives a geometric interpretation of bordered Heegaard Floer homology for manifolds with torus boundary. If \(M\) is such a manifold, we show that the type D structure \(\CFD(M)\)   may be viewed as a set of immersed curves  decorated with  local systems in \(\partial M\). These curves-with-decoration are invariants of the underlying three-manifold up to regular homotopy of the curves and isomorphism of the local systems. Given two such manifolds and a homeomorphism \(h\) between the boundary tori, the Heegaard Floer homology of the closed manifold obtained by gluing with \(h\) is obtained from the Lagrangian intersection Floer homology of the curve-sets. This machinery has several applications: We establish that the dimension of \(\HFhat\) decreases under a certain class of degree one maps (pinches) and we establish that the existence of an essential separating torus gives rise to a lower bound on the dimension of \(\HFhat\). In particular, it follows that a prime rational homology sphere $Y$ with \(\HFhat(Y)<5\) must be geometric. Other results include a new proof  of Eftekhary's theorem that L-space homology spheres are atoroidal; a complete characterisation of toroidal L-spaces in terms of gluing data; and a proof of a conjecture of Hom, Lidman, and Vafaee on satellite L-space knots.
\end{abstract}

\section{Introduction}

Bordered Floer homology is a suite of invariants, introduced by Lipshitz, Ozsv\'ath and Thurston \cite {LOT}, assigned to three-manifolds with boundary. These invariants are particularly well-adapted to cut-and-paste techniques. In the case of manifold with torus boundary, this theory has been developed in various ways yielding effective combinatorial tools for studying certain classes of toroidal three-manifolds \cite{BGW2013,Hanselman-splice,Hanselman2016,HRRW,HW,HL2014,Levine2012}. The goal of this paper is to provide a geometric interpretation of bordered Floer homology for manifolds with torus boundary in terms of one-dimensional objects in the boundary of the manifold. 

\subsection{Bordered invariants as immersed curves} 
If $M$ is a closed orientable 3-manifold with torus boundary, we define
\(T_M\) to be the complement of \(0\) in \(H_1(\partial M;\R)/H_1(\partial M;\Z)\). The punctured torus \(T_M\) can be identified with 
the complement of a point \(z\) in \(\partial M\), and this identification is well-defined up to isotopy. 

In order to define the bordered invariant \(\CFD(M,\alpha, \beta)\), we must choose a parametrization
\((\alpha, \beta)\) of \(\partial M\). That is, \(\alpha\) and \(\beta\) are the cores of the \(1\)-handles 
in a handle decomposition of \(\partial M\). 
Starting from  $\CFD(M,\alpha,\beta)$ 
we will define a collection $\boldsymbol\gamma = \{\gamma^1,\ldots,\gamma^n\}$ of immersed closed curves
$\gamma^i\co S^1\looparrowright T_M$, each decorated with a local system $(V_i, \Phi_i)$ consisting of a finite dimensional
vector space over \(\F=\Z/2\Z\)  and an automorphism $\Phi_i\co V_i\to V_i$; we denote this data by $\curves M$.  

\begin{theorem}\label{thm:invariance}
The collection of decorated curves $\curves M$ is a well defined invariant of $M$, up to regular homotopy of curves and isomorphism of local systems.  In particular, $\curves M$ does not depend on the choice of parametrization $(\alpha,\beta)$.
\end{theorem}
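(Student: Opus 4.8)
The plan is to split the statement into two independent assertions and reduce each to a manageable computation. First, that for a \emph{fixed} parametrization $(\alpha,\beta)$ the decorated multicurve $\boldsymbol\gamma$ depends, up to regular homotopy of curves and isomorphism of local systems, only on the homotopy-equivalence class of the type D structure $\CFD(M,\alpha,\beta)$ over the torus algebra $\mathcal{A}$. Second, that changing the parametrization changes $\CFD(M,\alpha,\beta)$ precisely by tensoring on the appropriate side with a change-of-framing type DA bimodule, whose effect on the associated curves in $T_M$ is the induced self-diffeomorphism of the punctured torus. Since $\CFD(M,\alpha,\beta)$ is already a well-defined invariant of the bordered manifold up to homotopy equivalence of type D structures \cite{LOT}, the two assertions together give the theorem; note that all of the multicurves in play live in the \emph{same} space $T_M$, which depends only on $M$ (the complement of $z$ in $\partial M$, up to isotopy), so comparing them there makes sense.

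For the first assertion I would pass to a reduced model: every type D structure over $\mathcal{A}$ is homotopy equivalent to a reduced one (no differential coefficients equal to an idempotent), and the passage (reduced type D structure) $\mapsto$ (train track in $T_M$) $\mapsto$ (immersed multicurve with local system) is to be shown well defined on homotopy-equivalence classes. Concretely one organizes a reduced type D structure by idempotents and by the Reeb chords $\rho_1,\rho_2,\rho_3,\rho_{12},\rho_{23},\rho_{123}$ labelling its arrows: the short chords and the unstable chains prescribe how a standard collection of horizontal and vertical segments near $\alpha$ and $\beta$ is glued up inside $T_M$, the longer chords correct this to an immersed curve once one pulls tight, and the residual ambiguity is recorded as the automorphism $\Phi_i$ of $V_i$. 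The content here --- that this construction is independent of the choices made and is in fact a complete invariant of the homotopy type --- is the structural classification of type D structures over the torus algebra established in the body of the paper, which I would invoke directly rather than reprove.

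For the second assertion, two parametrizations of $\partial M$ differ by an element of the mapping class group of the torus, and by the pairing theorem the corresponding type D structures differ by box-tensoring with the type DA bimodule of the associated mapping cylinder. The mapping class group of $T^2$ is generated by two Dehn twists, so it suffices to verify, for each generator, that tensoring a reduced type D structure with the corresponding bimodule has the same effect on the associated train track as applying the induced diffeomorphism of $T_M$ --- which acts on immersed curves by a regular homotopy away from the parametrizing arcs together with a relabeling near them. Because the Dehn-twist bimodules are small and explicit, this is a finite check; it is also where the real work lies. One must track how generators, arrows, and unstable chains are transported under the box tensor product, confirm that this is compatible with the train-track extension used in the first assertion, and check that no spurious homotopy classes or local-system data are created or destroyed. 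I expect this compatibility --- matching the algebraic bimodule action with the geometric diffeomorphism action on curves --- to be the main obstacle.
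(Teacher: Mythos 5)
Your two-step decomposition --- well-definedness of the curve data for a fixed parametrization, followed by invariance under change of parametrization via the Dehn-twist change-of-framing bimodules --- is precisely the strategy the paper uses, and your identification of the bimodule computation (Proposition~\ref{prp:Dehn-twists}) as the main technical obstacle is accurate. The overall architecture of your argument matches the paper's.

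There is, however, a genuine gap in the first step. The structure theorem you invoke does not classify arbitrary type~D structures over $\Alg$; Theorem~\ref{thm:structure} classifies only \emph{extendable} ones, i.e.\ those admitting a compatible curved differential module over the extended algebra~$\AlgExt$. Your proposal never mentions this hypothesis, and it is not automatic: one must separately establish (Theorem~\ref{thm:extension}) that every $\CFD(M,\alpha,\beta)$ arising from a bordered $3$-manifold is extendable, and the proof of that fact requires an honest pseudo-holomorphic argument --- analyzing moduli spaces whose domains cover the basepoint and counting the boundary degenerations that arise --- rather than a purely algebraic manipulation. Without extendability the train-track-to-immersed-curve step can fail outright (a non-extendable type~D structure is generally carried only by noncompact curves, as in~\cite{HKK}), so this is a necessary ingredient that must be added. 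Relatedly, well-definedness of the resulting curve-plus-local-system on homotopy-equivalence classes is not an immediate corollary of the structure theorem's existence statement: the paper proves it in Section~\ref{sec:cat} via a pairing argument, reading off the underlying cyclic word of each curve from intersection counts with a family of test tracks $\tracks_I$ and recovering the local system by pairing with parallel test curves. You should be explicit that you are appealing to that well-definedness argument, not merely to Theorem~\ref{thm:structure}.
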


This data is equivalent to bordered Floer homology: $\curves M$ is determined by $\CFD(M,\alpha,\beta)$ and, for any choice of parametrization $(\alpha,\beta)$, $\CFD(M,\alpha,\beta)$ can be recovered from $\curves M$.
 
Now suppose that \(M_0\) and \(M_1\) are manifolds as above, and that 
$Y=M_0\cup_h M_1$ is the closed manifold obtained by gluing them together by the 
orientation reversing homeomorphism $h\co\partial M_1\to\partial M_0$. 
The Heegaard Floer homology $\HFhat(Y)$ can be recovered from 
$\boldsymbol\gamma_0=\curves {M_0}$ and $\boldsymbol\gamma_1=\bar h(\curves{M_1})$, where
the homeomorphism $\bar h$ is the composition of $h$ with the elliptic involution on $\partial M_0$.
(In fact,  the invariant \( \curves{M}\)  is fixed by the elliptic
involution on \(\partial M\); this symmetry is established in the
 companion to this paper \cite{HRW-companion}. Thus \(\bar h\) can be replaced by \(h\) in the definition of \(\boldsymbol \gamma_1\).)

\begin{theorem}\label{thm:pairing}
For $Y = M_0\cup_h M_1$ as above, \[\HFhat(Y) \cong \HFa(\boldsymbol\gamma_0, \boldsymbol\gamma_1)\]
where $\HFa(\cdot,\cdot)$ 
is an appropriately defined version of the Lagrangian intersection Floer homology in \(T_{M_0}\).
\end{theorem}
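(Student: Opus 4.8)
The plan is to reduce the statement to the box-tensor-product pairing theorem of Lipshitz--Ozsv\'ath--Thurston, via the dictionary between type D structures over the torus algebra and decorated immersed multicurves that is built into the construction of $\curves{M}$. Using $h$ (hence $\bar h$), regard $\boldsymbol\gamma_0=\curves{M_0}$ and $\boldsymbol\gamma_1=\bar h(\curves{M_1})$ as decorated immersed curves in one punctured torus $T=T_{M_0}$; the elliptic-involution factor in $\bar h$ is the standard device reconciling the opposite conventions used on the $\CFA$ and $\CFD$ sides for the torus algebra, so that the box tensor product formed below lives over a single copy of that algebra. By Theorem~\ref{thm:invariance} we may choose the parametrization $(\alpha,\beta)$ of $\partial M_0$ freely; fix one whose associated arc $\mu\subset T$ is a straight segment, so that $\CFD(M_1,\alpha,\beta)$ and $\CFA(M_0,\alpha,\beta)$ are defined and the pairing theorem of \cite{LOT} gives
\[\HFhat(Y)\;\cong\;H_*\!\left(\CFA(M_0,\alpha,\beta)\boxtimes\CFD(M_1,\alpha,\beta)\right).\]
It therefore suffices to identify the homology of this box tensor product with $\HFa(\boldsymbol\gamma_0,\boldsymbol\gamma_1)$.

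The first step is to replace the two bordered invariants by models read off from the curves. By the construction of $\curves{M_1}$, the type D structure $\CFD(M_1,\alpha,\beta)$ is homotopy equivalent to a type D structure $\CFD(\boldsymbol\gamma_1)$ whose generators are the points of $\boldsymbol\gamma_1\cap\mu$, tensored with the local-system vector spaces, and whose coefficient maps record, for each arc of $\boldsymbol\gamma_1$ running between consecutive points of $\boldsymbol\gamma_1\cap\mu$, the sequence of elementary Reeb chords crossed by that arc, weighted by the holonomy of the local system. Dually, the $A_\infty$ module $\CFA(M_0,\alpha,\beta)$ is homotopy equivalent to one, $\CFA(\boldsymbol\gamma_0)$, spanned by $\boldsymbol\gamma_0\cap\mu$, again tensored with local systems, whose operation $m_{k+1}(x,a_1,\dots,a_k)$ counts arcs of $\boldsymbol\gamma_0$ running between two points of $\boldsymbol\gamma_0\cap\mu$ and crossing the elementary Reeb chords $a_1,\dots,a_k$ in the prescribed cyclic order. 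Both statements are essentially repackagings of the passage from $\CFD(M_i,\alpha,\beta)$ to $\curves{M_i}$ used to build the invariant.

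The second step identifies $\CFA(\boldsymbol\gamma_0)\boxtimes\CFD(\boldsymbol\gamma_1)$ with the immersed-bigon complex computing $\HFa(\boldsymbol\gamma_0,\boldsymbol\gamma_1)$. A generator of the box tensor product is a pair $x\otimes y$ with $x\in\boldsymbol\gamma_0\cap\mu$ and $y\in\boldsymbol\gamma_1\cap\mu$ whose idempotents are complementary; after pushing $\boldsymbol\gamma_0$ slightly off $\mu$, these pairs are exactly the intersection points of $\boldsymbol\gamma_0$ and $\boldsymbol\gamma_1$ in $T$. A term of the box tensor differential on $x\otimes y$ consists of a path along $\boldsymbol\gamma_1$ from $y$ to some $y'$ crossing Reeb chords $a_1,\dots,a_k$, together with a contribution to $m_{k+1}(x,a_1,\dots,a_k)$ --- an arc of $\boldsymbol\gamma_0$ from $x$ to some $x'$ crossing the same chords in the matching order; the union of the two arcs bounds an immersed bigon in $T$ with corners $x\otimes y$ and $x'\otimes y'$, the intervening Reeb chords recording which sectors around the puncture the bigon sweeps over, and the product of the local-system holonomies along its two sides giving the coefficient. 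Conversely, every immersed bigon between the two curves decomposes this way, so one obtains an isomorphism of chain complexes between $\CFA(\boldsymbol\gamma_0)\boxtimes\CFD(\boldsymbol\gamma_1)$ and the bigon complex. Taking homology and feeding in the homotopy equivalences of the first step yields $\HFhat(Y)\cong\HFa(\boldsymbol\gamma_0,\boldsymbol\gamma_1)$. (Equivalently, one first isotopes the curves into minimal position, whereupon both sides have vanishing differential and the identification is on generators; invariance of $\HFa(\,\cdot\,,\,\cdot\,)$ under regular homotopy --- part of its definition --- removes the dependence on the representative.)

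The step I expect to be the main obstacle is the second one, and within it the comparison of the higher $A_\infty$ operations of $\CFA(\boldsymbol\gamma_0)$ with immersed bigons: one must control the \emph{unstable-chain} behaviour of the torus algebra, pin down the correct cyclic ordering of the Reeb chords crossed by a bigon that wraps several times around the puncture, and check that local systems enter the two pictures identically. I would verify this correspondence first in the base case where one of $M_0,M_1$ is a solid torus, so that the corresponding curve is a straight line of slope $p/q$, the box tensor product computes $\HFhat$ of the $p/q$-Dehn filling, and the claim reduces to a familiar computation, before handling curves of arbitrary complexity.
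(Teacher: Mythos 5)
Your proposal follows essentially the same route the paper takes: fix a compatible parametrization of $\partial M_0$, invoke the Lipshitz--Ozsv\'ath--Thurston box-tensor pairing theorem, reconstruct the bordered modules as models read off from the curves (this is the bijection $f$ between extendable type~D structures and decorated curves established in Sections~3--4), and identify the box-tensor-product complex with a bigon-counting complex between the two curves (the paper's Theorem~\ref{thm:pairtracks} together with the discussion of $\sC(\tracks_0,\tracks_1)$ and $\HFa$ in Section~\ref{sec:categories-pairing}, where admissibility, the finger-move trick for parallel curves, and the contribution of local systems via $\rk(A_1\otimes A_2+I)$ are worked out). One small correction: the generators of the box tensor product are pairs whose idempotents \emph{agree}, not pairs in complementary idempotents; since the tensor is over $\sI$, only $\iota_0 N_0 \otimes \iota_0 N_1 \oplus \iota_1 N_0 \otimes \iota_1 N_1$ survives, and it is the convention of putting the type~A realization in the first quadrant and the (reflected) type~D realization in the third that makes these pairs visible as transverse intersections, rather than a generic pushoff of $\boldsymbol\gamma_0$.
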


\subsection{Trivial local systems} In order to illustrate the content of Theorem \ref{thm:invariance} and Theorem \ref{thm:pairing}, it is instructive to consider the case where the local systems are trivial. In this case the associated vector spaces are 1-dimensional and can be dropped from the notation; the resulting invariants are simply (systems of) immersed curves. As a very simple example, the invariant \(\curves{D^2\times S^1}\) consists of a  single closed circle parallel to the longitude \(\lambda = \partial D^2\times \{\text{pt}\}\).

\labellist
\tiny
\pinlabel {$z$} at 353 89
\endlabellist
\piccaption[]{The marked exterior of the figure eight knot, together with its bordered invariant as a pair of immersed curves. \label{fig:figure-eight-with-curves}}
\parpic[r]{
 \begin{minipage}{60mm}
 \centering
 \includegraphics[scale=0.3]{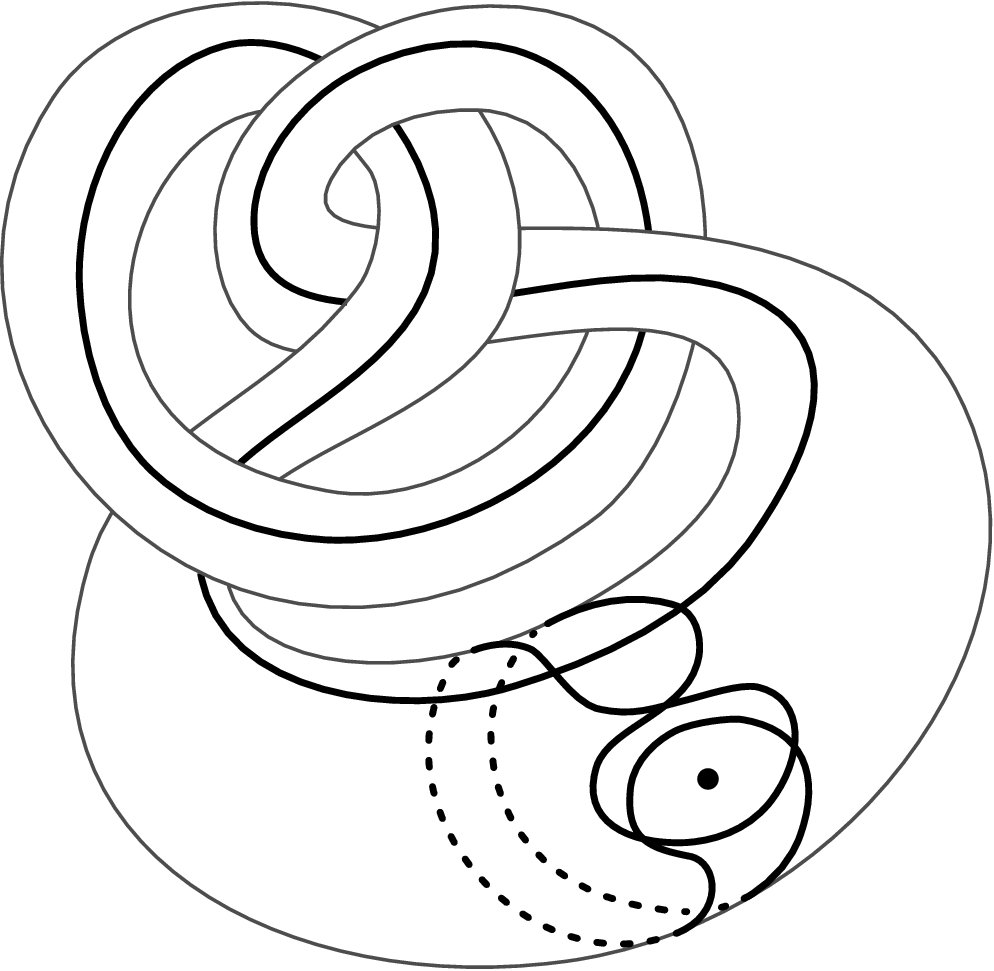}
  \end{minipage}%
}
A more interesting example is illustrated in Figure~\ref{fig:figure-eight-with-curves}: The invariant associated with the complement of the figure eight knot has two components. When \(\HFhat(M)\)  is more complicated, it is generally easier to represent it by drawing some curves in the cover \(\tildeT _M=  H_1(M;\R) \setminus H_1(M;\Z) \cong \R^2 \setminus \Z^2\) whose image in \(T_M\) is  \(\HFhat(M)\). From this point of view, the effect of orientation reversal on bordered invariants corresponds to reflection in the line determined by the longitude.   Figure \ref{fig:lifted-trefoil} shows the invariant associated with the right-hand trefoil. We note that, in this context, calculation is extremely efficient:

\begin{corollary}In the case of trivial local systems, following the notation of Theorem \ref{thm:pairing}, if no two components of $\boldsymbol\gamma_0$ and $\boldsymbol\gamma_1$ are parallel then $\dim\HFhat(Y)$ is the minimal geometric intersection number between $\boldsymbol\gamma_0$ and $\boldsymbol\gamma_1$.  \end{corollary}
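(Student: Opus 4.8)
The plan is to combine Theorem~\ref{thm:pairing} with the bigon criterion for immersed curves on a surface. By Theorem~\ref{thm:pairing} we have $\dim\HFhat(Y)=\dim\HFa(\boldsymbol\gamma_0,\boldsymbol\gamma_1)$, so it suffices to compute the right-hand side. When all local systems are trivial, for transverse representatives of the two curve systems the Floer chain complex $\CFa(\boldsymbol\gamma_0,\boldsymbol\gamma_1)$ has one $\F$-generator for each point of $\boldsymbol\gamma_0\cap\boldsymbol\gamma_1$, and its differential counts, mod $2$, immersed bigons in $T_{M_0}$ with one corner on $\boldsymbol\gamma_0$, one corner on $\boldsymbol\gamma_1$, and boundary running along the two curve systems --- in a surface the holomorphic disks contributing to $\partial$ are exactly such bigons. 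Hence if the representatives can be arranged so that no such bigon exists, then $\partial=0$ and $\dim\HFa(\boldsymbol\gamma_0,\boldsymbol\gamma_1)$ equals the number of intersection points.

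First I would homotope the curve systems --- legitimately, since $\curves{M_i}$ is only well defined up to regular homotopy --- into minimal position: a configuration in which each component is taut (bounds no monogon or teardrop) and the geometric intersection number of every pair of components, and of every component with itself, is realized. Existence of such a configuration for a collection of primitive immersed curves on a surface, none of whose components is null-homotopic, is a result of Hass and Scott; in the embedded case it is the familiar bigon criterion. The components of $\curves{M_i}$ are primitive and none is null-homotopic in $T_{M_i}$, so this applies.

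Next I would show that in minimal position there is no bigon between a component of $\boldsymbol\gamma_0$ and a component of $\boldsymbol\gamma_1$. An innermost such bigon may be taken embedded, and an embedded bigon cut out by a component $\gamma_0^i$ and a component $\gamma_1^j$ exhibits the two subarcs it bounds as homotopic rel endpoints; since $T_{M_0}$ is a punctured torus and the bigon is an embedded disk, a surgery/innermost-disk argument then lets one reduce $i(\gamma_0^i,\gamma_1^j)$, contradicting minimal position, unless $\gamma_0^i$ and $\gamma_1^j$ cobound an annulus, i.e.\ are parallel. This is precisely what the hypothesis excludes, so in minimal position $\partial=0$, whence $\dim\HFhat(Y)=\dim\HFa(\boldsymbol\gamma_0,\boldsymbol\gamma_1)=\#(\boldsymbol\gamma_0\cap\boldsymbol\gamma_1)$, and the latter is the minimal geometric intersection number because the representatives are in minimal position.

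The main obstacle is this middle step: one must be careful that the bigon-removal argument is valid for \emph{immersed} (rather than embedded) multicurves and that "parallel components" is really the complete list of exceptional configurations in the punctured torus; in particular one must check that a self-bigon of a single component $\gamma^i$ --- which does not contribute to the differential on $\CFa(\boldsymbol\gamma_0,\boldsymbol\gamma_1)$ but could a priori obstruct passing to minimal position --- can be removed, and that the curves in $\curves{M}$ admit no teardrops, which should be extracted from their construction. Once these points are settled, the count of intersection points in minimal position gives exactly the minimal geometric intersection number, and the corollary follows.
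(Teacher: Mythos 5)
Your approach is essentially the paper's.  Theorem~\ref{thm:pairing} reduces the claim to computing $\HFa(\boldsymbol\gamma_0,\boldsymbol\gamma_1)$, and the paper's Definition~\ref{def:floer-homology-decorated-curves} is arranged precisely so that, for trivial local systems and no parallel components, this dimension is $\#(\gamma_0\cap\gamma_1)$ in minimal position.  Section~\ref{sec:categories-pairing} then justifies that definition by exactly the argument you outline: homotope the underlying curves to minimal position, observe there are no bigons, and read off the intersection count.  (The paper realizes minimal position concretely via peg-board diagrams, which also dispatches the teardrop and self-bigon concerns you flag.)

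There is, however, a conceptual misstep in your bigon-removal paragraph: the clause ``unless $\gamma_0^i$ and $\gamma_1^j$ cobound an annulus'' has no place there.  An innermost bigon between two components in minimal position is \emph{always} removable, with no exceptional case; parallel components in minimal position simply have zero geometric intersection and hence no bigons either.  What the ``no two components are parallel'' hypothesis actually does is guarantee \emph{admissibility}: the Floer differential counts bigons, and the bigon count is only well-defined and finite when the pair $(\boldsymbol\gamma_0,\boldsymbol\gamma_1)$ cobounds no immersed annulus.  When a parallel pair does exist, one must perturb by a finger move to restore admissibility, and this finger move creates $2k_1k_2$ new intersection points whose surviving homology is governed by $\rk(A_1\otimes A_2 + I)$ --- this is precisely the second summand in Definition~\ref{def:floer-homology-decorated-curves}, and the reason the corollary needs its hypothesis.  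Your proof never addresses admissibility at all; under your hypotheses it holds automatically, but the step should be named, because it --- and not the bigon-removal step --- is where the hypothesis is spent.
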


\piccaption[]{A curve for the right-hand trefoil: the horizontal direction corresponds to the preferred longitude \(\lambda\), and the vertical direction to the standard meridian of the knot.\label{fig:lifted-trefoil}}
\parpic[r]{
 \begin{minipage}{60mm}
 \centering
 \includegraphics[scale=0.5]{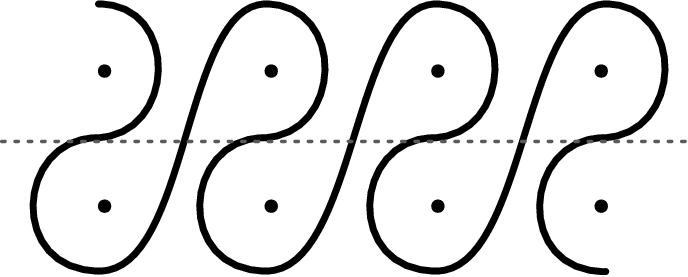}
  \end{minipage}%
}
For example, let \(Y_1\) be the manifold obtained by splicing the complements of the left-hand trefoil \(K_L\) and the right-hand trefoil \(K_R\); that is by taking \(M_0 = S^3\setminus \nu(K_L)\), \( M_1 = S^3 \setminus\nu(K_R)\), and \(h\co \partial M_0 \to \partial M_1\) such that 
\(h(\mu) = \lambda\) and \(h(\lambda) = \mu\), where \(\mu\) and \(\lambda\) are the preferred meridian and longitude of each trefoil in \(S^3\). Similarly, let \(Y_2\) be the manifold obtained by splicing two copies of the complement of the right-hand trefoil. Consulting Figure~\ref{fig:two-splices}, we see that \(\HFhat(Y_1)\) has dimension \(9\), while \(\HFhat(Y_2)\) has dimension \(7\), as calculated by Hedden and Levine \cite{HL2014}. 

\begin{figure}[ht]
\includegraphics[scale=0.5]{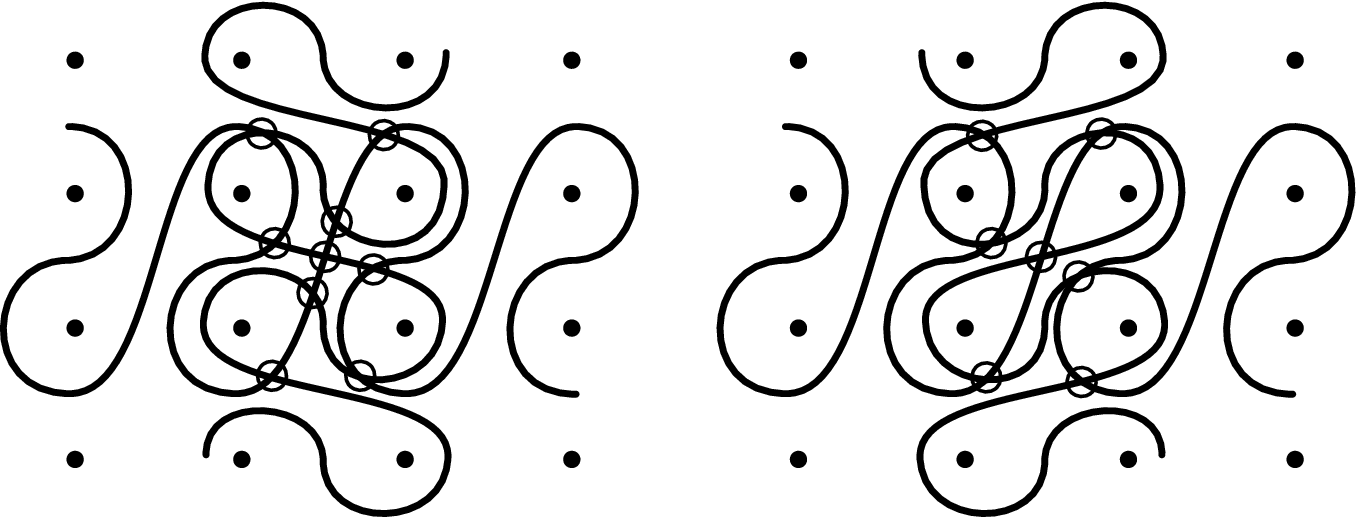}
\caption{Splicing trefoils: The diagram on the left illustrates  the intersection (carried out in $\tildeT_M$) calculating $\dim\HFhat(Y_1)=9$ where $Y_1$ is the splice of a right-hand and left-hand trefoil, while the diagram on the right illustrates the intersection calculating $\dim\HFhat(Y_2)=7$ where $Y_2$ is the splice of two right-hand trefoils.}\label{fig:two-splices}
\end{figure}

Manifolds for which the local systems are trivial are precisely the loop-type manifolds introduced by the first and third author \cite{HW}; the graphical formalism can thus be viewed as a geometric representation of the loop calculus. We remark that no explicit examples of three-manifolds are known for which the associated bordered invariant gives rise to a non-trivial local system. In practice---for instance for the gluing theorem needed in \cite{HRRW}---it is often enough to restrict attention to loop-type manifolds. As such, this class of manifolds seems interesting in its own right, and is discussed in further detail in a companion article \cite{HRW-companion}.

\subsection{Train tracks and a structure theorem} 
We now discuss the ideas behind the proof of Theorem~\ref{thm:invariance}. If \(M\) is as in the statement of the theorem,
the bordered Floer homology \(\CFD(M,\alpha, \beta)\) is a type D structure (in the sense of Lipshitz, Ozsv\'ath, and Thurston \cite{LOT}) over 
an algebra \(\sA\), known as the {\em torus algebra}. In this case, a  type D structure over \(\sA\) is simply a chain complex over 
\(\sA\) satisfying certain conditions. These are equivalent to projective differential modules  over $\sA$; see \cite[Remark 2.25]{LOT}.

In this paper, we will restrict attention to a certain class of type D structures over \(\sA\), 
which we call {\em extendable}. The precise definition is given in section~\ref{sec:extended-type-D-strs} but, briefly put, an {\it extended type D structure} is a curved differential module over a certain algebra \(\sAt\) that
has \(\sA\) as a quotient; these have certain properties in common with matrix factorizations. A type D structure \(N\) is extendable if there is an extended type D structure \(\widetilde{N}\) such that \(N \cong  \sA \otimes_{\sAt}  \widetilde{N} \). 
The following theorem, which is essentially due to Lipshitz, Ozsv\'ath and Thurston \cite[Chapter 11]{LOT}, shows that 
this is not much of a restriction. 

 \begin{theorem}\label{thm:extension} If \(M\) is a compact oriented manifold with torus boundary, 
 \(\CFD(M,\alpha, \beta)\) is extendable. \end{theorem}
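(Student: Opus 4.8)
The plan is to read the extension off a bordered Heegaard diagram. Fix a bordered Heegaard diagram $\mathcal{H}=(\Sigma,\balpha,\bbeta,z)$ for $(M,\alpha,\beta)$. The four endpoints of the two $\alpha$-arcs cut $\partial\Sigma$ into four arcs; the three that miss $z$ are the Reeb chords $\rho_1,\rho_2,\rho_3$ generating the torus algebra $\sA$, and the differential of $\CFD(M,\alpha,\beta)$ is assembled from the coefficient maps $D_{\vec\rho}$ ($\vec\rho$ a sequence of consecutive chords among $\rho_1,\rho_2,\rho_3$), each a signed count of embedded holomorphic curves in $\Sigma\times[0,1]\times\R$ with prescribed asymptotics at the boundary punctures. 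The fourth arc, the one straddling $z$, is one further Reeb chord $\rho_0$; concretely one may take $\sAt$ to be $\sA$ with $\rho_0$ and its legal concatenations adjoined, so that $\sA=\sAt/(\rho_0)$, and $\sAt$ carries a distinguished curvature element $\omega$ — the sum of the length-four monomials in the $\rho_i$ that run once around $z$ — which maps to zero in $\sA$. One then \emph{defines} the extended coefficient maps $\widetilde D_{\vec\rho}$ by literally the same holomorphic-curve count, now permitting the asymptotic sequence $\vec\rho$ to involve $\rho_0$ as well. Provincial admissibility of $\mathcal{H}$ guarantees that only finitely many $\vec\rho$ contribute to a given matrix entry, since each passage of a curve's boundary over $z$ consumes a positive part of the domain.

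The substance of the proof is the curved type $D$ relation $(\mu\otimes\id)\circ(\id\otimes\widetilde\delta)\circ\widetilde\delta=\omega\otimes\id$ for the operator $\widetilde\delta\colon N\to\sAt\otimes N$ assembled from the $\widetilde D_{\vec\rho}$. As usual this is an end-of-moduli-space argument: the compactified one-dimensional moduli spaces feeding $\widetilde D_{\vec\rho}$ have ends of three kinds — two-story buildings, which reassemble into $\widetilde\delta^2$; joins of consecutive boundary punctures, which reproduce the $\rho_i\rho_{i+1}=\rho_{i(i+1)}$ identifications of the algebra exactly as in the unextended case (passing to $\sAt$ does not disturb this combinatorics); and boundary degenerations along $\partial\Sigma$. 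The new feature, and the source of the curvature term, is that at a torus boundary there is exactly one family of boundary degenerations that does not cancel, namely the one in which a region wrapping once around $z$ breaks off, and these contribute precisely $\omega\otimes\id$. This last point — that such a degeneration occurs, occurs with the correct multiplicity, and lands on the nose at the central element $\omega$ with no generator-dependence and no lower-order correction — is the content of the boundary-degeneration analysis for manifolds with torus boundary in \cite[Chapter~11]{LOT}. I expect it to be the one genuinely delicate step, and it is exactly where the matrix-factorization flavour of the statement enters.

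The remaining verifications are routine adaptations of the proof that $\CFD(M,\alpha,\beta)$ is well defined: independence of $(N,\widetilde\delta)$, up to curved homotopy equivalence, from the almost complex structure and from isotopies, handleslides and (de)stabilizations of $\mathcal{H}$ follows by the usual continuation- and triangle-count arguments, now carrying $\rho_0$ along for the ride; this in particular recovers the compatibility with the known invariance of $\CFD$. An alternative route, closer to what is literally available from \cite[Chapter~11]{LOT}, avoids rerunning the curve counts altogether: one invokes the structural description of $\CFD$ for torus-boundary manifolds proved there, which restricts the coefficient maps $D_{\vec\rho}$ that can occur to a short list, and then checks directly that each admissible local configuration admits a completion over $\sAt$ compatible with the curvature $\omega$, whence extendability is immediate.
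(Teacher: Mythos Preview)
Your proposal is correct and follows essentially the same route as the paper: define the extended coefficient maps by the same holomorphic-curve count but allowing Reeb chords through $z$, then verify the curved relation $\widetilde\partial^2 = U\otimes\id$ by an ends-of-moduli-space argument in which boundary degenerations supply the curvature, with the key analytic input being \cite[Proposition~11.30]{LOT}. The paper's argument stops there; your discussion of invariance under Heegaard moves is superfluous for the statement as written (which asserts only existence of an extension, not its uniqueness or naturality), and the alternative structural route you sketch is not the one the paper takes.
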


We introduce a new graphical calculus, based on immersed train tracks in the punctured torus, 
which describes extended type D structures. The graphical calculus provides an effective practical method for reducing a given 
extended type D structure to a collection of curves decorated with local systems. Using it, we prove the following structure
theorem:

\begin{theorem}\label{thm:structure} Every extendable type D structure over $\Alg$ can be represented by a 
collection of immersed curves in the punctured torus, decorated with local systems.  \end{theorem}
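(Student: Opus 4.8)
The plan is to build the graphical calculus promised above and then run it to exhaustion. \textbf{Reduction and the train track.} Given an extendable type D structure $N$ over $\Alg$, first pass to an extended type D structure $\widetilde N$ over $\sAt$ (such a lift exists by definition of extendability, and for $\CFD(M,\alpha,\beta)$ by Theorem~\ref{thm:extension}), and use the extended analogue of the standard cancellation lemma to put $\widetilde N$ in \emph{reduced} form, i.e. with no differential term proportional to an idempotent; one checks once and for all that cancelling such an arrow is a homotopy equivalence of curved $\sAt$-modules, being careful with the curvature term of $\sAt$. To a reduced extended type D structure one then associates an immersed train track $\tracks(\widetilde N)$ in the punctured torus $T_M$: fix the standard square model of $T_M$ with the puncture $z$, the four points of $\alpha\cap\beta$, and the three Reeb arcs carrying $\rho_1,\rho_2,\rho_3$; place each generator in the region prescribed by its idempotent, and for each differential term $\rho_I\otimes y$ appearing in $\partial x$ draw the taut arc representing $\rho_I$ from near $x$ to near $y$. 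The force of \emph{extendability}, encoded in the extra arrows available over $\sAt$, is precisely that the arcs incident to any given vertex are mutually tangent there, so that $\tracks(\widetilde N)$ is a bona fide $C^1$ immersed train track rather than an arbitrary labelled graph.

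\textbf{Simplification.} Introduce a short list of local moves on immersed train tracks: merging the two edges at a bivalent vertex, deleting an immediate turn-back (a cancellable pair in the differential), pushing crossings into a standard position, and collapsing a bundle of parallel strands. Each move is arranged to preserve the homotopy-equivalence class of the associated extended type D structure. Equip train tracks with a complexity — ordered, say, by number of vertices and then number of crossings — that is bounded below and strictly decreased by every move. Then any $\tracks(\widetilde N)$ reduces, after finitely many moves, to a train track admitting no further move; call such a track \emph{taut}.

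\textbf{Structure of a taut track.} It remains to analyse a taut train track. The key point is that tautness forces every vertex to be a ``parallel strands'' vertex, with no genuine branching; consequently each connected component of the track is a regular neighbourhood, within the track itself, of an immersed closed curve $\gamma\co S^1\looparrowright T_M$, over each point of which sit a fixed number of parallel strands that, upon traversing $\gamma$ once, are identified by an invertible matrix $\Phi$ over $\F$. This is exactly the datum of an immersed curve decorated with a local system $(V,\Phi)$. Reading off a triple $(\gamma^i,V_i,\Phi_i)$ from each component, and unwinding the dictionary between train tracks and extended type D structures, one sees that the resulting collection of decorated immersed curves carries a type D structure homotopy equivalent to $N$; this is the assertion of Theorem~\ref{thm:structure}.

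\textbf{The main obstacle.} The real work lies in the last two steps. One must define the moves so that each is an honest homotopy equivalence of curved $\sAt$-modules — the curvature bookkeeping is where mistakes hide — and choose a complexity that genuinely terminates. The crux, however, is proving that no honest trivalent switch survives in a taut track, so that taut tracks are precisely bundles of parallel strands over immersed curves; it is here that extendability is indispensable, since without the extended structure there is no mechanism to eliminate a branching vertex and the statement is simply false for arbitrary type D structures over $\Alg$.
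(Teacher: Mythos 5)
Your high-level outline follows the same arc as the paper's proof — lift to $\sAt$, draw a train track, simplify by local moves, and read off curves with local systems — but the two steps you yourself flag as ``the real work'' are precisely where the proposal has genuine gaps, and the specific mechanism you suggest for one of them would fail.

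First, your termination argument does not work as stated. You propose a complexity ``ordered, say, by number of vertices and then number of crossings'' that is ``strictly decreased by every move.'' This is false for the moves one actually needs: when you slide a crossover arrow along a curve to try to eliminate it, it composes with other arrows it passes (the paper's move in Figure~\ref{fig:crossover_moves}(c)), generically \emph{increasing} the number of arrows/crossings. This is not an edge case but the generic behaviour, and the paper explicitly singles it out as the main obstacle. The actual termination argument replaces your naive count with a weight system $(\wtp,\wtm)$ on crossover arrows measuring how far an arrow must slide before its endpoints diverge in each direction, plus a notion of depth; Proposition~\ref{prop:IncreaseDepth} shows the minimum depth can always be strictly increased, and compactness of the curves then bounds the depth of any arrow with finite weight. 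Your definition of ``taut'' (``admits no further move'') is therefore not well founded without this more refined bookkeeping: the process you describe need never terminate.

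Second, you assert that extendability forces a taut track to have ``no genuine branching,'' but you give no mechanism. In the paper this is Lemma~\ref{lem:linear_algebra}, a concrete linear-algebra fact about matrices $M$ over $\F[U]/U^2$ satisfying $M^2=UI_{2n}$: any such $M$ is conjugate, by products of upper-triangular elementary matrices, to a permutation-type matrix $\bar M$ whose $(i,j)$ and $(j,i)$ entries pair as $(1,U)$ or $(0,0)$. It is the curvature identity $\widetilde\partial^2 = U\cdot\mathrm{id}$ — not merely the existence of the extra arrows over $\sAt$ — that gives $M^2 = UI_{2n}$, and the rigidity of this equation over $\F[U]/U^2$ is what kills branching: the conjugating elementary matrices are exactly the crossover arrows, and $\bar M$ is exactly an immersed multicurve. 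Your phrase ``the arcs incident to any given vertex are mutually tangent there'' does not capture this constraint; it is the matrix equation, via Proposition~\ref{prp:mod-track-matrix}, that does. Without an argument of this kind there is no reason a taut track in your sense could not have a stubborn trivalent switch.

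In short: the proposal correctly identifies the skeleton of the argument and correctly diagnoses where the difficulty lies, but supplies no substitute for the two actual inputs — the $\F[U]/U^2$ matrix model with Lemma~\ref{lem:linear_algebra}, and the depth-based termination — and the one concrete suggestion you do make (a vertex/crossing count) provably does not terminate.
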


The class of extendable type D structures known  to arise as $\CFD(M,\alpha, \beta)$ 
for some three-manifold \(M\) is considerably smaller than the set of all extendable type D structures.
Indeed, as mentioned above, we do not have any explicit examples of a manifold $M$ for 
which $\curves M$ has a non-trivial local system (though we expect that such $M$ should exist). Even in the case of trivial local systems there are curve sets that do not correspond to three-manifold invariants. For instance, Proposition \ref{prop:loose} shows that certain configurations of \emph{loose} curves (introduced in Section \ref{sub:STL}) do not arise as invariants of three-manifolds. This improves on a result due to Gillespie \cite{Gillespie}, and should be compared with work of Alishahi and Lipshitz \cite{AL2017}.

\subsection{Relation with the Fukaya category}
Theorem~\ref{thm:invariance} can be interpreted as saying that if $M$ is a manifold with torus boundary then \(\curves{M}\) is a compactly supported element of the Fukaya category \(\sF(T_M)\), while Theorem \ref{thm:pairing}
says that the Floer homology of a closed manifold  \(Y = M_0 \cup_h M_1\) is given by the \(\Hom\) pairing in the Fukaya category:
$$\HFhat(Y) = \Hom\big(\curves{M_0}, \bar{h}(\curves{M_1})\big)$$ The connection between the Heegaard Floer theory and the Fukaya category dates back to the introduction of Heegaard Floer homology, whose definition was motivated by a Seiberg-Witten analog of the Atiyah-Floer conjecture \cite{OSz2004-survey}. Following the introduction of bordered Floer homology \cite{Lipshitz-thesis,LOT},    Auroux \cite{Auroux2010-ICM, Auroux2010-Gokova} and Lekili and Perutz \cite{LP2011}  suggested that the Floer homology of a compact oriented \(M\) with \(\partial M = \Sigma_g\) should be an  object in \(\sF(\Sym^g(\Sigma_g\setminus z))\). In particular, Auroux showed that if \(\sP\) is a parametrization 
of \(\partial M\) (in other words, an identification of \(\partial M\) with a specific genus \(g\) surface \(F\) equipped with a handle
body decomposition), the bordered Floer homology \(\CFA(M,\sP)\) defines an object \(L(M,\sP)\) in the partially wrapped Fukaya category 
\(\sF(\Sym^g(F\setminus z))\), and that 
the pairing in bordered Floer homology is given by the Hom pairing. However it is unclear from this construction whether 
\(L(M,\sP)\) should be compactly supported. 

Auroux's construction is conceptually very useful, but for general \(g\) we don't have any way of making calculations in 
 \(\sF(\Sym^g(F\setminus z))\) other than the one provided by bordered Floer homology.  The one obvious  exception is  
the case \(g=1\) where, naively, one might expect that objects in  \(\sF(T^2\setminus z)\) are given by 
curves in the punctured torus. In fact, the situation is somewhat more complicated. The Fukaya category is triangulated, so a typical object actually has the form of an iterated mapping cone built out of geometric curves. In \cite{HKK}, Haiden, Katzarkov and Kontsevich give an especially nice algebraic model for the partially wrapped Fukaya category of a punctured surface \(\Sigma\); in the 
case of \(T^2\setminus z\), objects of \(\sF(T^2\setminus z)\) correspond to chain complexes over the algebra \(\sA\) or, equivalently, with type D structures. 

One of the main results of \cite{HKK} is a structure theorem for objects of \(\sF(\Sigma)\), which says that any object can be expressed as a direct sum of immersed, possibly noncompact curves.    Theorem~\ref{thm:structure} was motivated by this result but our proof is quite different. In particular,  the condition that the type D structure is extendable implies that the curves are all compactly supported. More generally, if \(\Sigma\) is a punctured surface with stops on each boundary component, objects of \(\sF(\Sigma)\) correspond to type D structures over an appropriate algebra. Our proof of Theorem~\ref{thm:structure} generalizes to show that if the type D structure is extendable, it is isomorphic to a disjoint union of compactly supported curves equipped with local systems. This gives a new proof of the structure theorem of Haiden, Katzarkov, and Kontsevich in the compactly supported case (see Theorem \ref{thm:higher-genus}), which is constructive and stays internal to the language of bordered Floer homology (namely, type D structures). This constructive approach has a key advantage: we obtain new information about Heegaard Floer theory and applications to 3-manifolds, as described below.
 
\subsection{Gradings}
There is a refined version of the invariant that takes spin$^c$ structures and the absolute \(\Z/2\Z\) grading on Heegaard Floer homology  into account. 

\begin{definition}
If \(M\) is as in Theorem~\ref{thm:invariance}, let \(\barT_M\) be the covering space of \(T_M\) whose 
fundamental group is the kernel of the composition
$$ \pi_1(T_M) \to \pi_1(\partial M) \to H_1(\partial M) \to H_1(M)$$
\end{definition}
Equivalently, if \(\lambda \in H_1(\partial M;\Z)\) generates the kernel of the inclusion \(j_*\co H_1(\partial M;\Z)\to H_1(M;\Z )\)
\(\barT_M\) is homeomorphic to the quotient \(((H_1(\partial M;\R) \setminus H_1(\partial M;\Z)))/\langle\lambda\rangle\). We let \(p\co\barT_M \to T_M\) be the covering map. 

The set of spin$^c$ structures on \(M\) can be identified with \(H^2(M) \simeq H_1(M, \partial M) \simeq \coker j_*\). We have:

\begin{theorem}
	\label{thm:gradings}
For each \(\spin \in \spinc(M)\) there is an invariant \(\barcurves{M,\spin}\), which is a collection of oriented immersed closed curves equipped with local systems
in \(\barT_M\).  Moreover,  \(\barcurves{M,\spin}\) is well-defined up to the action of translation by the deck group of \(p\), and  
$$\curves{M} = \bigcup_{\spin \in \spinc(M)} p(\barcurves{M,\spin}).$$
\end{theorem}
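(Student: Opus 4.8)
The plan is to run the constructions underlying Theorems~\ref{thm:invariance} and~\ref{thm:structure} on each spin$^c$ summand of $\CFD(M,\alpha,\beta)$, while carrying along two extra pieces of grading data: the absolute $\Z/2$--grading on $\HFhat$, which will orient the curves, and the spin$^c$--refined grading of Lipshitz--Ozsv\'ath--Thurston, which will lift them to $\barT_M$. First recall that $\CFD(M,\alpha,\beta)$ splits as $\bigoplus_{\spin \in \spinc(M)} \CFD(M,\alpha,\beta,\spin)$, the summands being distinguished by the $H_1(\partial M)$--component of the torus algebra grading, and that only finitely many are nonzero. Each summand is again extendable: this follows from Theorem~\ref{thm:extension} together with the observation that the extension to $\sAt$ respects this splitting, since the splitting is already visible in the type D structure maps. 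Theorem~\ref{thm:structure} therefore applies to each summand and produces a collection $\curves{M,\spin}$ of immersed curves with local systems in $T_M$. Because the reduction to curves in the proof of Theorem~\ref{thm:structure} proceeds by local moves, it is compatible with direct sums, so $\curves{M} = \bigcup_{\spin} \curves{M,\spin}$.

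Next, orientations. The absolute $\Z/2$--grading on $\HFhat(Y)$ for closed $Y$ endows the generators of $\CFD(M,\alpha,\beta)$ with a $\Z/2$--grading compatible with the $\Z/2$--grading on $\sA$. Under the correspondence between (extended) type D structures and immersed train tracks, this grading changes by a fixed amount whenever a strand crosses one of the distinguished arcs of the punctured torus; on an immersed curve this is precisely the data recorded by a choice of orientation. Tracking the $\Z/2$--grading through the reduction moves of Theorem~\ref{thm:structure}, each of which preserves it, thus promotes $\curves{M,\spin}$ to a collection of \emph{oriented} curves with local systems.

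The main point is the lift to $\barT_M$. Restricted to a single spin$^c$ structure, the grading on $\CFD(M,\alpha,\beta,\spin)$ refines, in the sense of \cite[Chapter 11]{LOT}, to one valued in a coset space of the torus algebra grading group in which the component of $H_1(\partial M)$ transverse to $\lambda$ becomes an honest $\Z$--grading; concretely, generators acquire a relative grading recording the Alexander--type displacement along a curve. I would identify the corresponding grading set with an intermediate cover of $T_M$ and check that it is precisely $\barT_M$: the deck group $\Z$ of $p\co \barT_M \to T_M$ is the image $j_*(H_1(\partial M)) \subseteq H_1(M)$, which is exactly the part of the $H_1(\partial M)$--grading left unconstrained once $\spin$ is fixed, and choosing $\spin$ amounts to choosing a lift up to this deck action. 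Since each generator of $\CFD(M,\alpha,\beta,\spin)$ then lifts to $\barT_M$ and the type D structure maps join generators by paths that also lift, naturality of the constructions of Theorems~\ref{thm:structure} and~\ref{thm:invariance} shows that the now oriented curves of $\curves{M,\spin}$ lift to closed oriented curves in $\barT_M$, well defined up to the deck action; the lift is compatible with the orientation because the $\Z/2$--grading is the reduction of the spin$^c$--refined grading. Forgetting orientations and applying $p$ returns $\curves{M,\spin}$, and the union over $\spin$ yields the stated formula. Finally, well-definedness up to regular homotopy, isomorphism of local systems, and the deck action --- in particular independence of $(\alpha,\beta)$, since a change of parametrization acts on $T_M$ by a mapping class that lifts to $\barT_M$ --- follows by combining Theorem~\ref{thm:invariance} with the invariance of the $\Z/2$-- and spin$^c$--refined gradings established in \cite{LOT}.

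I expect the principal obstacle to be exactly this last step: matching the abstract non-commutative grading-group formalism of bordered Floer homology with the concrete covering space $\barT_M \to T_M$, and in particular verifying that within a fixed spin$^c$ structure the curves really do close up in $\barT_M$ rather than spiralling off (which, in the knot-complement model, happens only after passing to the appropriate cover), as well as checking the compatibility of the orientation with the chosen lift. The supporting points --- extendability of the spin$^c$ summands, and the fact that the reduction algorithm of Theorem~\ref{thm:structure} can be carried out while keeping track of the $\Z/2$-- and spin$^c$--refined gradings --- should be routine but do need to be spelled out.
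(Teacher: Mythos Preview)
Your proposal is correct and follows essentially the same route as the paper: split $\CFD$ by $\spinc$, use the relative $\Z/2$ grading to orient the train track, use the refined $\spinc$ grading to lift it to $\barT_M$, and then check that the simplification moves of Section~\ref{sec:extend} and the reparametrization argument of Section~\ref{sec:invariance} can be carried out in the cover while preserving both gradings. The obstacle you flag---matching the grading formalism with $\barT_M$---is handled in the paper not by unwinding the full noncommutative grading group but by tabulating, arrow label by arrow label, the $\spinc$ shift in $\tfrac{1}{2}H_M$ (via \cite[Lemma~3.8]{RR}) and observing that these shifts coincide with the geometric displacements of the corresponding arcs, so each edge of the train track lifts uniquely once endpoints are lifted.
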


There is an analog of Theorem~\ref{thm:pairing} recovering the spin$^c$ decomposition of \(\HFhat(M_0 \cup_h M_1)\) from 
\( \barcurves{M_0,\spin_0}\) and \(\barcurves{M_1,\spin_1}\), where \(\spin_i\) runs over spin$^c$ structures on \(M_i\). 
(See Proposition~\ref{prop:refined-pairing} for a precise statement.) The \(\Z/2\Z\) grading on \(\HFhat(M_0 \cup_h M_1)\) is determined by the sign of the intersections 
between the oriented curves \(  \barcurves{M_0}\) and \(  \barcurves{M_1}\).

\subsection{Immediate consequences} The geometric interpretation of bordered Floer invariants described above has several  applications. For instance:

\begin{theorem}\label{thm:total-dim-toroidal} Let $Y$ be a  closed, orientable three-manifold. If $Y$ contains a separating essential  torus then $\dim\HFhat(Y) \ge 5$.\end{theorem}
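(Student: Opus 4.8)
The plan is to use the pairing theorem (Theorem~\ref{thm:pairing}) together with the geometric description of the invariants. Write $Y = M_0 \cup_h M_1$ where the essential separating torus is $\partial M_0 = \partial M_1$, so that both $M_0$ and $M_1$ are compact oriented three-manifolds with torus boundary and neither is a solid torus (since the torus is essential, in particular incompressible and not boundary-parallel). By Theorem~\ref{thm:pairing}, $\dim \HFhat(Y) = \dim \HFa(\boldsymbol\gamma_0, \boldsymbol\gamma_1)$ where $\boldsymbol\gamma_0 = \curves{M_0}$ and $\boldsymbol\gamma_1 = \bar h(\curves{M_1})$ are collections of immersed curves (with local systems) in the punctured torus $T_{M_0}$. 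Since Floer homology bounds geometric intersection number from below only up to homotopy, the first task is to get control on the homotopy classes of these curves: the key point is that because the torus is incompressible, neither $\curves{M_0}$ nor $\curves{M_1}$ can consist solely of a curve homotopic into a neighbourhood of the puncture $z$ (such trivial curves correspond, roughly, to the solid-torus case), so each $\boldsymbol\gamma_i$ contains at least one component that is homotopically essential in $T_{M_i}$, i.e. that wraps nontrivially around the torus.

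The second step is a counting argument for the intersection Floer homology of two essential immersed curves in the punctured torus. I would first reduce to the case of a single essential component on each side: dropping the other components and the local systems only decreases the dimension (passing to a summand, or using that $\dim$ is additive over components and each contributes a nonnegative amount — here one must be slightly careful with the local systems, but a one-dimensional sub-local-system always splits off enough). So it suffices to show: if $\gamma_0$ and $\gamma_1$ are two essential immersed closed curves in the punctured torus $T$, each nontrivial in $\pi_1(T) \cong F_2$ and not homotopic into a neighbourhood of the puncture, then $\dim \HFa(\gamma_0,\gamma_1) \ge 5$. This is a concrete statement about immersed curves in the once-punctured torus: after putting $\gamma_0, \gamma_1$ in minimal position, the Floer complex is generated by intersection points plus (for the "pairing at infinity" near $z$) possibly extra generators, and the differential is controlled by the disk count. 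The inequality should follow by casework on the slopes/homotopy classes: if $\gamma_0$ and $\gamma_1$ have distinct primitive homology classes then they already meet geometrically in at least $2$ points when both are embedded, but because they must avoid $z$ they are forced to wrap and the true minimal intersection number in the \emph{punctured} torus jumps; and if they are parallel, the "nearby Lagrangian" computation for a single essential curve in $T$ gives the local contribution. The precise bookkeeping is the heart of the matter.

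The third step is to pin down the constant $5$ and confirm it is not improvable by the method. For two curves of distinct slopes, the minimal geometric intersection number in the punctured torus is bounded below in terms of the slopes (Farey-distance type estimate), and the worst case — giving exactly $5$ — occurs for the "closest" pair of essential slopes, e.g. the configuration realised by splicing so that both sides look like the simplest essential curve (the invariant of a once-punctured torus bundle piece, or of the figure-eight complement as in Figure~\ref{fig:figure-eight-with-curves} — indeed a $(+1)$- or $(-1)$-surgery-type gluing of figure-eight complements is a natural candidate realising the bound). I would verify that in every case the count is $\ge 5$, handle the parallel-slope case separately (where one uses that a single essential curve paired with a parallel copy, perturbed, contributes the homology of a circle, $\dim = 2$, but there are at least two such intersections from each of two essential components, or the essential component has self-intersection forcing extra generators), and note that the local systems only increase dimensions.

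\medskip

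\emph{The main obstacle} I expect is the intersection-number bookkeeping in the punctured torus: making precise the claim that an essential, $z$-avoiding immersed curve is "complicated enough" that any two such curves meet in at least $5$ points after accounting for the behaviour near the puncture. The subtlety is twofold: first, immersed (not embedded) curves and curves carrying local systems must be handled uniformly, which I would do by reducing to primitive sub-objects and using additivity of $\dim \HFa$ over components together with the fact that local systems only add; second, the "pairing near $z$" contributes generators that are not honest intersection points, so the naive geometric intersection number is not the same as $\dim \HFa$, and one must check the correction term has the right sign. A clean way to organise this is via the universal cover picture (lifting to $\tildeT = \R^2 \setminus \Z^2$, as in the trefoil examples) where essential lifted arcs have well-defined endpoints on the boundary circles of the punctures and a linking/winding argument gives the lower bound directly.
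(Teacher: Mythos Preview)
Your overall framework is right --- cut along the torus, apply the pairing theorem, and argue about intersections of immersed curves --- but there is a genuine conceptual gap in how you identify the ``trivial'' configurations to be ruled out. You say the danger is a curve ``homotopic into a neighbourhood of the puncture $z$,'' but that is not the obstruction: the solid torus invariant is an embedded essential simple closed curve, far from the puncture. The actual pathology is a \emph{loose} curve, one homotopic into a neighbourhood of a simple closed curve that \emph{misses} $z$ (equivalently, one that pulls tight to a straight line in a singular peg-board diagram). Such curves are perfectly essential in $\pi_1(T)\cong F_2$, so your reduction ``each side contains an essential component'' is vacuous --- every component is already essential. What must be shown is that if $M$ is boundary incompressible then $\curves{M}$ is not loose; this is Proposition~\ref{prop:loose}, and its proof is not formal: it uses the relation between knot Floer homology and the Thurston norm, comparing $M$ with its double.

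Once looseness is excluded, your ``casework on slopes'' is pointing in the right direction but is missing the organising device. The paper works in peg-board position and observes that a non-loose curve has at least two \emph{corners} (places where it touches a peg); an averaging argument then shows the minimal intersection is at least $c_0c_1\ge 4$. Pinning down $5$ requires analysing exactly when $c_0=c_1=2$: each side is then a single curve with trivial local system, each $M_i$ is an integer homology solid torus, and the two segment-slopes on each side are constrained so that all pairwise Farey distances are $\le 1$. A short enumeration leaves only the configuration realised by gluing two trefoil complements (not figure-eight complements), which gives exactly $5$; the parallel case is handled by the admissibility finger move, which already forces $\ge 6$. None of this survives without first knowing the curves are not loose and without the corner-counting bound, so those are the two pieces you need to supply.
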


This follows quickly from Theorem \ref{thm:pairing}. Indeed, a simple geometric argument shows that any two sufficiently non-trivial immersed curves in the punctured torus intersect in at least 5 points (a specific example realizing $\dim\HFhat(Y)= 5$ is illustrated in Figure \ref{fig:ZHS3-small}; see also Theorem \ref{thm:toroidal-detailed}).  Theorem \ref{thm:total-dim-toroidal} gives rise to an interesting geometric statement:

\begin{corollary} If $Y$ is a prime rational homology sphere with $\dim\HFhat(Y)<5$ then $Y$ is geometric.\end{corollary}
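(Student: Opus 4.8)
The plan is to combine Theorem~\ref{thm:total-dim-toroidal} with the geometrization theorem, reducing the statement to the assertion that a prime rational homology sphere with no essential torus is geometric. First I would recall the structure theory of prime $3$-manifolds: if $Y$ is prime and atoroidal, then either $Y$ is a small Seifert fibered space or, by Thurston's hyperbolization (now Perelman's geometrization), $Y$ admits one of the eight Thurston geometries. A manifold admitting a Thurston geometry is geometric by definition, so the only remaining case is when $Y$ contains an essential torus. The point is that ``geometric'' here means ``admits one of the eight geometries,'' so the content is purely: rule out the toroidal case via the Floer-theoretic bound.

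The key deduction runs as follows. Suppose $Y$ is a prime rational homology sphere with $\dim\HFhat(Y) < 5$. We know $\dim\HFhat(Y) \geq |H_1(Y;\Z)| \geq 1$, with equality iff $Y$ is an L-space; in any case $\dim\HFhat(Y) \leq 4$. By Theorem~\ref{thm:total-dim-toroidal}, $Y$ cannot contain a separating essential torus. Since $Y$ is a rational homology sphere, $H_1(Y;\Q) = 0$, so by Poincar\'e--Lefschetz duality every torus in $Y$ separates (a non-separating torus would contribute to $H_2(Y;\Q) \cong H^1(Y;\Q) = 0$, giving a contradiction). Hence $Y$ contains no essential torus at all, i.e.\ $Y$ is atoroidal. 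Combined with primeness, $Y$ is then either a spherical space form, a small Seifert fibered space with base $S^2(p,q,r)$, or hyperbolic --- and each of these carries a Thurston geometry ($S^3$, one of $S^2\times\R$, $\widetilde{\mathrm{SL}_2}$, $\mathrm{Nil}$, etc., or $\mathbb{H}^3$). Therefore $Y$ is geometric.

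The main obstacle is really just assembling the classical input correctly rather than any Floer-theoretic subtlety: one must be careful that ``geometric'' excludes exactly the manifolds with a nontrivial JSJ decomposition (toroidal manifolds and connected sums), and that an atoroidal irreducible $3$-manifold indeed admits a geometric structure. This is exactly the combination of the elliptization and hyperbolization parts of geometrization, together with the observation that atoroidal Seifert fibered spaces are spherical or have one of the remaining homogeneous geometries. For a rational homology sphere there are no Seifert fibered pieces modeled on $\mathbb{H}^2\times\R$ that fail to be geometric, so no further case analysis is needed. The only delicate point to state precisely is the passage from ``no \emph{separating} essential torus'' to ``atoroidal,'' which uses the rational homology sphere hypothesis as above; I would include that half-line of homological reasoning explicitly.

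Once these pieces are in place the corollary is immediate, so in the write-up I would keep it to a short paragraph: cite Theorem~\ref{thm:total-dim-toroidal} for the absence of separating essential tori, note that in a rational homology sphere all tori separate, invoke geometrization to conclude that the resulting prime atoroidal $Y$ is geometric, and remark that the bound $\dim\HFhat(Y) < 5$ is sharp by the example of Figure~\ref{fig:ZHS3-small}.
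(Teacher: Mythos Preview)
Your proposal is correct and follows essentially the same approach as the paper: use Theorem~\ref{thm:total-dim-toroidal} to exclude separating essential tori, observe that in a rational homology sphere every essential torus separates, and then invoke Perelman's geometrization for the resulting prime atoroidal manifold. The paper's proof is just a terser version of what you wrote, omitting the explicit homological justification and the case breakdown of geometries.
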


\begin{proof}Note that any essential torus in a rational homology sphere must be separating. Thus, if $\dim\HFhat(Y)<5$ then $Y$ is atoroidal and we may appeal to Perelman's resolution of Thurston's geometrization conjecture to conclude that $Y$ admits a geometric structure.
\end{proof}
Note that, more precisely, the geometric structure in question is either hyperbolic or it is one of 6 Seifert fibered geometries; see Scott \cite{Scott1983}, for example. As another immediate corollary of Theorem \ref{thm:total-dim-toroidal}, we obtain a new proof of a theorem of Eftekhary \cite{Eftekhary}:
\begin{corollary}\label{crl:ZHS3}
L-space integer homology spheres are atoroidal. \qed
	\end{corollary}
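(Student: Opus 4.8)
The plan is to deduce this from Theorem~\ref{thm:total-dim-toroidal} together with the known structure of $\HFhat$ of an L-space integer homology sphere. Recall that for an L-space $Y$ one has $\dim\HFhat(Y) = |H_1(Y;\Z)|$, so an L-space integer homology sphere $Y$ satisfies $\dim\HFhat(Y) = 1 < 5$. First I would invoke prime decomposition: if $Y = Y_1 \# Y_2$ is a nontrivial connected sum, then by the Künneth formula for $\HFhat$ we have $\dim\HFhat(Y) = \dim\HFhat(Y_1)\cdot\dim\HFhat(Y_2) \geq 2$, contradicting $\dim\HFhat(Y)=1$; hence an L-space integer homology sphere is prime (or $S^3$, which is atoroidal trivially). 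So it suffices to show a prime L-space integer homology sphere is atoroidal.

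Next, suppose for contradiction that such a prime $Y$ contains an essential torus $T$. Since $Y$ is a rational homology sphere, as noted in the proof of the preceding corollary any essential torus in $Y$ is separating: indeed a nonseparating torus would carry a nonzero class in $H_2(Y;\Z)$, which vanishes. Therefore $T$ is an essential \emph{separating} torus, and Theorem~\ref{thm:total-dim-toroidal} applies directly to give $\dim\HFhat(Y)\geq 5$. This contradicts $\dim\HFhat(Y)=1$, so no essential torus exists and $Y$ is atoroidal. In other words, the whole argument is: $\dim\HFhat(Y)=1$ forces $Y$ prime (Künneth), primeness plus $b_1=0$ forces any essential torus to be separating, and Theorem~\ref{thm:total-dim-toroidal} then rules out separating essential tori.

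I do not expect a genuine obstacle here; the corollary is essentially a packaging of Theorem~\ref{thm:total-dim-toroidal}. The one point requiring a small amount of care is the reduction to the prime case: one must recall that $\HFhat$ of a connected sum satisfies $\HFhat(Y_1 \# Y_2) \cong \HFhat(Y_1)\otimes_{\F}\HFhat(Y_2)$ (or at least that the dimension is multiplicative) so that an integer homology sphere with $\dim\HFhat = 1$ cannot decompose nontrivially. Alternatively one can sidestep this: if $Y$ is any integer homology sphere containing an essential torus, that torus remains essential (and separating) in $Y$ regardless of primeness, so Theorem~\ref{thm:total-dim-toroidal} already gives $\dim\HFhat(Y)\geq 5 > 1$, contradicting the L-space condition; this avoids the connected-sum discussion entirely. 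Either way the proof is a couple of lines, and the substance lies entirely in Theorem~\ref{thm:total-dim-toroidal}.

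\begin{proof}
If $Y$ is an L-space integer homology sphere then $\dim\HFhat(Y) = |H_1(Y;\Z)| = 1$. Any essential torus in a rational homology sphere is separating, since a nonseparating torus would represent a nonzero class in $H_2(Y;\Z) = 0$. Thus if $Y$ contained an essential torus it would contain a separating one, and Theorem~\ref{thm:total-dim-toroidal} would force $\dim\HFhat(Y)\geq 5$, a contradiction. Hence $Y$ is atoroidal.
\end{proof}
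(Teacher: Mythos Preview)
Your proof is correct and matches the paper's approach exactly: the paper treats this as an immediate consequence of Theorem~\ref{thm:total-dim-toroidal} together with the observation (made in the proof of the preceding corollary) that any essential torus in a rational homology sphere is separating, and simply writes \qed. Your discussion of primeness is unnecessary, as you yourself note, but the final displayed proof is precisely what the paper intends.
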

Recall that an L-space is a rational homology sphere for which $\dim\HFhat(Y)=|H_1(Y)|$. Corollary \ref{crl:ZHS3} is a part of a conjecture of Ozsv{\'a}th and Szab{\'o}; it implies that prime integer homology sphere L-spaces with infinite fundamental group---should examples exist---are hyperbolic.

We also obtain a statement about Khovanov homology. 
 
 \begin{corollary} If $L$ is a link in $S^3$ containing an essential Conway sphere then $\dim\widetilde{\mathit{Kh}}(L)\ge 5$. \end{corollary}
\begin{proof}
Denoting by $\Sigma_L$ the two-fold branched cover of $S^3$, branched along the link $L$, the presence of an essential Conway sphere in $L$ is equivalent to the existence of an essential separating torus in $\Sigma_L$. Thus $\dim\widetilde{\mathit{Kh}}(L)\ge\HFhat(\Sigma_L)\ge 5$, where the first inequality arises from the spectral sequence from the reduced Khovanov homology of (the mirror of) $L$ to Heegaard Floer homology \cite{OSz2005} and the second applies Theorem \ref{thm:total-dim-toroidal}. 
\end{proof}

It is worth comparing this result with the fact that the unknot \cite{KM2011} and the two trefoils \cite{BS} are characterized by their Khovanov homology. For statements about links, see \cite{HN2010, HN2013}. 

In another direction, we can establish strong behaviour for Heegaard Floer homology in the presence of certain degree one maps. Consider an integer homology sphere $Y$ containing an essential torus, and write $Y=M_0\cup_h M_1$ where the $M_i$ are (necessarily) integer homology solid tori.  Following our conventions above, $h(\lambda)$ must be a meridian in $\partial M_0$ when $\lambda$ is the longitude of $M_1$. The Dehn filling $Y_0=M_0(h(\lambda))$ is the result of replacing $M_1$ with a solid torus (called a {\em pinch}); there is a degree one map $Y\to Y_0$. 

\begin{theorem}\label{thm:degree-one-pinch}If $Y$ is a toroidal integer homology sphere and $Y_0$ is the result of a pinch on $Y$ then $\dim\HFhat(Y)\ge\dim\HFhat(Y_0)$. \end{theorem}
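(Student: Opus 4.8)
\emph{Sketch of the proof.} The plan is to pass, via Theorem~\ref{thm:pairing}, from the comparison of Floer homologies to an elementary statement about immersed curves in the punctured torus, along the lines of the argument behind Theorem~\ref{thm:total-dim-toroidal}. Write $Y = M_0\cup_h M_1$ with the $M_i$ integer homology solid tori, let $\lambda_1\subset\partial M_1$ be the rational longitude, and set $s = h(\lambda_1)\subset\partial M_0$, so that $Y_0 = M_0(s)$ is obtained by gluing a solid torus $V$ to $M_0$ with meridian $s$. Put $\boldsymbol\gamma_0 = \curves{M_0}$. Since $\curves{V}$ is a single embedded curve of the meridian slope of $V$, Theorem~\ref{thm:pairing} gives
\[ \dim\HFhat(Y_0)=\dim\HFa(\boldsymbol\gamma_0,\boldsymbol\mu),\qquad \dim\HFhat(Y)=\dim\HFa\big(\boldsymbol\gamma_0,\bar h(\curves{M_1})\big), \]
where $\boldsymbol\mu\subset T_{M_0}$ is an embedded curve of slope $s$; it therefore suffices to show $\dim\HFa(\boldsymbol\gamma_0,\bar h(\curves{M_1}))\ge\dim\HFa(\boldsymbol\gamma_0,\boldsymbol\mu)$.

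Next I would isolate the relevant structure of $\curves{M_1}$. Because $M_1$ is an integer homology solid torus it carries a single $\spinc$ structure, so by Theorem~\ref{thm:gradings} the components of $\curves{M_1}$ are oriented and each lifts to a closed curve in the punctured cylinder $\barT_{M_1}$; consequently every component of $\curves{M_1}$ is homologous in $\partial M_1$ to an integer multiple of $\lambda_1$. On the other hand $\chi(\HFa(\boldsymbol\gamma_0,\bar h(\curves{M_1}))) = \chi(\HFhat(Y)) = \pm 1$ is the homological intersection number of $\boldsymbol\gamma_0$ with $\bar h(\curves{M_1})$, so some component $c$ of $\curves{M_1}$ is non-null-homologous; thus $[c]=k[\lambda_1]$ with $k\ne 0$, and $[\bar h(c)]=\pm k\,[\boldsymbol\mu]$ in $H_1(\partial M_0;\Z)$, since $\bar h$ acts on homology by $-h_*$ and $h_*[\lambda_1]=[s]=\pm[\boldsymbol\mu]$. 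The same analysis applied to $M_0$ shows that every component of $\boldsymbol\gamma_0$ is homologous to a multiple of $\lambda_0$; as $\lambda_0$ and $s$ are distinct slopes, no component of $\boldsymbol\gamma_0$ is freely homotopic (``parallel'') to $\boldsymbol\mu$ or to $\bar h(c)$.

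The heart of the matter is a geometric lemma about $T_{M_0}$: \emph{if $\boldsymbol\delta$ is an oriented immersed multicurve in the once-punctured torus whose total homology class is a nonzero integer multiple of $[\boldsymbol\mu]$, then the minimal geometric intersection numbers satisfy $i(\boldsymbol\gamma_0,\boldsymbol\delta)\ge i(\boldsymbol\gamma_0,\boldsymbol\mu)$.} I would prove this by placing $\boldsymbol\delta$ in a position realizing $i(\boldsymbol\gamma_0,\boldsymbol\delta)$ with all self-intersections transverse and disjoint from $\boldsymbol\gamma_0$, and then repeatedly performing oriented smoothings of self-intersections inside small disks missing $\boldsymbol\gamma_0$. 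Each such move preserves the integral homology class, strictly reduces the number of self-intersections, and leaves the number of intersection points with $\boldsymbol\gamma_0$ unchanged; after finitely many moves one reaches an embedded oriented multicurve $\boldsymbol\delta'$ with $[\boldsymbol\delta']=k[\boldsymbol\mu]$. Since any two disjoint essential non-peripheral simple closed curves in the once-punctured torus are isotopic and $[\boldsymbol\mu]$ is primitive, $\boldsymbol\delta'$ consists of at least one parallel copy of $\boldsymbol\mu$ together with curves that are peripheral or null-homotopic; using additivity of geometric intersection number over disjoint components we get $i(\boldsymbol\gamma_0,\boldsymbol\delta)=\#(\boldsymbol\gamma_0\cap\boldsymbol\delta')\ge i(\boldsymbol\gamma_0,\boldsymbol\delta')\ge i(\boldsymbol\gamma_0,\boldsymbol\mu)$.

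Finally, assemble. The bigon differential in the pairing complex computing $\HFa$ respects the decomposition by components of each side, so the Floer homology is additive over disjoint sub-collections and, for a pair of non-parallel components, $\dim\HFa$ equals the geometric intersection number times the dimension of the local system. Splitting off the component $c$ and using the previous two paragraphs,
\[ \dim\HFhat(Y)=\dim\HFa\big(\boldsymbol\gamma_0,\bar h(\curves{M_1})\big)\ \ge\ \dim\HFa\big(\boldsymbol\gamma_0,\bar h(c)\big)\ \ge\ i\big(\boldsymbol\gamma_0,\bar h(c)\big)\ \ge\ i(\boldsymbol\gamma_0,\boldsymbol\mu)=\dim\HFa(\boldsymbol\gamma_0,\boldsymbol\mu)=\dim\HFhat(Y_0). \]
The main obstacle is making the geometric lemma and its interface with the pairing fully rigorous: one must know that the pairing is genuinely computed by minimal geometric intersection data in the non-parallel case (this is where the hypotheses of the previous paragraph enter, together with the vanishing of the differential between non-parallel immersed curves), must handle the bookkeeping of orientations carefully — which is precisely why the refined invariant of Theorem~\ref{thm:gradings} is invoked, so that oriented smoothings are available — and must verify the additivity and taut‑position statements for geometric intersection numbers of immersed multicurves in the punctured torus. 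Everything else is formal.
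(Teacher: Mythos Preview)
Your argument is correct and takes a genuinely different route from the paper's.  Both proofs pass through Theorem~\ref{thm:pairing} and reduce to a comparison of geometric intersection numbers, but the key geometric step differs.  The paper deduces Theorem~\ref{thm:degree-one-pinch} from the more general Theorem~\ref{thm:pinch}: it replaces \emph{all} of \(\boldsymbol\gamma_1\) by the straightened curve \(\hat{\boldsymbol\gamma}_1\) obtained by deleting null\-homotopic components and pulling the rest tight \emph{in the unpunctured torus}, and the crux is a corner--balance argument showing that sliding a corner of \(\boldsymbol\gamma_1\) across a peg never increases the intersection with \(\boldsymbol\gamma_0\).  You instead stay entirely in the punctured torus: you isolate a single homologically essential component \(c\), perform oriented resolutions of its self-intersections (available because of the \(\Z/2\Z\)-grading/orientation data of Theorem~\ref{thm:gradings}) in disks disjoint from \(\boldsymbol\gamma_0\), and then use the elementary fact that disjoint essential non-peripheral simple closed curves in the once-punctured torus are isotopic to locate a copy of \(\boldsymbol\mu\) inside the resulting embedded multicurve.

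Each approach buys something.  Your smoothing argument is more elementary---it sidesteps the peg-crossing analysis entirely---but it leans on the orientation refinement and, by discarding all but one component of \(\boldsymbol\gamma_1\), does not immediately yield the sharper bound \(\dim\HFhat(Y)\ge nm\dim\HFhat(Y_0)\) of Theorem~\ref{thm:pinch} in the rational homology setting.  The paper's peg-sliding argument is slightly more technical but treats all components uniformly, does not require orientations, and gives that stronger inequality.  One small point worth making explicit in your write-up: when you pass from \(\dim\HFa(\boldsymbol\gamma_0,\bar h(c))\) to the underlying geometric intersection number you are implicitly dropping local-system multiplicities (which is fine since they are \(\ge 1\)), while the final identification \(i(\boldsymbol\gamma_0,\boldsymbol\mu)=\dim\HFa(\boldsymbol\gamma_0,\boldsymbol\mu)\) must reinstate the local-system dimensions on the \(\boldsymbol\gamma_0\) side; the cleanest fix is to apply your geometric lemma component-by-component in \(\boldsymbol\gamma_0\) and then sum with the weights \(d_j\).
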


This follows from a more general statement about pinching along tori in rational homology spheres; see Theorem \ref{thm:pinch}. In particular, to the best of the authors' knowledge, this is the only result concretely treating the following:

\begin{question}If $Y\to Y_0$ is a degree one map between closed, connected, orientable three-manifolds, is it the case that $\dim\HFhat(Y)\ge\dim\HFhat(Y_0)$? \end{question}

\subsection{The L-space gluing theorem}

The geometric invariants defined in this paper can also be applied to the classification of L-spaces; in particular, we give a complete characterization of when gluing along a torus produces an L-space.  Again, an L-space is a rational homology sphere $Y$ for which $\dim\HFhat(Y)=|H_1(Y)|$. For a manifold with torus boundary  $M$, the set of L-space slopes of \(M\) is given by \[\sL_M=\{\alpha \,|\,  M(\alpha) \ \text{is an L-space}\}\subset \Q P^1\] where $M(\alpha)$ denotes Dehn filling along the slope $\alpha$.
Denote the interior of $\sL_M$ by $\sL_M^\circ$. 

The set $\sL_M$ is encoded by and easily extracted from the invariant $\curves M$. In particular, a necessary condition for $|\sL_M|>1$ is that the associated local system be trivial. The complement of $\sL_M$ is obtained by considering the minimal set of tangent lines to $\curves M$; see Theorem \ref{thm:slope_detection} for a precise statement. This provides a satisfying solution to a problem posed by Boyer and Clay \cite[Problem 1.9]{BC}. Toroidal L-spaces are then characterized as follows. 

\begin{theorem}\label{L-space-gluing-theorem}Let $Y=M_0\cup_h M_1$ be a 3-manifold where $M_i$ are boundary incompressible manifolds and $h\co \partial M_1\to\partial M_0$ is  an orientation reversing homeomorphism between torus boundaries. Then $Y$ is an L-space if and only if $\sL_{M_0}^\circ\cup h(\sL_{M_1}^\circ)=\Q P^1$.   \end{theorem}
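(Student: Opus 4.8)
The plan is to reduce the statement to a purely geometric fact about immersed curves in the punctured torus $T_{M_0}$, using the pairing theorem (Theorem~\ref{thm:pairing}) and the curve-theoretic description of L-space slopes. First I would recall that $Y = M_0 \cup_h M_1$ is a rational homology sphere precisely when the gluing slope (the image $h(\lambda_1)$ of the rational longitude, compared against $\lambda_0$) is generic, and in that case $|H_1(Y)|$ equals the appropriate intersection-number count; the content of ``L-space'' is that $\dim \HFa(\boldsymbol\gamma_0, \boldsymbol\gamma_1)$ achieves its minimal possible value $|H_1(Y)|$, i.e.\ there are no ``extra'' intersection points beyond those forced by homology. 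So the theorem becomes: the paired Floer homology has no extra generators if and only if $\sL_{M_0}^\circ \cup h(\sL_{M_1}^\circ) = \Q P^1$.

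The key steps, in order, would be: (1) Establish the dictionary between $\sL_M$ and $\curves M$ promised by Theorem~\ref{thm:slope_detection} — namely that $\alpha \notin \sL_M$ iff $\alpha$ is (parallel to) one of the finitely many tangent slopes of $\curves M$, equivalently $\alpha \in \sL_M^\circ$ iff a line of slope $\alpha$ can be homotoped off $\curves M$ (through the puncture-respecting homotopies) so as to have minimal, homologically-forced, intersection with it. (2) Reduce to the case of trivial local systems for the ``if'' direction: if $|\sL_{M_i}^\circ| > 1$ for the relevant manifold the local system must be trivial, and boundary incompressibility rules out the degenerate curve configurations; handle the local system bookkeeping by noting intersection Floer homology of a curve with local system $(V,\Phi)$ is governed by the same minimal position as the underlying curve with multiplicity $\dim V$. (3) For the ``if'' direction proper: assume $\sL_{M_0}^\circ \cup h(\sL_{M_1}^\circ) = \Q P^1$. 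After applying $\bar h$, the curves $\boldsymbol\gamma_1$ live in $T_{M_0}$; the hypothesis says every slope is ``good'' for at least one of the two curve sets. Use this to put $\boldsymbol\gamma_0$ and $\boldsymbol\gamma_1 = \bar h(\curves{M_1})$ simultaneously into a position realizing the minimal geometric intersection number, and show this number equals $|H_1(Y)|$ — i.e.\ no bigon-free excess intersections survive. The tangent-line criterion is exactly what lets one slide one family past the other. (4) For the ``only if'' (contrapositive): if some slope $\alpha$ lies outside both $\sL_{M_0}^\circ$ and $h(\sL_{M_1}^\circ)$, then both $\boldsymbol\gamma_0$ and $\boldsymbol\gamma_1$ have a tangency/nontrivial behaviour in the direction $\alpha$; lift to the relevant cover (or the plane $\tildeT$) and produce, by an explicit local model near that common slope, at least one pair of intersection points that cannot be removed by homotopy and is not accounted for by homology, forcing $\dim \HFa(\boldsymbol\gamma_0, \boldsymbol\gamma_1) > |H_1(Y)|$.

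The main obstacle I expect is step (3), the ``if'' direction: turning the covering-all-slopes hypothesis into an actual simultaneous minimal position for the two multicurves and verifying that the resulting intersection count is \emph{exactly} $|H_1(Y)|$ rather than merely finite. The subtlety is that ``$\alpha \in \sL^\circ_{M_i}$'' only says a single line of slope $\alpha$ behaves well, whereas the components of $\boldsymbol\gamma_j$ need not be lines; one must argue that the complement of the tangent-slope set being covered lets one isotope each component of one family across each component of the other without creating unremovable intersections, presumably by a winding-number / geometric argument in $\tildeT$ or $\barT$ analogous to the Corollary following Theorem~\ref{thm:pairing}. Care is also needed with components of $\boldsymbol\gamma_0$ and $\boldsymbol\gamma_1$ that happen to be parallel (the special case where the Corollary's hypothesis fails), and with the $\spin^c$-refined pairing (Proposition~\ref{prop:refined-pairing}) if one wants the count spin$^c$-structure by spin$^c$-structure; I would handle parallel components by a direct computation in that one-dimensional-family situation, since there the relevant Floer homology is explicitly computable. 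A secondary point requiring attention is the boundary-incompressibility hypothesis, which is exactly what excludes the solid-torus-like and loose-curve configurations where the slope dictionary degenerates; I would invoke it precisely where Theorem~\ref{thm:slope_detection} needs the curves to be ``sufficiently non-trivial.''
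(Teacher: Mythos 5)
Your high-level framework — pairing theorem plus the slope dictionary of Theorem~\ref{thm:slope_detection} — is the right one and matches the paper, but there is a genuine gap in how you propose to handle the direction ``$\sL^\circ_{M_0}\cup h(\sL^\circ_{M_1})=\Q P^1 \Rightarrow Y$ is an L-space.'' You attempt this directly (``put $\boldsymbol\gamma_0$ and $\boldsymbol\gamma_1$ simultaneously into a position realizing the minimal geometric intersection number, and show this number equals $|H_1(Y)|$''), and you correctly flag it as the main obstacle, but what you propose is not how the argument goes and would be very hard to carry out: ``simultaneous minimal position'' does not by itself tell you that the intersection count is exactly the homologically forced one, and the local-system/immersed-annulus bookkeeping makes ``no excess intersections'' slippery. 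The paper instead proves the \emph{contrapositive}, and the key idea you are missing is short and clean: if $Y$ is not an L-space, then some $\spinc$ class of the pairing has two generators, i.e.\ two intersection points $x,y$ of $\boldsymbol\gamma_0$ and $\boldsymbol\gamma_1$ whose connecting arcs $c_0\subset\boldsymbol\gamma_0$, $c_1\subset\boldsymbol\gamma_1$ satisfy $[c_0-c_1]=0\in H_1(T)$, hence lift to a bigon in $\R^2$; the (extended) mean value theorem applied to this bigon produces a common tangent slope of both curve systems, which (running over peg radii $1/n\to 0$ and taking a nested intersection of closed sets) lies in $S(M_0)\cap h(S(M_1))$ and hence outside $\sL^\circ_{M_0}\cup h(\sL^\circ_{M_1})$. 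This one-paragraph argument replaces the whole simultaneous-minimal-position program you sketch, and it is also where Corollary~\ref{cor:tangent_slopes} ($S(M)=S^{\mathrm{sing}}(M)$, ruling out peg-wrapping inflating the tangent set) is needed — a point your proposal does not address.

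For the other implication your outline is closer to the paper: pick a slope $\alpha$ outside both $\sL^\circ$-sets, reparametrize so $\alpha=0$, and build two intersection points of opposite sign in a transverse peg-board diagram. But be warned that the ``explicit local model'' you gesture at is the hard half of the paper's proof: one must pick non–solid-torus-like components $c_0,c_1$ (this is where boundary incompressibility actually enters, not merely in invoking Theorem~\ref{thm:slope_detection}), distinguish whether either curve has peg-wrapping, locate corners with horizontal tangency on opposite sides of the peg, and then handle a delicate subcase in which the outgoing segments are horizontal and one must chase the curves around the torus to the next corner (Figures~\ref{fig:gluing-proof-1} and~\ref{fig:gluing-proof-2}). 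So the ``only if'' direction is in the right spirit but substantially underspecified, while the ``if'' direction as you've planned it would lead you down the wrong road entirely.
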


This confirms a conjecture due to the first author \cite{Hanselman-splice}, and strengthens results in \cite{Hanselman2016,HRRW,HW,HL2014,RR}. In addition to the applications discussed below, the L-space gluing theorem plays a key role in the program to understand L-spaces arising as cyclic branched covers of knots in the three-sphere; see Gordon and Lidman \cite{GL2014-cor,GL2014} and Boileau, Boyer, and Gordon \cite{BBG}. A weaker version of the L-space gluing theorem was the key ingredient, along with work of Boyer and Clay \cite{BC}, in proving the L-space conjecture for graph manifolds \cite{HRRW} (an alternate, constructive, proof is given in \cite{Rasmussen2017}). We note that, in light of the L-space conjecture \cite{BGW2013}, the L-space gluing theorem makes some striking predictions about the behaviour of foliations on manifolds with torus boundary, as well as the behaviour of left-orders on the fundamental groups of these manifolds. Another notable application is given in the work of N{\'e}methi on links of rational singularities \cite{Nemethi2017}. 

Theorem \ref{L-space-gluing-theorem} allows us to refine Theorem \ref{thm:total-dim-toroidal} in the case that $Y$ is a toroidal L-space. In addition to ruling out $|H_1(Y)| \le 4$, we can easily enumerate all examples with $|H_1(Y)| < 7$; see Theorem \ref{thm:toroidal-detailed}. Note that this leads to another proof of Corollary \ref{crl:ZHS3}. Another quick consequence of Theorem \ref{L-space-gluing-theorem} is the following:

\begin{theorem}\label{thm:sat-intro} Suppose that $K$ is a satellite knot in $S^3$. If $K$ is an L-space knot (that is, $K$ admits non-trivial 
L-space surgeries) then both the pattern knot and the companion knot are L-space knots as well.\end{theorem}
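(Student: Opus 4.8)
The plan is to deduce this from the L-space gluing theorem (Theorem~\ref{L-space-gluing-theorem}) by viewing a satellite knot complement as a union along an essential torus. Write $K = P(C)$ for the satellite with pattern $P \subset S^1\times D^2$ and companion $C \subset S^3$, so that the complement $M = S^3 \setminus \nu(K)$ decomposes along the companion torus as $M = M_P \cup_h M_C$, where $M_C = S^3 \setminus \nu(C)$ and $M_P = (S^1\times D^2)\setminus \nu(P)$ is the pattern exterior in the solid torus, with $h$ identifying the meridian of the solid torus factor in $\partial M_P$ with the longitude $\lambda_C$ of $C$. First I would fix slope conventions carefully: $K$ being an L-space knot means that $M(\alpha)$ is an L-space for all sufficiently positive slopes $\alpha$ (equivalently $\sL_M$ contains a half-open interval, and in fact $\sL_M^\circ \ne \emptyset$), and the key point is that Dehn filling $M$ along a slope $\alpha$ is the same as Dehn filling $M_P$ along $\alpha$ (all fillings happen on the outermost boundary), so that $M(\alpha) = M_P(\alpha) \cup_h M_C$ is again a union along the same essential torus.

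Next I would apply Theorem~\ref{L-space-gluing-theorem} to this decomposition: $M(\alpha) = M_P(\alpha)\cup_h M_C$ is an L-space if and only if $\sL_{M_P(\alpha)}^\circ \cup h(\sL_{M_C}^\circ) = \Q P^1$. Here I must first check the hypotheses — that $M_P(\alpha)$ and $M_C$ are boundary incompressible — which holds once $P$ is chosen to be essential (geometrically essential in the solid torus, not contained in a ball, not the core) and $C$ is nontrivial; the degenerate cases (connect sums, cables of the unknot) can be handled directly or excluded, and I should remark on them. Now, since $\sL_M^\circ\ne\emptyset$, there is an open set of slopes $\alpha$ for which $M(\alpha)$ is an L-space, hence $\sL_{M_P(\alpha)}^\circ \cup h(\sL_{M_C}^\circ) = \Q P^1$ for all such $\alpha$. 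The strategy is to extract from this a single condition on $\sL_{M_C}$ and a single condition on $\sL_{M_P}$.

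For the companion: the slope $h(\lambda_C)$ in $\partial M_P$ is the meridian of the solid torus, so for each filling slope $\alpha$ there is a corresponding point on $\partial M_P$, and as $\alpha$ varies over a nonempty open set, the set $\sL_{M_P(\alpha)}^\circ$ stays within a bounded region of $\Q P^1$ (it is a proper subset once $M_P(\alpha)$ is not a solid torus). Hence to have $\sL_{M_P(\alpha)}^\circ \cup h(\sL_{M_C}^\circ) = \Q P^1$ we need $h(\sL_{M_C}^\circ)$ to be large — in particular nonempty — which via the identification forces $\sL_{M_C}^\circ \ne \emptyset$; combined with the fact that $\sL_{M_C}$ is an interval containing the rational longitude's neighborhood on one side, this gives that $C$ is an L-space knot. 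For the pattern: the conclusion we want is that $P$, as a knot in the solid torus, is an L-space knot in the sense that infinitely many fillings of $M_P$ (equivalently, of the pattern in $S^1\times S^2$ or with the natural convention on $\Q P^1$) are L-spaces. Fixing $\alpha$ with $M(\alpha)$ an L-space and using that $\sL_{M_C}^\circ$ is a bounded interval not all of $\Q P^1$ (since $C$ is nontrivial), the equation $\sL_{M_P(\alpha)}^\circ \cup h(\sL_{M_C}^\circ) = \Q P^1$ forces $\sL_{M_P(\alpha)}^\circ$ to contain everything outside $h(\sL_{M_C}^\circ)$; translating back through the torus $\partial M_P$ (both the companion-side slope and the pattern-side fillings are encoded there) shows $\sL_{M_P} \subset \Q P^1$ is large enough to conclude the pattern is an L-space knot.

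I expect the main obstacle to be the bookkeeping with slopes and the precise meaning of "L-space knot" for a pattern in a solid torus versus the set $\sL_{M_P}$ on $\Q P^1$ — making sure the two boundary tori $\partial M$ and the companion torus are consistently parametrized, that the gluing map $h$ is correctly oriented, and that the degenerate/boundary-compressible cases are correctly dispatched. Once the conventions are pinned down, the argument is a short deduction from Theorem~\ref{L-space-gluing-theorem}, using only that $\sL_M$, $\sL_{M_P(\alpha)}$, and $\sL_{M_C}$ are intervals (a structural fact about L-space slopes that follows from Theorem~\ref{thm:slope_detection}) and that a nontrivial knot complement has $\sL$ a proper subinterval of $\Q P^1$.
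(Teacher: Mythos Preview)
Your approach is essentially the same as the paper's: decompose an L-space surgery on $K$ along the companion torus and apply Theorem~\ref{L-space-gluing-theorem} to force both $\sL_{M_C}^\circ$ and $\sL_{M_P(\alpha)}^\circ$ to be nonempty. The paper's execution is tighter in two respects. First, a single L-space filling $Y$ suffices; there is no need to vary $\alpha$ over an open set, since for any one $\alpha$ the union condition already forces $\sL_{M_C}^\circ\neq\emptyset$ (the other piece can never cover the rational longitude). Second, for the pattern the paper avoids the vague ``translating back through $\partial M_P$'' step: since the Seifert longitude $\lambda_C$ is never an L-space slope for $M_C$, it must lie in $h(\sL_{M_1}^\circ)$, so the slope $\alpha_P=h^{-1}(\lambda_C)$ (the meridian of the pattern solid torus) is an interior L-space slope for $M_1$. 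Filling $M_1$ along $\alpha_P$ is exactly the corresponding surgery on $P\subset S^3$, which is therefore an L-space; and since $K$ admits more than one L-space surgery, $P$ does as well. Your identification of the bookkeeping as the main obstacle is apt---your pattern paragraph conflates slopes on the two boundary components of $M_P$ and would benefit from the paper's pinpointing of $\alpha_P$.
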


This confirms a conjecture of Hom, Lidman, and Vafaee \cite[Conjecture 1.7]{HLV2014}. It has subsequently been used by Baker and Motegi \cite{BakerMotegi} to show that the pattern knot must be a braid in the solid torus (see also Hom \cite{Hom}).

\subsection*{Organization} This paper is a substantially revised version of our earlier preprint of the same name, which dealt only with loop-type manifolds. A subsequent paper \cite{HRW-companion} will discuss further properties of the invariant and give some examples. The current paper is laid out as follows. 

Section \ref{sec:tracks} summarizes the relevant background from bordered Floer homology (in the case of manifolds with torus boundary) and describes a geometric representation for these invariants in terms of immersed train tracks. The main new content in this section is a  graphical interpretation of the box tensor product---the chain complex $\CFA(M_0,\alpha_0,\beta_0)\boxtimes\CFD(M_1,\alpha_1,\beta_1)$---in terms of intersection between train tracks; see Theorem \ref{thm:pairtracks}. For train tracks that are immersed curves, this has a clear connection to Lagrangian intersection Floer theory. In general, however, the naive intersection of train tracks is not an invariant in any sense. Extracting invariants is the principal aim of the remaining sections.  

Section \ref{sec:extend} is devoted to algebraic issues concerning a well-behaved class of type D structures: we prove a structure theorem (Theorem \ref{thm:structure}) for extendable type D structures in two steps. We first reduce train tracks in this class to train tracks satisfying certain nice properties (Section \ref{subsec:simplify tracks}, particularly Proposition \ref{prop:curves-plus-crossovers}), and then show that such train tracks can be manipulated and ultimately interpreted in terms of immersed curves and local systems (Section \ref{sub:removing}, particularly Proposition \ref{prop:IncreaseDepth}). Indeed, the class of train tracks considered ultimately gives a geometric interpretation of the relevant local systems. The section concludes with Theorem \ref{thm:higher-genus}, which extends our techniques to surfaces of higher genus. 

Section \ref{sec:cat} establishes the equivalence between, and independence of choices made in the construction of, extendable type D structures and immersed curved with local systems. We note that, while only the existence of an extension is required for our purposes, it follows from the work in this section that  extendable type D structures have essentially unique extensions; see Proposition \ref{prop:unique-extension}.

Section \ref{sec:invariance} returns the focus to three-manifolds, completing the proof of Theorem \ref{thm:invariance} and Theorem \ref{thm:pairing}, while Section \ref{sec:gradings} explains the modifications to the invariant needed to recover the spin\(^c\) and \(\Z/2\Z\) gradings on \(\HFhat\), as described in Theorem~\ref{thm:gradings}. 

Section \ref{sec:applications} is devoted to the applications described above, and the paper concludes with a short Appendix containing the proof of Theorem \ref{thm:extension}.

\subsection*{Conventions and coefficients} All three-manifolds arising in this work are compact, connected, smooth, and orientable. Consistent with the set-up in bordered Floer homology \cite{LOT}, all Floer invariants in this work take coefficients in the two element field $\F$. We expect that the results we describe here should work over other fields (and perhaps over \(\Z\) as well) but setting it  up would require defining the bordered Floer homology of a manifold with torus boundary over \(\Z\). Unless explicitly stated otherwise, (singular) homology groups of manifolds should be assumed to take coefficients in $\Z$. 

\subsection*{Acknowledgements} The authors would like to thank Peter Kronheimer, Yank\i\ Lekili, Tye Lidman, Robert Lipshitz, Gabriel Paternain, Sarah Rasmussen, Ivan Smith, and Claudius Zibrowius for helpful discussions (some of them dating back a very long time). In addition, we would like to thank the referee for many thoughtful comments and suggestions, which led to improvements at a number of key points. Part of this work was carried out while the third author was visiting Montr\'eal as CIRGET research fellow, and part was carried out while the second and third authors were participants in the program {\em Homology Theories in Low Dimensions} at the Isaac Newton Institute.

\section{Bordered invariants as train tracks}\label{sec:tracks}

This section gives a geometric interpretation of bordered invariants in terms of immersed train tracks for manifolds with torus boundary. We begin with a brief review of the relevant notions from bordered Floer homology \cite{LOT}. A less terse introduction, with essentially the same notation used here, is given in \cite{HW}.

\subsection{Modules over the torus algebra}\label{sub:modules} Let $M$ be an orientable three-manifold with torus boundary, and choose oriented essential simple closed curves $\alpha,\beta$ in $\partial M$ with $\beta\cdot\alpha=1$. The ordered triple $(M,\alpha,\beta)$ is called a bordered three-manifold; the pair $(\alpha,\beta)$ may be regarded as a parametrization of the torus boundary, in the sense that $\langle\alpha,\beta\rangle$ specifies the peripheral subgroup $\pi_1(\partial M)$.  

\piccaption[]{A quiver for the torus algebra, with relations $\rho_2\rho_1=\rho_3\rho_2=0$. \label{fig:torus-algebra}}
\parpic[r]{
 \begin{minipage}{45mm}
 \centering
\begin{tikzpicture}[->,>=stealth',auto,node distance=3cm,thick,main node/.style={circle,draw}]
\node[main node] (1) {$\iota_0$};
  \node[main node] (2) [right of=1] {$\iota_1$};
  \path
  (1) edge[bend left=45] node [above] {\normalsize$\rho_1$} (2)
  (2) edge node [above] {\normalsize$\rho_2$} (1)
   (1) edge[bend right=45] node [above] {\normalsize$\rho_3$} (2);
\end{tikzpicture}
  \end{minipage}%
}
We focus on two (equivalent) bordered invariants assigned to a bordered manifold $(M,\alpha,\beta)$. These will take the form of certain modules over the torus algebra $\sA$. Various descriptions of this algebra are given by Lipshitz, Ozsv\'ath and Thurston \cite{LOT}, but for our purposes, recall that $\sA$ is generated, as an algebra over the two-element field $\F$, by elements $\rho_1$, $\rho_2$ and $\rho_3$ and idempotents $\iota_0$ and $\iota_1$. Multiplication in $\sA$ is described by the quiver depicted in Figure \ref{fig:torus-algebra} together with the relations $\rho_2\rho_1=\rho_3\rho_2=0$. The shorthand $\rho_{12}=\rho_1\rho_2$, $\rho_{23}=\rho_2\rho_3$, $\rho_{123} =\rho_1\rho_2\rho_3$ is standard. Let $\sI$ denote the subring of idempotents, and write $\boldsymbol 1=\iota_0+\iota_1$ for the unit in $\sA$. 

The relevant bordered invariants are  $\CFA(M,\alpha,\beta)$ and $\CFD(M,\alpha,\beta)$; both are invariants of the underlying bordered manifold up to homotopy \cite{LOT}. The former is a type A structure, that is, a right $A_\infty$-module over $\sA$. The latter is a type D structure, which consists of (1) a vector space $V$ (over $\F$) together with a splitting as a direct sum over a left action of the idempotents $V\cong \iota_0V\oplus\iota_1V$; and (2) a map $\delta^1\co V\to\sA\otimes_\sI V$ satisfying a compatibility condition ensuring that $\partial(a\otimes x) = a\cdot\delta(x)$ is a differential on $\sA\otimes_\sI V$ (with left $\sA$-action specified by $a\cdot(b\otimes x)=ab\otimes x$). In particular, $\sA\otimes_\sI V$ has the structure of a left differential module over $\sA$. We will sometimes confuse the type D structure $\CFD(M,\alpha,\beta)$ (consisting of $V$ and $\delta^1$) and this associated differential module. Given any type D structure there is a collection of recursively defined maps $\delta^k\co V\to \sA^{\otimes k}\otimes V$ where $\delta^0\co V\to V$ is the identity and $\delta^k=(\operatorname{id}_{\sA^{\otimes k-1}}\otimes\delta^1) \circ\delta^{k-1}$. The type D structure $\CFD(M,\alpha,\beta)$ is bounded if $\delta^k$ vanishes for all sufficiently large integers $k$. 

Given a bordered manifold $(M,\alpha,\beta)$ the associated type A and type D structures are related. In particular, according to \cite[Corollary 1.1]{LOT-bimodules}, $\CFA(M,\alpha,\beta)$ is dual (in an appropriate sense) to $\CFD(M,\alpha,\beta)$. However, the utility of the two structures, taken together, is a computable model for the  $A_\infty$ tensor product: Given bordered manifolds $(M_0,\alpha_0,\beta_0)$ and $(M_1,\alpha_1,\beta_1)$ consider the chain complex $\CFA(M_0,\alpha_0,\beta_0)\boxtimes\CFD(M_1,\alpha_1,\beta_1)$ obtained from $\CFA(M_0,\alpha_0,\beta_0)\otimes_\sI \CFD(M_1,\alpha_1,\beta_1)$ with differential defined by \[\partial^\boxtimes(x\otimes y) = \sum_{k=0}^\infty (m_{k+1}\otimes\operatorname{id})(x\otimes\delta^k(y))\]  and the requirement that $\CFD(M_1,\alpha_1,\beta_1)$ is bounded (this ensures that the above sum is finite). Then the pairing theorem of Lipshitz, Ozsv\'ath and Thurston asserts that the homology of $\CFA(M_0,\alpha_0,\beta_0)\boxtimes\CFD(M_1,\alpha_1,\beta_1)$ is isomorphic to the Heegaard Floer homology $\HFhat(M_0\cup_h M_1)$ of the closed manifold obtained from the homeomorphism $h\co \partial M_1\to \partial M_0$ specified by $h(\alpha_1)=\beta_0$ and $h(\beta_1)=\alpha_0$ \cite[Theorem 1.3]{LOT}. 

\subsection{Conventions for bordered manifolds}\label{sec:conventions}
In the framework given by Lipshitz, Ozsv\'ath and Thurston \cite{LOT}, the boundary parametrization of $M$ is recorded by specifying a diffeomorphism $\phi\co F \to \partial M$ where $F$ is a torus with a fixed handle decomposition. The image of the cores of the one-handles in $F$ correspond to the pair of $\alpha$-arcs $(\alpha_1^a,\alpha_2^a)$ in a bordered Heegaard diagram; completing this pair to curves in $\partial M$ gives a pair corresponding to our parametrizing curves $\alpha$ and $\beta$. To relate the two notations, we must specify which curve corresponds to each $\alpha$-arc in a bordered Heegaard diagram. We orient the arcs $\alpha_1^a$ and $\alpha_2^a$ so that, starting at the basepoint and following the boundary of the Heegaard surface, we pass the initial point of $\alpha_1^a$, the initial point of $\alpha_2^a$, the final point of $\alpha_1^a$, and the final point of $\alpha_2^a$. With this labelling, $\alpha$ in our notation corresponds to $-\alpha_1^a$ and $\beta$ corresponds to $\alpha_2^a$. Note that when gluing two bordered manifolds together by an orientation reversing diffeomorphism taking basepoint to basepoint, $\alpha_1^a$ must glue to $-\alpha_2^a$ and $\alpha_2^a$ must glue to $-\alpha_1^a$; thus in our notation, $\alpha$ glues to $\beta$ and $\beta$ glues to $\alpha$. Finally, the arcs defined above in a Heegaard diagram seem to satisfy $(-\alpha_1) \cdot \alpha_2 = 1$, which would imply that $\alpha\cdot\beta = 1$. However, the orientation of the Heegaard surface agrees with the opposite of the orientation on $\partial M$. Said another way, the surface in a bordered Heegaard diagram for $M$ should be interpreted as being viewed from \emph{inside} $M$, but we will generally look at $\partial M$ from \emph{outside} $M$ so that $\beta\cdot\alpha = 1$.

\subsection{Decorated graphs} 
\label{subsec:graphs and loops}
There is a convenient graph-theoretic shorthand for describing these bordered invariants  (see, for instance, \cite{HW, HL2014}). Let $\Gamma$ be a directed graph with vertex set $\sV_\Gamma$ and  edge set $\sE_\Gamma$. Suppose that every $v\in \sV_\Gamma$ is labelled with exactly one element from $\{\bu, \ci\}$ and every $e\in\sE_\Gamma$ is labelled with exactly one element from $\{\varnothing,1,2,3,12,23,123\}$. Moreover, suppose the edge labels are compatible with the vertex labels so that the only arrangements that occur are 
\raisebox{-4pt}{$\begin{tikzpicture}[thick, >=stealth',shorten <=0.1cm,shorten >=0.1cm] 
\draw[->] (0,0) -- (1,0);\node [above] at (0.45,-0.07) {$\scriptstyle{\varnothing}$};
\node at (0,0) {$\bu$};\node at (1,0) {$\bu$};\end{tikzpicture}$},
\raisebox{-4pt}{$\begin{tikzpicture}[thick, >=stealth',shorten <=0.1cm,shorten >=0.1cm] 
\draw[->] (0,0) -- (1,0);\node [above] at (0.45,-0.07) {$\scriptstyle{\varnothing}$};
\node at (0,0) {$\circ$};\node at (1,0) {$\circ$};\end{tikzpicture}$},
\raisebox{-4pt}{$\begin{tikzpicture}[thick, >=stealth',shorten <=0.1cm,shorten >=0.1cm] 
\draw[->] (0,0) -- (1,0);\node [above] at (0.45,-0.07) {$\scriptstyle{1}$};
\node at (0,0) {$\bu$};\node at (1,0) {$\circ$};\end{tikzpicture}$},
\raisebox{-4pt}{$\begin{tikzpicture}[thick, >=stealth',shorten <=0.1cm,shorten >=0.1cm] 
\draw[->] (0,0) -- (1,0);\node [above] at (0.45,-0.07) {$\scriptstyle{2}$};
\node at (0,0) {$\circ$};\node at (1,0) {$\bu$};\end{tikzpicture}$}, 
\raisebox{-4pt}{$\begin{tikzpicture}[thick, >=stealth',shorten <=0.1cm,shorten >=0.1cm] 
\draw[->] (0,0) -- (1,0);\node [above] at (0.45,-0.07) {$\scriptstyle{3}$};
\node at (0,0) {$\bu$};\node at (1,0) {$\circ$};\end{tikzpicture}$}, 
\raisebox{-4pt}{$\begin{tikzpicture}[thick, >=stealth',shorten <=0.1cm,shorten >=0.1cm] 
\draw[->] (0,0) -- (1,0);\node [above] at (0.45,-0.07) {$\scriptstyle{12}$};
\node at (0,0) {$\bu$};\node at (1,0) {$\bu$};\end{tikzpicture}$},
\raisebox{-4pt}{$\begin{tikzpicture}[thick, >=stealth',shorten <=0.1cm,shorten >=0.1cm] 
\draw[->] (0,0) -- (1,0);\node [above] at (0.45,-0.07) {$\scriptstyle{23}$};
\node at (0,0) {$\circ$};\node at (1,0) {$\circ$};\end{tikzpicture}$}, and
\raisebox{-4pt}{$\begin{tikzpicture}[thick, >=stealth',shorten <=0.1cm,shorten >=0.1cm] 
\draw[->] (0,0) -- (1,0);\node [above] at (0.45,-0.07) {$\scriptstyle{123}$};
\node at (0,0) {$\bu$};\node at (1,0) {$\circ$};\end{tikzpicture}$}.
Finally, we will require that for any pair of vertices $v_1$ and $v_2$ in a decorated graph and any element $I \in \{\varnothing,1,2,3,12,23,123\}$, there is an even number of length two directed paths from $v_1$ to $v_2$ for which the concatenation of the two edge labels is $I$. We will call a graph of this form an \emph{$\Alg$-decorated graph}.

\piccaption[]{A decorated graph. \label{fig:decorated-type-D}}
\parpic[r]{
 \begin{minipage}{45mm}
 \centering
 \begin{tikzpicture}[scale=0.75,>=stealth', thick] 
\def \radius {2cm} \def \outer {2.25cm}
 \node at ({360/(7) * (1- 1)}:\radius) {$\bu$};
  \node at ({360/(7) * (2- 1)}:\radius) {$\ci$};
  \node at ({360/(7) * (3 -1)}:\radius) {$\bu$};
   \node at ({360/(7) * (4 -1)}:\radius) {$\ci$};
    \node at ({360/(7) * (5 -1)}:\radius) {$\bu$};
     \node at ({360/(7) * (6 -1)}:\radius) {$\ci$};
      \node at ({360/(7) * (7 -1)}:\radius) {$\ci$};
  \draw[->,shorten <= 0.125cm, shorten >= 0.125cm]({360/7 * (1 - 1)}:\radius) arc ({360/7 * (1- 1)}:{360/7 * (2-1)}:\radius);
 \draw[<-,shorten <= 0.125cm, shorten >= 0.125cm]({360/7 * (2 - 1)}:\radius) arc ({360/7 * (2- 1)}:{360/7 * (3-1)}:\radius);
 \draw[->,shorten <= 0.125cm, shorten >= 0.125cm]({360/7 * (3 - 1)}:\radius) arc ({360/7 * (3- 1)}:{360/7 * (4-1)}:\radius);
 \draw[->,shorten <= 0.125cm, shorten >= 0.125cm]({360/7 * (4 - 1)}:\radius) arc ({360/7 * (4- 1)}:{360/7 * (5-1)}:\radius);
 \draw[->,shorten <= 0.125cm, shorten >= 0.125cm]({360/7 * (5 - 1)}:\radius) arc ({360/7 * (5- 1)}:{360/7 * (6-1)}:\radius);
 \draw[<-,shorten <= 0.125cm, shorten >= 0.125cm]({360/7 * (6 - 1)}:\radius) arc ({360/7 * (6- 1)}:{360/7 * (7-1)}:\radius);
 \draw[<-,shorten <= 0.125cm, shorten >= 0.125cm]({360/7 * (7 - 1)}:\radius) arc ({360/7 * (7- 1)}:{360/7 * (8-1)}:\radius);
  \node at ({360/(7) * (1- (1/2))}:2.45cm) {$\scriptstyle{123}$};
  \node at ({360/(7) * (2-(1/2))}:\outer) {$\scriptstyle{1}$};
  \node at ({360/(7) * (3-(1/2))}:\outer) {$\scriptstyle{3}$};
  \node at ({360/(7) * (4-(1/2))}:\outer) {$\scriptstyle{2}$};
  \node at ({360/(7) * (5-(1/2))}:\outer) {$\scriptstyle{1}$};
  \node at ({360/(7) * (6-(1/2))}:\outer) {$\scriptstyle{23}$};
  \node at ({360/(7) * (7-(1/2))}:\outer) {$\scriptstyle{3}$};
\end{tikzpicture}
  \end{minipage}%
  }
  An $\Alg$-decorated graph $\Gamma$ describes a type D structure over $\Alg$ as follows. The underlying vector space is generated by $\sV_\Gamma$, with the idempotent splitting specified by $\bu$ labels identifying the $\iota_0$ generators and $\ci$ labels identifying the $\iota_1$ generators. The map $\delta^1$ is determined by the edge labels: Given $e\in\sE_\Gamma$ with label $I\in \{\varnothing,1,2,3,12,23,123\}$ consider the source $x$ and target $y$ in $\sV_\Gamma$. Then $\rho_I\otimes y$ is a summand of $\delta^1(x)$, with the interpretation that $\rho_\varnothing = \boldsymbol 1$. A decorated graph (and its associated type D structure) is \emph{reduced} if none of the edges is labelled by $\varnothing$. An example is shown in Figure \ref{fig:decorated-type-D}, describing the bordered invariant $\CFD(M,\mu,\lambda)$ when $M$ is the complement of the right-hand trefoil, $\mu$ is the knot meridian and $\lambda$ is the Seifert longitude (see \cite{LOT}). Notice that the higher $\delta^k$ are determined by directed paths in $\Gamma$; for the example shown, there exist generators $x$ and $y$ such that $\delta^3(x)=\rho_3\otimes\rho_2\otimes\rho_1\otimes y$ corresponding to the unique directed path of length 3. Clearly, the associated type D structure is bounded if and only if the decorated graph contains no directed cycles. Note that the assumption that length two paths cancel mod 2 implies that $\partial^2 = 0$ in the corresponding differential module. 
  
\piccaption[]{  Relabelling by $1\leftrightarrow3$, $2\leftrightarrow2$, $3\leftrightarrow1$ to extract a type A structure from a decorated graph.\label{fig:decorated-type-A}}
\parpic[r]{
 \begin{minipage}{45mm}
 \centering
 {\begin{tikzpicture}[scale=0.75,>=stealth', thick] 
\def \radius {2cm} \def \outer {2.25cm}
 \node at ({360/(7) * (1- 1)}:\radius) {$\bu$};
  \node at ({360/(7) * (2- 1)}:\radius) {$\ci$};
  \node at ({360/(7) * (3 -1)}:\radius) {$\bu$};
   \node at ({360/(7) * (4 -1)}:\radius) {$\ci$};
    \node at ({360/(7) * (5 -1)}:\radius) {$\bu$};
     \node at ({360/(7) * (6 -1)}:\radius) {$\ci$};
      \node at ({360/(7) * (7 -1)}:\radius) {$\ci$};
  \draw[->,shorten <= 0.125cm, shorten >= 0.125cm]({360/7 * (1 - 1)}:\radius) arc ({360/7 * (1- 1)}:{360/7 * (2-1)}:\radius);
 \draw[<-,shorten <= 0.125cm, shorten >= 0.125cm]({360/7 * (2 - 1)}:\radius) arc ({360/7 * (2- 1)}:{360/7 * (3-1)}:\radius);
 \draw[->,shorten <= 0.125cm, shorten >= 0.125cm]({360/7 * (3 - 1)}:\radius) arc ({360/7 * (3- 1)}:{360/7 * (4-1)}:\radius);
 \draw[->,shorten <= 0.125cm, shorten >= 0.125cm]({360/7 * (4 - 1)}:\radius) arc ({360/7 * (4- 1)}:{360/7 * (5-1)}:\radius);
 \draw[->,shorten <= 0.125cm, shorten >= 0.125cm]({360/7 * (5 - 1)}:\radius) arc ({360/7 * (5- 1)}:{360/7 * (6-1)}:\radius);
 \draw[<-,shorten <= 0.125cm, shorten >= 0.125cm]({360/7 * (6 - 1)}:\radius) arc ({360/7 * (6- 1)}:{360/7 * (7-1)}:\radius);
 \draw[<-,shorten <= 0.125cm, shorten >= 0.125cm]({360/7 * (7 - 1)}:\radius) arc ({360/7 * (7- 1)}:{360/7 * (8-1)}:\radius);
  \node at ({360/(7) * (1- (1/2))}:2.45cm) {$\scriptstyle{321}$};
  \node at ({360/(7) * (2-(1/2))}:\outer) {$\scriptstyle{3}$};
  \node at ({360/(7) * (3-(1/2))}:\outer) {$\scriptstyle{1}$};
  \node at ({360/(7) * (4-(1/2))}:\outer) {$\scriptstyle{2}$};
  \node at ({360/(7) * (5-(1/2))}:\outer) {$\scriptstyle{3}$};
  \node at ({360/(7) * (6-(1/2))}:\outer) {$\scriptstyle{21}$};
  \node at ({360/(7) * (7-(1/2))}:\outer) {$\scriptstyle{1}$};
\end{tikzpicture} }
  \end{minipage}%
  }
In the reduced case, the duality between type D and type A structures is encoded in the $\Alg$-decorated graphs following an algorithm described by Hedden and Levine \cite{HL2014}. This takes the same interpretation for the underlying vector space (that is, the idempotent splitting according to vertex labels) but requires a different interpretation of the edge labels. First, one re-writes/re-interprets the edge labels according to the bijection $1\leftrightarrow3$ , $2\leftrightarrow2$, $3\leftrightarrow1$. Next, given any length $n$ directed path in $\Gamma$ with source vertex $x$ and target vertex $y$ we construct a sequence $I={I_1,\ldots,I_k}$ and assign a multiplication $m_{k+1}(x\otimes\rho_{I_1}\otimes\cdots\otimes\rho_{I_k})=y$. The sequence $I$ is constructed by forming a word in $\{1,2,3\}$ determined according to the labels of the directed edge read in order, and then regrouping to find the minimum $k$ such that each $I_j$ (for $1\le j \le k$) is an element of $\{1,2,3,12,23,123\}$. Thus, in our example, to the length 3 directed path we assign the label sequence  $I=\{123\}$ so that $m_2(x,\rho_{123})=y$ while the edge label $321$ (formerly $123$ in the original decorated graph) gives rise to a sequence $I=\{3,2,1\}$ and the product $m_4(x',\rho_3,\rho_2,\rho_1)=y'$.

\subsection{Train tracks}

Given a reduced $\Alg$-decorated graph \(\Gamma\) as in the previous section, we can immerse \(\Gamma\) in the torus, as described below. 
First, we fix a specific model for the punctured torus. 

\begin{definition}
The {\em marked torus} \(T\) is the oriented surface \(T=\R^2/\Z^2\) punctured at \(z=(1-\epsilon,1 - \epsilon)\). 
The images of the \(y\) and \(x\)-axes in \(T\) will be referred to as \(\alpha\) and \(\beta\) respectively. 
\end{definition}

We embed the vertices of \(\Gamma\) into \(T\) so that the vertices corresponding to $V_0=\iota_0V$ generators (the $\bu$ vertices) are distinct points on \(\alpha\) and the vertices corresponding to $V_1=\iota_1V$ generators (the $\circ$ vertices) are distinct points on \(\beta\). We then embed each edge in \(T\) according to its label, as shown in Figure \ref{fig:algebra-edges}. While each edge is embedded, different edges may intersect so the result is an immersion of $\Gamma$. We will denote the image of this immersion by $\tracks_\Gamma$. Note that the order in which vertices are embedded along $\alpha$ and $\beta$ is arbitrary, and the arcs in Figure \ref{fig:algebra-edges} may be replaced by any homotopic path, so $\tracks_\Gamma$ is defined up to regular homotopy. A concrete example is shown in Figure \ref{fig:naive-track-example}.

\begin{figure}[ht]
\labellist
\tiny
\pinlabel {$z$} at 63 63 
\pinlabel {$z$} at 176 63 
\pinlabel {$z$} at 291 63 
\pinlabel {$z$} at 403 63 
\pinlabel {$z$} at 516 63 
\pinlabel {$z$} at 628 63 
\small
\pinlabel {$1$} at 33 35
\pinlabel {$2$} at 161 35
\pinlabel {$3$} at 266 35
\pinlabel {$12$} at 369 29
\pinlabel {$23$} at 474 35
\pinlabel {$123$} at 593 35
\endlabellist
\includegraphics[scale=0.5]{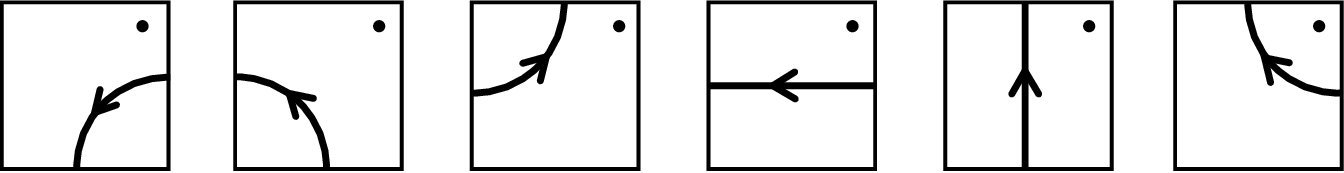}
\caption{Assigning edges in a reduced decorated graph $\Gamma$ to directed edges in the marked torus $T$. Notice that, by labelling corners of the (bordered) marked torus clockwise from the base point, the labels and orientations on edges may be suppressed without ambiguity. }\label{fig:algebra-edges}.
\end{figure}

We require that (1) all intersections between edges are transverse and away from the parametrizing curves $\alpha$ and $\beta$ and 
(2) near a vertex, all edges are orthogonal to the curve (\(\alpha\) or \(\beta\)) that the vertex lies on. This gives  $\tracks_\Gamma$ the structure of an immersed train track (following Thurston's notion of a train track in a surface, see for example \cite{Mosher2003}). An immersed train track is an immersed graph for which all edges incident at a vertex share a common tangent line. Following the usual terminology (inspired by railroad junctions), the vertices of an immersed train track are referred to as {\em switches}. An important point is that, just as $\Gamma$ need not be connected as a graph, the immersed train track $\tracks_\Gamma$, constituting equivalent data, may consist of a union of a collection of immersed train tracks.

A train track $\tracks_\Gamma$ constructed as above has a very particular form; for instance, it has no switches that are not on $\alpha$ or $\beta$. We will later have need to work with a more general class of immersed train tracks in $T$ (see Definition \ref{def:toroidal-train-track}), and we will call $\tracks_\Gamma$ as constructed here a \emph{naive train track} representing $\Gamma$. We remark that the edge orientations may be dropped from $\tracks_\Gamma$ without losing information, since they are determined by the rule that the basepoint lies on the right side of each arc, and thus $\tracks_\Gamma$ may be viewed as an unoriented train track. However, orientations will be meaningful for more general train tracks and we find it convenient to record them even in this case.

%
%

\begin{figure}[ht]
 \begin{tikzpicture}[scale=0.75,>=stealth', thick] 
\def \radius {3.5cm} \def \outer {3.9cm}
\node (d) at ({360/(11) * (1- 1)}:\radius) {$\bu$};
\node (j) at ({360/(11) * (2- 1)}:\radius) {$\ci$};
\node (a) at ({360/(11) * (3 -1)}:\radius) {$\bu$};
\node (k) at ({360/(11) * (4 -1)}:\radius) {$\ci$};
\node (f) at ({360/(11) * (5 -1)}:\radius) {$\bu$};
\node (g) at ({360/(11) * (6 -1)}:\radius) {$\ci$};
\node (h) at ({360/(11) * (7 -1)}:\radius) {$\ci$};
\node (c) at ({360/(11) * (8 -1)}:\radius) {$\bu$};
\node (e) at ({360/(11) * (9 -1)}:\radius) {$\bu$};
\node (i) at ({360/(11) * (10 -1)}:\radius) {$\ci$};
\node (b) at ({360/(11) * (11 -1)}:\radius) {$\bu$};

\draw[->, bend right = 15] (d) to (j);
\draw[->, bend left = 15] (a) to (j);
\draw[->, bend right = 15] (a) to (k);
\draw[->, bend right = 15] (k) to (f);
\draw[->, bend right = 15] (f) to (g);
\draw[->, bend left = 15] (h) to (g);
\draw[->, bend left = 15] (c) to (h);
\draw[->, bend right = 15] (c) to (e);
\draw[->, bend right = 15] (e) to (i);
\draw[->, bend left = 15] (b) to (i);
\draw[->, bend right = 15] (b) to (d);

\draw[->, bend right = 15] (d) to node[right]{$\scriptstyle{1}$} (i);
\draw[->, bend right = 30] (d) to node[right]{$\scriptstyle{12}$} (e);
\draw[->, bend right = 10] (d) to node[above]{$\scriptstyle{1}$} (h);
\draw[->, bend right = 10] (j) to node[above]{$\scriptstyle{23}$} (g);

  \node at ({360/(11) * (1- (1/2))}:\outer) {$\scriptstyle{1}$};
  \node at ({360/(11) * (2-(1/2))}:\outer) {$\scriptstyle{3}$};
  \node at ({360/(11) * (3-(1/2))}:\outer) {$\scriptstyle{123}$};
  \node at ({360/(11) * (4-(1/2))}:\outer) {$\scriptstyle{2}$};
  \node at ({360/(11) * (5-(1/2))}:\outer) {$\scriptstyle{1}$};
  \node at ({360/(11) * (6-(1/2))}:\outer) {$\scriptstyle{23}$};
  \node at ({360/(11) * (7-(1/2))}:\outer) {$\scriptstyle{3}$};
  \node at ({360/(11) * (8-(1/2))}:\outer) {$\scriptstyle{12}$};
  \node at ({360/(11) * (9-(1/2))}:\outer) {$\scriptstyle{1}$};
  \node at ({360/(11) * (10-(1/2))}:\outer) {$\scriptstyle{3}$};
  \node at ({360/(11) * (11-(1/2))}:\outer) {$\scriptstyle{12}$};

  \node at ({360/(11) * (0)}:\outer) {$\footnotesize{d}$};
  \node at ({360/(11) * (1)}:\outer) {$\footnotesize{j}$};
  \node at ({360/(11) * (2)}:\outer) {$\footnotesize{a}$};
  \node at ({360/(11) * (3)}:\outer) {$\footnotesize{k}$};
  \node at ({360/(11) * (4)}:\outer) {$\footnotesize{f}$};
  \node at ({360/(11) * (5)}:\outer) {$\footnotesize{g}$};
  \node at ({360/(11) * (6)}:\outer) {$\footnotesize{h}$};
  \node at ({360/(11) * (7)}:\outer) {$\footnotesize{c}$};
  \node at ({360/(11) * (8)}:\outer) {$\footnotesize{e}$};
  \node at ({360/(11) * (9)}:\outer) {$\footnotesize{i}$};
  \node at ({360/(11) * (10)}:\outer) {$\footnotesize{b}$};

\end{tikzpicture} \hspace{2 cm}
\labellist
\pinlabel {$a$} at 50 228
\pinlabel {$b$} at 50 192
\pinlabel {$c$} at 50 153
\pinlabel {$d$} at 50 116
\pinlabel {$e$} at 50 76
\pinlabel {$f$} at 50 37

\pinlabel {$a$} at 333 228
\pinlabel {$b$} at 333 192
\pinlabel {$c$} at 333 153
\pinlabel {$d$} at 333 116
\pinlabel {$e$} at 333 76
\pinlabel {$f$} at 333 37

\pinlabel {$g$} at 286 -10
\pinlabel {$h$} at 237 -10
\pinlabel {$i$} at 190 -10
\pinlabel {$j$} at 142 -10
\pinlabel {$k$} at 94 -10

\pinlabel {$g$} at 286 277
\pinlabel {$h$} at 237 277
\pinlabel {$i$} at 190 277
\pinlabel {$j$} at 142 277
\pinlabel {$k$} at 94 277

\tiny \pinlabel {$z$} at 300 245
\endlabellist
\raisebox{1cm}{\includegraphics[scale=0.5]{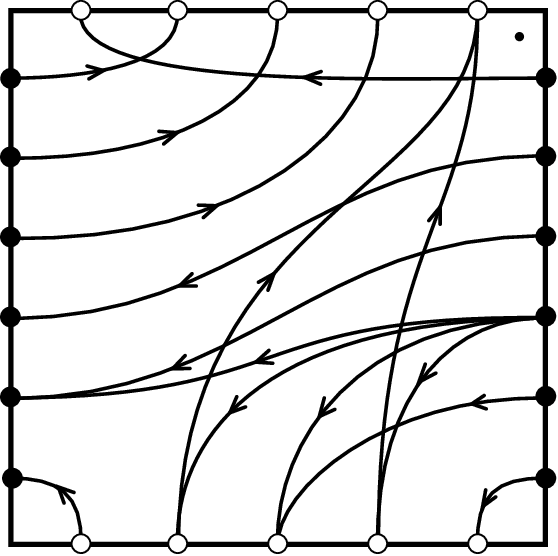}}
\caption{The train track $\tracks_\Gamma$ (right) associated with the $\Alg$-decorated graph $\Gamma$ shown on the left. All train track switches occur at the vertices on the $\alpha$ and $\beta$ curves; intersection points between edges of the graph are not (new) vertices of the graph. Note that the type D structure described is not known to come from a three-manifold.}\label{fig:naive-track-example}
\end{figure}


There is an interesting special case in which $\tracks_\Gamma$ (ignoring edge orientations) is simply a collection of immersed curves in $T$. This corresponds to the case when $\Gamma$ is a valence 2 graph and satisfies an extendability condition ensuring that the two arcs out of the switch of $\tracks_\Gamma$ associated with a given vertex leave the switch in opposite directions. The extendibility condition always holds for $\CFD(M,\alpha,\beta)$ (see Theorem \ref{thm:extension}); in the case where $\CFD(M,\alpha,\beta)$ admits a representative that may be described by a valence 2 decorated graph, $M$ is called loop-type; compare \cite{HW}. As an example, the trefoil  exterior is loop-type; see Figure \ref{fig:decorated-type-D}. 
It is a surprising fact that a great many classes of manifolds with torus boundary are loop-type, including any manifold admitting multiple L-space fillings \cite{HRRW,RR}; this class of manifolds is considered in detail in a companion paper \cite{HRW-companion}.  Indeed, we have no explicit example of a manifold that is not loop-type.

\subsection{Pairing train tracks}\label{sec:pairing-train-tracks} Just as an $\Alg$-decorated graph encodes both a type D structure and a type A structure over $\Alg$, we now establish our conventions for train tracks with respect to this duality. Let $\tracks$ be a train track that is a naive train track representing some reduced $\Alg$-decorated graph. Fix a standard marked torus $T$ and divide it into four quadrants. The \emph{type A realization} $A(\tracks)$ is obtained by including $\tracks$ (cut along $\alpha$ and $\beta$) into the first quadrant and adding unoriented horizontal edges in the second quadrant and unoriented vertical edges in the fourth quadrant connecting the two resulting copies of each switch; see Figure \ref{fig:typeDtypeA}. Similarly, the \emph{type D realization} $D(\tracks)$ is obtained by reflecting $\tracks$ across the anti-diagonal $y=-x$, including into the third quadrant, and adding unoriented vertical (resp. horizontal) edges in the second (resp. fourth) quadrant. In other words, $D(\tracks)$ is the reflection of $A(\tracks)$ across $y = -x$. We will refer to the edges of $A(\tracks)$ or $D(\tracks)$ in the first or third quadrants as \emph{corners}, since these are where paths carried by the train tracks can change direction, and the edges in the second and fourth quadrant are \emph{horizontal edges} or \emph{vertical edges}.

 \begin{figure}
\labellist
\tiny \pinlabel {$z$} at 111 241
\pinlabel {$z$} at 412 241
\pinlabel {$z$} at 717 241
\pinlabel {$z$} at 1025 241
\endlabellist
\includegraphics[scale=0.4]{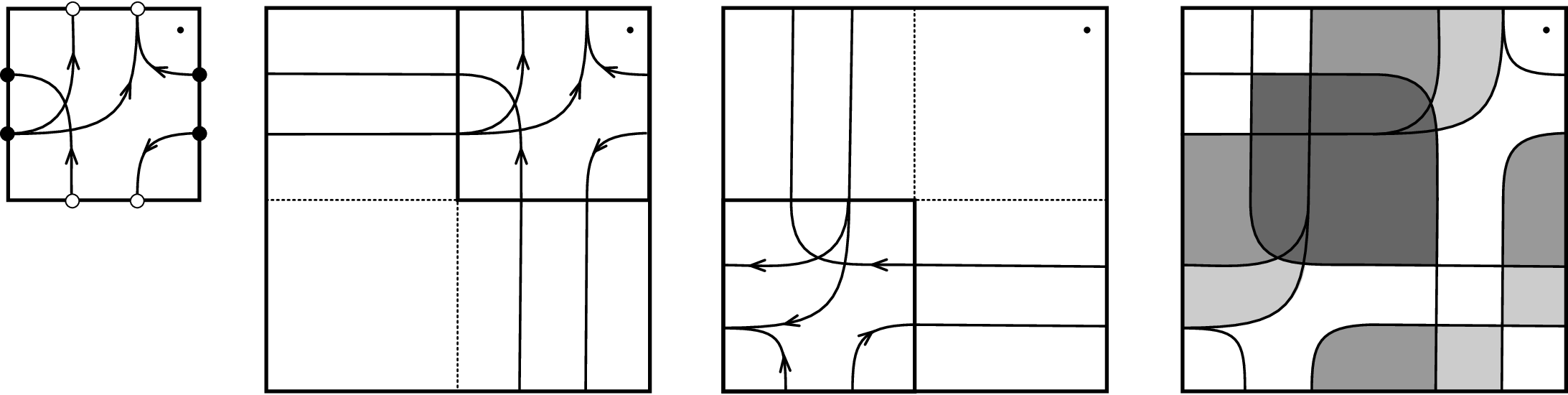}
\caption{From left to right: a sample train track $\tracks$, its type A realization $A(\tracks)$, its type D realization $D(\tracks)$, and the pairing complex $\sC(\tracks,\tracks)$ between the two. The 5 bigons contributing to the map $d^\tracks$ in this case have been shaded.}\label{fig:typeDtypeA}
\end{figure}

Now consider a pair of naive train tracks $\tracks_0$ and $\tracks_1$. 
We define $\sC(\tracks_0,\tracks_1)$ to be the vector space over \(\F\) generated  by the intersection points between \(A(\tracks_0)\) and \(D(\tracks_1)\). Note that these intersection points are confined to the second and fourth quadrants in $T$, by construction. We will define a linear map $d^\tracks\co\sC(\tracks_0,\tracks_1)\to\sC(\tracks_0,\tracks_1)$ by counting bigons.

If $x$ and $y$ are two intersection points between $A(\tracks_0)$ and $D(\tracks_1)$, we consider the set of continuous maps $u\co D^2 \to T$  satisfying the following conditions (compare with \cite[Definition 2.8]{Abouzaid2008}):

\begin{itemize}
\item $u$ is an orientation-preserving immersion away from $-i$ and $i$;
\item $u(-i) = x$ and $u(i) = y$ (viewing $D^2$ as the unit disk in $\C$);
\item $u$ maps the positive real part of $\partial D^2$ (oriented from $-i$ to $i$) to an oriented path in $A(\tracks_0)$;
\item $u$ maps the negative real part of $\partial D^2$ (oriented from $-i$ to $i$) to an oriented path in $D(\tracks_1)$;
\item both \(x\) and \(y\) are convex corners of the disk \(u\); and 
\item the basepoint $z \in T$ is not in the image of $u$.
\end{itemize}
We say two such maps \(u, u'\) are equivalent if there is an orientation preserving diffeomorphism $\phi:D^2 \to D^2$ with $u' = u\circ\phi$ and $\phi(\pm i) = \pm i$. Assuming the set of equivalence classes of such maps is finite, let   $n(x,y)$ denote the number of equivalence classes modulo 2. (We will see below that by making a small modification of \(\tracks_1\), we can arrange for the set of equivalence classes to be finite for all intersection points \(x,y\).) We then define

$$d^\tracks(x) = \sum_{y \in A(\tracks_0)\cap D(\tracks_1)} n(x,y) \, y.$$

We remark that an oriented path in $A(\tracks_0)$ or $D(\tracks_1$) may traverse the horizontal and vertical edges in either direction, since we view these as unoriented edges. In fact, just as we may drop the edge orientations on naive train tracks, we can also drop the requirement that paths from $-i$ to $i$ in $\partial D^2$ map to oriented paths in $A(\tracks_0)$ and $D(\tracks_1)$ in the present setting that $\tracks_0$ and $\tracks_1$ are naive train tracks. Indeed, the fact that every edge in $\tracks_0$ or $\tracks_1$ has the basepoint on its right implies that for any immersed disk not covering $z$ the paths determined by the boundary have the correct orientations. However, we state this requirement as it is relevant for generalizing this pairing to more general train tracks.

Under mild hypotheses, the vector space $\sC(\tracks_0,\tracks_1)$ with linear map $d^{\tracks}$ is, in fact, a chain complex: it may be identified with the box tensor product 
of the associated bordered invariants. To explain this, we need to introduce some variants of our train tracks (this will coincide with a notion of admissibility when we return 
our focus to three-manifold invariants later).  Recall that, to this point, we have assumed our train tracks come from reduced $\Alg$-decorated graphs (i.e graphs where no edges are labelled with $\varnothing$). Since we require a bounded type D structure for the box tensor product to make sense, we will need to relax the reduced assumption slightly.

\begin{figure}
\labellist
\pinlabel {\begin{tikzpicture}[thick, >=stealth',shorten <=0.1cm,shorten >=0.1cm] 
\draw[->] (0,1) -- (1.25,0.5);\node [above] at (0.65,0.70) {$\scriptstyle{1}$};
\draw[<-] (1.25,0.5) -- (0,0);\node [above] at (0.55,0.15) {$\varnothing$};
\draw[->] (0,0) -- (1.25,-0.5);\node [above] at (0.65,-0.70) {$\scriptstyle{2}$};
\node at (0,1) {$\bu$};\node at (1.25,0.5) {$\circ$};\node at  (0,0) {$\circ$};\node at (1.25,-0.5) {$\bu$};\end{tikzpicture}} at 65 -55
\pinlabel {\begin{tikzpicture}[thick, >=stealth',shorten <=0.1cm,shorten >=0.1cm] 
\draw[->] (0,1) -- (1.25,0.5);\node [above] at (0.65,0.70) {$\scriptstyle{2}$};
\draw[<-] (1.25,0.5) -- (0,0);\node [above] at (0.55,0.15) {$\varnothing$};
\draw[->] (0,0) -- (1.25,-0.5);\node [above] at (0.65,-0.70) {$\scriptstyle{3}$};
\node at (0,1) {$\circ$};\node at (1.25,0.5) {$\bu$};\node at  (0,0) {$\bu$};\node at (1.25,-0.5) {$\circ$};\end{tikzpicture}} at 250 -55
\pinlabel {\begin{tikzpicture}[thick, >=stealth',shorten <=0.1cm,shorten >=0.1cm] 
\draw[->] (0,1) -- (1.25,0.5);\node [above] at (0.65,0.70) {$\scriptstyle{1}$};
\draw[<-] (1.25,0.5) -- (0,0);\node [above] at (0.55,0.15) {$\varnothing$};
\draw[->] (0,0) -- (1.25,-0.5);\node [above] at (0.65,-0.70) {$\scriptstyle{23}$};
\node at (0,1) {$\bu$};\node at (1.25,0.5) {$\circ$};\node at  (0,0) {$\circ$};\node at (1.25,-0.5) {$\circ$};\end{tikzpicture}} at 430 -55
\pinlabel {\begin{tikzpicture}[thick, >=stealth',shorten <=0.1cm,shorten >=0.1cm] 
\draw[->] (0,1) -- (1.25,0.5);\node [above] at (0.65,0.70) {$\scriptstyle{12}$};
\draw[<-] (1.25,0.5) -- (0,0);\node [above] at (0.55,0.15) {$\varnothing$};
\draw[->] (0,0) -- (1.25,-0.5);\node [above] at (0.65,-0.70) {$\scriptstyle{3}$};
\node at (0,1) {$\bu$};\node at (1.25,0.5) {$\bu$};\node at  (0,0) {$\bu$};\node at (1.25,-0.5) {$\circ$};\end{tikzpicture}} at 605 -55
\tiny \pinlabel {$z$} at 114 115
\pinlabel {$z$} at 293 115
\pinlabel {$z$} at 468 115
\pinlabel {$z$} at 646 115
\endlabellist
\includegraphics[scale=0.5]{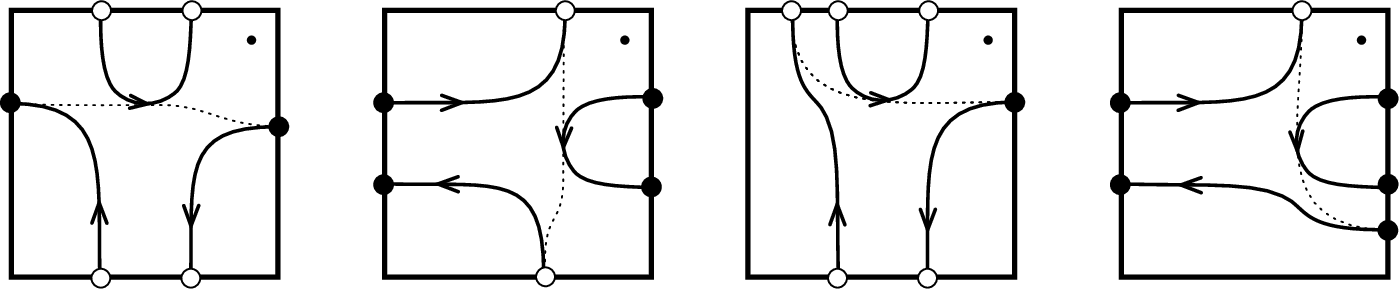}
\vspace*{50pt}
\caption{Modifications permitted in almost reduced train tracks. These local moves are useful for destroying oriented cycles and achieving admissibility in the proof of Theorem \ref{thm:pairtracks}.}\label{fig:almost}
\end{figure}

We will say an $\Alg$-deocrated graph is \emph{almost reduced} if the only edges labelled $\varnothing$ begin and end on valence 2 vertices and occur in one of precisely four configurations as shown in Figure \ref{fig:almost}. Note that each of these configurations can be replaced by a single arrow labelled $12$, $23$, $123$, or $123$, respectively, without changing the corresponding type D structure (up to homotopy). We can extend our naive train track construction to almost reduced $\Alg$-decorated graphs by embedding the edges labeled by $\varnothing$ as an edge starting and ending on the same side of the square $T \setminus (\alpha \cup \beta)$, as shown in the figure. Recall that an $\Alg$-decorated graph is bounded if it contains no oriented cycle. It is clear that we can replace any reduced graph by a bounded almost reduced graph representing a  type D structure homotopy equivalent to the original by replacing some number of $12$, $23$, or $123$ arrows by the corresponding zig-zag configuration. A train track will be called \emph{special bounded} if the underlying $\Alg$-decorated graph is both bounded and almost reduced.


\begin{theorem}\label{thm:pairtracks} Fix train tracks $\tracks_0$ and $\tracks_1$, where $\tracks_0$ is reduced and $\tracks_1$ is special bounded. Let $N^0_\Alg$ denote the type A structure associated with $\tracks_0$ and let ${}^\Alg\! N^1$ denote the type D structure associated with $\tracks_1$. Then $d^\tracks$ may be identified with $\partial^\boxtimes$ and there is an isomorphism of chain complexes \[\sC(\tracks_0,\tracks_1)\cong N^0_\Alg\boxtimes {}^\Alg\! N^1\]\end{theorem}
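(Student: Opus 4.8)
The plan is to exhibit a bijection between the generators of $\sC(\tracks_0,\tracks_1)$ and those of $N_0^\Alg \boxtimes {}^\Alg\! N_1$, and then check that the two differentials agree under this bijection. The module $N_0^\Alg \boxtimes {}^\Alg\! N_1$ is generated over $\F$ by elements $x \otimes y$ where $x$ is a generator of $N_0^\Alg$, $y$ a generator of ${}^\Alg\! N_1$, and the idempotents match, i.e. $\iota(x)\cdot y = y$. On the geometric side, a generator of $N_0^\Alg$ corresponds to a vertex of $\tracks_0$ lying on $\alpha$ or $\beta$, and similarly for $\tracks_1$; after taking the type A realization $A(\tracks_0)$ (push into the first quadrant, extend horizontally/vertically) and the type D realization $D(\tracks_1)$ (reflect in $y=-x$, push into the third quadrant, extend), the extended strands emanating from these vertices meet in the second and fourth quadrants. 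The first step is to verify that intersection points in $A(\tracks_0) \cap D(\tracks_1)$ are in natural bijection with idempotent-compatible pairs $x \otimes y$: a vertex of $\tracks_0$ on $\alpha$ (an $\iota_0$-generator) gives a horizontal ray, a vertex on $\beta$ (an $\iota_1$-generator) gives a vertical ray, and after the reflection the type-D strands have the complementary behavior, so a ray from $x$ meets a ray from $y$ exactly once when the idempotents agree and not at all otherwise. This is a straightforward but slightly fiddly case check over the two quadrants.

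The second and main step is to match the differentials. The differential $\partial^\boxtimes(x\otimes y) = \sum_k (m_{k+1}\otimes \id)(x \otimes \delta^k(y))$ counts sequences of algebra elements $\rho_{I_1},\ldots,\rho_{I_k}$ read off a directed path of length $k$ in the decorated graph $\Gamma_1$ underlying $\tracks_1$ (using the relabelling $1\leftrightarrow 3$, $2\leftrightarrow 2$, $3\leftrightarrow 1$ and the regrouping algorithm of Hedden-Levine), which the type A structure $N_0^\Alg$ is able to absorb via a nonzero $m_{k+1}$. I would show that such an algebraic term corresponds precisely to a bigon carried by $(\tracks_0,\tracks_1)$ contributing to $d^\tracks$. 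The key observation is that the edges in the marked torus assigned to the algebra elements $\rho_1,\rho_2,\rho_3,\rho_{12},\rho_{23},\rho_{123}$ (Figure \ref{fig:algebra-edges}) are exactly the elementary strips one crosses when pushing a bigon past the puncture region, and the allowed configurations in an almost reduced (hence special bounded) track are precisely those that do not create extra bigon boundary. Concretely: the negative real part of the bigon boundary traverses a directed path in $D(\tracks_1)$ built from these labelled edges, spelling out the word that $\delta^k(y)$ records; the positive real part traverses the corresponding oriented path in $A(\tracks_0)$, and an oriented path in $A(\tracks_0)$ along edges labelled $\rho_{I_1},\ldots,\rho_{I_k}$ is exactly the combinatorial data that $m_{k+1}(x,\rho_{I_1},\ldots,\rho_{I_k})\neq 0$ encodes in the type A structure of a train track. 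One then checks that the homotopy class of bigon is uniquely determined by this pair of paths (using that both sides of the bigon pass the puncture on the correct side, so the wrong-orientation bigons are excluded as noted in the text), giving a bijection of terms and hence $d^\tracks = \partial^\boxtimes$.

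Two points require care. First, boundedness: the sum defining $\partial^\boxtimes$ must be finite, which is why $\tracks_1$ is required to be special bounded; I would note that the bounded case is the heart of the matter, and the almost reduced modifications of Figure \ref{fig:almost} are handled by observing that each of the four permitted $\varnothing$-configurations corresponds to an elementary homotopy of the train track that changes neither $\sC(\tracks_0,\tracks_1)$ (up to the obvious identification of generators, possibly cancelling a pair) nor the box tensor product up to chain homotopy equivalence — this is where one invokes that passing to an almost reduced representative is a homotopy equivalence of the associated type D structure. Second, one must confirm that no bigon in the pairing complex covers the puncture $z$: this is exactly the orientation argument already spelled out before the statement (all strands of $A(\tracks_0)$ and of $D(\tracks_1)$ pass $z$ on their right), so a bigon with boundary orientation incompatible with the $A_\infty$ operation would have to cover $z$ and is therefore disallowed.

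\textbf{Main obstacle.} The genuine difficulty is the $k\geq 2$ matching: relating an immersed (non-embedded) bigon that wraps several times around the puncture region to a length-$k$ directed path in $\Gamma_1$ together with a higher multiplication $m_{k+1}$ in $N_0^\Alg$, and proving this correspondence is a bijection rather than merely a surjection or injection. This amounts to a careful local-to-global analysis of how a bigon decomposes into the elementary labelled strips of Figure \ref{fig:algebra-edges} and how that decomposition interacts with the Hedden-Levine regrouping that turns a word in $\{1,2,3\}$ into a sequence of algebra elements; getting the bookkeeping exactly right (including the edge cases arising from the almost reduced configurations) is the technical core of the argument.
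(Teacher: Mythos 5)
Your approach is essentially the one the paper takes: establish the bijection of generators via the vertical/horizontal segment intersections, then match bigons to box-tensor-product terms by decomposing the bigon into the elementary labelled strips of Figure~\ref{fig:algebra-edges}, with the puncture-avoidance/orientation argument ruling out the wrong-way bigons. You have also correctly located the technical core: relating an immersed bigon that wraps several times around the corner region to a length-$k$ directed path in $\Gamma_1$ together with a higher multiplication $m_{k+1}$, and showing the correspondence is a genuine bijection. The paper closes this by cutting the bigon along the lattice curves $\{0\}\times[0,1]$ and $[0,1]\times\{0\}$ into a starting tile, connecting tiles, and an ending tile (Figure~\ref{fig:bigon-pieces}), then sliding the $1$ and $3$ labels on the $\tracks_0$ boundary into adjacent tiles so that the $\tracks_0$-side and $\tracks_1$-side corner words agree tile-by-tile; this simultaneously produces the Hedden--Levine regrouping you mention and shows that the $\Ainfty$-operation read off the $A(\tracks_0)$ side pairs exactly with the $\delta^k$ sequence read off the $D(\tracks_1)$ side. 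That same decomposition explains why special boundedness matters: a closed chain of connecting tiles is an immersed annulus, which is precisely what boundedness forbids.

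One soft spot: your handling of the $\varnothing$-labelled (almost reduced) configurations goes through a chain homotopy equivalence, which at best gives a quasi-isomorphism to the reduced model, whereas the theorem asserts an isomorphism of chain complexes $\sC(\tracks_0,\tracks_1)\cong N_0^\Alg\boxtimes {}^\Alg N_1$ for the special bounded $\tracks_1$ and its associated (almost reduced) type D structure itself. The paper instead verifies this case directly: a bigon whose $D(\tracks_1)$ boundary turns at a $\varnothing$-labelled corner is pinned to a single vertical or horizontal segment of $A(\tracks_0)$ (so as not to cover $z$), and the resulting differential is exactly the $w\otimes x \mapsto w\otimes y$ term arising from $\delta^1(x)=\mathbf{1}\otimes y$ in $\partial^\boxtimes$. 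Replacing your homotopy-equivalence step with this direct match would close the argument at the chain level.
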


\begin{remark}\label{rmk:pairtracks}With the exception of the proof that follows, we will typically opt for subscripts on the given type D structures and make use of the shorthand notation $N^0_\Alg\boxtimes {}^\Alg\! N^1=N_0^A\boxtimes N_1$.\end{remark}
 
 \begin{proof}[Proof of Theorem \ref{thm:pairtracks}] First observe that there is an identification of the generating sets for $\sC(\tracks_0,\tracks_1)$ and $N^0_\Alg\boxtimes {}^\Alg\! N^1$, since each vertical (respectively	horizontal) segment in $A(\tracks_0)$ intersects each horizontal (respectively vertical) segment of $D(\tracks_1)$ and there are no other intersection points. Note that the horizontal segments of $A(\tracks_0)$ correspond directly to the $\iota_0$ generators of $N^0_\Alg$, while the vertical segments correspond to the $\iota_1$ generators. Similarly, the vertical segments of $D(\tracks_1)$ correspond directly to the $\iota_0$ generators of ${}^\Alg\! N^1$, while the horizontal segments correspond to the $\iota_1$ generators.
 
Since $\tracks_1$ is bounded, the associated type D structure ${}^\Alg\! N^1$ is bounded and hence $\partial^\boxtimes$ gives a well-defined differential on $N^0_\Alg\boxtimes {}^\Alg\! N^1$. We will identify the linear map $d^\tracks$ with the differential $\partial^\boxtimes$ by showing that that each immersed bigon contributing to $d^\tracks$ is uniquely associated with a pairing between type D and type A operations contributing to $\partial^\boxtimes$. 
 

We first aim to classify the immersed bigons that contribute to $d^\tracks$ by decomposing them into pieces. Observe that by cutting the torus $T$ along the curves $\alpha$ and $\beta$, any immersed bigon connecting intersection points $p$ to $q$ decomposes into embedded pieces. We will show that the only possible pieces are those shown in Figure \ref{fig:bigon-pieces}. We find it helpful to keep track of the type of corners on each boundary of the bigon (our convention for doing so will be explained shortly), and the dotted boxes in the top right and lower left quadrants in the figures are meant to highlight these corners. The top right quadrant will be called the corner region for $A(\tracks_0)$ and the lower left quadrant is the corner region for $D(\tracks_1)$.

\begin{figure}[t]
\labellist
\pinlabel {\bf Pieces of small bigons:} at 94 770
\pinlabel {\bf Starting pieces:} at 66 605
\pinlabel {\bf Ending pieces:} at 60 440
\pinlabel {\bf Connecting pieces:} at 79 277
\pinlabel {\bf Single piece bigon:} at 130 100
\scriptsize
\pinlabel {$\varnothing$} at 171 676
\pinlabel {$\varnothing$} at 464 670
\pinlabel {$3$} at 25 520
\pinlabel {$3$} at 165 520 \pinlabel {$2$} at 184 520 \pinlabel {$2$} at 214 551
\pinlabel {$1$} at 358 567
\pinlabel {$1$} at 501 567 \pinlabel {$2$} at 501 551
\pinlabel {$2$} at 472 520
\pinlabel {$3$} at 91 386
\pinlabel {$2$} at 185 355  \pinlabel {$2$} at 214 386 \pinlabel {$3$} at 233 386
\pinlabel {$1$} at 328 337
\pinlabel {$1$} at 472 337 \pinlabel {$2$} at 472 354  \pinlabel {$2$} at 503 386
\pinlabel {$1$} at 44 174 \pinlabel {$2$} at 44 190 \pinlabel {$1$} at 74 239 \pinlabel {$2$} at 74 223
\pinlabel {$3$} at 169 191 \pinlabel {$2$} at 187 191 \pinlabel {$2$} at 216 222 \pinlabel {$3$} at 235 222
\pinlabel {$2$} at 360 222 \pinlabel {$1$} at 330 174 \pinlabel {$2$} at 330 191 \pinlabel {$3$} at 312 191
\pinlabel {$2$} at 475 192 \pinlabel {$3$} at 522 222 \pinlabel {$2$} at 504 222 \pinlabel {$1$} at 504 239
\pinlabel {$2$} at 259 44 \pinlabel {$2$} at 289 74
\endlabellist
\vspace*{30pt}
\includegraphics[scale=0.6]{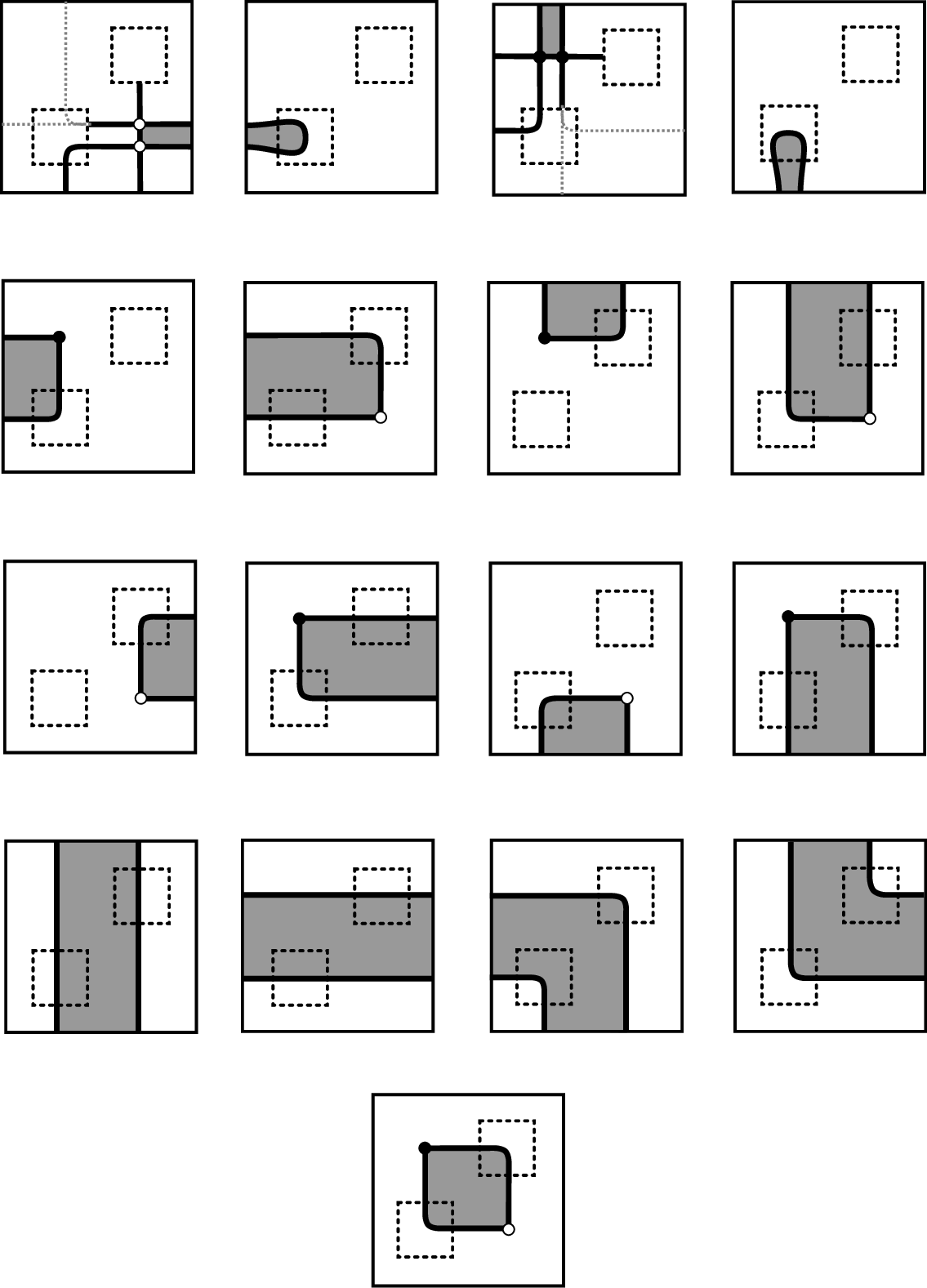}
\caption{Decomposing a bigon into tiles: Any small bigon carried by $(\tracks_0,\tracks_1)$ consists of exactly two pieces of either the first two types or the second two types in the first row. Any other bigon either is disjoint from $\alpha \cup \beta$ and has the form shown in the bottom row, or it decomposes into one starting piece, some number of connecting pieces, and one ending piece.}
\label{fig:bigon-pieces}
\end{figure}

The first row of the figure shows pieces arising from bigons which involve a $\varnothing$-labelled corner in $D(\tracks_1)$ (these are corners that start and end on the same side of the corner region, corresponding to $\varnothing$-labelled edges in the $\Alg$-decorated graph associated to $\tracks_1$); we will call such bigons \emph{small bigons}. Small bigons are always cut into exactly two pieces; we explain this for $\varnothing$-labelled corners connecting two horizontal edges in $D(\tracks_1)$, and the case of a $\varnothing$ corner connecting two vertical edges is similar. The $\varnothing$ corner in question connects the right ends of two horizontal segments corresponding to two $\circ$-generators in the type D structure corresponding to $\tracks_1$, and because $\tracks_1$ is almost reduced these generators appear in either the first or third zig-zag arrangement in Figure \ref{fig:almost}. At the left end of these horizontal segments, the lower one has a corner turning downward (corresponding to the $1$ labelled arrow in the decorated graph) and the upper one either continues leftward or turns upward (corresponding to a $23$ or $2$ labelled arrow). It is clear that the bigon can not extend past the left end of these horizontal edges, since it would then necessarily cover the basepoint, and so it must stop in the lower right quadrant of $T$ at some vertical edge of $A(\tracks_0)$. 

For the remaining bigons, we will assume they do not contain any corners of type $\varnothing$. Consider a bigon from an intersection point $p$ to an intersection point $q$. The only bigons that do not intersect $\alpha$ and $\beta$ and thus do not decompose into multiple pieces have the form shown in the last row of Figure \ref{fig:bigon-pieces}; they begin at $p$ in the bottom right quadrant of $T$ and end at $q$ in the top left quadrant, and each side of the boundary has a single corner labelled by 2. Apart from bigons of this type, the piece of the bigon containing the initial generator $p$ must look like one of the \emph{starting pieces} in the second row of Figure \ref{fig:bigon-pieces}. For example, if $p$ is in the lower quadrant then the bigon must open up and left from $p$, since a bigon opening down and right would cover the basepoint. The $A(\tracks_0)$ boundary must either pass straight through its corner region (the top right quadrant) or turn left, since turning right would again lead to the basepoint being covered. If the $A(\tracks_0)$ continues upward, then the $D(\tracks_1)$ boundary must turn upward in its corner region to avoid covering the basepoint, leading to the fourth starting piece in Figure \ref{fig:bigon-pieces}, and if the $A(\tracks_0)$ boundary turns left then the $D(\tracks_1)$ boundary must continue leftward through its corner region, leading to the second starting piece shown. If $p$ lies in the top left quadrant then the bigon can either open down and left or up and right from $p$; in the former case the $D(\tracks_1)$ boundary must turn leftward and in the latter case the $A(\tracks_0)$ boundary must turn upwards, producing starting pieces of the first and third types. Note that the type D and type A boundaries can only change direction in their respective corner regions, and backtracking is not possible since $\tracks_0$ is reduced and we assume the $\varnothing$-labelled edges of $D(\tracks_1)$  are not involved. 
Similar consideration shows that the piece of a bigon containing the terminal generator $q$ takes the form of one of the \emph{ending pieces} in Figure \ref{fig:bigon-pieces}, and all other pieces must be one of the \emph{connecting pieces} shown in the figure.




In order to identify $d^\tracks$ with $\partial^\boxtimes$, we will relate sequences of bigon pieces to type A operations and sequences of type D operations which pair in the box tensor product. Toward that end, we now describe our convention for labeling corners of bigon pieces. Recall that each corner in $A(\tracks_0)$ or $D(\tracks_0)$ represents an arrow in the corresponding $\Alg$-decorated graph. Referring to Figure \ref{fig:algebra-edges} (and remembering that $\tracks_1$ is reflected across the antidiagonal when constructing $D(\tracks_1)$), we label the corners of $D(\tracks_1)$ the same as the corresponding edge. The label on a corner of $A(\tracks_0)$ is the subword of $321$ obtained from the arrow labels in the corresponding $\Alg$-decorated graph by interchanging $3$'s and $1$'s. An equivalent graphical way of describing these labels is to label the corners of each corner region 0, 1, 2, and 3, ordered counterclockwise starting from the top right corner for the corner region of $A(\tracks_0)$ and from the bottom left corner for the corner region of $D(\tracks_1)$, and then label each corner edge in the boundary of a bigon by the sequence of these labels which are covered (in the order obtained by following the boundary from $p$ to $q$). Note that the path from $p$ to $q$ in $A(\tracks_0)$ follows the boundary orientation of the bigon and thus has the basepoint to its right at each corner, while the path from $p$ to $q$ in $D(\tracks_1)$ opposes the boundary orientation and has the basepoint to its left at each corner. It follows that traversing either path from $p$ to $q$ agrees with the orientation of the edges in the corresponding $\Alg$-decorated graphs, and thus each boundary path from $p$ to $q$ determines a directed path in the $\Alg$-decorated graphs. Finally, recall that a directed path in an $\Alg$-decorated graph defines either a $\delta^k$ map in the associated type D structure or an $m_k$ map in associated $\Ainfty$ module.

For a given bigon from $p$ to $q$, let $x$ denote the horizontal or vertical edge of $A(\tracks_0)$ containing $p$ and let $x'$ denote the horizontal or vertical edge containing $q$; similarly, let $y$ and $y'$ denote the edges of $D(\tracks_1)$ containing $p$ and $q$. Let $\tilde x$, $\tilde x'$, $\tilde y$, and $\tilde y'$ denote the corresponding generators of $N^0_\Alg$ or ${}^\Alg\! N^1$, so that $p$ corresponds to $\tilde x \otimes \tilde y$ and $q$ corresponds to $\tilde x' \otimes \tilde y'$. Let $I_1$, \ldots, $I_k$ is the sequence of corner labels along the $D(\tracks_1)$ part of the boundary of the bigon, viewed as a path from $y$ to $y'$. This path encodes an operation
$$\delta^k(\tilde y) = \rho_{I_1} \otimes \cdots \otimes \rho_{I_k} \otimes \tilde y'$$
in ${}^\Alg\! N^1$. Let $J_1, \ldots, J_\ell$ be the sequence of corner labels along the $A(\tracks_0)$ part of the boundary, viewed as a path from $x$ to $x'$. To determine the corresponding $\Ainfty$ operation, note that $1$'s and $3$'s are already swapped relative to labels in the $\Alg$-decorated graph, so by the duality described in Section \ref{subsec:graphs and loops} it just remains to concatenate the corner labels and divide into maximal increasing subwords. That is, let $I'_1, \ldots, I'_{k'}$ be the sequence such that $I'_1 \cdots I'_{k'} = J_1 \cdots J_\ell$, each $I'_i$ is a subword of $123$, and $k'$ is minimal with respect to these conditions. Then the path from $x$ to $x'$ in $A(\tracks_0)$ encodes an operation
$$\mu^{k'+1}(\tilde x, \rho_{I'_1}, \cdots, \rho_{I'_{k'}}) = \tilde x'$$
in $N^0_\Alg$.

A key observation is that for any bigon, the sequences of corner labels from each half of the boundary label have the same concatenation---that is, $I_1 \cdots I_k = J_1 \cdots J_\ell$. To see this, observe that for each piece in Figure \ref{fig:bigon-pieces}, the label on the $D(\tracks_1)$ corner can be obtained from the label on the $A(\tracks_0)$ corner by removing a 3 at the beginning if the piece has boundary on the right side of the square, adding a 3 at the end if the piece has boundary on the left side of the square, removing a 1 at the end if the piece has boundary on the top side of the square, and adding a 1 at the beginning if the piece has boundary on the bottom of the square. Note that for any bigon formed from these pieces, excluding small bigons, both boundary arcs move only upward and leftward when traveling from $x$ to $y$, so a piece with a boundary on the left side of the square is always followed by a piece with boundary on the right side of the square, and a piece with boundary on the top is always followed by a piece with boundary on the bottom. It follows that the changes made to obtain the $D(\tracks_1)$ labels from the $A(\tracks_0)$ labels cancel out when concatenating all labels along the boundary paths from $x$ to $y$. For example, for the bigon in Figure \ref{fig:minus-one} the concatenation of the corner labels along either boundary path is the word $123$. Since $I_1 \cdots I_k = J_1 \cdots J_\ell$, and $\{I_1, \cdots, I_k\}$ is a sequence of subwords of $123$ with the first letter of $I_i$ less than the last letter of $I_{i-1}$, we have immediately that the sequences $\{I_1, \ldots, I_k\}$ and $\{I'_1, \ldots, I'_{k'}\}$ agree. From this it follows that the operations discussed in the preceding paragraph pair in the box tensor product to produce a $\tilde x' \otimes \tilde y'$ term in $\partial^\boxtimes(\tilde x \otimes \tilde y)$.

We have shown that every non-small bigon encodes a term in $\partial^\boxtimes$. It is easy to check the same is true for small bigons. In this case, $p$ and $q$ lie on the same horizontal or vertical edge of $A(\tracks_0)$, so $x = x'$, and the $\varnothing$ corner on $D(\tracks_1)$ encodes an operation $\delta^1(\tilde y)=\boldsymbol 1\otimes \tilde y'$ in ${}^\Alg\! N^1$. Such a term in $\delta^1$ contributes $\tilde x\otimes \tilde y'$ to $\partial^\boxtimes(\tilde x\otimes \tilde y)$, as desired.

 It remains to show that every term in $\partial^\boxtimes$ corresponds to a bigon in the way described above. First consider terms in $\partial^\boxtimes(x \otimes y)$ arising from differentials of the form $\delta^1(y)=\boldsymbol 1\otimes y'$ in ${}^\Alg\! N^1$. For each such term, $x \otimes y'$ appears in $\partial^\boxtimes(x \otimes y)$ for each generator $x$ of $N^0_\Alg$ with the appropriate idempotent. It is clear that each of these terms correspond to a small bigon, since the horizontal (or vertical) edges of $D(\tracks_1)$ corresponding to $y$ and $y'$ intersect each vertical (or horizontal) edge of $A(\tracks_0)$ and are connected by a $\varnothing$-labelled corner on one end. All other terms in $\partial^\boxtimes(x \otimes y)$ arise from pairing two operations of the form 
$$\mu^{k+1}(\tilde x, \rho_{I_1}, \cdots, \rho_{I_k}) = \tilde x' \qquad \text{and} \qquad \delta^k(\tilde y) = \rho_{I_1} \otimes \cdots \otimes \rho_{I_k} \otimes \tilde y'$$
for some sequence of algebra elements $\rho_{I_1}, \cdots, \rho_{I_k}$. The presence of this $\delta^k$ map in ${}^\Alg\! N_1$ implies the existence of a path in $D(\tracks_1)$ from $y$ to $y'$ with $k$ corners of types $I_1, \ldots, I_k$. Similarly, the presence of this $m^{k+1}$ operation in $N^0_\Alg$ implies the existence of a path in $A(\tracks_0)$ from $x$ to $x'$ with sequence of corner labels $\{J_1, \ldots, J_\ell\}$ obtained by dividing $I_1 \cdots I_k$ into a minimal number of subwords of $321$. We need to show that these two paths must bound a bigon. It is clear that there is some path consisting of horizontal edges in the top left quadrant, vertical edges in the bottom right quadrant, and corners in the top right quadrant that would form a bigon with the given path in $D(\tracks_1)$---such a path could be obtained by applying a homotopy to the path in $D(\tracks_1)$ avoiding the basepoint to achieve the desired form. By the discussion above, this path must have sequence of corner labels $\{J_1, \ldots, J_\ell\}$, and any path in $A(\tracks_0)$ with the same sequence of corners would also bound a bigon with the path in $D(\tracks_1)$.
\end{proof}

\begin{figure}[t]
\includegraphics[scale=0.5]{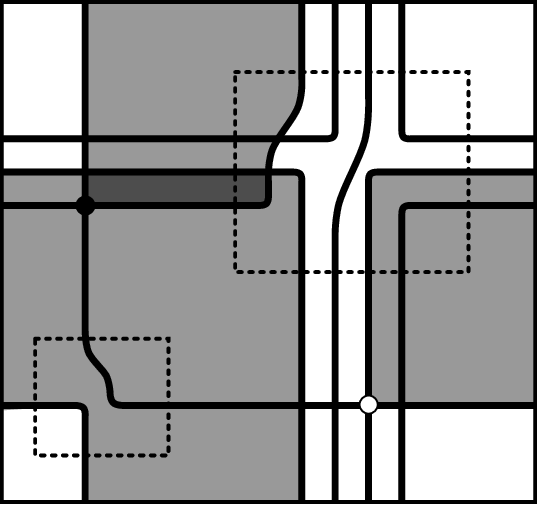}
\qquad\qquad
\labellist\scriptsize
\pinlabel {$1$} at 152 137 \pinlabel {$2$} at 152 154 \pinlabel {$3$} at 135 154
\pinlabel {$1$} at 182 89 \pinlabel {$2$} at 182 185 \pinlabel {$3$} at 90 185
\endlabellist
\includegraphics[scale=0.55]{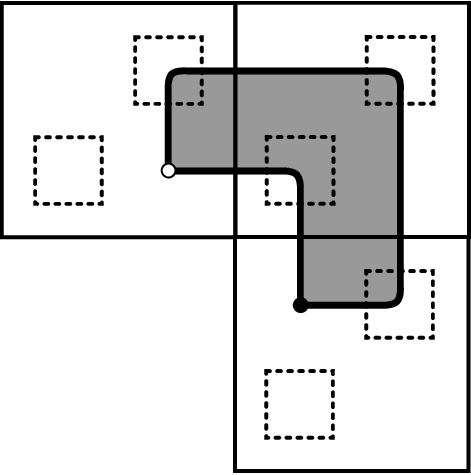}
\caption{The chain complex arising from $-1$ surgery on the right-hand trefoil. The shaded region highlights a bigon with edges corresponding to $m_2(x_0,\rho_{123}) = y_0$ and $\delta^1(x_1)=\rho_{123}\otimes y_1$ so that $y_0\otimes y_1$ is a summand of $\partial^\boxtimes(x_0\otimes x_1)$. The reader can also find an example of a single-piece bigon; compare Figure \ref{fig:bigon-pieces}.}\label{fig:minus-one}
\end{figure}

\section{Classifying extendable type D structures}\label{sec:extend} 

This section proves Theorem \ref{thm:structure}, our structure theorem for type D structures under the hypothesis that the module in question is extendable. While this is a purely algebraic result, it is of interest in the present context owing to the fact that the three-manifold invariants that we wish to study all satisfy this additional hypothesis; compare Theorem \ref{thm:extension}. As such, the ultimate outcome of this section will be the existence part of Theorem \ref{thm:invariance}. 

Broadly, after defining extendable type D structures in Section \ref{sec:extended-type-D-strs}, we establish the structure theorem in two steps. First, we give a partial characterization in terms of matrices over the ring $\F[U]/U^2$, and second, we give a combinatorial algorithm that simplifies the train tracks resulting from this partial characterization.

\subsection{Extended type D structures} \label{sec:extended-type-D-strs}
The torus algebra $\Alg$ admits an extension, denoted by $\AlgExt$, which plays a key role. 

\piccaption[]{A quiver for the extended torus algebra. \label{fig:extended-algebra-quiver}}
\parpic[r]{
 \begin{minipage}{45mm}
 \centering
\begin{tikzpicture}[->,>=stealth',auto,node distance=3cm,thick,main node/.style={circle,draw}]
\node[main node] (1) {$\iota_0$};
  \node[main node] (2) [right of=1] {$\iota_1$};
  \path
  (1) edge[bend left=22] node [above] {\normalsize$\rho_1$} (2)
  (2) edge[bend left=22] node [above] {\normalsize$\rho_2$} (1)
  (1) edge[bend right=60] node [above] {\normalsize$\rho_3$} (2)
  (2) edge[bend right=60] node [above] {\normalsize$\rho_0$} (1);
\end{tikzpicture}
  \end{minipage}%
}
The algebra $\AlgExt$ is a path algebra over
the quiver shown in Figure \ref{fig:extended-algebra-quiver} with relations $\rho_i \rho_{i-1} = 0$, where the indices are to be interpreted mod 4, along with the relation
$\rho_0 \rho_1 \rho_2 \rho_3 \rho_0 = 0$. As a vector space over $\F$ it is spanned by the idempotents $\iota_0$ and $\iota_1$, along with elements 
$\rho_I = \rho_{i_1} \rho_{i_2} \cdots \rho_{i_k}$ for each sequence $I = i_1\ldots i_k$ of indices in $\Z/4\Z$ containing at most one 0 such that $i_{j+1} \equiv i_j + 1$ for each $j$. 
The torus algebra may be recovered from $\AlgExt$ by setting $\rho_0 = 0$. More precisely, if $\mathcal{J}$ is the ideal generated by $\rho_0$ then $\Alg = \AlgExt / \mathcal{J}$. As with $\Alg$, write $\mathbf{1} = \iota_0 + \iota_1$ and use the convention $\rho_\varnothing = \mathbf{1}$. We distinguish the element $U = \rho_{1230} + \rho_{2301} + \rho_{3012} + \rho_{0123}$ and note that $U$ is central in $\AlgExt$.

 \begin{definition}
 An \emph{extended type D structure over $\AlgExt$} is an $\sI$ module $V = \iota_0 V \oplus \iota_1 V$  equipped with a map
 $\widetilde{\delta}^1: V \to \AlgExt \otimes_\sI V$ such that the map 
$\widetilde\partial: \AlgExt \otimes_\sI V \to \AlgExt \otimes_\sI V $ defined by $\widetilde\partial (a \otimes x) = a \cdot \td^1 x$  satisfies $\widetilde\partial^2(x) = U x$ for all $x\in V$.  The module $\widetilde N = \AlgExt \otimes _ \sI V$ is the \emph{extended type D module} associated with $V$ and $\widetilde\delta^1$. 
 \end{definition}

An extended type D structure over $\AlgExt$ can be specified by the pair of vector spaces $\iota_0 V$ and $\iota_1 V$ along with a collection of \emph{coefficient maps} $D_I: \iota_{i_1-1} V \to \iota_{i_k} V$ where $I = i_1\ldots i_k$ is a (possibly empty) sequence of indices in $\Z/4\Z$ containing at most one 0 such that $i_{j+1} \equiv i_j + 1$ for each $j$. In the case of the empty sequence we have two maps $D_\varnothing: \iota_i V\to \iota_i V$ for $i \in \{0,1\}$. The coefficient maps encode $\widetilde\delta^1$ by
$$\widetilde\delta^1 = \sum_I \rho_I \otimes D_I.$$ 
The condition that $\widetilde\partial^2(x) = Ux$ translates to the condition that for each $I$, the sum $\sum_{J\cup K = I} D_K \circ D_J$ is zero unless $I$ is 0123, 1230, 2301, or 3012, in which case the sum is the identity on the appropriate summand of $V$. A type D structure over $\Alg$ admits a similar description in terms of coefficient maps $D_I$ where $I \in \{\varnothing, 1, 2, 3, 12, 23, 123\}$.

Any extended type D structure over $\AlgExt$ determines a type D structure over $\Alg$ by ignoring all coefficient maps for which $I$ contains 0 or, equivalently, ignoring all terms in $\widetilde\delta_1$ involving a multiple of $\rho_0$; this corresponds to taking the quotient by the ideal $\mathcal{J}$. In this case we say that the initial extended type D structure is an \emph{extension} of the quotient type D structure. Equivalently, an extended type D module \(\tN\) is an extension of \(N\) if \(\sA \otimes_{\AlgExt} \tN\) is isomorphic to \(N\) as a differential \(\sA\) module. We say that a type D module $N$ is \emph{extendable} if it has an extension $\widetilde N$, and we say that $N = \sA \otimes_{\AlgExt} \widetilde N$ is the underlying type D module of $N$.

We remark that, though it is not clear \emph{a priori}, Proposition \ref{prop:unique-extension} shows that any two extensions of a type D structure $N$ must be homotopy equivalent. In  other words, an extended type D structure contains no more information than its underlying type D structure. Thus the chief advantage to working with extended type D structures is not the extension that we choose, but simply the fact that an extension exists. Being extendable is a rather strong algebraic constraint and, by restricting to extendable modules (as we will do for the rest of the paper), the geometric description of type D modules over $\Alg$ can be simplified considerably. 

Just as a type D structure (along with a choice of basis) can be represented by an $\Alg$-decorated graph, an extended type D structure with a specified basis can be represented by an $\AlgExt$-decorated graph, where the set of possible edge labels is expanded to include all sequences $I = i_1 \ldots i_k$ of elements of $\Z/4\Z$ with $i_\ell \equiv i_{\ell-1} + 1$ and with at most one $i_\ell=0$. The method for constructing an extended type D structure from a graph is the same: vertices labeled by $\bullet$ and $\circ$ correspond to $\iota_0$-generators and $\iota_1$-generators, respectively, and an arrow labeled by $I$ from $x$ to $y$ contributes a $\rho_I\otimes y$ term to $\td^1(x)$. Analogous to $\Alg$-decorated graphs, $\AlgExt$-decorated graphs satisfy a condition on the counts of length two paths of arrows, which ensures the result is in fact an extended type D structure.

In Section \ref{sec:tracks} we realized the $\Alg$-decorated graph $\Gamma$  representing a reduced or almost reduced type D structure geometrically by immersing it in the parametrized torus $T$. This construction extends immediately to decorated graphs associated with extensions, where the arcs corresponding to the additional edge labels are determined by Figure \ref{fig:extended-algebra}; the result is a naive train track representing the extended type D structure. Note that all edges of this train track are oriented, but a pair of vertices may be connected by a pair of parallel but oppositely oriented edges; in this case, for convenience, we adopt the convention that the two opposing oriented edges are replaced with a single unoriented edge. For example, consider the type D module in Figure \ref{fig:naive-track-example}; a choice of extension is illustrated in Figure \ref{fig:naive-track-example-extended} together with the corresponding train track.

\begin{figure}[ht]
\labellist 
\small
\pinlabel {$\rho_1$} at 181 393
\pinlabel {$\rho_{12}$} at 295 393
\pinlabel {$\rho_{123}$} at 410 393
\pinlabel {$\rho_{1230}$} at 520 393
\pinlabel {$\rho_{12301}$} at 637 393
\pinlabel {$\rho_{123012}$} at 747 393
\pinlabel {$\rho_{1230123}$} at 860 393

\pinlabel {$\iota_0$} at 71 257
\pinlabel {$\rho_2$} at 181 258
\pinlabel {$\rho_{23}$} at 295 258
\pinlabel {$\rho_{230}$} at 410 258
\pinlabel {$\rho_{2301}$} at 520 258
\pinlabel {$\rho_{23012}$} at 637 258
\pinlabel {$\rho_{230123}$} at 747 258

\pinlabel {$\iota_1$} at 71 120
\pinlabel {$\rho_3$} at 181 121
\pinlabel {$\rho_{30}$} at 295 121
\pinlabel {$\rho_{301}$} at 410 121
\pinlabel {$\rho_{3012}$} at 520 121
\pinlabel {$\rho_{30123}$} at 637 121

\pinlabel {$\rho_0$} at 181 -15
\pinlabel {$\rho_{01}$} at 295 -15
\pinlabel {$\rho_{012}$} at 410 -15
\pinlabel {$\rho_{0123}$} at 520 -15
\endlabellist
\includegraphics[scale=0.4]{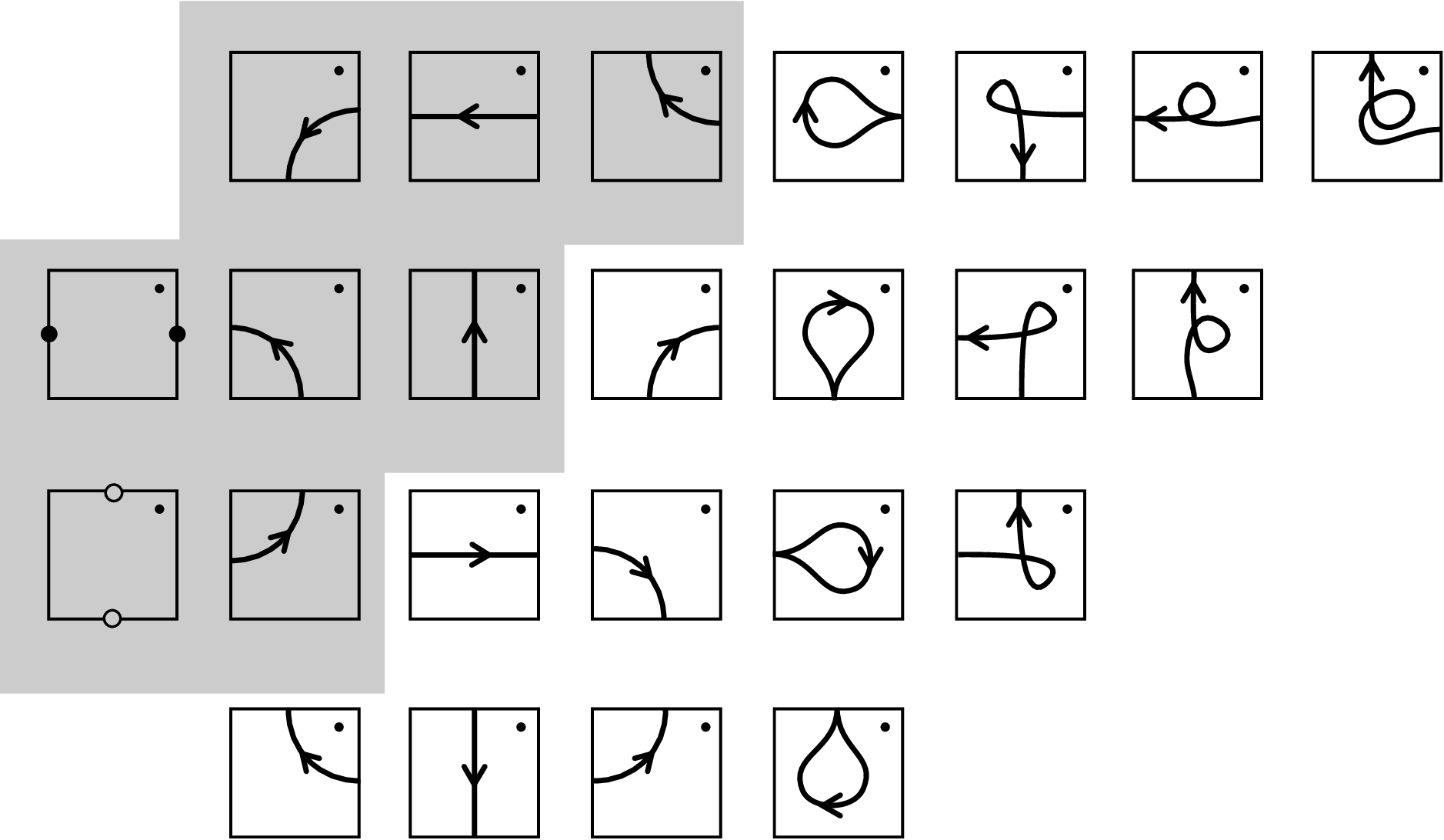}
\caption{Train track edges corresponding to elements of the extended algebra $\AlgExt$. These can also be interpeted as homotopy classes of clockwise moving paths (or constant paths, in the case of $\iota_0$ and $\iota_1$) connecting sides of a square and passing the top right corner at most once, and $\AlgExt$ is generated as an $\F$-vector space by these homotopy classes; algebra multiplication is concatenation of paths, where the product is zero if the concatenation does not exist or passes the top right corner twice. The shaded elements generate $\Alg$.}\label{fig:extended-algebra}
\end{figure}

\begin{figure}[ht]
 \begin{tikzpicture}[scale=0.75,>=stealth', thick] 
\def \radius {3.5cm} \def \outer {3.9cm} \def \inner {2.85cm}
\node (d) at ({360/(11) * (1- 1)}:\radius) {$\bu$};
\node (j) at ({360/(11) * (2- 1)}:\radius) {$\ci$};
\node (a) at ({360/(11) * (3 -1)}:\radius) {$\bu$};
\node (k) at ({360/(11) * (4 -1)}:\radius) {$\ci$};
\node (f) at ({360/(11) * (5 -1)}:\radius) {$\bu$};
\node (g) at ({360/(11) * (6 -1)}:\radius) {$\ci$};
\node (h) at ({360/(11) * (7 -1)}:\radius) {$\ci$};
\node (c) at ({360/(11) * (8 -1)}:\radius) {$\bu$};
\node (e) at ({360/(11) * (9 -1)}:\radius) {$\bu$};
\node (i) at ({360/(11) * (10 -1)}:\radius) {$\ci$};
\node (b) at ({360/(11) * (11 -1)}:\radius) {$\bu$};

\draw[->, bend right = 15] (d) to (j);
\draw[->, bend left = 15] (a) to (j);
\draw[->, bend right = 15] (a) to (k);
\draw[->, bend right = 15] (k) to (f);
\draw[->, bend right = 15] (f) to (g);
\draw[->, bend left = 15] (h) to (g);
\draw[->, bend left = 15] (c) to (h);
\draw[->, bend right = 15] (c) to (e);
\draw[->, bend right = 15] (e) to (i);
\draw[->, bend left = 15] (b) to (i);
\draw[->, bend right = 15] (b) to (d);

\draw[->, bend right = 45] (d) to node[left, pos = .7]{$\scriptstyle{1}$} (i);
\draw[->, bend right = 30] (d) to node[above left = -3pt, pos = .3]{$\scriptstyle{12}$} (e);
\draw[->, bend right = 20] (d) to node[above]{$\scriptstyle{1}$} (h);
\draw[->, bend right = 10] (j) to node[above]{$\scriptstyle{23}$} (g);

  \node at ({360/(11) * (1- (1/2))}:\outer) {$\scriptstyle{1}$};
  \node at ({360/(11) * (2-(1/2))}:\outer) {$\scriptstyle{3}$};
  \node at ({360/(11) * (3-(1/2))}:\outer) {$\scriptstyle{123}$};
  \node at ({360/(11) * (4-(1/2))}:\outer) {$\scriptstyle{2}$};
  \node at ({360/(11) * (5-(1/2))}:\outer) {$\scriptstyle{1}$};
  \node at ({360/(11) * (6-(1/2))}:\outer) {$\scriptstyle{23}$};
  \node at ({360/(11) * (7-(1/2))}:\outer) {$\scriptstyle{3}$};
  \node at ({360/(11) * (8-(1/2))}:\outer) {$\scriptstyle{12}$};
  \node at ({360/(11) * (9-(1/2))}:\outer) {$\scriptstyle{1}$};
  \node at ({360/(11) * (10-(1/2))}:\outer) {$\scriptstyle{3}$};
  \node at ({360/(11) * (11-(1/2))}:\outer) {$\scriptstyle{12}$};

  \node at ({360/(11) * (0)}:\outer) {$\footnotesize{d}$};
  \node at ({360/(11) * (1)}:\outer) {$\footnotesize{j}$};
  \node at ({360/(11) * (2)}:\outer) {$\footnotesize{a}$};
  \node at ({360/(11) * (3)}:\outer) {$\footnotesize{k}$};
  \node at ({360/(11) * (4)}:\outer) {$\footnotesize{f}$};
  \node at ({360/(11) * (5)}:\outer) {$\footnotesize{g}$};
  \node at ({360/(11) * (6)}:\outer) {$\footnotesize{h}$};
  \node at ({360/(11) * (7)}:\outer) {$\footnotesize{c}$};
  \node at ({360/(11) * (8)}:\outer) {$\footnotesize{e}$};
  \node at ({360/(11) * (9)}:\outer) {$\footnotesize{i}$};
  \node at ({360/(11) * (10)}:\outer) {$\footnotesize{b}$};

\draw[->, color = gray, bend right = 15] (j) to (d);
\draw[->, color = gray, bend left = 15] (j) to (a);
\draw[->, color = gray, bend right = 15] (k) to (a);
\draw[->, color = gray, bend right = 15] (f) to (k);
\draw[->, color = gray, bend right = 15] (g) to (f);
\draw[->, color = gray, bend left = 15] (g) to (h);
\draw[->, color = gray, bend left = 15] (h) to (c);
\draw[->, color = gray, bend right = 15] (e) to (c);
\draw[->, color = gray, bend right = 15] (i) to (e);
\draw[->, color = gray, bend left = 15] (i) to (b);
\draw[->, color = gray, bend right = 15] (d) to (b);

  \node at ({360/(11) * (1- (1/2))}:\inner) {\color{gray}$\scriptstyle{230}$};
  \node at ({360/(11) * (2-(1/2))}:\inner) {\color{gray}$\scriptstyle{012}$};
  \node at ({360/(11) * (3-(1/2))}:\inner) {\color{gray}$\scriptstyle{0}$};
  \node at ({360/(11) * (4-(1/2))}:\inner) {\color{gray}$\scriptstyle{301}$};
  \node at ({360/(11) * (5-(1/2))}:\inner) {\color{gray}$\scriptstyle{230}$};
  \node at ({360/(11) * (6-(1/2))}:\inner) {\color{gray}$\scriptstyle{01}$};
  \node at ({360/(11) * (7-(1/2))}:\inner) {\color{gray}$\scriptstyle{012}$};
  \node at ({360/(11) * (8-(1/2))}:\inner) {\color{gray}$\scriptstyle{30}$};
  \node at ({360/(11) * (9-(1/2))}:\inner) {\color{gray}$\scriptstyle{230}$};
  \node at ({360/(11) * (10-(1/2))}:\inner) {\color{gray}$\scriptstyle{012}$};
  \node at ({360/(11) * (11-(1/2))}:\inner) {\color{gray}$\scriptstyle{30}$};
  
\draw[->, color = gray, bend right = 30] (j) to node[left]{$\scriptstyle{230}$} (e);
\draw[->, color = gray, bend right = 40] (j) to node[left]{$\scriptstyle{230}$} (c);

\end{tikzpicture} \hspace{2 cm}
\raisebox{5 mm}{\labellist

\pinlabel {$a$} at -10 225
\pinlabel {$b$} at -10 187
\pinlabel {$c$} at -10 149
\pinlabel {$d$} at -10 111
\pinlabel {$e$} at -10 73
\pinlabel {$f$} at -10 34

\pinlabel {$a$} at 270 225
\pinlabel {$b$} at 270 187
\pinlabel {$c$} at 270 149
\pinlabel {$d$} at 270 111
\pinlabel {$e$} at 270 73
\pinlabel {$f$} at 270 34

\pinlabel {$g$} at 227 -16
\pinlabel {$h$} at 179 -12
\pinlabel {$i$} at 131 -12
\pinlabel {$j$} at 83 -12
\pinlabel {$k$} at 35 -12

\pinlabel {$g$} at 227 273
\pinlabel {$h$} at 179 275
\pinlabel {$i$} at 131 275
\pinlabel {$j$} at 83 275
\pinlabel {$k$} at 35 275
\tiny\pinlabel {$z$} at 246 247
\endlabellist
\includegraphics[scale=0.55]{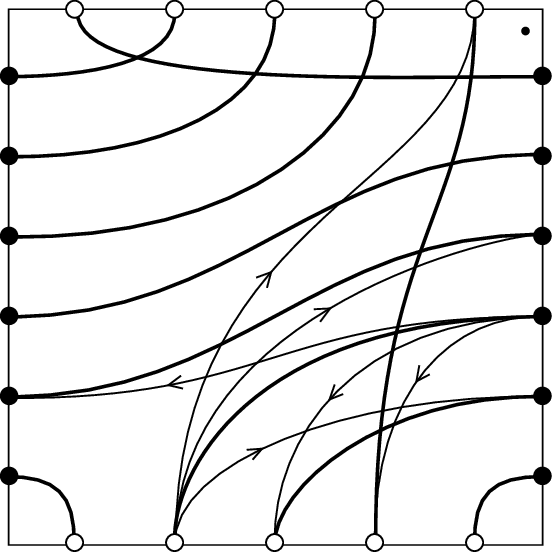}}
\vspace*{10pt}
\caption{The train track (right) associated with an extended type D structure (left). This extended type D structure is an extension of the type D structure in Figure \ref{fig:naive-track-example}; the extended coefficient maps are shown in gray.}
\label{fig:naive-track-example-extended}
\end{figure}

\subsection{Train tracks and matrices}
\label{subsec:extended tracks}

We have so far considered only naive train tracks constructed from $\Alg$-decorated or $\AlgExt$-decorated graphs, but we will have need to work with a larger class of train tracks in the marked torus, which we now formalize.

\piccaption[]{Examples of counterclockwise fishtails. These never appear in a toroidal train track. \label{fig:anti-oriented-fishtail}}
\parpic[r]{
 \begin{minipage}{45mm}
 \hspace{5mm}
 \labellist
 \tiny
 \pinlabel {$z$} at 63 65  \pinlabel {$z$} at 169 65 
 \endlabellist
\includegraphics[scale=0.5]{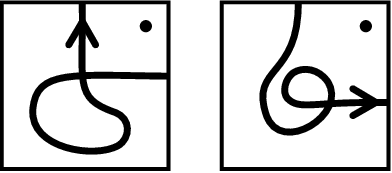}
  \end{minipage}%
}
\begin{definition}\label{def:toroidal-train-track}
A \emph{train track in the marked torus $T$} will refer to an immersed train track $\tracks$ in $T$ with the following properties:
(1) $\tracks$ is disjoint from the basepoint in $T$ and perpendicular to $\alpha$ and $\beta$; (2) 
 every intersection of $\tracks$ with $\alpha\cup\beta$ is a switch and called a \emph{primary switch} of $\tracks$; and (3)
no oriented path carried by $\tracks$ contains a counterclockwise fishtail (see Figure \ref{fig:anti-oriented-fishtail}). 
\end{definition}

Note that all the switches in a naive train track are primary, but this is not required by the definition. As we will see in the next subsection, one of the key advantages of the train track notation is that it allows us to push some of the switches in a train track into the interior of the fundamental domain for \(T\). 
In addition, the definition allows for train tracks where a section of the track simply terminates in a monovalent vertex. Although  tracks of this form cannot appear as the train track associated with an extendable type D structure, pairing with them provides a useful way of distinguishing between two such tracks ({\it c.f.} Section~\ref{subsec:Injectivity of g}).

While we will often simply use the term \emph{train track}, all train tracks we will discuss are in the marked torus (with the exception of those in Section \ref{sub:aside}) and should be understood to satisfy the conditions above.
Within this broader class of objects, we will be able to manipulate our naive train tracks geometrically to find much simpler representatives. Importantly, the discussion of pairing above will carry over directly to this more general class of train tracks.

Given a train track $\tracks$ in $T$ with $n$ primary switches, we can construct a $2n\times 2n$ matrix $M_\tracks$ encoding the essential information of $\tracks$.  We cut $T$ open along $\alpha$ and $\beta$ to form a square, with our usual convention that $\beta$ is horizontal (oriented rightward), $\alpha$ is vertical (oriented upward), and the basepoint is in the top right corner of the square. The primary switches give $2n$ marked points on the edges of the square, which we label $v_1, \ldots, v_{2n}$ clockwise, starting from the top right corner. We let $k$ denote the number of primary switches of $\tracks$ on $\alpha$, so that $v_i$ is on the right side of the square if $1\le i \le k$, the bottom side if $k+1 \le i \le n$, the left side if $n+1 \le i \le n+k$, and the top side if $n+k+1\le i \le 2n$. The $(i,j)$ entry of $M_\tracks$ counts immersed oriented paths in $\tracks$ through the square from $v_i$ to $v_j$. Such a path can be given a label recording how many times it passes the base point on its left. More precisely, any immersed path through the square from a primary switch $x$ to a primary switch $y$ can be projected to a path in the boundary of the square by sending each point in the interior of the path to the boundary following the path's leftward pointing normal vector. The resulting path connects the corner after $x$ (moving clockwise around the boundary of the square) to the corner before $y$. We precompose this path with the clockwise moving path from $x$ to the corner after $x$ and postcompose with the path from the corner before $y$ to $y$, resulting in a path from $x$ to $y$ in the boundary of the square. The assumption that $\tracks$ has no counterclockwise fishtails ensures that this projected path will have net clockwise movement around the boundary. We weight each path by $U^m$, where $m$ is the number of times (counted with sign) that the projected path passes the top right corner of the square. We set $U^2=0$ and count paths modulo 2, so $M_\tracks$ has coefficients in $\F[U]/U^2$.

\begin{definition}
We will say that train tracks $\tracks_1$ and $\tracks_2$ in $T$ are \emph{equivalent} if $M_{\tracks_1} = M_{\tracks_2}$. A train track $\tracks$ is said to be \emph{reduced} if no immersed oriented paths through the square $T\setminus(\alpha\cup\beta)$ begin and end on the same side of the square unless they are weighted by a positive power of $U$. A train track $\tracks$ is \emph{valid} if $M_\tracks^2 = U I_{2n}$, where $I_{2n}$ is the $2n\times 2n$ identity matrix. 
\end{definition} 
The significance of these restrictions on train tracks is found in the following proposition:

\begin{proposition}\label{prp:mod-track-matrix}
The following are equivalent:
\begin{itemize}
\item[(i)] Reduced extended type D structures with a chosen (ordered) set of generators. 
\item[(ii)] Equivalence classes of valid, reduced train tracks in $T$.
\item[(iii)] $2n\times 2n$ matrices $M$ over $\F[U]/U^2$, together with an integer $0\le k \le n$, such that $M^2 = UI_{2n}$ and $M$ is strictly block upper triangular modulo $U$ with respect to blocks of size $k$, $n-k$, $k$, and $n-k$.
\end{itemize}
\end{proposition}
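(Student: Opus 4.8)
The plan is to prove the two equivalences (i)$\Leftrightarrow$(ii) and (ii)$\Leftrightarrow$(iii) separately; chained together they give all three. Throughout I use the identification of Figure \ref{fig:extended-algebra}: cutting $T$ along $\alpha\cup\beta$ produces the standard square with basepoint at its top right corner, in which a basis element $\rho_I$ of $\AlgExt$ is the homotopy class of a clockwise-moving arc joining two sides of the square and passing the top right corner at most once, multiplication is concatenation of such arcs, and $U$ is the class of a once-around loop.

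For (i)$\Leftrightarrow$(ii) I would invoke the construction of Section \ref{subsec:extended tracks}, which turns a decorated graph for a reduced extended type D structure — equivalently, an ordered set of generators — into an immersed train track by placing the $\iota_0$- and $\iota_1$-generators on $\alpha$ and on $\beta$ and realizing each coefficient $\rho_I$ by the corresponding clockwise arc; conversely, a train track whose switches all lie on $\alpha\cup\beta$ is read off as a decorated graph. Three points need checking. First, the output is always an $\Alg$-train track: transversality is generic, every $\alpha\cup\beta$-intersection is a switch by construction, and no counterclockwise fishtail appears because every algebra arc is clockwise. Second, $\tracks$ is valid, i.e. $M_\tracks^2 = UI$, exactly when $\widetilde{\partial}^2 = U$ on the type D side, since $M_\tracks$ is just the ``doubled'' matrix of $\widetilde\delta^1$ and both conditions express the extended type D relation (one via composition of coefficient maps, the other via matrix multiplication of arc counts). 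Third, $\tracks$ is reduced exactly when the type D structure has no identity coefficients, which is precisely what ``reduced'' means for the module.

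For (ii)$\Leftrightarrow$(iii), the assignment $[\tracks]\mapsto(M_\tracks,k)$ is well defined and injective by the definition of equivalence, and its image is exactly the data of (iii): ``valid'' is by definition $M_\tracks^2 = UI_{2n}$; the no-counterclockwise-fishtail axiom forces the left projection of every carried path to move with net clockwise rotation, so the $U$-weights are honest nonnegative powers and $M_\tracks$ is block upper triangular modulo $U$ (a clockwise arc from a later side to an earlier side, in clockwise order from the basepoint, necessarily sweeps past that corner and hence carries a $U$); and ``reduced'' is precisely the vanishing of the four diagonal blocks modulo $U$. For surjectivity I would realize a given $M$ directly: place $2n$ marked points on the boundary of the square in the prescribed blocks of sizes $k,\,n-k,\,k,\,n-k$, and for each entry $M_{ij}=a+bU$ with $a,b\in\F$ draw $a$ clockwise arcs from $v_i$ to $v_j$ of winding $0$ and $b$ of winding $1$, taken disjoint in the interior of the square and meeting $\alpha\cup\beta$ only at the $v_i$; regluing, the arc-ends accumulating at each $v_i$ (incoming from one side and outgoing to the other, by the clockwise convention) assemble into a legitimate oriented train-track switch. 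Since the resulting $\Alg$-train track has no switch in the interior of the square, every immersed oriented path through the square is a single one of the drawn arcs, so $M_\tracks=M$; then $M^2=UI$ says $\tracks$ is valid and the diagonal-block hypothesis says it is reduced.

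The substantive content, and the main obstacle, is the realization step together with setting up the $U$-bookkeeping so that it is simultaneously consistent on all three sides. One must weight carried paths by powers of $U$ (via the left projection and signed passages of the top right corner, with $U^2=0$) in a way that matches both multiplication in $\AlgExt$ and concatenation of carried paths; and, starting from a matrix $M$ as in (iii), one must verify that the prescribed clockwise arcs can always be drawn disjointly, that no unintended immersed oriented path is created upon regluing — so that $M_\tracks$ equals $M$ and not merely dominates it — and that the clockwise convention really does force a cusp-free, consistently oriented switch at each marked point. Each of these checks is elementary but delicate, and they all rest on the structural description of $\AlgExt$ recorded in Figure \ref{fig:extended-algebra}.
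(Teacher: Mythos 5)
Your proof is correct and takes essentially the same route as the paper: build the train track from an extended type D structure, read off the matrix $M_\tracks$ by counting $U$-weighted paths through the cut square, observe that no counterclockwise fishtails forces strict block upper triangularity mod $U$ and that $\td^2=U$ corresponds to $M_\tracks^2=UI_{2n}$, then reverse the construction. The paper organizes this as a cyclic chain (i)$\to$(ii)$\to$(iii)$\to$(i), recovering the extended type D structure directly from the block decomposition of the matrix, whereas you close the loop by realizing a given matrix as a train track (iii)$\to$(ii); the two closures are equivalent since the correspondence between arcs in the square and entries of $M_\tracks$ is the same one used in the forward direction. The one place you abbreviate relative to the paper is the verification that $\td^2=U$ exactly matches $M_\tracks^2=UI_{2n}$: because $M_\tracks$ is $2n\times 2n$ while there are only $n$ generators, each generator contributes two rows, and one should check (as the paper does by matching the four terms $\rho_{1230},\rho_{3012},\rho_{2301},\rho_{0123}$ to the corresponding diagonal entries) that the $n$ relations $\td^2(x)=Ux$ account for all $2n$ rows of $M_\tracks^2=UI_{2n}$ and not just half of them. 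This is routine bookkeeping, not a gap, but it is worth making explicit.
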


\piccaption[]{Equivalent valid train tracks in $T$. \label{fig:sample-equivalence}}
\parpic[r]{
 \begin{minipage}{60mm}
 \labellist
 \tiny
 \pinlabel {$z$} at 110 106 \pinlabel {$z$} at 264 106 
 \endlabellist
\includegraphics[scale=0.6]{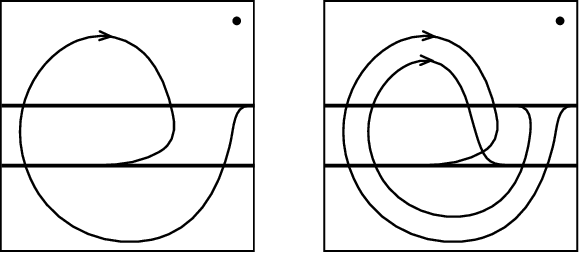}
  \end{minipage}%
}
Before proving this proposition it is worth pausing to consider a simple example. The pair of train tracks in Figure \ref{fig:sample-equivalence} are equivalent; the reader should construct the associated four-by-four matrix in each case and observe that these are equal, as required.  The key feature here is that the additional path in the train track on the right carries  the label $U^2$, which is set to zero in the matrix. 
On the other hand, an inequivalent train track is obtained by considering only the horizontal two-way tracks of this example. Although this train track is inequivalent to the ones given, it represents an isomorphic module. For generators $x$ and $y$ such that $D_{123012}(x)=y$ there is an isomorphism of modules over $\AlgExt$ given by $x'=x+\rho_{1230}\otimes y$ and $y'=y$, so that $D_{123012}(x')=0$. Note that this is an isomorphism between two different choices of extension (of a fixed type D structure) as $\AlgExt$ modules but not an isomorphism as $\F$ vector spaces. That is, this train track is weakly equivalent to those of Figure \ref{fig:sample-equivalence} in the sense of Definition \ref{def:weak-equiv} below.

\begin{proof}[Proof of Proposition \ref{prp:mod-track-matrix}]
Any extended type D structure $\tN$, along with a specified basis $B$, determines a naive train track $\tracks(\tN, B)$ as described above, and it is immediate from the construction that if  $\tN$ is reduced then so is $\tracks(\tN,B)$. This particular train track determines an equivalence class of train tracks. We must check that  $\tracks(\tN, B)$ is valid, but postpone this until after the discussion of matrices below.

Any valid reduced train track $\tracks$ determines a matrix $M_\tracks$ as described above. This is a $2n\times 2n$ matrix, where each row or column corresponds to primary switches on the boundary of the square $T\setminus(\alpha\cup\beta)$. These may be split into blocks according to which side each primary switch is on. The fact that paths in $\tracks$ have net clockwise rotation ensures that any path  labelled by 1 contributes to an entry above the diagonal in $M_\tracks$; the fact that $\tracks$ is reduced further ensures that no 1 appears in the blocks along the diagonal of $M_\tracks$. By definition, the fact that $\tracks$ is valid implies $M_\tracks^2 = UI_{2n}$.

Finally, a reduced extendable type D structure can be extracted from a matrix $M$ and an integer \(k\) as in (iii). The vector space $\iota_0 V$ has generators $x_1,\ldots,x_k$ and $\iota_1 V$ has generators $x_{k+1},\ldots,x_{n}$. Each generator $x_i$ corresponds to a pair of indices $i$ and $t(i)$, where \[
t(i)=\begin{cases}
n+k+1-i & \text{if}\quad  i \le k\\
2n + k - i + 1 & \text{if}\quad i > k.
\end{cases}\]An entry in the $j$ or $t(j)$ column and the $i$ or $t(i)$ row of $M$ contributes an $x_j$ term to the coefficient map $D_I(x_i)$, where $I$ is determined by the position in the matrix with respect to the block decomposition of $M$. More precisely, write $M=M_1+M_U$ where $M_1$ has entries that are $1$ or $0$ while $M_U$ has entries that are $U$ or $0$. Then the block decompositions of $M_1$ and $M_U$ with respect to blocks of size $k$, $n-k$, $k$, and $n-k$ recover the coefficient maps for the corresponding extended type D structure as follows:
\begin{equation}\label{eqn:block}M_1=\left[
\begin{array}{c|c|c|c}
0 &D_1&D_{12}& D_{123}\\ \hline
0 &0&D_2&D_{23} \\ \hline
0 &0&0&D_{3}\\ \hline
0 &0&0&0
\end{array}\right]
\quad
\quad
\quad
M_U= U \left[
\begin{array}{c|c|c|c}
D_{1230} &D_{12301}&D_{123012}& D_{1230123}\\ \hline
D_{230} &D_{2301}&D_{23012}& D_{230123} \\ \hline
D_{30}&D_{301}&D_{3012}&D_{30123}\\ \hline
D_0 &D_{01}&D_{012}&D_{0123}
\end{array}\right]
\end{equation}
Here the blocks are the matrices representing $D_I$, acting by \emph{right} multiplication, with respect to the basis $\{x_1, \ldots, x_k\}$ of $\iota_0 V$, the basis $\{x_{k+1}, \ldots, x_n\}$ of $\iota_1 V$, or these bases in reversed order, as appropriate. In the case that $M = M_{\tracks(\tN, B)}$ is the matrix associated with a pair $(\tN, B)$, the extended type D structure extracted from $M$ in this way exactly recovers $\tN$, as there is a one to one correspondence between nonzero terms in $M_1$ or $M_U$, paths in the train track $\tracks(\tN,B)$, and nonzero terms in the coefficient maps for $\tN$.

It remains to show that $\tracks = \tracks(\tN, B)$ is valid, or equivalently that the corresponding matrix $M_{\tracks(\tN, B)}$ squares to $UI_{2n}$, and conversely that $\widetilde\partial^2(x) = Ux$ in the extended type D structure determined by any matrix $M$ with $M^2 = UI_{2n}$. To see this, we look at the block decomposition of $M^2$, which can be computed from the block decomposition of $M_1$ and $M_U$ above. Let $\bar D_I$ denote the sum $\sum_{J\cup K = I} D_J D_K$ where $J$ and $K$ are nonempty sequences of indices in $\Z/4\Z$. Using this shorthand, we have:
\begin{equation*}M^2=\left[
\def\arraystretch{1.2}
\begin{array}{c|c|c|c}
0 & 0 & \bar D_{12}& \bar D_{123} \\ \hline
0 &0&0& \bar D_{23} \\  \hline
0 &0&0&0\\  \hline 
0 &0&0&0
\end{array}\right] + U \left[
\def\arraystretch{1.2}
\begin{array}{c|c|c|c}
\bar D_{1230} & \bar D_{12301}& \bar D_{123012}& \bar D_{1230123}\\ \hline
\bar D_{230} & \bar D_{2301}& \bar D_{23012}& \bar D_{230123} \\ \hline
\bar D_{30}& \bar D_{301}& \bar D_{3012}& \bar D_{30123}\\ \hline
\bar D_0 & \bar D_{01}& \bar D_{012}& \bar D_{0123}
\end{array}\right]
\end{equation*}
Note that, by slight abuse of notation, we are using $D_I$ to denote both a coefficient map and a matrix representing it, and the matrix acts by right multiplication. Thus the matrix $D_J D_K$ is the matrix representing the map $D_K \circ D_J$. It is now clear that the condition that $M^2 = U I_{2n}$ is precisely the condition on composition of coefficient maps for the corresponding reduced extended type D structure.
\end{proof}

To restrict an extended type D module $\tN$ to its underlying type D module $N$, we simply set $\rho_0 = 0$ in the algebra, and thus ignore all coefficient maps $D_I$ for which $I$ contains 0. It is clear that the analogous operation for the corresponding matrix is setting $U = 0$, resulting in a matrix with coefficients in $\F$. This motivates the following definitions:

\begin{definition}\label{def:weak-equiv}
A train track $\tracks$ in $T$ is \emph{weakly valid} if the corresponding matrix $M_\tracks$ squares to 0 modulo $U$. Two train tracks $\tracks_1$ and $\tracks_2$ are \emph{weakly equivalent} if the matrices $M_{\tracks_1}$ and $M_{\tracks_2}$ agree modulo $U$.
\end{definition}

The following is immediate from Proposition \ref{prp:mod-track-matrix}.

\begin{proposition}\label{prp:mod-track-matrix-weak}
The following are equivalent:
\begin{itemize}
\item[(i)] Reduced type D structures over $\Alg$, up to isomorphism as $\F$ vector spaces,  with a chosen (ordered) set of generators.
\item[(ii)] Weak equivalence classes of weakly valid, reduced train tracks in $T$.
\item[(ii)] $2n\times 2n$ matrices $M$ over $\F$, together with an integer $0\le k \le n$, such that $M^2 = 0$ and $M$ is strictly block upper triangular with respect to blocks of size $k$, $n-k$, $k$, and $n-k$.\qed\end{itemize}

\end{proposition}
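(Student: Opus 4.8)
The plan is to re-run the argument of Proposition~\ref{prp:mod-track-matrix} verbatim, replacing the ground ring $\F[U]/U^2$ by $\F$, setting $U=0$, and replacing ``valid''/``equivalent''/``extended type D structure'' by ``weakly valid''/``weakly equivalent''/``type D structure''. Concretely, I would set up a cycle of constructions realizing the three correspondences: from a reduced type D structure over $\Alg$ with an ordered generating set, split by idempotents as $x_1,\dots,x_k\in\iota_0 V$ and $x_{k+1},\dots,x_n\in\iota_1 V$, one produces a weakly valid reduced $\Alg$-train track by immersing its decorated graph in $T$ as in Section~\ref{sec:tracks}; from such a track $\tracks$ one extracts the matrix $M_\tracks \bmod U$, a matrix over $\F$; and from a matrix $M$ together with $k$ one reads off the coefficient maps $D_I$, $I\in\{1,2,3,12,23,123\}$, via the block decomposition~(\ref{eqn:block}) with the $M_\tracks^U$ part now absent. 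One then verifies these constructions are mutually inverse up to the stated equivalences.

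For (i)$\Rightarrow$(ii): the decorated graph $\Gamma$ of a reduced type D structure over $\Alg$ has edge labels only in $\{1,2,3,12,23,123\}$, so immersing it via Figure~\ref{fig:algebra-edges} gives an immersed train track all of whose switches lie on $\alpha\cup\beta$, whose edges move clockwise (hence no counterclockwise fishtail) and which misses the basepoint; thus it is an $\Alg$-train track, and it is reduced since $\Gamma$ has no $\varnothing$-labelled edges. It is weakly valid because $M_\tracks \equiv M \pmod U$, where $M$ is the matrix of coefficient maps, and $M^2=0$ is precisely the relation $\partial^2=0$, so $M_\tracks^2\equiv 0$. Different choices of immersion, and different choices of extension of the underlying module, change $M_\tracks$ only in its $U$-entries, hence give weakly equivalent tracks. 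This is the step where moving from the ``valid'' world of Proposition~\ref{prp:mod-track-matrix} to the ``weakly valid'' world of the present statement actually helps: the realization (i)$\Rightarrow$(ii) comes straight from the decorated-graph construction, so we do \emph{not} need to know that an arbitrary type D structure over $\Alg$ is extendable.

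The remaining two steps are exactly as in the proof of Proposition~\ref{prp:mod-track-matrix}. For (ii)$\Rightarrow$(iii): the $\Alg$-train-track axioms force the left-projection of every carried oriented path to have net clockwise rotation, so any path not weighted by $U$ runs from $v_i$ to $v_j$ with $i<j$ in the clockwise ordering from the basepoint; hence $M_\tracks \bmod U$ is block upper triangular with respect to blocks of sizes $k,n-k,k,n-k$, and reducedness of $\tracks$ rules out $U^0$-paths beginning and ending on the same side, killing the diagonal blocks, while weak validity gives $(M_\tracks\bmod U)^2=0$. For (iii)$\Rightarrow$(i): the block form~(\ref{eqn:block}) with $M_\tracks^U=0$ converts $M$ and $k$ back into coefficient maps $D_I$, $I\in\{1,2,3,12,23,123\}$; the identity $M^2=0$ is $\partial^2=0$; and strict block upper triangularity is equivalent to the absence of an identity coefficient map $D_\varnothing$, i.e.\ to reducedness. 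Tracing the cycle (using the $i\leftrightarrow t(i)$ identification of the $2n$ indices) shows each composite is the identity up to isomorphism of $\F$-vector spaces, respectively weak equivalence of train tracks.

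I expect the only real work to be bookkeeping --- matching the three notions of ``reduced'' and of ``weakly equivalent'' across the three settings, and confirming the $2n\times 2n$ index set behaves correctly after setting $U=0$. Nothing here is conceptually new: it is the mod-$U$ shadow of the argument already carried out for Proposition~\ref{prp:mod-track-matrix}, with the single structural difference that (i)$\Rightarrow$(ii) is obtained directly rather than by reducing a valid track.
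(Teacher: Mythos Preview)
Your proposal is correct and matches the paper's approach exactly: the paper simply declares the proposition ``immediate from that of Proposition~\ref{prp:mod-track-matrix}'' and appends a \qed, so your write-up is precisely the mod-$U$ reduction the paper leaves implicit. Your observation that (i)$\Rightarrow$(ii) no longer requires extendability (since the decorated-graph immersion of Section~\ref{sec:tracks} works for any reduced type D structure) is a genuine point the paper glosses over, and it is the only place where anything beyond literal substitution of $U=0$ is needed.
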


Although we are mainly interested in reduced train tracks, we observe that valid almost reduced train tracks can be defined as those corresponding to almost reduced type D structures over $\Alg$.

We pause briefly to discuss pairing of train tracks in $T$, which is defined just as it was for naive train tracks in Section \ref{sec:pairing-train-tracks}. A pair $(\tracks_0, \tracks_1)$ is \emph{admissible} if there are no immersed annuli cobounded by $A(\tracks_0)$ and $D(\tracks_1)$, where $A(\tracks)$ denotes the result of including $\tracks$ into the first quadrant and extending vertically/horizontally and $D(\tracks)$ is the reflection of $A(\tracks)$ across the anti-diagonal.  Then $\sC(\tracks_0, \tracks_1)$ is the vector space generated by intersection points of $A(\tracks_0)$ and $D(\tracks_1)$, equipped with a linear map $d^\tracks$ that counts bigons not covering the basepoint with boundary carried by the pair $(\tracks_0,\tracks_1)$, with the requirement that both sides of the bigon are oriented paths from the initial intersection point to the terminal intersection point. Note that, unlike for naive train tracks, the orientation requirement is meaningful for an arbitrary train track in $T$. The key observation is that defining $\sC(\tracks_0, \tracks_1)$ does not require all the information in $\tracks_1$ and $\tracks_2$, it is in fact determined only by their weak equivalence classes.

\begin{proposition}
Let $\tracks_0$, $\tracks_0'$ be weakly valid reduced train tracks and $\tracks_1, \tracks'_1$ be weakly valid almost reduced train tracks such that the pairs $(\tracks_0, \tracks_1)$ and $(\tracks'_0, \tracks'_1)$ are admissible. If $\tracks_0$ is weakly equivalent to $\tracks_0'$ and $\tracks_1$ is weakly equivalent to $\tracks_1'$, then $\sC(\tracks_0, \tracks_1) = \sC(\tracks_0', \tracks_1')$ as chain complexes.
\end{proposition}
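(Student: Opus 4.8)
The plan is to reduce the statement to the pairing theorem for train tracks, Theorem~\ref{thm:pairtracks}, by showing that $\sC(\tracks_0,\tracks_1)$ depends on $\tracks_0$ and $\tracks_1$ only through the $\Alg$-modules they carry, and that these modules are precisely the weak equivalence invariants. Write $N_0$ for the reduced type D structure over $\Alg$, with its ordered generating set, determined by the weak equivalence class of $\tracks_0$ (this is well defined by the weak analogue of Proposition~\ref{prp:mod-track-matrix}), let $N_0^\Alg$ be the associated type A structure, and let ${}^\Alg\! N_1$ be the almost reduced type D structure over $\Alg$ determined by $\tracks_1$. Since $\tracks_0$ is weakly equivalent to $\tracks_0'$ and $\tracks_1$ to $\tracks_1'$, the track $\tracks_0'$ carries the same $N_0^\Alg$ and $\tracks_1'$ the same ${}^\Alg\! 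N_1$. It therefore suffices to produce a canonical isomorphism
\[
\sC(\tracks_0,\tracks_1)\;\cong\;N_0^\Alg\boxtimes {}^\Alg\! N_1,
\]
since applying the same isomorphism to the primed tracks then gives $\sC(\tracks_0,\tracks_1)=\sC(\tracks_0',\tracks_1')$.

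On generators this is immediate: $A(\tracks_0)$ is $\tracks_0$ in the first quadrant together with horizontal and vertical rays out of its primary switches, $D(\tracks_1)$ is the corresponding picture reflected into the third quadrant, and every intersection point of $A(\tracks_0)$ with $D(\tracks_1)$ lies in the second or fourth quadrant and occurs exactly once for each pair consisting of a horizontal ray of one track and a vertical ray of the other. The horizontal rays of $A(\tracks_0)$ are indexed by the primary switches of $\tracks_0$ on $\alpha$, i.e.\ by the $\iota_0$-generators of $N_0$, its vertical rays by the $\iota_1$-generators, and symmetrically for $D(\tracks_1)$; so the intersection points are in canonical (order-respecting) bijection with the idempotent-matched generating set of $N_0^\Alg\boxtimes {}^\Alg\! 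N_1$, a set that depends only on the two weak equivalence classes.

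The differential requires more care, and here I would rerun the bigon-decomposition argument from the proof of Theorem~\ref{thm:pairtracks}. The crucial observation is that a bigon contributing to $d^\tracks$ does not cover the basepoint, hence its boundary passes only through corners of $\tracks_0$ and $\tracks_1$ whose labels lie in $\Alg$ — that is, in $\{1,2,3,12,23,123\}$, together with the $\varnothing$-corners permitted in almost reduced tracks — and never through a corner whose label involves $\rho_0$: such a corner corresponds, in the dictionary of Figure~\ref{fig:extended-algebra}, to a local arc whose left projection passes the top right corner of the fundamental square, so any bigon whose boundary traverses it must contain the basepoint in its interior. Consequently each such bigon reads only the $\Alg$-parts $M^1_{\tracks_0}$ and $M^1_{\tracks_1}$, i.e.\ the weak equivalence data. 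Granting this, the decomposition of each bigon into a starting piece, connecting pieces, and an ending piece (Figure~\ref{fig:bigon-pieces}), the reading off of a type A operation of $N_0^\Alg$ from the $\tracks_0$-side of the boundary and a sequence of type D operations of ${}^\Alg\! N_1$ from the $\tracks_1$-side, and the label-sliding verification that these pair in the box tensor product all go through verbatim, with the empty-bigon case $\delta^1(x)=\mathbf{1}\otimes y$ handled as around Figure~\ref{Fig:empty-bigon}. Admissibility of $(\tracks_0,\tracks_1)$ — the absence of immersed annuli cobounded by $A(\tracks_0)$ and $D(\tracks_1)$ — plays here the role that special boundedness of $\tracks_1$ played in Theorem~\ref{thm:pairtracks}: it forbids bigons assembled from connecting pieces alone and makes the count finite. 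This yields the asserted isomorphism.

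The main obstacle is this last step. One must be certain that the bigon count genuinely cannot detect the extended ($U$-weighted, $\rho_0$-bearing) part of an $\Alg$-train track, and that the bigon $\leftrightarrow$ box-tensor-operation correspondence of Theorem~\ref{thm:pairtracks} survives the passage from the special tracks of Section~\ref{sec:tracks} — all switches on $\alpha\cup\beta$, with $\tracks_1$ special bounded — to arbitrary $\Alg$-train tracks with $\tracks_0$ reduced, $\tracks_1$ almost reduced, and the pair only assumed admissible. The basepoint observation is what makes the first point work; once it is in hand, the remainder is a routine, if bookkeeping-heavy, repetition of that proof.
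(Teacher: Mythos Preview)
Your key observation is exactly the right one, and it coincides with what the paper uses: bigons contributing to $d^\tracks$ never traverse $U$-weighted paths (equivalently, $\rho_0$-bearing corners), because the orientation requirement together with basepoint-avoidance rules them out. Once you have this, however, the paper's argument is more direct than yours. The paper simply observes that the bigon count is determined by the mod-$2$ counts of oriented paths between primary switches in each track, and that the orientation condition discards precisely the $U$-weighted paths; hence $d^\tracks$ is determined by $M_{\tracks_0}$ and $M_{\tracks_1}$ modulo $U$, i.e.\ by the weak equivalence classes. No box tensor product enters, and the proposition is finished in two sentences.

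Your route through the isomorphism $\sC(\tracks_0,\tracks_1)\cong N_0^\Alg\boxtimes {}^\Alg\! N_1$ is not wrong in spirit, but it is more than the proposition requires and it creates a technical issue you do not fully resolve: the box tensor product needs ${}^\Alg\! N_1$ bounded so that $\partial^\boxtimes$ is a finite sum, whereas here $\tracks_1$ is only assumed almost reduced and the pair admissible. Admissibility guarantees the bigon count in $\sC$ is finite, but that is a statement about $\sC$, not about the well-definedness of $N_0^\Alg\boxtimes {}^\Alg\! N_1$. The paper sidesteps this entirely by never invoking the box tensor product in the proof; it only remarks afterward, as a separate observation, that under the hypotheses $d^\tracks$ turns out to be a differential and agrees with the box tensor product when that is defined. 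So: keep your key observation, drop the detour, and you have the paper's proof.
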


\begin{proof}
Note that to count bigons connecting two intersection points, one need only know the count of paths connecting each pair of primary vertices in each train track. The orientation requirement implies that we can ignore any path in $\tracks_0$ or $\tracks_1$ that has the puncture on the left side of the path; these are precisely the paths weighted by $U$ in $M_{\tracks_0}$ and $M_{\tracks_1}$. Thus the map $d^\tracks$ is determined by the weak equivalence class of $\tracks_0$ and $\tracks_1$.
\end{proof}

Note that under the hypotheses above $d^\tracks$ is a differential, making $\sC(\tracks_0, \tracks_1)$ a chain complex. Indeed, if $\tracks_0$ and $\tracks_1$ correspond to type D structures $N_0$ and $N_1$, respectively, then they are weakly equivalent to the naive train tracks representing $N_0$ and $N_1$, and it follows from Theorem \ref{thm:pairtracks} that $\sC(\tracks_0, \tracks_1)$ is isomorphic as a chain complex to $N_0^A \boxtimes N_1$.

\subsection{Simplifying valid train tracks} 
\label{subsec:simplify tracks}

Consider our running example and the train track in Figure \ref{fig:naive-track-example-extended}. We observe that this train track has two special properties. First, there are exactly two unoriented edges, one from each direction, incident to each switch. It follows that if the oriented edges are ignored, the unoriented edges form a collection of immersed curves. Second, the extra oriented edges can be grouped in pairs, where each pair connects the same two sections of the immersed curve, oriented the same way but turning opposite directions at each end and crossing once. We can slide the endpoints of each pair of oriented segments near each other along the unoriented segments, without changing the equivalence class of the train track, as shown in Figure \ref{fig:crossover-arrows}(a). (In doing so, we have passed from a naive train track in which every switch is primary, to one in which some of the switches lie in the interior of the fundatmental domain for \(T\).)
The resulting $X$ shaped pairs of oriented arrows will be called \emph{crossover arrows}, and to simplify diagrams these pairs will be replaced by a bold arrow (Figure \ref{fig:crossover-arrows}(b)).

\begin{figure}[ht]
\raisebox{-1.5 cm}{\includegraphics[scale=0.4]{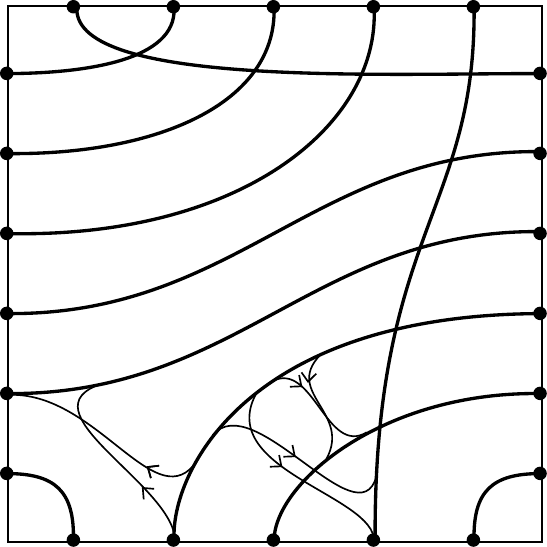}}
\hspace{3 cm}
\labellist
\pinlabel {$=$} at 81 18
\pinlabel {(a)} at -180, -65
\pinlabel {(b)} at 81, -65
\endlabellist
\includegraphics[scale=0.8]{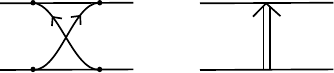}
\vspace{3 mm}
\caption{(a) The train track from Figure \ref{fig:naive-track-example-extended} homotoped so that the oriented edges form crossover arrows. (b) Diagrammatic shorthand for crossover arrows.}
\label{fig:crossover-arrows}
\end{figure}

The goal of the present subsection is to prove that these features are not unique to this example, but indeed hold for (some representative of) every equivalence class of valid reduced train tracks.

\begin{proposition}\label{prop:curves-plus-crossovers}
Any valid reduced train track in $T$ is equivalent to a train track $\tracks$ with the following form:

\begin{itemize}
\item Every switch of $\tracks$ has exactly one unoriented edge on each side, so that $\tracks$ is a collection of (unoriented) immersed curves along with additional oriented edges connecting points on the immersed curves;

\item the immersed curves restrict to horizontal and vertical segments outside of the square $[\frac{1}{4},\frac{3}{4}]\times[\frac{1}{4},\frac{3}{4}]$, which we call the \emph{corner box} since it is where the immersed curves change direction;

\item within the corner box, the immersed curves restrict to embedded arcs  connecting different sides of the corner box (we call these arcs \emph{corners}); 

\item the oriented edges appear in pairs that form crossover arrows (as in Figure \ref{fig:crossover-arrows});

\item the crossover arrows appear outside the corner box, and they move clockwise around the corner box.

\end{itemize}
Conversely, any train track of this form is a valid, reduced train track in $T$.
\end{proposition}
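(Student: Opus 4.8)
The plan is to establish the two implications separately, starting with the (easier) converse.

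\emph{The converse.} Suppose $\tracks$ has the stated form. Reading off coefficient maps from its immersed-curve arcs and its crossover arrows produces an extended type D structure whose underlying type D structure is reduced, since the clockwise hypothesis forces every coefficient map not involving $\rho_0$ to point ``forward''. In particular $\tracks$ is an $\Alg$-train track: it meets $\alpha\cup\beta$ only in switches, and no oriented path it carries contains a counterclockwise fishtail because the only place such a path can turn is at a crossover arrow and every crossover arrow winds clockwise about the corner box. It remains to check validity, $M_\tracks^2 = UI_{2n}$, equivalently $\widetilde\partial^2 = U$. This is a local computation: the unique immersed curve through a given primary switch contributes, by winding once around the corner box through the unoriented edges along it, exactly the diagonal $U$ term; every other term of $\widetilde\partial^2$ uses a crossover arrow, and such terms cancel --- in pairs modulo $2$ when one crossover arrow is involved, because its two strands run along the same pair of curve arcs, and identically to zero when two are involved, since the resulting coefficient carries the weight $U^2 = 0$.

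\emph{The forward direction: set-up and the algebraic input.} Let $\tracks_0$ be any valid reduced $\Alg$-train track. By Proposition~\ref{prp:mod-track-matrix} we may replace $\tracks_0$, without changing its equivalence class, by the image of the decorated graph of a reduced extended type D structure $\tN$, so that all switches lie on $\alpha\cup\beta$; henceforth we use only homotopies avoiding the basepoint, the crossover-sliding move of Figure~\ref{fig:crossover-arrows}(a), and insertion or deletion of paths weighted by $U^2$ --- each of these preserves the equivalence class. Writing $M_{\tracks_0}=M^1+UN$ with $N$ a $\{0,1\}$-matrix (this is the decomposition of~(\ref{eqn:block})), validity is equivalent to the pair of identities $(M^1)^2 = 0$ and $M^1 N + N M^1 = I_{2n}$. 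The second identity says that $N$ is an explicit null-homotopy of the acyclic complex $(\F^{2n},M^1)$, so $\rk M^1 = n$; geometrically it says that the wind-once coefficients recorded by $N$ interlock with the ordinary coefficients in $M^1$: at each primary switch there is a distinguished wind-once edge which, together with the reversed ordinary edge it runs parallel to, can be collapsed to an unoriented edge. The crucial step is to show that, after the allowed moves, these distinguished edges can be chosen so that exactly one emanates from each side of every switch; the unoriented edges then assemble into a collection of immersed curves, and what is left of $M^1$ is carried by oriented edges which --- using $(M^1)^2 = 0$ together with the above identity --- occur in the X-shaped crossover pairs of the statement.

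\emph{The forward direction: geometric clean-up.} Once $\tracks$ has the form ``immersed curves plus crossover arrows'', the remaining bullets follow by routine homotopies. The immersed curves may be isotoped to be horizontal and vertical outside the corner box $[\frac{1}{4},\frac{3}{4}]\times[\frac{1}{4},\frac{3}{4}]$ and to meet it in embedded arcs joining distinct sides; here one uses that reducedness forbids any immersed path carried by the curves alone from having both endpoints on the same side of the cut-open square, which is exactly what rules out the arcs needed to close the curves up incorrectly. Each crossover arrow is then slid along its two supporting curve arcs out of the corner box; because $M^1$ is strictly block upper triangular --- equivalently, because reducedness and the no-counterclockwise-fishtail condition forbid any oriented arc from running backwards without passing the basepoint --- every crossover arrow automatically winds clockwise about the corner box. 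This puts $\tracks_0$ into the stated form, and with the converse completes the proof.

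\emph{Main obstacle.} The heart of the matter is the curve-extraction step: turning the algebraic identity $M^1 N + N M^1 = I_{2n}$ into the geometric assertion that one can select a single unoriented edge at each side of each switch, coherently enough that these edges close up into immersed curves while the leftover oriented edges organise into X-shaped crossover pairs rather than more complicated configurations. I expect this to need a careful case analysis of the local picture at a switch, organised by the block decomposition of~(\ref{eqn:block}), ruling out ``branching'' of the wind-once data and using the crossover-sliding and $U^2$-insertion moves to normalise the branching that does occur.
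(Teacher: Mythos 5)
Your plan identifies the right algebraic datum --- the matrix $M_\tracks$ over $\F[U]/U^2$ with $M_\tracks^2 = UI_{2n}$ --- and the right geometric moves (crossover-slides, insertion/deletion of $U^2$-weighted paths, homotopy), and your converse direction is fine in spirit (the paper's version is actually simpler: deleting all crossover arrows leaves an immersed multicurve which is visibly valid, and re-adding the arrows corresponds to conjugating $M_\tracks$ by elementary matrices, which preserves $M^2=UI_{2n}$). But the forward direction has a genuine gap, and you have in effect flagged it yourself in the final paragraph.

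The step you defer --- turning validity into the existence of a coherent system of unoriented edges, one per side of each switch, with the leftover oriented edges assembling into clockwise X-pairs --- is precisely what the paper's Lemma~\ref{lem:linear_algebra} supplies, and it is not a ``careful case analysis of the local picture at a switch.'' It is a global inductive matrix reduction: any $2n\times 2n$ matrix $M$ over $\F[U]/U^2$ that is upper triangular mod $U$ and squares to $UI_{2n}$ can be conjugated, using only elementary matrices $A_{i,j}$ with $i<j$ and $A^U_{i,j}$, to a normal form $\bar M$ in which each row and column has exactly one nonzero entry, the diagonal vanishes, and whenever $i<j$ the $(i,j)$-entry is $1$ precisely when the $(j,i)$-entry is $U$. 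The normal form $\bar M$ \emph{is} the immersed multicurve (the $1$'s above the diagonal and $U$'s below are the two orientations of the unoriented edges), and the conjugating elementary matrices are exactly the crossover arrows; the restriction $i<j$ on $A_{i,j}$ is what forces those arrows to run clockwise. Working directly from the decomposition $M_\tracks = M^1 + UN$ and the pair of identities $(M^1)^2=0$, $M^1N+NM^1=I_{2n}$ is a reasonable instinct, but it does not by itself tell you how to \emph{choose} the unoriented edge at each switch in a globally consistent way: the two identities are invariant under conjugation, and the whole content of the normalization is to pick the conjugate in which the choice is canonical. You should prove (or import) Lemma~\ref{lem:linear_algebra} rather than appeal to an unspecified local case analysis; once it is in hand, the remaining ``geometric clean-up'' you describe (isotoping the curves to be horizontal/vertical outside the corner box, sliding arrows out of the corner box, observing reducedness forbids arcs with both endpoints on the same side) goes through as you indicate.
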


See Figure \ref{fig:curves-and-crossovers}(c) for an example of a train track of this form.

Before proving Proposition \ref{prop:curves-plus-crossovers}, we review some linear algebra of matrices over $\F[U]/U^2$. Any invertible matrix over $\F[U]/U^2$ can be written as the product of elementary matrices of the following two types:
$$A_{i,j} = \scalemath{.75}{\begin{blockarray}{cccccccc}
 & & & & & j & & \\
\begin{block}{c[ccccccc]}
 & 1 & & & & & & \\
 & & \ddots & & & & & \\
i& & & 1 & & 1 & & \\
 & & & & \ddots & & & \\
 & & & 0 & & 1 & & \\
 & & & & & & \ddots & \\
 & & & & & & & 1 \\
 \end{block}\end{blockarray}}  \qquad\qquad 
A^U_{i,j} = \scalemath{.75}{\begin{blockarray}{cccccccc}
 & & & & & j & & \\
\begin{block}{c[ccccccc]}
 & 1 & & & & & & \\
 & & \ddots & & & & & \\
i& & & 1 & & U & & \\
 & & & & \ddots & & & \\
 & & & 0 & & 1 & & \\
 & & & & & & \ddots & \\
 & & & & & & & 1 \\
 \end{block}\end{blockarray}} $$
For $A_{i,j}$ we require $i\neq j$; in $A^U_{i,i}$ the $(i,i)$ entry is $1+U$. Each of these elementary matrices is its own inverse. We remark that there is another type of elementary matrix, transposition matrices, but over $\F = \Z/2\Z$ these can be obtained by products of matrices of type $A_{i,j}$ .

\begin{lemma}\label{lem:linear_algebra}
Let $M$ be a $2n\times 2n$ matrix over the base ring $\F[U]/U^2$ that is upper triangular modulo $U$ such that $M^2 = U I_{2n}$. Then $M$ can be written as $P = P \bar M P^{-1}$ where $P$ is composition of elementary matrices of the form $A^U_{i,j}$ or of the form $A_{i,j}$ with $i<j$, and $\bar{M}$ has the following form:
\begin{itemize}
\item Each row and each column of $\bar M$ has exactly one nonzero entry;
\item The diagonal entries of $\bar M$ are zero;
\item If $1\le i < j \le 2n$, either the $(i,j)$ entry and the $(j,i)$ entry of $\bar M$ are both zero or the $(i,j)$ entry is 1 and the $(j,i)$ entry is $U$.
\end{itemize}
\end{lemma}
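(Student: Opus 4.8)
The plan is to reduce everything modulo $U$ first, normalize that part, and then correct the $U$-part. Write $M = m + UN$ with $m, N \in M_{2n}(\F)$. Expanding $M^2 = U I_{2n}$ in powers of $U$ gives $m^2 = 0$ and $mN + Nm = I_{2n}$. Since $m$ is upper triangular and nilpotent it is strictly upper triangular; from $mN + Nm = I$ one gets $2\rk(m) \ge \rk(I) = 2n$, and from $m^2 = 0$ one gets $\im(m) \subseteq \ker(m)$, so $\rk(m) = n$ and $\im(m) = \ker(m)$. I would then record the effect of the two families of conjugations: conjugation by $A_{i,j}$ with $i < j$ replaces $m$ by $A_{i,j}\, m\, A_{i,j}^{-1}$, i.e.\ it applies an upper-unitriangular base change to $m$ (and alters $N$ by cross terms involving $m$); conjugation by $A^U_{i,j}$ leaves $m$ unchanged and sends $N \mapsto N + (m E_{ij} + E_{ij} m)$, where $E_{ij}$ is a matrix unit. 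Both operations preserve the conditions ``upper triangular mod $U$'' and $M^2 = U I$, and $A^U_{i,j}$ is permitted for arbitrary $i,j$ while $A_{i,j}$ requires $i<j$; this is exactly the supply of moves allowed in the statement.

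The first substantive step is to normalize $m$. Since every upper-unitriangular matrix over $\F$ is a product of the $A_{i,j}$ with $i<j$, it suffices to conjugate $m$ by a suitable upper-unitriangular matrix into a \emph{matching matrix} $\bar m = \sum_{\ell=1}^{n} E_{i_\ell j_\ell}$, where each $i_\ell < j_\ell$ and the pairs $\{i_\ell, j_\ell\}$ partition $\{1, \dots, 2n\}$. I would do this by building a flag-compatible basis $f_1, \dots, f_{2n}$ (so $f_k - e_k \in \langle e_1, \dots, e_{k-1}\rangle$) inductively in $k$: having chosen $f_1, \dots, f_{k-1}$, adjust $e_k$ by an element of $\langle e_1, \dots, e_{k-1}\rangle$ so that $m f_k$ is as simple as possible. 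Because $m^2 = 0$, the vector $m f_k \in \langle f_1,\dots,f_{k-1}\rangle$ must lie in the span of those $f_i$ $(i<k)$ already killed by $m$ (the components along previously matched pairs are forced to vanish); re-basing inside that span — which does not disturb the relations $m f_i = 0$ — makes $m f_k$ equal to a single such $f_i$, which we then declare to be the partner of $k$. After this step $M$ has the form $\bar m + U N'$, and $(\bar m + U N')^2 = U I$ still gives $\bar m N' + N' \bar m = I$.

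The second step is to normalize $N'$. Conjugations by the $A^U_{i,j}$ fix $\bar m$ and change $N' \mapsto N' + \operatorname{ad}_{\bar m}(E_{ij})$, where $\operatorname{ad}_{\bar m}(X) := \bar m X + X \bar m$; since the $E_{ij}$ span $M_{2n}(\F)$, one may add to $N'$ any element of $\im(\operatorname{ad}_{\bar m})$. The target is $\bar N := \sum_{\ell=1}^{n} E_{j_\ell i_\ell}$, and a short computation using that the pairs partition $\{1,\dots,2n\}$ gives $\operatorname{ad}_{\bar m}(\bar N) = I = \operatorname{ad}_{\bar m}(N')$, so $N' - \bar N \in \ker(\operatorname{ad}_{\bar m})$. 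It therefore suffices to show $\ker(\operatorname{ad}_{\bar m}) = \im(\operatorname{ad}_{\bar m})$. The inclusion $\im \subseteq \ker$ follows from $\operatorname{ad}_{\bar m}^2 = 0$: writing $\operatorname{ad}_{\bar m} = L + R$ with $L, R$ left and right multiplication by $\bar m$, one has $L^2 = R^2 = 0$ and $LR = RL$, so $(L+R)^2 = L^2 + R^2 = 0$ over $\F$. Equality of dimensions follows from a rank count: $\bar m$ is permutation-conjugate to $\bigoplus_{\ell} \left(\begin{smallmatrix}0&1\\0&0\end{smallmatrix}\right)$, and block-decomposing $\operatorname{ad}_{\bar m}$ accordingly exhibits it as a direct sum of $n^2$ copies of the rank-$2$ operator $X \mapsto \left(\begin{smallmatrix}0&1\\0&0\end{smallmatrix}\right)X + X\left(\begin{smallmatrix}0&1\\0&0\end{smallmatrix}\right)$ on $M_2(\F)$, giving $\rk(\operatorname{ad}_{\bar m}) = 2n^2 = \tfrac12 \dim M_{2n}(\F)$. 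Hence $N' - \bar N = \operatorname{ad}_{\bar m}(X)$ for some $X$, and conjugating $\bar m + U N'$ by the product of the $A^U_{i,j}$ over the $(i,j)$ with $X_{ij} \neq 0$ produces exactly $\bar M := \bar m + U \bar N$.

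Finally, $\bar M$ manifestly has the three listed properties (one nonzero entry per row and column, zero diagonal, and a $1$ at $(i,j)$ together with a $U$ at $(j,i)$ precisely for the matched pairs $i<j$), and taking $P$ to be the composite of all the elementary matrices used — each self-inverse and each of a permitted type — yields $M = P \bar M P^{-1}$. I expect the main obstacle to be the normalization of $m$ in the second paragraph: the inductive reduction to matching form under upper-unitriangular conjugation needs careful bookkeeping to check that the re-basing performed at step $k$ does not invalidate the matchings declared at earlier steps (the key point being that the vectors re-based there are precisely the still-unmatched annihilated ones, which were never used as partners), and it is the square-zero and full-rank conditions on $m$ that make this go through. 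The remaining ingredients — the opening expansion and the $N'$-correction via exactness of $\operatorname{ad}_{\bar m}$ — are routine by comparison.
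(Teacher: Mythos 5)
Your proof is correct, and it takes a genuinely different route from the paper's. The paper argues by induction on $n$: after enumerating the four admissible $2\times 2$ matrices for the base case, it locates the pair $(i,j)$ with $j-i$ minimal such that $M_{ij}$ is a unit, conjugates to isolate that row/column pair, and peels off a $2\times 2$ block to recurse. You instead split $M = m + UN$ into its reduction mod $U$ and its $U$-correction, and normalize them in two clean stages: first you show that a strictly upper triangular square-zero $m$ with $\rk(m)=n$ is Borel-conjugate (via the $A_{ij}$, $i<j$) to a matching matrix $\bar m$, and then you exploit that the $A^U_{ij}$-conjugations act on $N$ precisely by adding $\operatorname{ad}_{\bar m}(E_{ij})$, so that completing the normalization is an exactness statement $\ker\operatorname{ad}_{\bar m} = \operatorname{im}\operatorname{ad}_{\bar m}$, which you verify by the square-zero trick in characteristic $2$ plus a rank count after block-decomposing $\operatorname{ad}_{\bar m}$ into $n^2$ copies of $X \mapsto JX + XJ$. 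The payoff of your approach is conceptual clarity: it makes visible that the $m$-normalization is a (known-type) Borel orbit statement for square-zero nilpotents with $\operatorname{im}=\ker$, and that the $N$-correction is a Koszul-type acyclicity statement, with no case enumeration. The paper's induction is more elementary and self-contained but hides this structure. I checked the step you flagged as the main obstacle — the inductive reduction of $m$ to matching form — and the bookkeeping does go through as you sketch: after adjusting $f_k$ you have $m f_k$ supported on the already-annihilated $f_i$, the image of $m$ on $\langle f_1,\dots,f_{k-1}\rangle$ is exactly the span of the already-targeted ones, so those components can be cleared, and the remaining support lies on unmatched heads, whose span you may re-base (replacing the largest-index head $f_{i_0}$ by the sum of the support) without disturbing any prior relation $m f_j = f_{\tau(j)}$ since $i_0$ is by definition not a target. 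The perfectness of the resulting matching follows from $\rk(m) = n = \dim\ker(m)$ and $\operatorname{im}(m)=\ker(m)$, exactly as you implicitly use.
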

\begin{proof}
Since the elementary matrices are their own inverses, it is enough to show that $M$ can be reduced to a matrix $\bar M$ of the desired form by conjugating with elementary matrices of the form $A^U_{i,j}$ or of the form $A_{i,j}$ with $i<j$. Note that conjugating by these matrices preserves the fact that $M$ is upper triangular mod $U$ and that $M^2 = UI_{2n}$.

We proceed by induction on $n$. For $n = 1$, there are finitely many $2 \times 2$ matrices over $\F[U]/U^2$. Only four of these square to $U I_{2}$ and are upper triangular mod $U$:
$$\left[ \begin{array}{cc} 0 & 1 \\ U & 0\end{array} \right], 
\left[ \begin{array}{cc} 0 & 1+U \\ U & 0\end{array} \right], 
\left[ \begin{array}{cc} U & 1 \\ U & U\end{array} \right], \text{ and }
\left[ \begin{array}{cc} U & 1+U \\ U & U\end{array} \right].
$$
The first has the desired form already, and it is routine to check that the remaining three reduce to the first after conjugating with $A^U_{1,1}$, $A^U_{2,1}$, and $A^U_{1,1}A^{U}_{2,1}$, respectively.

If $n > 1$, choose $i$ and $j$ with $j-i$ as small as possible so that the $(i,j)$ entry of $M$ is 1 or $1+U$. Conjugating by $A_{j,k}$ has the effect of adding the $j$th column to the $k$th column and adding the $k$th row to the $j$th row. After  conjugating by $A_{j,k}$ if necessary for each $k>j$, we can ensure that the $(i,k)$ entry of $M$ is 0 or $U$. After conjugating with matrices of the form $A^U_{j,k}$ for any $k$, we can assume that the $i$th row has a 1 in the $j$th entry and zeros elsewhere. The fact that $M^2 = U I_{2n}$ then implies that the $j$th row has a $U$ in the $i$th entry and zeroes elsewhere. Similarly, after conjugating with $A_{k,i}$ with $k<i$ or with $A^U_{k,i}$ for any $k$ we can ensure that the only nonzero entry in the $j$th column of $M$ is a 1 in the $i$th entry. It follows from the fact that $M^2 = U I_{2n}$ that the only nonzero entry in the $i$th column is a $U$ in the $j$th entry.

After the conjugations described above, the matrix $M$ has the form desired in the $i$th and $j$th rows and columns. We need to show that rest of the matrix can be put in the desired form using conjugations with $A_{k,\ell}$ and $A^U_{k,\ell}$ with $k$ and $\ell$ not in $\{i,j\}$. This is equivalent to showing that the the $2(n-1)\times 2(n-1)$ matrix obtained from this one by deleting the $i$th and $j$th rows and columns can be reduced to the desired form, which we take as the inductive hypothesis.
\end{proof}

\piccaption[]{After adding a crossover arrow from $x_i$ to $x_j$, the path from $y$ to $x_i$ gives rise to a path from $y$ to $x_j$ (shaded). There is also a new path from $x_i$ to $y'$, which carries a weight of $U$. \label{fig:conjugation}}
\parpic[r]{
 \begin{minipage}{51mm}
 \hspace{5mm}
 \labellist
 \pinlabel {$y$} at 188 106
 \pinlabel {$x_i$} at 105 -8
 \pinlabel {$x_j$} at -10 106
 \pinlabel {$y'$} at 190 75
 \tiny
 \pinlabel {$\bu$} at 170 170
 \endlabellist
\includegraphics[scale=0.6]{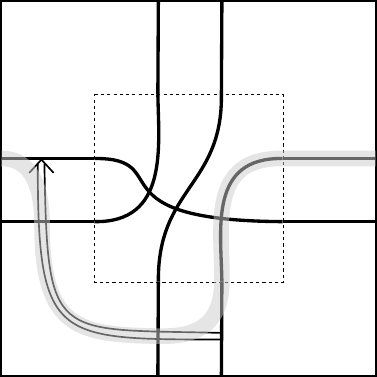}
 \vspace{1mm}
  \end{minipage}%
}
Lemma \ref{lem:linear_algebra} lets us relate $M$ to a matrix of a simple form through conjugation by elementary matrices; this in turn has a nice geometric interpretation in terms of train tracks. Let $\tracks$ be a train track that has only horizontal or vertical segments outside the corner box $[\frac{1}{4},\frac{3}{4}]\times[\frac{1}{4},\frac{3}{4}]$, and let $\tracks'$ be the train track obtained from $\tracks$ by adding a clockwise moving crossover arrow outside the corner box from the segment with endpoint $x_i$ to the segment with endpoint $x_j$. We claim that $M_{\tracks'}$ is obtained from $M_\tracks$ by conjugating with an elementary matrix. The counts of smooth paths are identical except that there are some new paths in $\tracks'$ involving the crossover arrow. For each path into $x_i$ in $\tracks$, $\tracks'$ will have an additional path into $x_j$, and for each path out of $x_j$ in $\tracks$, $\tracks'$ will have an additional path out of $x_i$; if the crossover arrow passes the basepoint in the top right corner of the square $m$ times, the new paths will be weighted by an additional factor $U^m$. An example of this is shown in Figure \ref{fig:conjugation}. It follows that $M_{\tracks'}$ is obtained from $M$ by adding $U^m$ times the $j$th row to the $i$th row and $U^m$ times the $i$th column to the $j$th column. In other words, we conjugate by $A_{i,j}$ if the crossover arrow does not pass the base point or by $A_{i,j}^U$ if the crossover arrow passes the basepoint once. (If the basepoint is passed more than once then $M_{\tracks'} \equiv M_{\tracks}$ modulo $U^2$).

\begin{proof}[Proof of Proposition \ref{prop:curves-plus-crossovers}]
Let $\tracks'$ be a valid reduced train track in $T$ and let $M = M_{\tracks'}$ be the corresponding matrix. Set $M = P \bar M P^{-1}$ as in Lemma \ref{lem:linear_algebra}; the matrix $\bar M$ corresponds to an extended type D structure, from which we can construct a train track $\bar \tracks$. To construct $\bar \tracks$, let $\bar \tracks$ have the same primary vertices as $\tracks'$, labelled $x_1,\ldots x_{2n}$ as described above, and add oriented corner edges from $x_i$ to $x_j$ if the $(i,j)$ entry of $\bar M$ is nonzero. The form of $\bar M$ implies that there will be exactly one incoming and one outgoing corner edge at each primary vertex, and that for each corner edge from $x_i$ to $x_j$ there is an oppositely oriented edge from $x_j$ to $x_i$. Replacing the pairs of oppositely oriented corner edges with single unoriented corner edges as usual, observe that there is exactly one unoriented corner edge at each primary vertex. It follows that $\bar \tracks$ is an immersed multicurve.

We observed that $M$ is obtained from $\bar M$ by conjugating with a sequence of elementary matrices of the form $A_{i,j}$ with $i<j$ or $A^U_{i,j}$. Each conjugation corresponds to adding a crossover arrow to $\bar \tracks$ near the boundary of the square. That is, adding the appropriate sequence of crossover arrows to $\bar \tracks$ produces a train track $\tracks$ whose   corresponding matrix is $M$. Since we only use type $A_{i,j}$ matrices if $i<j$, all the crossover arrows added move clockwise. Up to regular homotopy, we can assume that $\bar \tracks$ is horizontal or vertical outside of the corner box $[\frac{1}{4},\frac{3}{4}]\times[\frac{1}{4},\frac{3}{4}]$ and that the crossover arrows added lie outside the corner box. Since $\tracks$ is reduced, the arcs within the corner box connect different sides of the square. Thus $\tracks$ has the form claimed and $\tracks$ is equivalent to $\tracks'$ since the matrix associated to each is $M$.

Conversely, given any train track $\tracks$ of this form, there is a reduced train track $\bar \tracks$ obtained by removing the crossover arrows. $\bar \tracks$ is clearly valid and $M_\tracks$ is obtained from $M_{\bar \tracks}$ by conjugating with elementary matrices, which preserves the property that the matrix squares to $U I_{2n}$. It follows that $\tracks$ is valid as well as reduced.
\end{proof}

We remark that in many cases, a naive train track can be transformed into a train track of the form predicted by Proposition \ref{prop:curves-plus-crossovers} geometrically, without using the linear algebra above. For example, in the running example in Figure \ref{fig:crossover-arrows}(a), pairs of oriented edges have been homotoped together to form crossover arrows; it is clear that this homotopy preserves the equivalence class of train tracks. A further homotopy ensures that the crossover arrows lie outside of a corner box, as in Figure \ref{fig:curves-and-crossovers}(a). This train track has the desired form except that two of the crossover arrows run counterclockwise. This can be fixed by sliding the two counterclockwise arrows through the corner box and using some local moves that preserve the equivalence class of train track, which will be discussed in Section \ref{sub:calculus} (see Figure \ref{fig:crossover_moves}); in this case the local moves result in resolving a crossing of the immersed curve and adding an additional crossover arrow. In small examples, this geometric approach is often faster than the linear algebra used above. However, it would difficult to make this approach systematic for all cases. By passing from train tracks to matrices and back, the proof of Proposition \ref{prop:curves-plus-crossovers} takes advantage of the interplay between linear algebra and our geometric arguments.

\begin{figure}
\labellist
\pinlabel {(a)} at 150 -25
\endlabellist
\includegraphics[scale=0.4]{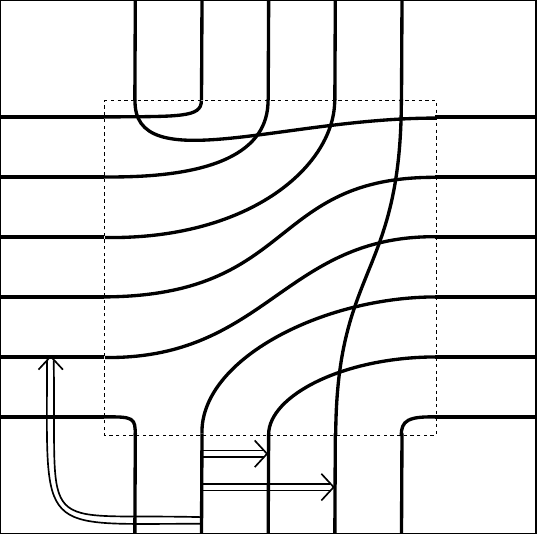} \qquad
\labellist
\pinlabel {(b)} at 150 -25
\endlabellist
\includegraphics[scale=0.4]{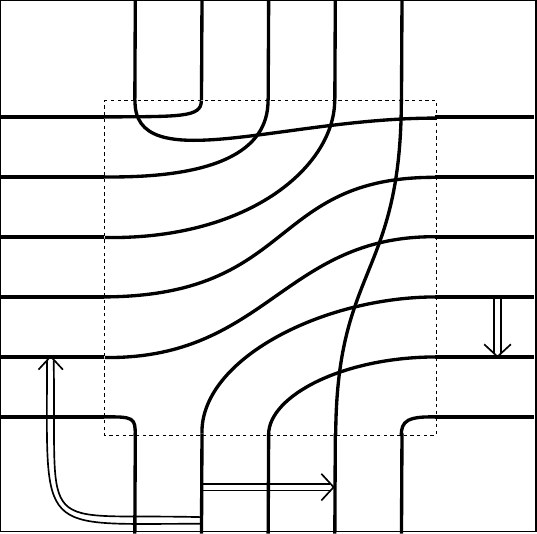} \qquad
\labellist
\pinlabel {(c)} at 150 -25
\endlabellist
\includegraphics[scale=0.4]{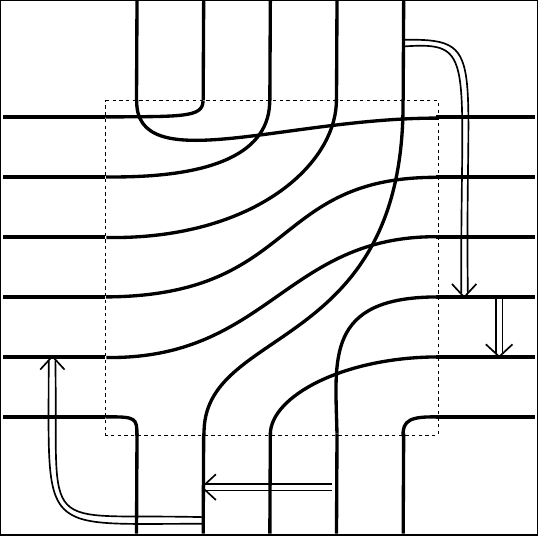}

\vspace{10 pt}
\caption{(a) A train track equivalent to the one in  in Figure \ref{fig:crossover-arrows}, after a slight homotopy. This has the form specified in Proposition \ref{prop:curves-plus-crossovers} except that two crossover arrows move counterclockwise. (b) The innermost counterclockwise arrow can be replaced by a clockwise arrow by pushing it through the corner region. (c) The remaining counterclockwise arrow can be replaced with two clockwise arrows by pushing through the corner region and resolving a crossing. Note that this train track is in the form guaranteed by Proposition \ref{prop:curves-plus-crossovers}.}\label{fig:curves-and-crossovers}
\end{figure}

\subsection{Crossover arrow calculus}\label{sub:calculus} 

\begin{figure}
\labellist
\pinlabel {(a)} at 10 340
\pinlabel {(b)} at 0 227
\pinlabel {(c)} at 330 230
\pinlabel {(d)} at -25 125
\pinlabel {(e)} at 145 15

\pinlabel {$\sim$} at 104 340
\pinlabel {$\sim$} at 270 340
\pinlabel {$\sim$} at 436 340

\pinlabel {$\sim$} at 75 227
\pinlabel {$\sim$} at 165 227
\pinlabel {$\sim$} at 418 227

\pinlabel {$\sim$} at 75 123
\pinlabel {$\sim$} at 255 123
\pinlabel {$\sim$} at 360 123
\pinlabel {$\sim$} at 465 123

\pinlabel {$\sim$} at 238 15
\pinlabel {$\sim$} at 335 15

\endlabellist
\includegraphics[scale=.6]{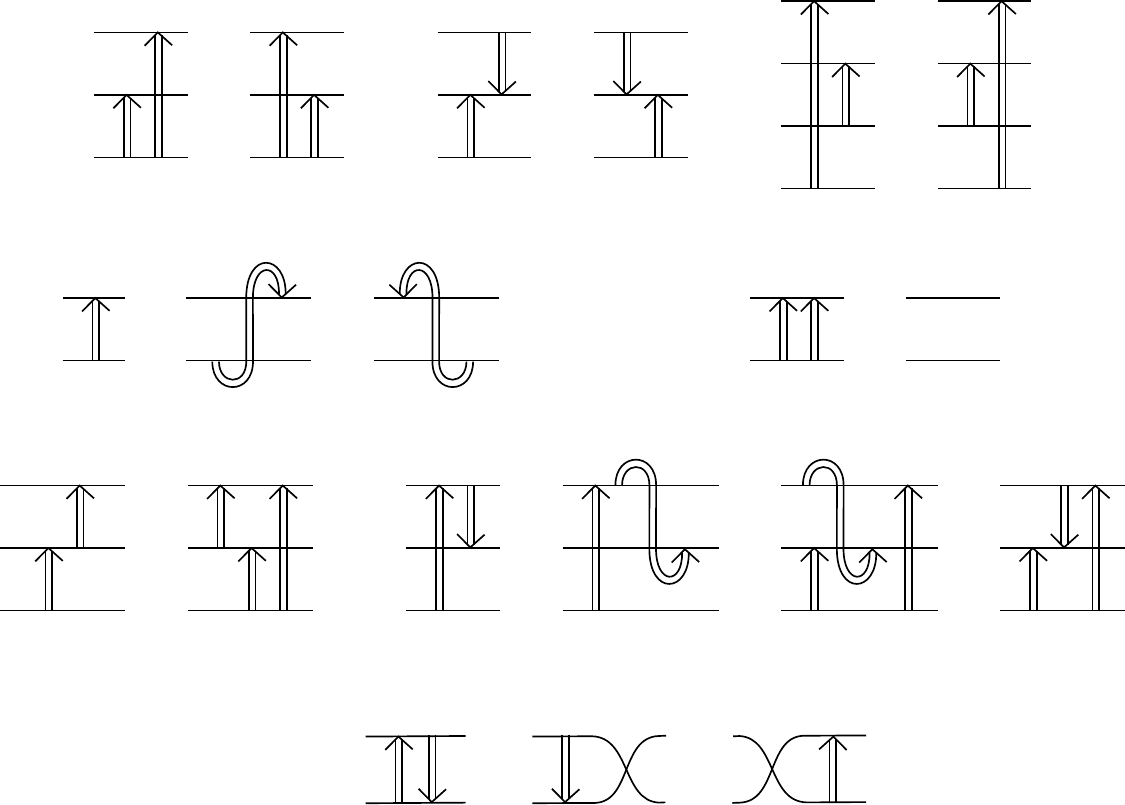}
\caption{Local moves for crossover arrows producing equivalent train tracks: (a) beginnings or ends of arrows slide past each other freely (three such instances are pictured, though this is not exhaustive; for instance, similar configurations can be obtained by reversing the direction of all arrows),  (b) U-turns can be added at the start and end of an arrow if they turn opposite directions; (c) a pair of adjacent parallel crossover arrows connecting the same two strands can be added or removed; (d) the start of one arrow may slide past the end of another arrow at the expense of adding a new arrow that is the composition of the two (two example configurations are shown); (e) opposing arrows can be replaced with one arrow and a crossing.}
\label{fig:crossover_moves}
\end{figure}

\begin{figure}
\labellist
\Huge
\pinlabel {$\leftrightarrow$} at 210 87
\pinlabel {$\leftrightarrow$} at 735 87

\endlabellist
\includegraphics[scale=.45]{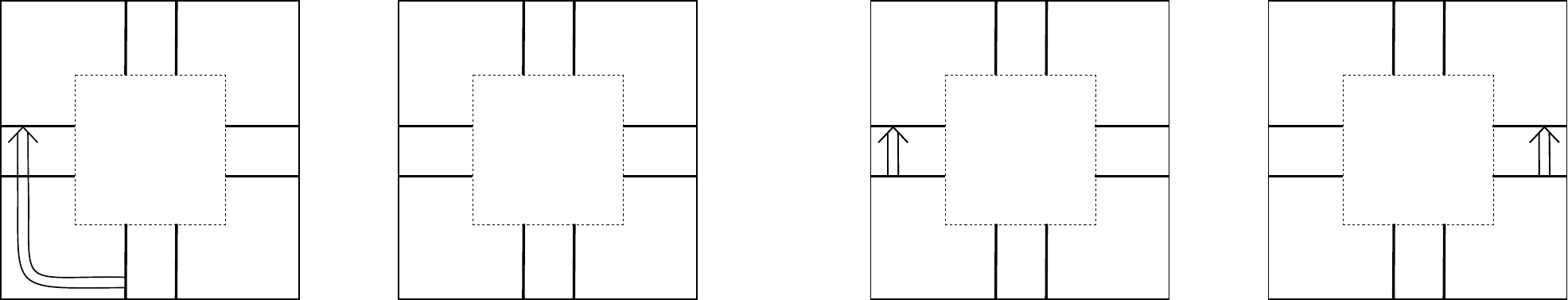}

\caption{Examples of the moves (M1), left, and (M2), right.}
\label{fig:movesM1M2}
\end{figure}

By Proposition \ref{prop:curves-plus-crossovers}, we can always represent a reduced extendable type D structure with a train track consisting of an immersed multicurve with crossover arrows. Our next aim is to simplify these train train tracks using certain geometric moves involving sliding crossover arrows along the immersed curves. First, we are allowed to slide arrows along the multicurve within the square $T\setminus (\alpha\cup\beta)$ without changing the equivalence class of the train track, as long as ends of arrows do not slide past each other. Moreover, the local moves shown in Figure \ref{fig:crossover_moves} produce equivalent train tracks. To see this, note that the mod 2 count of oriented paths between any pair of endpoints is the same on both sides of each replacement. Arrows may slide past each other within the square $T\setminus (\alpha\cup\beta)$ with the caveat that if the head of one arrow passes the tail of another arrow, we must add a new arrow that is homotopic to the composition of the two arrows. By composition, we mean the the arrows are stacked head to tail, where we assume the tail of the second arrow and the head of the first arrow lie on opposite sides of an immersed curve at the same point; to arrange this we may first need to add U-turns at the beginning and end of one arrow, which may be viewed as a regular homotopy of train tracks. Finally, a pair of opposing arrows connecting the same two segments of immersed curve can be replaced with a crossing and one arrow.

In addition to the local moves in Figure \ref{fig:crossover_moves}, we will make use of two additional moves to modify the train tracks:

\begin{itemize}
\item[(M1)] add or remove a clockwise crossover arrow that is in a neighborhood of the boundary of the square $T\setminus (\alpha\cup\beta)$ and which passes at least one corner of the square;

\item[(M2)] 
move a crossover arrow which is in a neighborhood of one side of the square $T\setminus (\alpha\cup\beta)$ to the opposite side of the square; in other words, slide an arrow across either $\alpha$ or $\beta$ in the torus $T$.
\end{itemize}

Examples of these moves are shown in Figure \ref{fig:movesM1M2}. In each move, we assume that there are no other crossover arrows between the arrow being (re)moved and the boundary of the square. More precisely, there is a neighborhood of the boundary of the square containing the crossover arrow in question but no other crossover arrows, and in which the immersed curve consists of horizontal and vertical segments. In contrast with the local moves described in Figure \ref{fig:crossover_moves}, the moves (M1) and (M2) change the equivalence class of the train track. However, these moves induce isomorphisms between the associated extended type D structures.

\begin{proposition}\label{prp:moves}
Suppose $\tracks'$ is obtained from $\tracks$ by an application of a sequence of the moves (M1) and (M2) together with the local moves depicted in Figure \ref{fig:crossover_moves}. If $\widetilde{N}'$ and $\widetilde{N}$ are the extended type D structures associated with $\tracks'$ and $\tracks$, respectively, then there is an isomorphism $\widetilde{N}'\cong\widetilde{N}$ as $\widetilde{\Alg}$ modules.
\end{proposition}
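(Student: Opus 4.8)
The plan is to verify the claim separately for each of the two moves (M1) and (M2), since the local moves of Figure~\ref{fig:crossover_moves} have already been observed to preserve the equivalence class of the train track (and hence determine the same extended type D structure up to reordering of generators, giving the identity isomorphism). For a sequence of moves one simply composes the isomorphisms obtained at each stage, so it suffices to treat a single application of (M1) or (M2).

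For move (M1), I would translate the geometric operation into the matrix language of Proposition~\ref{prp:mod-track-matrix} and Lemma~\ref{lem:linear_algebra}. Adding a clockwise crossover arrow from the strand $c_i$ to the strand $c_j$ that passes at least one corner has the effect, on $M_\tracks$, of conjugating by one of the elementary matrices $A_{i,j}$ (if the arrow passes the basepoint an even number of times) or $A^U_{i,j}$ (if it passes an odd number of times), exactly as computed in the aside of Section~\ref{sub:aside}: each new corner path from $x_i$ through $c_j$ onward, and each new corner path terminating at $x_i$ coming from $x_j$, is recorded by the corresponding row/column operation, with an extra weight $U$ precisely when the basepoint is crossed. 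Conjugation by an invertible matrix over $\F[U]/U^2$ corresponds to a change of basis of the underlying $\F$-vector space $V$, so it visibly induces an isomorphism of extended type D structures $\widetilde N \to \widetilde N'$ as $\AlgExt$-modules (indeed the explicit isomorphism is $x \mapsto P^{-1}x$ read off from the elementary matrix $P = A_{i,j}$ or $A^U_{i,j}$, just as in the worked example with $x' = x + \rho_{1230}\otimes y$). The hypothesis that the arrow passes at least one corner is what guarantees $i \ne j$ (or, in the $A^U_{i,i}$ case, that the diagonal modification is by $1+U$ rather than something outside the allowed list), so that the elementary matrix is genuinely invertible and the target is still a valid $\Alg$-train track.

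For move (M2), sliding a crossover arrow over a one-handle, the point is that in the handle-decomposition picture of Section~\ref{sub:aside} the matrix data $M^i$ lives on the zero-handles, and the arrow before and after the slide records the same pair of corner paths on the zero-handle side — the one-handle contributes only the parallel strands connecting identified generators, which the arrow is merely translated across. Concretely I would check that the count of oriented paths between every pair of primary switches on the relevant zero-handles is unchanged, possibly after a compensating change of basis identifying the two presentations; this is again a conjugation by an invertible matrix (often the identity, when the slide is literally a homotopy in the ambient surface that happens to cross a handle region), hence an $\AlgExt$-module isomorphism.

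The main obstacle I anticipate is bookkeeping rather than conceptual: one must be careful that in move (M1) the crossover arrow is only permitted to be added when it passes a corner, and that in all cases the resulting matrix still satisfies the hypotheses of Lemma~\ref{lem:linear_algebra} ($M^2 = U I_{2n}$, strict block upper-triangularity mod $U$) so that it continues to describe an honest extended type D structure; verifying that the elementary conjugations land inside the allowed class $\{A_{i,j} : i<j\} \cup \{A^U_{i,j}\}$ and that compositions of these remain invertible is the crux. Once that is in place, the isomorphism is simply the composite of the base-change maps, and since each factor is an isomorphism of $\AlgExt$-modules, so is the composite.
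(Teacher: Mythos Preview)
Your treatment of (M1) is close in spirit to the paper's but is imprecise at the key step. Conjugating the $2n\times 2n$ matrix $M_\tracks$ by a single $A_{i,j}$ is \emph{not} a change of basis of the $n$-dimensional $\F$-vector space $V$: each generator of $V$ corresponds to \emph{two} indices $i$ and $t(i)$, and an arbitrary elementary conjugation does not respect this pairing. What makes (M1) work is that the arrow passes a corner, so $i$ and $j$ lie on different sides of the square; the paper identifies the resulting operation not as an $\F$-linear change of basis on $V$ but as the $\widetilde{\Alg}$-linear substitution $x\mapsto x+\rho_I\otimes y$ on $\widetilde{\Alg}\otimes_\sI V$, where $I$ records the corners crossed. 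You gesture at this with the $x'=x+\rho_{1230}\otimes y$ example, but you need to make this identification explicit rather than asserting that matrix conjugation is automatically a basis change on $V$.

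Your treatment of (M2) has a genuine gap. Sliding a crossover arrow across a one-handle does \emph{not} leave $M_\tracks$ unchanged: the arrow moves from one arrowbox to another, changing which corner paths exist in the zero-handle. The paper's argument reformulates (M2) as adding a pair of cancelling arrows at one end and sliding one across; the net effect is to add arrows at \emph{both} ends $v_{i_x}\to v_{i_y}$ and $v_{j_x}\to v_{j_y}$, which is conjugation by $A_{i_x,i_y}A_{j_x,j_y}$. Because this pair acts on both indices associated to the generator $x$, it \emph{does} correspond to an honest $\F$-linear change of basis on $V$, namely $x\mapsto x+y$. Your claim that the conjugation is ``often the identity'' is simply false, and without identifying the correct basis change you have not shown that the two extended type D structures are isomorphic.
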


\begin{proof}
As observed, all local moves produce equivalent train tracks, and hence equality on the associated extended type D structures. To prove the proposition then, we need to establish changes of basis for each of the moves (M1) and (M2) that induce the desired isomorphisms. 

Suppose first that $\tracks'$ is obtained from $\tracks$ by an application of (M1), adding a crossover arrow from horizontal or vertical edge representing a generator $x$ of $\widetilde{N}$ to an edge representing a generator $y$. We claim that $\widetilde{N}'$ is related to $\widetilde{N}$ by the change of basis replacing $x$ with $x + \rho_I \otimes y$, where $I$ is the sequence of corners passed by the crossover arrow being added.

Consider for concreteness the case where the sequence of corners $I$ associated with the arrow being added is 1 (other cases of clockwise arrows are similar). Thus $x$ has idempotent $\iota_0$, $y$ has idempotent $\iota_1$, and we remove a crossover arrow from the left end of the horizontal segment corresponding to $x$, which we will call $v_x$, to the top end of the vertical segment corresponding to $y$, which we will call $v_y$. Adding the crossover arrow adds a new corner path starting at $v_x$ for each corner path starting at $v_y$, and a new corner path ending at $v_y$ for each one ending at $v_x$. The corner paths starting at $v_y$ correspond to $\rho_J \otimes z$ terms in $\tilde\partial(y)$ where the sequence $J$ begins with $2$. The new corner paths starting from $v_x$ pass one extra corner, so they correspond to terms $(\rho_1\rho_{J}) \otimes z$ in $\tilde\partial(x)$. In other words, we add $\rho_1 \tilde\partial(y)$ to $\tilde\partial(x)$ (note that multiplying $\tilde\partial(y)$ on the left by $\rho_1$ picks out exactly the terms for which the Reeb chord interval starts with 2). Similarly, each corner path ending at $v_x$ corresponds to a term $\rho_I \otimes x$ in $\tilde\partial(z)$ for some $z$ and some $I$ ending in 0, and for each the new corner path added corresponds to a $(\rho_{I} \rho_1) \otimes y$ term in $\tilde\partial(z)$. This is precisely the effect of the change of basis (over $\widetilde{\Alg}$) replacing $x$ with $x + \rho_1 \otimes y$.

Now suppose that $\tracks'$ is obtained from $\tracks$ by an application of (M2), where the crossover arrows slides between two horizontal or two vertical edges representing generators $x$ and $y$ of $\widetilde{N}$ and the crossover arrow is oriented from the edge representing $x$ to the edge representing $y$. In this setting, we claim that $\widetilde{N}'$ is obtained from $\widetilde{N}$ by the change of basis that replaces $x$ with $x+y$.

Consider a train track $\tracks$ in $T$ with associated extended type D structure $\widetilde{N}$. Let $x$ and $y$ be generators of $\widetilde{N}$, and suppose that $x$ and $y$ both have idempotent $\iota_0$ (the case where $x$ and $y$ both have idempotent $\iota_1$ is identical). Let $v_{i_x}$ and $v_{j_x}$ be the marked points on the right and left boundary, respectively, of the square $T\setminus(\alpha\cup\beta)$ associated to $x$, and let $v_{i_y}$ and $v_{j_y}$ be marked points on the right and left boundary of the square associated to $y$. Suppose $\tracks'$ is obtained by connecting the horizontal segments by crossover arrows at each end, one from $v_{i_x}$ to $v_{i_y}$ and one from $v_{j_x}$ to $v_{j_y}$; note that this is equivalent to move (M2), since we could equivalently add two cancelling arrows at one end of the horizontal strands and then slide one to the other end. The effect of this move on $M_{\tracks}$ is conjugation by $A_{i_x,i_y}A_{j_x,j_y}$. For each corner path out of $v_{i_y}$ or $v_{j_y}$ we add the corresponding corner path out of $v_{i_x}$ or $v_{j_x}$, and for each corner path into $v_{i_x}$ or $v_{j_x}$ we add the corresponding corner path into $v_{i_y}$ or $v_{j_y}$. Each corner path out of $v_{i_y}$ or $v_{j_y}$ corresponds to a term $z$ in $D_I(y)$ for some $I$, and the new corner path added corresponds to the same term in $D_I(x)$. Thus we add each term in $\partial(y)$ to $\partial(x)$. Each corner path into $v_{i_x}$ or $v_{j_x}$ corresponds to an $x$ term in $D_I(z)$ for some generator in $z$ and some $I$, so for each such term we add the term $y$ to $D_I(z)$. This is exactly the effect of the change of basis on $\widetilde{N}$ that replaces $x$ with $x+y$. This shows that move (M2) corresponds to a change of basis in the extended type D structure $\widetilde{N}$, as claimed.
\end{proof}

\subsection{Prospectus for simplification} With these observations in place, our goal is to use the local moves in Figure \ref{fig:crossover_moves} and moves (M1) and (M2) to simplify a given train track by systematically removing crossover arrows. Below, we describe our general strategy for doing this. In individual examples, it is often easy to simplify the train track by inspection, using the strategy as a guide.  In the next section, we will give a  formal algorithm showing that the strategy can always be applied. 

One might hope to remove all crossover arrows from the train track, resulting in simply a collection of immersed curves. This, it turns out, is not always possible; instead, we will remove all arrows which do not connect parallel immersed curves. To make sense of this, we must first define what we mean by parallelism. Associated with a crossover arrow, we have a pair of pointed curves \((\gamma, p)\) and \((\gamma',p')\), where \(p \in \gamma\) is the tail of the arrow, and \(p'\in \gamma'\) is its head. We identify \(\pi_1(T,p)\) with \(\pi_1(T,p')\) by choosing a path from \(p\) to \(p'\) which is disjoint from \(\alpha \cup \beta\), so after choosing orientations we can view \((\gamma,p)\) and \((\gamma',p')\) as defining \(\gamma, \gamma' \in \pi_1(T,p)\). 
\begin{definition}\label{def:parallel}
Pointed closed curves \((\gamma,p), (\gamma,p')\) are parallel if there is some \(\delta\in \pi_1(T,p)\) and \(k, k' \in \Z\) such that \(\gamma = \delta^{k},\) \(\gamma' = \delta^{k'}\). 
\end{definition}
Since \(k,k' \in \Z\), this definition does not depend on the choice of orientations. 
Note that the choice of basepoints is very important; for example, if \(p\) and \(p'\) lie on different components of \(\gamma  - (\alpha \cup \beta)\), \((\gamma, p)\) is usually not parallel to \((\gamma, p')\). 

Equivalently, assume that \(\gamma\)  is in minimal position with respect to \(\alpha\) and \(\beta\). 
Following the orientation on $\gamma$ starting at $p$ determines a (periodic) infinite sequence of signed intersections with $\alpha$ or $\beta$, which we can interpret as an infinite periodic  word in two variables and their inverses; two pointed curves are parallel if there is some choice of orientations for which they determine the same word. 

The definition of parallel pointed curves above suggests a local notion of parallelism. This can be made more precise with the notions of colors and weights that will be introduced in Section \ref{sub:removing}, but for now we simply say two pointed curves are \emph{locally parallel} moving in a given direction (either with the orientations or against the orientations) if the first signed intersection with $\alpha \cup \beta$ on each curve is of the same type. If pointed curves are not locally parallel we say they \emph{diverge} in the given direction. When two curves diverge, one curve diverges to the left and one diverges to the right. In general, curves will be locally parallel for some finite number of intersections and then diverge. Curves which never diverge are parallel in the sense of Definition \ref{def:parallel}; we will sometimes say such curves are \emph{globally parallel} to avoid confusion.

\begin{figure}[h]
\includegraphics[scale=0.4]{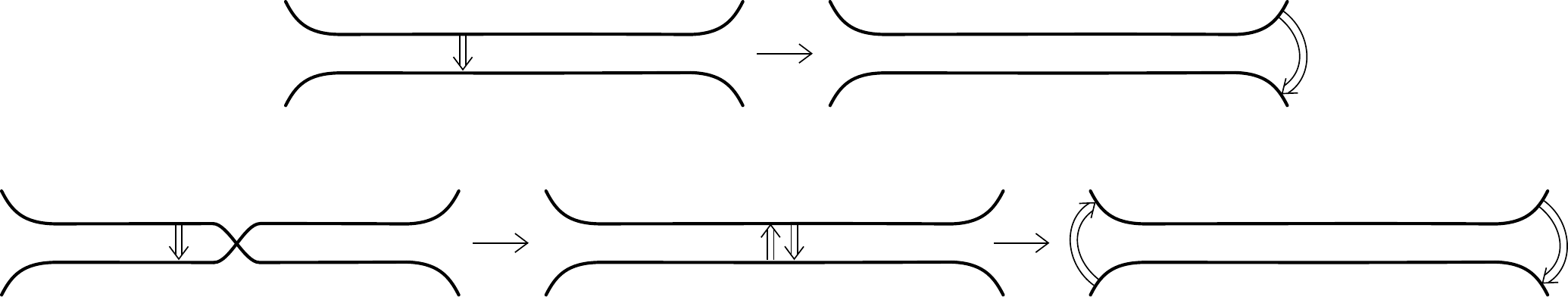} 
\caption{A schematic for removing a single crossover arrow that does not connect globally parallel curves. In the first case we slide the arrow until the curves diverge and the arrow moves left to right, and then remove the arrow. In the second case we resolve a crossing and then remove the two resulting arrows as in the first case.}\label{fig:removing-one-arrow}
\end{figure}

Suppose a train track consists of a collection of immersed curves and a single crossover arrow. If the crossover arrow does not connect (globally) parallel curves, then it can be removed using the moves (M1), (M2), and the local moves from Figure \ref{fig:crossover_moves}. Since the pointed curves connected by the arrow are not globally parallel, they must eventually diverge in both directions. We begin by sliding the crossover arrow along the curves in the direction of the orientation. If the curves are locally parallel in this direction, we slide the arrow across $\alpha$ or $\beta$ using (M2); repeating this as necessary we slide the arrow until the two curves diverge. If the arrow goes from the left curve to the right curve when the curves diverge, it can be realized as a clockwise moving crossover arrow around the corner box that passes at least one corner of the square; in this case the arrow can be  removed using (M1). If instead the arrow goes from the right curve to the left curve, then we slide the arrow along the curves in the opposite direction until they diverge. Once again, if the arrow goes from the left curve to the right curve at this point of divergence, it can be removed using (M1). If the arrow moves from right to left at both ends, then the curves must cross at some point in between. In this case we slide the arrow to the intersection and resolve the crossing using the local move (e) from Figure \ref{fig:crossover_moves}. The resulting train track has two crossover arrows, but when these slide along the curves in opposite directions they will be left-to-right, and thus removable, at both ends. This process is summarized in Figure \ref{fig:removing-one-arrow}. Note that in the second case case we have removed the original arrow and arrived at a collection of immersed curves with no crossover arrows, but the immersed curves are not the ones we started with because a crossing has been resolved.

If a train track contains many crossover arrows, any one of them can be removed by the procedure above, provided it does not connect parallel curves. It is plausible, then, that we can remove all crossover arrows that do not connect parallel curves by repeatedly removing one at a time.  The key potential issue here is that when multiple arrows are present, removing one of them may involve sliding it past others and this may create more arrows. We need to control the number of new arrows added to ensure that the process of removing arrows one at a time eventually terminates. To deal with this, we will introduce a bookkeeping tool in the form of a weight system for arrows, which measure how far an arrow needs to be pushed before the strands diverge. The strategy is to deal with the easiest to remove arrows (i.e. those with smallest weights) first. These weights will be infinite for crossover arrows connecting globally parallel curves, and we will show that there is an inductive algorithm that, at each step, increases the smallest weight (taken over all arrows) by one while controling the number of arrows added in the process. By compactness of the underlying curves, after a finite number of steps the only remaining arrows must have infinite weight. Figure \ref{fig:end-runnning-example} shows the outcome of this process applied to the train track in Figure \ref{fig:curves-and-crossovers}(c). Note that when performing this procedure by hand, it may not be necessary to explicitly specify this weight system.

\begin{figure}[h]
\includegraphics[scale=0.4]{figures/example-curves-and-crossovers_c} \hspace{20 mm}
\includegraphics[scale=0.4]{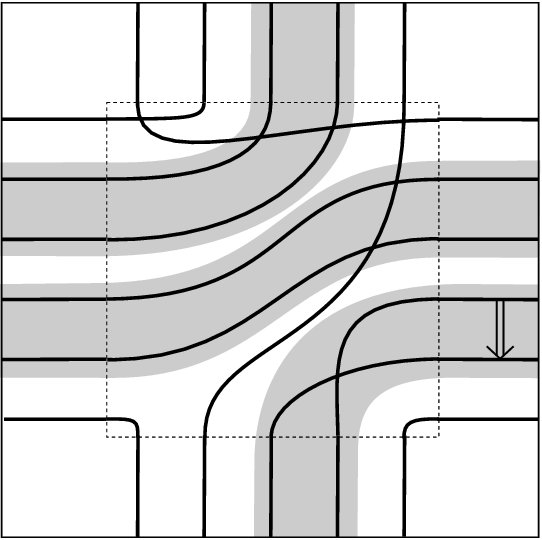}
\caption{The running example: On the left is the train track from Figure \ref{fig:curves-and-crossovers}(c). The arrow passing the top right corner of the square can be removed by an application of (M1); since it is not the outermost arrow it must first be moved closer to the boundary of the square, which introduces a new arrow, but this can also be removed by (M1). The arrow passing the bottom left corner of the square can also be removed by (M1). The arrow parallel to the bottom side of the square is slid downward, applying (M2) twice before being removed with (M1). The remaining crossover arrow, which connects two points on an immersed curve at which the curve is self parallel, cannot be removed.}\label{fig:end-runnning-example}
\end{figure}

%

\subsection{Arrow sliding algorithm} \label{sub:removing} We now formalize the strategy suggested above. By Proposition \ref{prop:curves-plus-crossovers}, we may replace any train track with a configuration of curves and crossover arrows; it remains to show that all crossover arrows can be eliminated unless they connect parallel (pointed) curves. 

 \labellist
 \pinlabel {$\vdots$} at 360 54  \pinlabel {$\vdots$} at 360 160
  \pinlabel {$\sigma_\bu$} at 367 290  \pinlabel {$\sigma_\circ$} at 367 427
   \pinlabel {$A$} at 310 293  \pinlabel {$A$} at 310 430
   \pinlabel {$A$} at 418 293  \pinlabel {$A$} at 418 430

 \tiny \pinlabel {$z$} at 243 344
 \endlabellist
\begin{figure}[h] \includegraphics[scale=0.35]{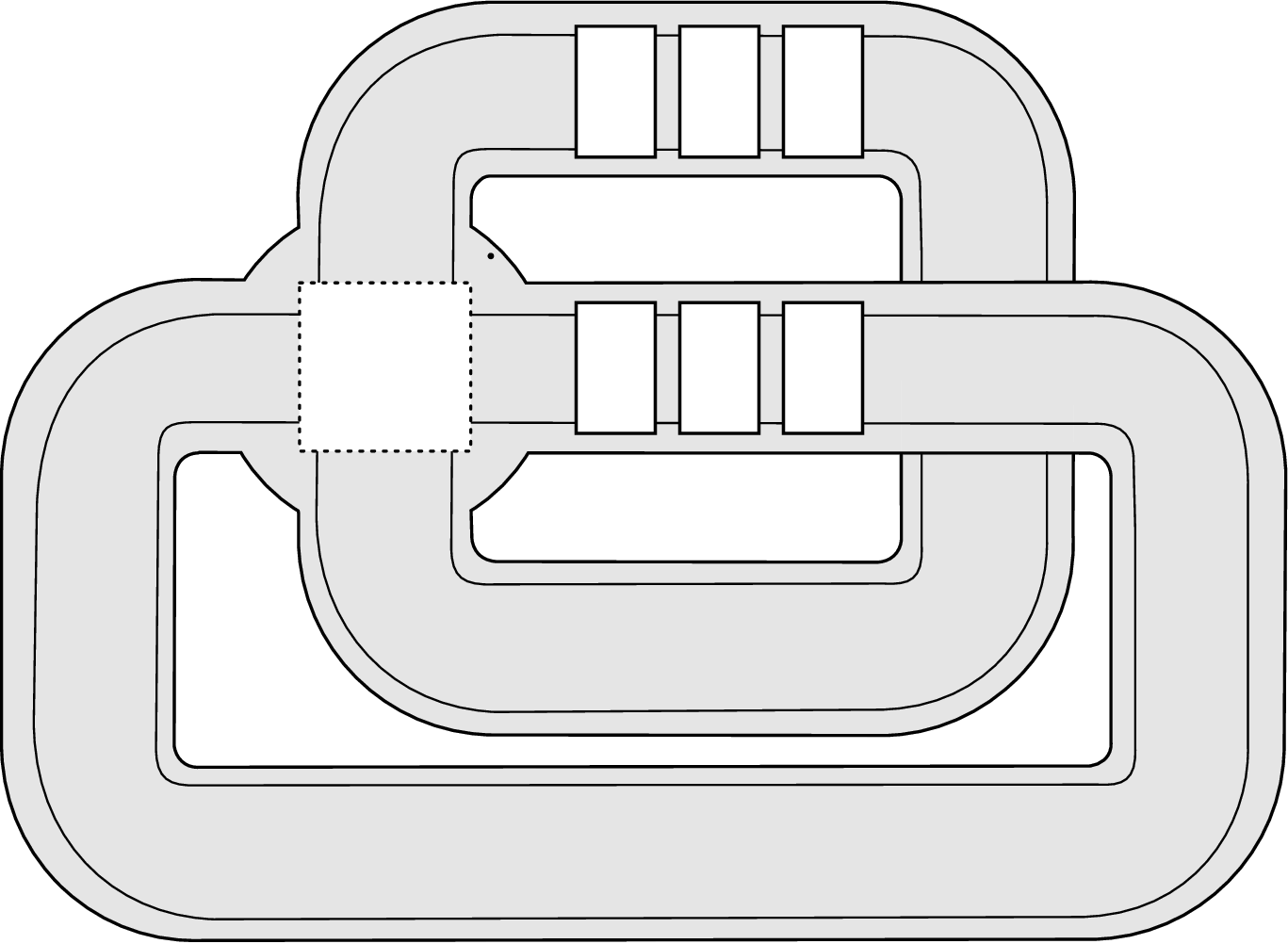}
\caption{The anatomy of a train track in $T$, according to Proposition \ref{prop:curves-plus-crossovers}: After applying (M1) moves to remove each of the (clockwise) crossover arrows covering a corner, all of the crossover arrows can be moved into the 1-handles (these are contained in the four boxes labeled $A$). Each one handle contains a permutation box (labelled $\sigma_\bu$ and $\sigma_\circ$ to agree with the notation for idempotents) containing an immersed collection of curve segments encoding the permutation. Finally, following previous sections, the dashed box in the 0-handle corresponds to the corner box.}\label{fig:anatomy}
\end{figure}

 \labellist
  \tiny \pinlabel {$z$} at 243 344
  \pinlabel {$\infty$} at 422 282.5
    \pinlabel {$(2,1)$} at 270 450.6
 \endlabellist
\begin{figure}[h] \includegraphics[scale=0.35]{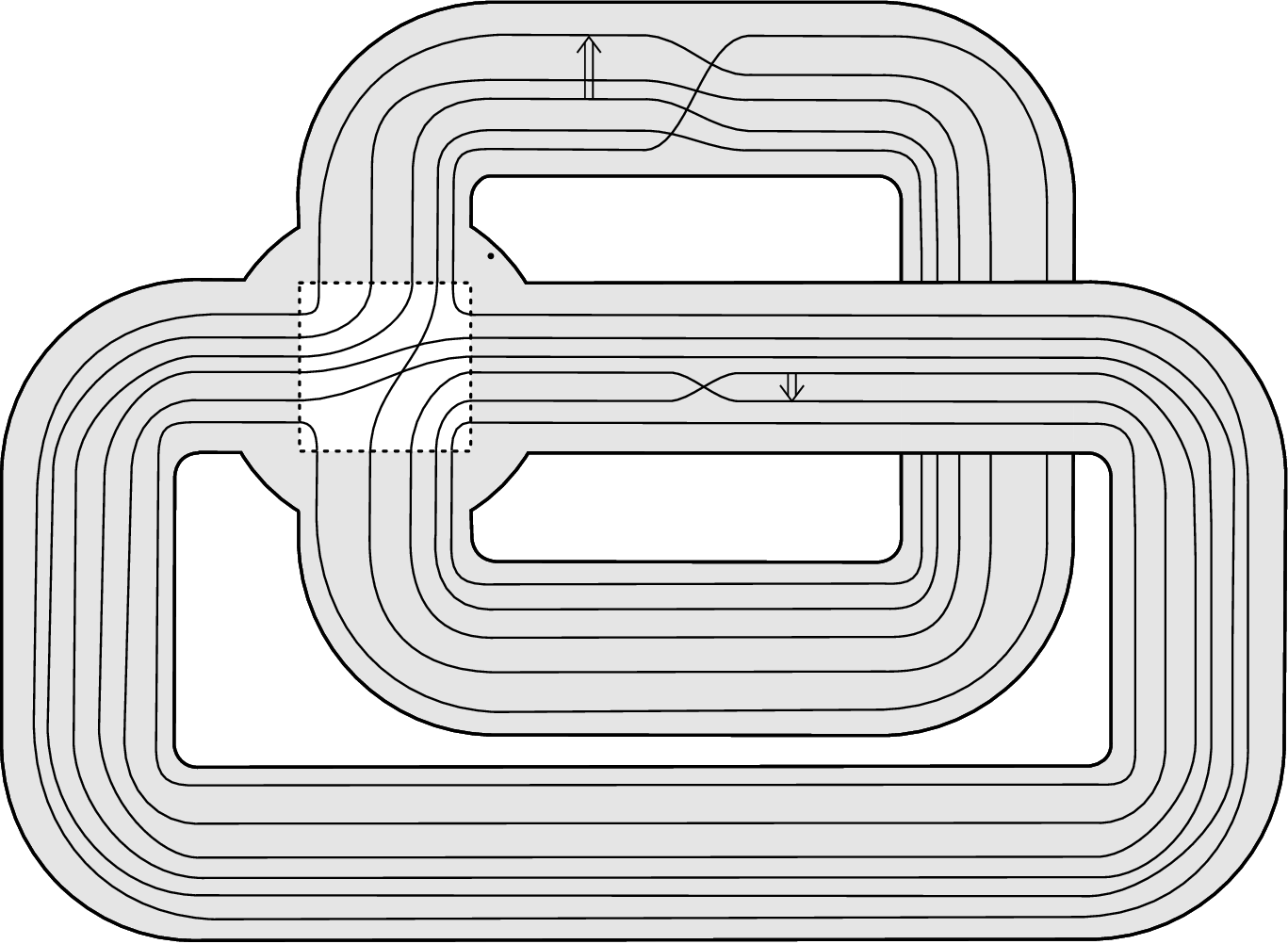}
\caption{An equivalent view of the train track from Figure \ref{fig:curves-and-crossovers}(c), where the two crossover arrows that passed corners have been removed by a change of basis using (M1) moves. Notice that, in each handle, the arrows and permutations satisfy Lemma \ref{Lemma1}. The crossover arrows have been labelled by their respective weights; comparing Figure \ref{fig:end-runnning-example} shows that the arrow with finite depth $(\wtp,\wtm)=(2,1)$ can be removed by applications of the moves (M1) and (M2), while the infinite  depth arrow persists.}\label{fig:anatomy-running-example}
\end{figure}

It will now be useful to make explicit the handle decomposition associated with the punctured torus $T$, in which the corner box corresponds to a 0-handle; see Figure \ref{fig:anatomy}. The corner box contains an arbitrary configuration of embedded arcs (possibly intersecting one another). Because all crossover arrows in the output of Proposition \ref{prop:curves-plus-crossovers} move clockwise, any arrows passing a corner of the corner box can be immediately removed by move (M1) and we may assume all other arrows lie in one of the two 1-handles.  We previously assumed that the immersed curves consist of purely horizontal or vertical segments outside the corner box (that is, in the 1-handles), but it will be convenient to allow these segments to cross each other so that the endpoints of these segments can be ordered independently on each end of the 1-handle. We collect all crossings into \emph{permutation boxes} near the middle of the handle, which we label \(\sigma_\bu\) and \(\sigma_\circ\); they contain a braid-like immersed curve diagram. The boxes labeled \(A\) are \emph{arrow boxes}; they contain parallel curve segments joined by an arbitrary configuration of crossover arrows. All of the crossover arrows are contained in these arrow boxes. Each strand in an arrow box has two directions: towards the corner box and away from the corner box (toward the permutation box). A specific example is shown in Figure \ref{fig:anatomy-running-example}.

\subsubsection*{Weights} Every crossover arrow in an arrow box can be assigned a weight, which is a value in the set $(\Z\setminus\{0\})^2\cup\{\infty\}$. If the strands on which the arrow starts and ends are globally parallel, the weight is \(\infty\). Otherwise, the weight is a pair \((\wtp,\wtm)\), where the integer \(\wtp\) ($w$-to) is defined as follows. Follow the strands on which the arrow begins and ends, starting by moving towards the corner box; at some point, the two strands will diverge (that is, leave the corner box through different edges). The non-zero integer $\wtp$ indicates that the arrow enters the corner box $|\wtp|$ times before the strands diverge; the sign of $\wtp$ records whether the arrow runs clockwise ($+$) or counterclockwise $(-)$ around the corner box when this happens. The non-zero integer $\wtm$ ($w$-from) is defined similarly by following the strands in the opposite direction, initially traveling away from the corner box, until they diverge. 

\begin{definition}
The {\it depth} of an arrow is  defined to be  $\min\{|\wtp|,|\wtm|\}$, or $\infty$ if the weight is \(\infty\). We will refer to a collection of immersed curves along with a collection of crossover arrows as a {\it curve configuration}; the depth of a curve configuration is the minimum depth of all its crossover arrows, or $\infty$ if there are no crossover arrows.
\end{definition}
 
By compactness of the curves, if an arrow has finite depth, then there is an absolute bound on that depth depending only on the number of strands in the configuration. Our aim is to prove:
\begin{proposition}
\label{prop:IncreaseDepth}
A curve configuration of depth \(m\) is equivalent to another curve configuration with the same number of strands and  depth no less than \(m+1\). 
\end{proposition}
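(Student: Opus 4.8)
The plan is to run an induction on depth, deleting one minimal-depth crossover arrow at a time while tracking the arrows that get created in the process. By Proposition~\ref{prop:curves-plus-crossovers} and the reductions of Section~\ref{sub:calculus} we may assume the curve configuration has the anatomy of Figure~\ref{fig:anatomy}: a $0$-handle containing the corner box with its embedded arcs, a permutation box $\sigma_\bu$ (resp.\ $\sigma_\circ$) inside each $1$-handle, and four arrow boxes (one on each side of the corner box) that together contain all of the crossover arrows. Fix a crossover arrow $a$ of depth $m<\infty$. Reversing the direction in which one follows the two strands carrying $a$ interchanges $\wtp$ and $\wtm$ and leaves the anatomy unchanged, so we may assume $|\wtp(a)|=m$.

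First I would drag $a$ toward the corner box along its carrying strands, using move (M2) to slide over the $1$-handles, the moves of Lemma~\ref{Lemma1} to pass the permutation boxes, and the local moves of Figure~\ref{fig:crossover_moves} inside the arrow boxes. The two strands carrying $a$ remain parallel on the successive returns to the corner box up to the one recorded by $\wtp(a)$, so $a$ stays an honest crossover arrow until then; a passage through the corner box is carried out as in Figure~\ref{fig:cases-remove-A}, passing straight through when the two strands cross the corner box an even number of times and using move~(d) to replace $a$ by two arrows (each meeting the box an even number of times) when they cross an odd number of times. At the return recorded by $\wtp(a)$ the carrying strands diverge, so $a$ (or each of the two arrows it split into) becomes, after a small homotopy, a crossover arrow that passes a corner of the $0$-handle: if it runs clockwise there it is deleted by (M1); if it runs counterclockwise there I push it toward the $\wtm$-side instead; and if it is counterclockwise on both sides then, as in the discussion preceding the statement, the carrying strands must cross and resolving a crossing via move~(d) replaces $a$ by two clockwise arrows passing corners, each again deleted by (M1). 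In every case $a$ is eliminated.

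The substance of the proposition is the accompanying bookkeeping, which I would package as a complexity argument. The only new arrows are the compositions produced by move~(c) when $a$ (or a piece of it) slides past another arrow, and the arrows produced by crossing resolutions (move~(d)). Because $a$ has minimal depth, every arrow $b$ it meets has $|\wtp(b)|,|\wtm(b)|\ge m$, and tracing how the carrying strands of such a composition are assembled from those of $a$ and $b$ — together with the fact that it is born only after $a$ has already cleared at least one corner — should show that the new arrow cannot diverge until strictly later than $a$ did, i.e.\ has depth at least $m+1$; the arrows from move~(d) are controlled the same way. Hence one pass of the procedure strictly lowers the number of depth-$\le m$ crossover arrows while creating none, and since the underlying multicurve is compact the finite depths are bounded by a function of the (unchanged) number of strands; after finitely many passes no depth-$\le m$ arrow survives. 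Applying this to every depth-$m$ arrow and re-imposing the normal form of Proposition~\ref{prop:curves-plus-crossovers} yields a curve configuration with the same strands and depth $\ge m+1$.

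The step I expect to be the main obstacle is precisely this last one: keeping exact track, as $a$ is dragged through corner boxes, arrow boxes and permutation boxes, of the signed corner-counts $(\wtp,\wtm)$ attached to every newly spawned arrow, and verifying uniformly that their absolute values exceed $m$ — in particular that resolving crossings via move~(d) and pushing arrows across permutation boxes (Lemma~\ref{Lemma1}) never manufacture a shallow arrow — so that the complexity genuinely decreases and the process terminates.
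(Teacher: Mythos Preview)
Your overall plan—drag a minimal-depth arrow until its strands diverge, delete it, and track the arrows spawned along the way—captures the right intuition, but the central bookkeeping claim is false as stated. Suppose $a$ has $|\wtp(a)|=m$ and you have already slid it through $k\ge 1$ passes of the corner box when it meets an arrow $b$ in some later arrow box. The composition $c$ produced by move~(c) joins a strand $s_1$ of $a$ to a strand $s_3$ of $b$ via their common strand $s_2$. Toward the corner box from that point, $s_1$ stays with $s_2$ for only $m-k$ more passes, while $s_3$ stays with $s_2$ for $|\wtp(b)|\ge m$ passes; hence $s_1$ and $s_3$ diverge after exactly $m-k$ passes and $|\wtp(c)|=m-k<m$. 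So the composition has strictly \emph{smaller} depth than $a$, and your complexity (the number of depth-$\le m$ arrows) can increase. One slide of $a$ may spawn several such compositions, so deleting $a$ does not compensate.

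The paper's argument sidesteps this by never composing during the removal. The key device is a \emph{snowplow}: when the chosen arrow $a$ (with $\wtp(a)=m$) meets another arrow $b$ en route, one does not invoke move~(c) but simply pushes $b$ along in front of $a$; once $a$ reaches the point where its strands diverge and is deleted by (M1), every snowplowed arrow is slid back to where it started. No new arrows are born. This works because every arrow has depth $\ge m$, so each can be pushed at most $m-1$ steps (or $m$ steps, for arrows in $a$'s own arrow box closer to the corner) without its strands diverging. To set this up, one first applies Lemma~\ref{Lemma1} globally to sort every arrow box with respect to $\leq_m$; this eliminates arrows with $\wtp=-m$, so the remaining $|\wtp|=m$ arrows all point clockwise at their divergence and really are removable by (M1). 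A second, more careful, application of Lemma~\ref{Lemma1} (with respect to $\leq_{m+1}$ on one arrow box at a time) handles the $\wtm$-side. The sorting steps do create new arrows, but Lemma~\ref{Lemma1} guarantees these all have depth $\ge m$, and the snowplow then disposes of the depth-$m$ ones without further spawning. Your proposal is missing both the preliminary sorting and, crucially, the snowplow; without them the induction does not close.
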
 
By induction, it will follow that any curve configuration is equivalent to an infinite depth curve configuration, that is, one in which the only crossover arrows run between parallel strands. Some additional structures and preliminary lemmas are required before proving Proposition \ref{prop:IncreaseDepth}.

\labellist
  \pinlabel {\scalebox{.5}{$z$}} at 243 344
  \tiny 
   \pinlabel {$nws$} at 316 321
      \pinlabel {$wsw$} at 316 303
       \pinlabel {$sww$} at 316 284
       \pinlabel {$ssw$} at 316 266
       \pinlabel $n$ at 71 468 \pinlabel $s$ at 71 352
        \pinlabel $w$ at 10 409 \pinlabel $e$ at 129 409
        \endlabellist
\piccaption[]{A depth 3 coloring in an arrow box; these strands are (lexicographically) ordered top-to-bottom since $n<w<s$ for the first letter and, for the two strands which begin with $s$, ${w<s<e}$ for the second letter. \label{fig:anatomy-colour}}
\parpic[r]{
 \begin{minipage}{63mm}
 \centering
\includegraphics[scale=0.28]{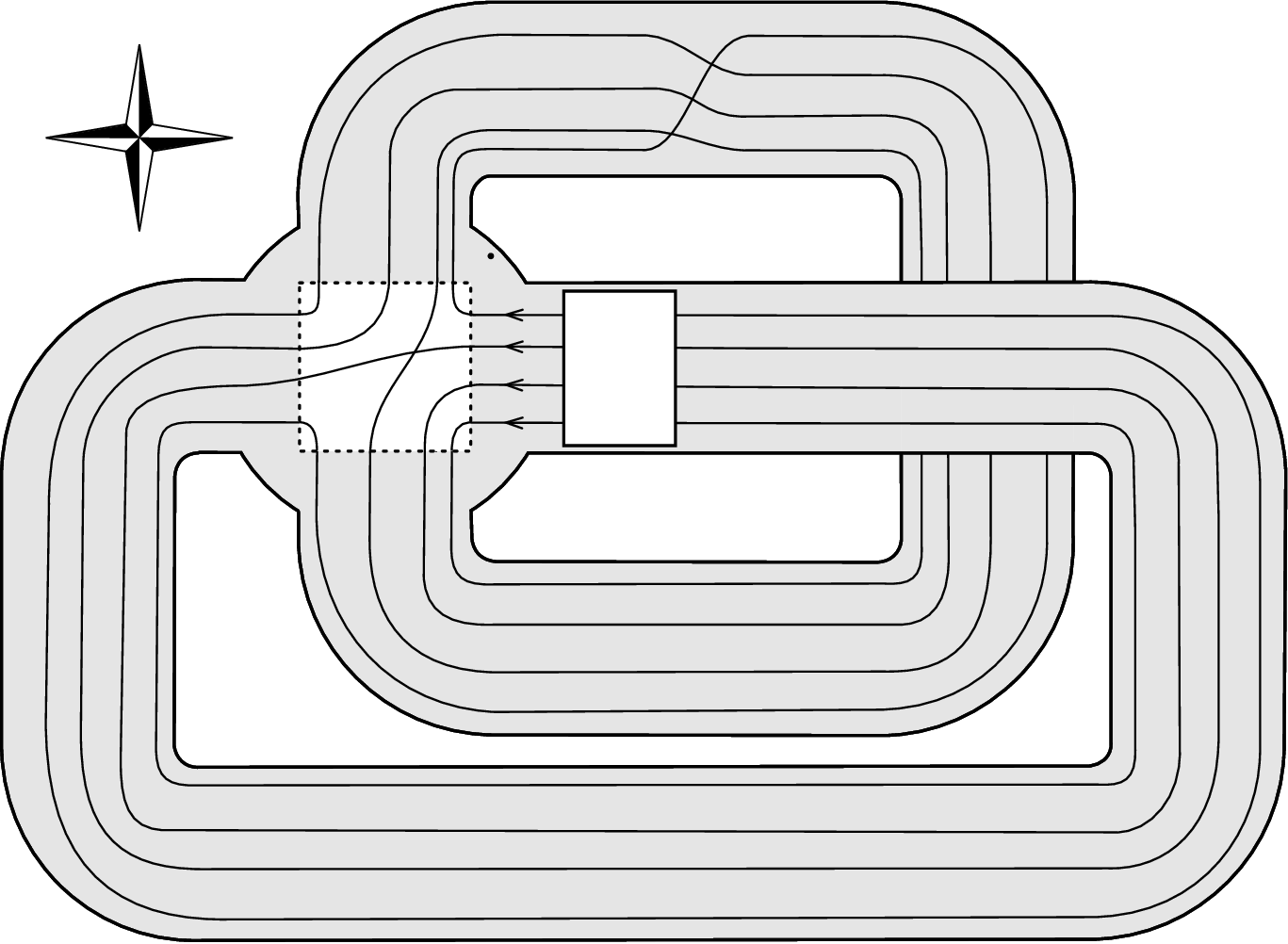}
  \end{minipage}%
}
{\em Colors and orders.} 
We now define a partial order on the set of strands in an arrow box. To do this, observe that any strand \(a\) in the box lifts to a quasi-geodesic in the universal cover of \(T\). By following the lift of \(a\) in the direction that initially moves towards the corner box, we get a ray \(\gamma_a\) in the cover. More explicitly, the universal cover of $T$ is a ribbon graph, which we can identify with a regular neighborhood of a  regular 4-valent tree properly  embedded in the hyperbolic plane \(\mathbb{H}^2\). The path \(\gamma_a\) takes in this tree can be described by an infinite string in the letters \(n,e,s,w\) recording which edge of the corner box the strand exits through each time it passes through the corner box, as illustrated in Figure \ref{fig:anatomy-colour}. By compactness of the curves in the configuration, this string is clearly periodic. Such a string associated with $a$ is called the coloring of $a$. Said another way, retracting the universal cover of $T$ to the Cayley graph for $\pi_1(T)\cong\langle n,e\rangle$, these colors corresponds to reduced words in the free group where $s=n^{-1}$ and $w=e^{-1}$.

Informally, the partial ordering on strands is given by $a < b$ if the strand $a$ is on the left relative to $b$ whenever the strands diverge. Equivalently, \(a<b\) means that if we take a  crossover arrow from \(a\) to \(b\) and push it in the positive direction until the strands diverge, the arrow will point in a counterclockwise direction when the strands diverge. This can be made precise in two ways. First, the Gromov boundary of the 4-valent tree  in $\mathbb{H}^2$ can be naturally identified with a Cantor set  \(C\subset S^1= \partial \mathbb{H}^2\).  Since \(\gamma_a\)  does not pass through the initial branch of the tree corresponding to the arrowbox it starts in, the set of possible endpoints of \(\gamma_a\) is the intersection of \(C\) with an interval \(I\subset S^1\) of length \(3 \pi/2\). Thus there is a natural order on the set of possible endpoints defined by declaring \(x<y\) if we can go counterclockwise from \(x\) to \(y\) in \(I\). We define \(a<b\) if \(\gamma_a(\infty)<\gamma_b(\infty)\). This ordering can also be described as a lexicographic order on colorings of strands, with the understanding that the order on the letters differs in each arrow box and thus in each position in the word. Each time the ray $\gamma_a$ passes through an arrow box and into the corner box, there are only three ways it can leave the corner box since backtracking along the 1-handle it came from is not allowed. At each pass through the corner box, we order the choices from rightmost to leftmost. In terms of letters, no backtracking means that, for instance, the letter $n$ is never followed by $s$. There are always three possible choices for the $(k+1)$st letter of a coloring, depending on $k$th letter, and these three choices inherit a linear ordering from the cyclic ordering $(n, w, s, e)$ by removing the disallowed letter. We then define $a < b$ if the color of $a$ is less than the color of $b$ in the lexicographical order---that is, if the first $k$ letters of the colors agree for some $k$ and the $(k+1)$st letter for $a$ is less than the $(k+1)$st letter for $b$ using the ordering obtained by disallowing the opposite of the $k$th letter.

Note that $<$ is a partial order, but $a$ and $b$ have an ordering relationship unless their colors agree; this happens precisely when the pointed curves starting with the strands $a$ and $b$ are globally parallel in the sense of Definition \ref{def:parallel}. We can define a sequence of weaker partial orders \(<_m\) on the set of strands in an arrow box using the truncation of the coloring of $a$ to the first $m$ letters, which we call the \emph{depth $m$ coloring of $a$}. We say that $a <_m b$ if the depth $m$ color of $a$ is less than the depth $m$ color of $m$ using the lexicographical ordering described above. The partial order $<_m$ does not distinguish between strands for which the rays $\gamma_a$ and $\gamma_b$ are locally parallel and do not diverge before passing through the corner box at least $m$ times. It is easy to see that \(a<_mb\) implies \(a<_{m+1}b\) and that \(<_m\) agrees with \(<\) for sufficiently large \(m\).  By a slight abuse of notation, we write  \(a \leq _m b\) to mean that $a < b$ or the depth $m$ colors of $a$ and $b$ agree, noting that \(a \leq _m b\) does not imply \(a<_m b\) or \(a=b\).

\subsubsection*{Ordering crossover arrows in an arrowbox}

In an arrowbox, where all strands are parallel, crossover arrows have a well-defined length, 
 measured by how many strands they cross. A collection of arrows  that point in the same direction can be sorted by length, as follows:

\begin{lemma}\label{Lemma0}
A configuration of crossover arrows that all point up is equivalent to a new configuration in which all the arrows point up and are sorted by length, in the sense that shorter arrows lie to the left of longer arrows. 
\end{lemma}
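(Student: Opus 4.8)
The plan is to prove this by a selection-sort argument, repeatedly collecting the shortest arrows at the left, where the one operation used throughout is the interchange of two adjacent crossover arrows. I would begin by analyzing that interchange. Let $B$ and $A$ be up-pointing crossover arrows in the arrowbox with $B$ immediately to the left of $A$, and record which strands carry their endpoints. If no initial endpoint of one lies on the same strand as a terminal endpoint of the other, the interchange is free: by Figure~\ref{fig:crossover_moves}(a), together with the fact that an endpoint or an interior arc of one arrow slides past an interior arc of the other at no cost, $B$ and $A$ simply exchange positions and nothing else changes. Otherwise the terminal endpoint of one of them lies on the same strand as the initial endpoint of the other, and now Figure~\ref{fig:crossover_moves}(c) applies: the interchange introduces exactly one additional up-arrow $C$ — the composition of the two — whose length equals $|A|+|B|$, so that $C$ is strictly longer than either of $B$ and $A$, and $C$ is produced on the side of both of them. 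I would also use that a pair of parallel up-arrows joining the same two strands may simply be erased, since over $\F$ they contribute nothing to the matrix $M_\tracks$ and hence do not change the equivalence class.

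Next I would carry out one round of the sort. Suppose the shortest arrow present has length $\ell$, and there are $m$ arrows of that length. I claim all of them can be brought into a common leftmost block. Repeatedly choose the leftmost length-$\ell$ arrow not yet in the block and slide it leftward, interchanging it with each arrow in its way until it becomes adjacent to the block. Every arrow it meets is strictly longer than $\ell$: there are no shorter arrows, and every other length-$\ell$ arrow lying to its left is already in the block (the composites produced so far are longer than $\ell$, so they do not lower the minimum). Hence every interchange in this slide is a legitimate move, and is either free or produces a composite of length $>\ell$ lying to the right of the moving arrow. Since the number of arrows to the left of the moving arrow strictly drops at each interchange, the slide terminates with the arrow adjacent to the block; if it then runs parallel to a block member, the pair is erased. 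Because composites are always longer than $\ell$, the number of length-$\ell$ arrows never increases, so after at most $m$ such slides every length-$\ell$ arrow has been absorbed into the block or cancelled.

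To finish I would close the induction. After the round just described, the configuration is a leftmost block of equal-length up-pointing arrows followed by a configuration all of whose arrows are strictly longer than $\ell$, and the arrows of the block correctly precede everything to their right. Recursing on the right-hand portion — whose minimum arrow length has strictly increased — and noting that every length lies in $\{1,\dots,n-1\}$, where $n$ denotes the number of strands of the arrowbox, the process terminates after at most $n-1$ rounds and produces a configuration with all arrows pointing up and lengths weakly increasing from left to right.

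The main obstacle is precisely the appearance of composite arrows under Figure~\ref{fig:crossover_moves}(c): a naive bubble sort can re-create inversions and enlarge the arrow count, so the argument must be organized so that every composite produced while sorting the length-$\ell$ arrows is strictly longer than $\ell$ and is therefore harmless until a later round; combined with the a priori bound on lengths this forces termination. The only genuinely delicate point internal to the argument is the case check in the interchange step — confirming that at most one composite appears, of the stated length, and on the correct side — but this is routine given the local moves of Figure~\ref{fig:crossover_moves}.
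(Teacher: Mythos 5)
Your proof is correct, but it organizes the induction in the opposite direction from the paper's. The paper runs a descending induction on $k$: it builds up the sorted region on the right starting with the \emph{longest} arrows, and when a shorter out-of-order arrow is pushed right one step and a composite appears, it re-invokes the inductive hypothesis to re-sort the region of longer arrows before continuing. You instead run a selection sort from the \emph{shortest} length up, collecting a block on the left, so that every composite produced during a round is automatically longer than the current working length and therefore lands harmlessly in the yet-to-be-processed part. Both arguments hinge on the same structural fact — $|A\circ B|=|A|+|B|$, so composites are strictly longer than either factor and processing a given length $\ell$ can never manufacture a new arrow of length $\le\ell$ — but your bookkeeping for termination is more explicit than the paper's (the inner ``Repeating this process\ldots'' loop in the paper leaves its termination a little implicit, whereas you control it with two monotone quantities). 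One point worth flagging: you rely on the composite landing to the right of the moving arrow when sliding left. Algebraically this is unproblematic — over $\F$, the composite may be inserted on either side of the interchanged pair without changing $M_\tracks$, as a short computation with elementary matrices confirms — so the placement you need is legitimate, but it should be stated as a choice made consistently rather than as a forced consequence of the local move. The cancellation of adjacent parallel pairs is a valid move but is not needed to establish the lemma, which only asserts sortedness and says nothing about minimality of the arrow count.
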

\begin{proof}
Let \(n+1\) be the total number of strands in the arrowbox.
We say a configuration is {\em \(k\)-sorted} if all arrows of length \(n\) appear on the right, followed by all arrows of length \(n-1\), then all arrows of length \(n-2\), {\it etc.} up to arrows of length \(n-k\). The length \(< n-k\) part of a configuration is the configuration obtained by deleting all arrows of length greater or equal to \(n-k\). 

\piccaption[]{Ordering arrows from left to right, following the steps used in the proof of Lemma \ref{Lemma0}. \label{fig:order-slide}}
\parpic[r]{
 \begin{minipage}{45mm}
 \centering
\includegraphics[scale=0.5]{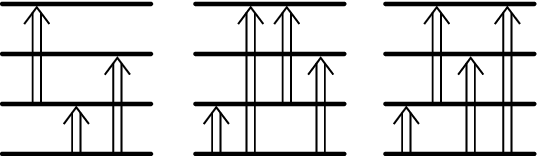}
  \end{minipage}%
}
We prove by induction on \(k\) that any configuration is equivalent to a \(k\)-sorted configuration whose length 
\(<n-k\) part is the same as the length \(< n-k\) part of the original configuration. 
 The base case \(k=0\) is clear, since an
arrow of length \(n\) slides freely past any other arrow. For the general case, given a configuration \(C_0\), we first apply the induction hypothesis to find a \(k\)-sorted configuration \(C_1\) whose length \(< n-k\) part agrees with \(C_0\). If there is no arrow of length $n-k-1$, then the configuration is $(k+1)$-sorted, and we are done. Otherwise choose the length \(n-k-1\) arrow which is farthest to the right among those arrows of  \(C_1 \) which are out of order, and slide it to the right, past the first arrow it encounters, to obtain a new configuration \(C_2\).

 If this slide did not create a new arrow (as in Figure~\ref{fig:crossover_moves}(a)) \(C_2\) is still \(k\) sorted and has the same length \(<n-k-1\) part as \(C_1\). If the slide created a new arrow (as in Figure~\ref{fig:crossover_moves}(d)) 
\(C_2\) has the same length \(<n-k-1\) part as \(C_1\) but is not $k$-sorted. In this case, we apply the induction hypothesis to find a configuration \(C_3\) which is \(k\)-sorted and has the same length \(<n-k\) part as \(C_2\).  (In particular, the position of the length \(n-k-1\) arrow has been switched, and we have not added any arrows of length \(n-k-1\)). Repeating this process, we eventually arrive at a \((k+1)\)-sorted configuration whose length \(<n-k-1\) part agrees with that of \(C_0\). 
\end{proof}

On a one-handle, the curve configuration consists of two arrowboxes and one permutation box, as shown in Figure~\ref{fig:anatomy}. We introduce the notational shorthand $[A_1,\sigma,A_2]$ to specify this data. We say that $[A_1, \sigma, A_2]$ and $[A_1', \sigma', A_2']$ are equivalent if they represent equivalent train tracks in the 1-handle, in the sense that the (mod 2) counts of paths from any endpoint at one end of the 1-handle to any endpoint at the other end agree. Note that when the local moves in Figure \ref{fig:crossover_moves} are applied within a 1-handle the resulting configurations are equivalent. In addition to sliding arrows past each other within each arrow box, as in Lemma \ref{Lemma0}, we can also slide arrows through the permutation box from one arrow box to the other, and we can resolve a crossing in the permutation box using the move in Figure \ref{fig:crossover_moves}(e), which changes the permutation $\sigma$ as well as the arrow boxes.

Our next goal is to show that we can replace any configuration $[A_1,\sigma,A_2]$ with an equivalent configuration $[A_1', \sigma', A_2']$ in which the crossover arrows are nicely arranged with respect to a chosen ordering of the strands on each end.
Fix an ordering $<$ on the strands in each arrowbox. We say that the arrow boxes are sorted with respect to $<$ if $b < a$ whenever there is a crossover arrow from \(a\) to \(b\).

\begin{lemma}\label{Lemma1} Any configuration $[A_1,\sigma,A_2]$  in a 1-handle is equivalent to a configuration $[A_1',\sigma',A_2']$, where the arrow boxes \(A_1'\) and \(A_2'\) are sorted with respect to the fixed ordering $<$.  Moreover, if the original configuration has depth \(m\), then the depth \(m\) colorings on the old and  new configurations are the same, and the new configuration has depth \(\geq m\).
\end{lemma}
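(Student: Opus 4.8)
The plan is to prove Lemma~\ref{Lemma1} by a double induction: an outer induction on the depth $m$ of the configuration, and an inner induction that sorts the two arrowboxes one "level" at a time using the partial orders $<_m$ introduced above. The key point is that sorting with respect to $\leq$ (the full order, i.e. $\leq_m$ for $m$ large) can be done level by level without disturbing the depth-$m$ colorings, because sliding a crossover arrow all the way around $T$ (i.e. through the cornerbox and back, a move of type (M1)/(M2)) only changes the configuration by a change of basis, and — crucially — the length of such an arrow and the colorings of the strands it connects are controlled by how far the strands travel before diverging, which is exactly the depth.

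First I would set up the one-handle picture from Figure~\ref{fig:anatomy}: a configuration $[A_1,\sigma,A_2]$ with two arrowboxes $A_1,A_2$ and a permutation box $\sigma$. Using Lemma~\ref{Lemma0}, I may assume the arrows within each individual arrowbox are already sorted by length among those pointing the same direction. The heart of the argument is then: suppose there is a crossover arrow in (say) $A_1$ from a strand $a$ to a strand $b$ with $a <_m b$ (i.e. pointing "the wrong way" with respect to the order at depth $m$). Since the depth of the configuration is $m$, the strands $a$ and $b$ do not diverge before passing through the cornerbox at least $m$ times in the forward direction, and likewise backward. I push this arrow forward (towards and through the cornerbox) using (M2) moves; because $a<_m b$, when the strands finally diverge the arrow points counterclockwise, so it may be removed by an (M1) move (or, if it survives through the handle and re-enters $A_2$, it lands there sorted, by the definition of the orders in the two arrowboxes — recall $\leq$ is chosen so that the two boxes use opposite lexicographic conventions). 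Each such move is a change of basis and hence preserves the equivalence class; I must check it does not decrease depth and does not alter the depth-$m$ coloring of any strand, which follows because the move only rearranges arrows among strands that agree to depth $m$, and an arrow of depth $<m+1$ is never created in the process.

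To make this terminate, I would introduce a complexity (for instance, the multiset of $(\text{level at which an arrow is out of order})$ together with arrow lengths, ordered lexicographically) and argue that each arrow-removal/slide step strictly decreases it, invoking Lemma~\ref{Lemma0} to re-sort within arrowboxes after each step (re-sorting does not create new out-of-order arrows at shallower levels). Running this to completion yields $[A_1',\sigma',A_2']$ with both arrowboxes sorted with respect to $\leq$. The bookkeeping about depth-$m$ colorings and the depth bound is then immediate from the observation that every individual move either (i) slides arrows within an arrowbox (changes nothing about colorings or depth), (ii) applies a local move of Figure~\ref{fig:crossover_moves} (produces an equivalent train track), or (iii) applies (M1)/(M2) to an arrow whose strands agree to depth at least $m$ (changes the configuration by a basis change over $\widetilde\Alg$ by Proposition~\ref{prp:moves}, and the new arrows created connect strands agreeing to depth $\geq m$, so have depth $\geq m$).

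The main obstacle I anticipate is the termination/complexity argument: when an arrow is slid past a crossing (Figure~\ref{fig:crossover_moves}(c)) or over the cornerbox, it can spawn additional arrows, so one must be careful that the chosen complexity genuinely decreases and that re-applying Lemma~\ref{Lemma0} does not undo progress at the level currently being sorted. I expect this is handled exactly as in the proof of Lemma~\ref{Lemma0} — sort the deepest (longest-to-diverge) misordered arrows first, so that fixing a level-$k$ inversion only ever creates arrows that are already correctly placed relative to levels $\leq k$ — but writing the induction so that the colorings-at-depth-$m$ invariant is visibly maintained at every intermediate step will require care. Everything else (that the moves are changes of basis, that depth cannot drop) is a direct consequence of Proposition~\ref{prp:moves} and the definitions.
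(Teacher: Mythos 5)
Your proposal mislocates the scope of this lemma. Lemma~\ref{Lemma1} is a \emph{local} statement about a single one-handle: starting from $[A_1,\sigma,A_2]$ you are to produce $[A_1',\sigma',A_2']$, a replacement \emph{inside the same one-handle}, using only equivalence-preserving moves (sliding arrows within arrowboxes, past each other, and through the permutation box $\sigma$, i.e.\ the local moves of Figure~\ref{fig:crossover_moves}, together with the conjugation identity $[A_1,\sigma,A_2]=[A_1^{\tau},\tau^{-1}\sigma,A_2]$). Your proposal instead pushes arrows \emph{out} of the one-handle, through the cornerbox, applying (M1) and (M2). This is the opposite direction: (M2) carries an arrow from $A_1$ across $\sigma$ to $A_2$ within the handle, while circling the cornerbox and removing an arrow with (M1) takes you around the 0-handle and into a \emph{different} arrowbox of the other one-handle. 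So your algorithm does not output a replacement $[A_1',\sigma',A_2']$ of the given one-handle at all; it modifies the global configuration, and it uses moves that (per Proposition~\ref{prp:moves}) change the train-track equivalence class, which the conclusion of Lemma~\ref{Lemma1} does not allow you to do.

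What you have sketched is, in essence, the strategy of Proposition~\ref{prop:IncreaseDepth} (in particular Steps~1--3 there), which \emph{uses} Lemma~\ref{Lemma1} as a local preprocessing tool before pushing arrows around the cornerbox and tracking weights. The paper's proof of Lemma~\ref{Lemma1} itself never leaves the one-handle: it first partitions strands into bundles (strands parallel for $m-1$ steps in both directions, so depth-$m$ arrows stay within a bundle), conjugates so that the bundle is ordered top-to-bottom in $A_1$ and bottom-to-top in $A_2$, reduces to showing every \emph{down} arrow in $A_1$ can be carried across the already-sorted up arrows (Lemma~\ref{Lemma0}) and through $\sigma$ into $A_2$, and then inducts on the length of the down arrow. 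Termination is immediate because everything is confined to a bundle inside one arrowbox, so there is an absolute bound on lengths; no weight system or depth accounting is needed at this stage. The depth and coloring bookkeeping at the end is the easy observation that any new transpositions lie between strands of the same bundle and so cannot change depth-$m$ colorings. I would suggest you re-read the statement with the picture of Figure~\ref{fig:anatomy} in mind and rebuild your proof so that it never touches the cornerbox.
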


\piccaption[]{Three equivalent configurations; the pair on the right both have the form guaranteed by Lemma \ref{Lemma1}. \label{fig:bundle-non-unique}}
\parpic[r]{
 \begin{minipage}{45mm}
 \centering
 \includegraphics[scale=0.5]{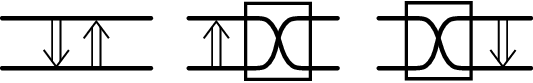}
  \end{minipage}%
} We remark that the configuration predicted by Lemma \ref{Lemma1} is not unique, as can be seen for the two-strand bundle with a pair of opposite arrows pictured in Figure \ref{fig:bundle-non-unique}. In general, when applying Lemma \ref{Lemma1} we choose the ordering $<$ to be the color ordering (or, more precisely, any total ordering on strands which is consistent with the partial order coming from colors). Note that in the resulting configuration, the order may no longer coincide with the color ordering, since resolving crossings in the permutation box changes the colors of the strands. However, the last part of the lemma implies that the depth $m$ partial ordering $<_m$ is the same for both configurations.

\begin{proof}[Proof of Lemma \ref{Lemma1}]

We begin by making some simplifications. First, 
if the original configuration has depth \(m\), we divide the strands into equivalence classes (bundles). Two strands belong to the same equivalence class if they run parallel for \(m-1\) steps in both directions. The hypothesis that the configuration has depth \(m\) means that any two strands joined by a crossover arrow are in the same equivalence class. Crossover arrows from one bundle do not interact with stands from a different bundle, so we can operate on each bundle separately; thus we assume without loss of generality that we are operating on a single bundle of strands. 

Next, we claim that if the statement holds for one  ordering $<$ on the strands in each arrow box, it holds for any other ordering $<'$.  To see this, note that $<'$  is related to $<$ by some permutation $\tau$ of the strands. We define $A^\tau$ to be the result of permuting the strands according to $\tau$ and carrying the crossover arrows along, so that an arrow from strand $i$ to strand $j$ in $A$ becomes an arrow from strand $\tau(i)$ to strand $\tau(j)$ in $A^\tau$; we refer to $A^\tau$ as the conjugation of the arrow box by $\tau$. By applying a homotopy to the strands in $A$ while fixing their endpoints, we can see that the arrow box $A$ is equivalent to the arrow box $A^\tau$ with a collection of crossings added on either end encoding the permutation $\tau$ on the left and $\tau^{-1}$ on the right. We can absorb the crossings on the inside of $A^\tau$ (that is, toward the permutation box) into the permutation box, ignore the crossings on the outside of $A^\tau$ (toward the end of the 1-handle), and apply the Lemma with $<$ to the remaining configuration. Undoing the homotopy, we see that this configuration being ordered with respect to $<$ is equivalent to the original configuration being ordered with respect to $<'$.

From now on, we will assume that the strands are ordered from top to bottom in \(A_1\), and bottom to top in \(A_2\), so that we are aiming to have all arrows in \(A_1'\) pointing up, and all arrows in \(A_2'\) pointing down.
We begin by arranging the bundle so that all crossings between strands are on the right and all crossover arrows are on the left. It is enough to show that the right-most down arrow can be moved past every up arrow, and to the other side of the crossings (at the expense of possibly altering the permutation between strands). An example is shown in Figure \ref{fig:Lemma-1-A}. Using Lemma \ref{Lemma0}, arrange the collection of up arrows so that the shortest arrows appear first. We will induct on  the length of the down arrow immediately to the left of the collection of up arrows. 

Suppose  that the right-most down arrow has length 1. This arrow slides past all length 1 up arrows, with the exception of the possibility illustrated in Figure \ref{fig:bundle-non-unique}, namely, the length 1 down arrow meets a length 1 up arrow between the same two strands. If this occurs, we replace the pair with a single up arrow and a new crossing between the strands. Effectively, the down arrow is replaced by a crossing and this new crossing slides to the right to compose with (and alter) the permutation. Notice that this will, in general, have a non-trivial effect on the remaining up arrows encountered, however this can only increase their length (if a starting or ending point is on one of the strands in question) and not, in particular, switch their direction. 

\begin{figure}[ht]
\includegraphics[scale=0.5]{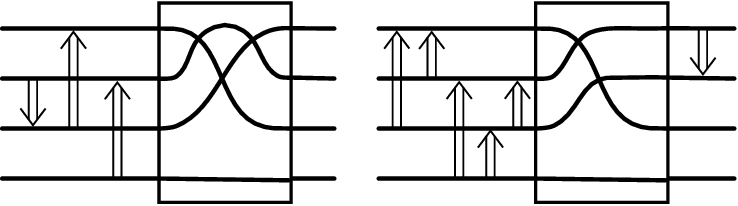}
\caption{Two equivalent views of the same bundle. The configuration on the left is in the form desired at the beginning of the algorithm; the configuration on the right shows the result of an application of the base case in the induction.} \label{fig:Lemma-1-A}
\end{figure}

If the length 1 down arrow does not meet a length 1 up arrow between the same strands, it slides freely past all remaining length $k>1$ arrows. New arrows may be produced in the process by composition, but these will have length $k-1$ and be (additional) up arrows; compare Figure \ref{fig:Lemma-1-A}. Finally, we slide the arrow through the crossings specified by the permutation $\sigma$. There are two cases: Either the arrow remains a down arrow on the other side of these crossings and we are done, or the arrow switches to an up arrow. In the latter case there must be a crossing between the strands that caused the swicth, which we resolve (see, for example, Figure \ref{fig:Lemma-1-A}). The result alters the permutation but produces an up arrow on the left and a down arrow on the right. This completes the base case for induction. 

\piccaption[]{Compositions with short (above) and long (below) arrows. \label{fig:Lemma-1-B}}
\parpic[r]{
 \begin{minipage}{35mm}
 \centering
 \includegraphics[scale=0.5]{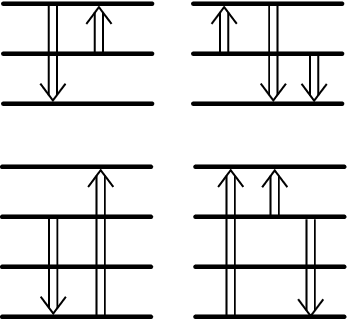}
  \end{minipage}%
}
Now suppose that the right-most down arrow has length $n$; our induction hypothesis is that arrows of length less than $n$ can be moved to the right of the up arrows. As before, the up arrows collected to the right are ordered by length. There are three groups in this collection: the short arrows (those of length less than $n$), the length $n$ arrows, and the long arrows (those of length greater than $n$). Sliding the down arrow past the short arrows may produce compositions (see Figure \ref{fig:Lemma-1-B}), but these new down arrows have length less than $n$ and are dealt with by the induction hypothesis. To pass the length $n$ arrows, we proceed as in the base case. That is, the arrow passes freely unless it meets an up arrow joining the same strands. In this case the arrow is replaced by a crossing, which slides to the right; no new down arrows are created as the remaining arrows are all long. Finally, if the arrow passes to the long arrows the arrow proceeds to the permutation (again without producing new down arrows, compare Figure \ref{fig:Lemma-1-B}) and we resolve a crossing in the permutation if needed.

Finally, we consider the effect of the operations we have performed on the weights of crossover arrows and orderings of strands. In the course of the proof, we have added transpositions between strands belonging to the same bundle. Strands belonging to the same bundle have the same depth \(m-1\) coloring, and any strand must pass at least once through the corner box before it goes over one of the new transpositions. Thus the depth \(m\) coloring of all strands is unchanged. From this, it follows that any arrow with depth \(\geq m\) in the old configuration has  depth \(\geq m\) in the new configuration as well. In the course of the proof, we may have created many new crossover arrows, but these all run between strands in the same bundle, so they have depth \(\geq m\) as well. 
\end{proof}

\begin{proof}[Proof of Proposition~\ref{prop:IncreaseDepth}] 
The argument is divided into four steps.

\subsubsection*{Step 1: Remove arrows with \(\wtp = -m\)} Order to depth $m$ by applying Lemma~\ref{Lemma1} to each one-handle, with respect to any extension of the partial ordering  \(\leq_{m}\) to a total ordering. Since the original configuration has depth \(m\), the ordering \(\leq_m\) is unaffected by this operation,  and it does not matter which handle we order first. The new configuration has depth \(m\) and is sorted with respect to \(\leq_m\).  
This means  that if we take an arrow with \(|\wtp|=m\) and push it \(m\) steps in the direction towards the corner box (to where its endpoints diverge), it will remain positively oriented, and thus will run clockwise around the corner box. Thus any such arrow actually has \( \wtp=m\), and there are no arrows with \(\wtp = -m\) in the new configuration.

\subsubsection*{Step 2: Remove arrows with \(\wtp = m\)} Fix an arrow box. If it contains any arrows with \(\wtp = m\), choose the one among them which is closest to the corner box. We can remove this arrow by pushing it in the positive direction until its ends diverge. As we do so, we may encounter other arrows with ends on one of the strands which our arrow runs between. We claim that these arrows can be pushed along in front of our chosen arrow (like a pile of snow accumulating in front of a snowplow) without  their ends diverging until the chosen arrow reaches the point where its ends diverge. To see this, note that every arrow in the arrow box which lies between the corner box and our chosen arrow has \(|\wtp|>m\), so it can be pushed along \(m\) steps without its ends diverging. Similarly, arrows which we encounter in any subsequent arrow box have depth at least \(m\), so they can be pushed \(m-1\) steps in any direction without diverging. Other arrows which these arrows would have to pass can likewise be pushed along (they need to move at most \(m-1\) steps). At the end of the process, we can remove the chosen arrow, and then just push all other arrows back to where they started. The net result is that we have removed the chosen arrow without making any other changes to the curve configuration. Repeating this process  removes all arrows with \(\wtp=m\). 

\subsubsection*{Step 3: Remove arrows with \(\wtm=-m\).} 
For a given arrow box \(A\), we will change the configuration so as to eliminate all arrows with \(\wtm=-m\) from \(A\). To do this, we first 
 apply Lemma~\ref{Lemma1} to the region consisting of the arrow box and its adjacent permutation 
 with ordering \(\leq_{m+1}\). This region is shown in  Figure~\ref{fig:Lemma-1-C}. 
\labellist
\pinlabel {$\sigma_\bu$} at 109 53
\endlabellist
\piccaption[]{Lemma \ref{Lemma1} applied to the dashed region;  arrows have $|\wtp|\ge m+1$ and $|\wtm|\ge m$.\label{fig:Lemma-1-C}}
\parpic[r]{
 \begin{minipage}{35mm}
 \centering
 \includegraphics[scale=0.5]{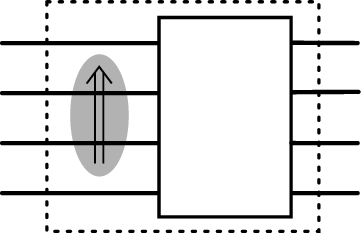}
  \end{minipage}%
}
 By Lemma~\ref{Lemma1}, the depth \(m\) colorings in the old and new configurations agree. Moreover, the depth \(m+1\) colorings on \(A\) are the same in both configurations. 
To see this, consider the number of steps a strand leaving the arrowbox towards the corner box takes to return to it. If it returns after one step through the corner box, it must re-enter through the right-hand side of the figure and exit through the left. Since all crossover arrows in the original arrowbox have \(|\wtp|\geq m+1\) (Steps 1 and 2), 
 any new transpositions we added in the course of applying the lemma will not change the next \(m\) labels in the coloring. Thus the first \(m+1\) labels in the original coloring will remain unchanged. A similar argument applies if  the strand takes two or more steps to return to the arrowbox. 

 \labellist
 \pinlabel {$\sigma'_\bu$} at 238 54
\pinlabel {$\underbrace{\phantom{aaaaaaaaaaaaaaaaa}}$} at 115 -3 
\pinlabel {$\underbrace{\phantom{aaaaaaaaaaaaaaaaa\i}}$} at 361 -3 
\pinlabel {$\underbrace{\phantom{aaaaaaaaaaaa}}$} at 527 -3 
\footnotesize
\pinlabel {$\wtp=m+1$ or $|\wtp|>m+1$} at 115 -23
\pinlabel {$|\wtm|\ge m$} at 115 -43
\pinlabel {$\wtp=m$ or $|\wtp|>m$} at 361 -23
\pinlabel {$|\wtm|>m$} at 361 -43
\pinlabel {$|\wtp|\geq m+1$} at 527 -23
\pinlabel {$|\wtm| \geq m$} at 527 -43
\endlabellist
\begin{figure}[ht]
\includegraphics[scale=0.5]{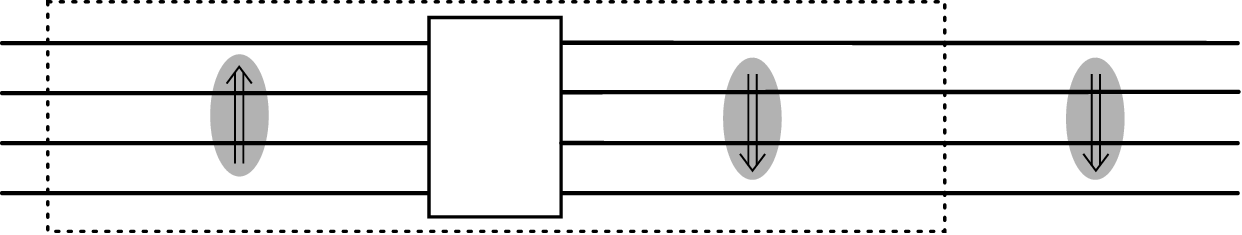}\vspace*{20pt}
\caption{A summary of weights after Lemma \ref{Lemma1} is applied to the region of the horizontal strands in the dashed box. Weights outside the dashed box are inherited from the previous step. 
}
 \label{fig:Lemma-1-E}
\end{figure}

 At this point, the configuration is as shown in Figure~\ref{fig:Lemma-1-E}. It is sorted to depth \(m+1\) on the left, and to depth \(m\) on the right. It follows from  Lemma~\ref{Lemma1} that every arrow in the dashed box has depth greater or equal to \(m\). In fact, every arrow in the dashed box has  \(\wtp \geq m+1\) on the left of the permutation, and \(\wtm\geq m+1\) on the right. To see this, note that when we apply Lemma~\ref{Lemma1}, the strands in each bundle used in the proof run parallel for \(m\) steps to the left.
 Finally, since the configuration is sorted, we see that   \(\wtp \neq -(m+1)\) for arrows on the left, and \(\wtp \neq -m\) for arrows on the right. In summary, the weights in the new configuration are as shown in the figure. 
 
In the process of applying Lemma~\ref{Lemma1}, we may have created some new arrows with \(\wtp = m\) in the right-hand side of the dashed region. Since the right-hand arrowbox in the 1-handle is sorted to depth \(m\), these can now be removed just as in Step 2. 
 
 Consider the arrow in \(A\) which is closest to the corner box. This arrow has \(|\wtp| \geq m+1 \geq 2\), so we can slide it through the corner box to obtain a new configuration. In its old position, the arrow had \(|\wtm| \geq m\), so the new arrow will have \(|\wtp| \geq m+1\). Similarly, the old arrow had \(\wtp = m+1\) or \(|\wtp|>m+1\), so the new arrow will have \(\wtm = m\) or \(|\wtm|>m\). Repeating this operation for each arrow in \(A\), we eventually arrive at a depth \(m\) configuration in which we have slid every arrow out of \(A\). 
 
Summarising our progress to this point: we have removed all arrows with \(\wtp=-m\) or \(\wtm=-m\) from our chosen arrowbox \(A\) without adding any new arrows with \(\wtp = -m\) or $\wtm=-m$ in any of the other arrowboxes. Therefore, by repeating this sequence of steps on each arrowbox, we arrive at a depth \(m\) configuration of curves in which there are no crossover arrows with $\wtp=-m$ or $\wtm=-m$. 

\subsubsection*{Step 4: Remove arrows with \(\wtm=m\).} There may remain arrows for which $\wtm=m$; the last step in the proof of Proposition~\ref{prop:IncreaseDepth} is to show that we can eliminate all arrows of depth $m$ without changing the rest of the configuration. This proceeds in exactly the same way as in Step 2. Fix a 1-handle, choose a direction \(\to\) on it, and consider all crossover arrows which have weight \(w_\to = m\) when sliding in  this direction. All the other arrows in the 1-handle have \(|w_\to| \geq m+1\), so we can slide the arrows with \(w_\to = m\) off one at a time without changing the weights of any other arrow, just as we did in Step 2.  Repeating this operation for each 1-handle and each direction, we arrive at a configuration in which every arrow has depth greater or equal to \(m+1\). 
\end{proof}

\subsection{The proof of Theorem \ref{thm:structure}}We are now positioned to prove the theorem that is the focus of the section. 

Fix a reduced, extendable type D structure and express it as an immersed train track. By Proposition \ref{prop:curves-plus-crossovers} this may be expressed as a curve configuration, a schematic for which is given in Figure \ref{fig:anatomy}. Then the algorithm detailed in Section \ref{sub:removing} ensures that this curve configuration can be simplified to one in which the only crossover arrows connect parallel curves in the marked torus. Indeed, the original type D structure is isomorphic as an $\Alg$-module to the type D structure determined by this simplified curve configuration.  It is convenient to introduce terminology for a train track of this form.

\begin{definition}
\label{def:curve-like}
A \emph{curve-like train track} is an $\Alg$-train track that has the form of immersed curves together with crossover arrows connecting parallel curves. 
\end{definition}

\piccaption[]{Equivalent configurations.\label{fig:make-a-local-system}}
\parpic[r]{
 \begin{minipage}{45mm}
 \centering
 \includegraphics[scale=0.5]{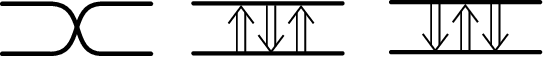}
  \end{minipage}%
}It remains to associate a local system with any collection of parallel curves that arise. Consider such a collection of $n$ parallel curves, and notice that  we can replace, without loss of generality, any crossings between two curves with a triple of alternating crossover arrows as in Figure \ref{fig:make-a-local-system}; compare Figure \ref{fig:crossover_moves}(e). Moreover, by applying the move (M2) we may assume that all of the arrows are contained in the 0-handle and run between curves whose endpoints are on two different sides of the corner box. 

\begin{figure}[ht]
\hspace*{0cm}\includegraphics[scale=0.5]{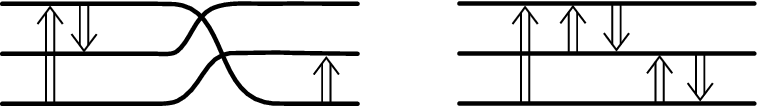} 
\qquad\raisebox{10pt}{$\left(\begin{matrix}0 &1 &0 \\ 1&0 &1 \\ 0&1& 1 \end{matrix}\right)$}
\caption{Extracting a local system from a bundle of curves, where the generators run from top to bottom.} \label{fig:simple-system}
\end{figure}

Interpreting this configuration as (part of) a train track, we take the segments on each side of the collection of arrows as generators and form a $n\times n$ matrix $(x_{ij})$ where the entry $x_{ij}$ is the mod 2 count of oriented paths from $x_i$ to $x_j$. The result is a local system of dimension $n$ over the immersed curve $\gamma$ carrying the bundle of curves. Note that to define this local system we must choose an orientation on $\gamma$ (for the example in Figure \ref{fig:simple-system} we count paths oriented left to right); choosing the opposite orientation corresponds to inverting the matrix.  \qed


\subsection{An aside on type D structures associated with general surfaces}\label{sub:aside}

Thus far we have focused on type D structures associated to the torus algebra $\Alg$, since these are the objects relevant for bordered Heegaard Floer homology of manifolds with torus boundary. However, the proof above applies more broadly to classify extendable type D structures associated with more general surfaces in terms of immersed curves in those surfaces.

We begin by defining algebras associated to a disk with a single marked point. For a given positive integer $m$ we can associate a quiver with the marked disk: there are $m$ vertices $\{0,\ldots,m-1\}$ placed on the boundary of the disk and $m$ edges $\{\rho_0,\ldots,\rho_{m-1}\}$ oriented clockwise along the boundary. We assume that the marked point is arbitrarily close to the boundary of the disk and is next to the edge $\rho_0$.  Fix the target of $\rho_i$ to be the point $i$; below it will be helpful to distinguish between the vertex $i$ in the quiver and the associated idempotent $\iota_i$ in the path algebra of the quiver. We follow the usual convention of writing $\rho_I$ for the product associated with a cyclically-ordered string $I$ from $\{1,2\ldots,m-1,0\}$. The path algebra of this quiver determines two algebras that are of interest to us: $\widetilde{\sB}$ is the result of quotienting by $\rho_I=0$ for any string $I$ containing more than one $0$, while $\sB$ is the result of quotienting  by $\rho_I=0$ for any string $I$ containing at least one $0$. The algebra $\widetilde{\sB}$ contains a distinguished central element $U=\sum_{|I|=m}\rho_I$.

\piccaption[]{Equivalent train tracks in the marked disk, according to Lemma \ref{lem:linear_algebra}. Here $m = 12$ and there is one primary vertex for each idempotent. \label{fig:tennis-ball}}
\parpic[r]{
 \begin{minipage}{60mm}
 \labellist
 \tiny
 \pinlabel {$z$} at 71 120.5 \pinlabel {$z$} at 228.5 121
 \endlabellist
  \vspace*{0mm}
\includegraphics[scale=0.6]{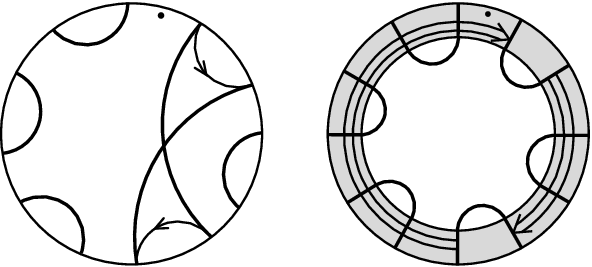}
  \vspace*{0mm}
  \end{minipage}%
}
Now a type D structure over $\sB$ may be defined as in Section \ref{sub:modules}. In the same way, we can  define the notion of an extension, which will be a module over $\widetilde{\sB}$. These (extended) type D structures can be encoded by train tracks in the marked disk, where generators with idempotent $\iota_i$ give rise to primary switches on the boundary of the disk in a neighborhood of the vertex $i$ and terms in the differential are encoded by paths through the disk. An example is shown in Figure \ref{fig:tennis-ball} (where, as usual, two-way track segments are recorded as unoriented edges). As such, in this setting, the type D structure and a choice of extension for it can be represented by a matrix $M$ over $\F[U]/U^2$ satisfying the hypotheses of Lemma \ref{lem:linear_algebra}, and hence may be expressed as $M=P\bar MP^{-1}$ where $P$ is a composition of elementary matrices of the form $A^U_{i,j}$ or of the form $A_{i,j}$ with $i<j$, and \(M\) is as in Proposition \ref{prop:curves-plus-crossovers}. Following the proof of Proposition \ref{prop:curves-plus-crossovers}, this new form may be interpreted in terms of the associated train tracks in the disk as follows: up to equivalence, all train tracks for which $M^2=UI_{2n}$ can be represented by 
(1) 
a collection of unoriented track segments connecting primary switches in the boundary of the disk; and 
(2) 
a collection of clockwise-moving crossover arrows confined to a small collar neighborhood of the boundary of the disk. 
For the example in Figure \ref{fig:tennis-ball}, the reader can check that $P = A^U_{6,1}A_{2,5}$ (where determining $M$ and $\bar M$ is left as an exercise), leading to the train track on the right of the figure.



For $m=4g$ and a specified handle attachment of $2g$ 1-handles, we can define an algebra associated with a punctured genus $g$ surface. We denote this algebra by $\Alg_F$, where $F$ is the surface along with the specified handle decomposition. Let $\iota_i$ and $\iota_j$ be idempotents of $\sB$ corresponding to points $i<j$ in the boundary of the disk. Adding a 1-handle $[-\epsilon,\epsilon]\times I$ to the disk so that $(0,0)$ is identified with $i$ and $(0,1)$ is identified with $j$ gives rise to a modified quiver: the points $i$ and $j$ are identified to form a quotient graph, but new paths are disallowed by setting $\rho_{i}\rho_{j+1}=0$, and $\rho_{j}\rho_{i+1}=0$. The new algebra $\Alg_F$ is a subalgebra of $\sB$ obtained by replacing the idempotents $\iota_i$ and $\iota_j$ with the single idempotent $\iota_i+\iota_j$. In other words: the points of the relevant idempotents are identified, while any and all possible {\em new} paths created are set to zero. The extended algebra $\widetilde\Alg_F$ is defined analogously as a quotient of $\widetilde\sB$. (The reader may recognize the geometric significance of the idempotent subring of $\Alg_F$ in general: the Grothendieck group of the appropriate associated category of modules is completely determined by this ring; see \cite{HLW}.)

Note that when $m = 4$ and $F$ is the punctured torus, with 1-handles identifying idempotents $\iota_0$ with $\iota_2$ and $\iota_1$ with $\iota_3$, the algebra $\Alg_F$ defined above coincides with the torus algebra $\Alg$ as defined in Section \ref{sub:modules}, and $\widetilde\Alg_F$ agrees with $\widetilde\Alg$. In this case we identify $\iota_\bu=\iota_0+\iota_2$ and $\iota_\circ=\iota_1+\iota_3$, and the relations from the 1-handle attachments account for the familiar $\rho_3\rho_2=\rho_2\rho_1=0$ in the definition of $\Alg$. When $m = 4g$ and $F$ is a punctured genus $g$ surface, the algebra $\Alg_F$ is the one-moving-strand part of the strand algebra associated to $F$ in \cite{LOT}. More specifically, the algebra $\sB$ is the algebra $\Alg(m, 1)$ defined in \cite[Section 3.1]{LOT} and $\Alg_F$ coincides with $\Alg(\mathcal{Z}, -g+1)$ defined in \cite[Section 3.2]{LOT}, where $\mathcal{Z}$ is a pointed matched circle representing the given handle decomposition of $F$.

In fact, we need not restrict to surfaces with one zero handle and one boundary component: we can construct algebras $\Alg_F$ and $\widetilde\Alg_F$ for other surfaces $F$ admitting a prescribed (finite) 0- and 1- handle decomposition, whenever each 0-handle carries a marked point (i.e. there is a distinguished $\rho_0$ for each 0-handle). We begin with the algebra $\oplus_{i=1}^n \sB^i$ associated with the collection of 0-handles, that is, the path algebra associated with a disjoint union of $n$ cycles. Each 1-handle attachment identifies some point $i_a$ (corresponding to $\iota^a_i\in\sB^a$) with $i_b$ (corresponding to $\iota^b_i\in\sB^b$) and gives rise to a quotient graph; this quiver has new relations imposed by $\rho_{i}^a\rho_{j+1}^b=0$ and $\rho_{j}^b\rho_{i}^a=0$ (just as in the case of a single 0-handle). Having attached all of the handles in the description of $F$, and appropriately quotienting the associated quiver at each stage, we arrive at an algebra $\sA_F$ (see Figure \ref{fig:handles-example} for an explicit example). A similar construction gives  $\widetilde{\sA}_F$. There is are close ties here with work of Zarev; see in particular the examples considered in \cite[Section 9]{Zarev}.

To describe an extendable type D structure over $\Alg_F$ for general surface $F$ with $n$ 0-handles, we can first describe a type D structure over $\oplus_{i=1}^n \sB^i$: the underlying vector space $\oplus_{i=1}^nV^i$ has left-action by the idempotents $\sI(\oplus_{i=1}^n \sB^i) = \oplus_{i=1}^n\sI( \sB^i)$ so that $V^i$ is a type D structure associated with the $i$th 0-handle. The essential observation is that any quotient identifying $i_a$ with $i_b$ in the quiver is compatible with  a left-action of the new idempotent subring where $\iota^a_i$ and $\iota^b_i$ are replaced by $\iota^a_i+\iota^b_i$, and hence gives a well-defined type D structure over the (subalgebra) associated with the quotient quiver. Notice that the matrices $M^i$ describing this type D structure are data that remains associated with the 0-handles in the decomposition of $F$. In particular, while the type D structures change (the underlying vector space changes, as well as the algebra), the $M_i$ are unchanged at every stage. We emphasise that, in the course of this construction, the relevant type D structures are {\em not} obtained by a sequence of quotients as differential modules, rather, the vector spaces are quotiented in a way compatible with a change to the idempotent subring. Moreover, such quotients are only possible when $\dim(\iota^a_iV^a)$ and $\dim(\iota^b_iV^b)$ agree. Geometrically, the train tracks representing the type D-structures in each 0-handle extend over the 1-handles as parallel lines connecting the relevant generators in the quotient; see Figure \ref{fig:handles-example}. This is the key observation required in order to establish Theorem \ref{thm:higher-genus}, below, treating type D structures associated with general surfaces. 

\begin{figure}[t]
\labellist \tiny
\pinlabel {$z$} at 105 298 
\pinlabel {$z$} at 518 347
\pinlabel {$z$} at 509 832 \pinlabel {$z$} at 742 832
\pinlabel {$z$} at 509 652 \pinlabel {$z$} at 742 652
\small
\pinlabel {$e_0^{(0,1)}$} at 481 800 \pinlabel {$e_0^{(1,1)}$} at 718 800
\pinlabel {$e_0^{(0,0)}$} at 481 579 \pinlabel {$e_0^{(1,0)}$} at 724 579
\pinlabel {$\widetilde{\sB}^{(0,1)}$} at -5 750 \pinlabel {$\widetilde{\sB}^{(1,1)}$} at 233 753
\pinlabel {$\widetilde{\sB}^{(0,0)}$} at -5 645 \pinlabel {$\widetilde{\sB}^{(1,0)}$} at 233 647
\pinlabel {$\rho_i^a$} at 50 472
\pinlabel {$\rho_j^b$} at 121 472
\pinlabel {$\Longrightarrow \ \rho_i^a\rho_j^b=0$} at 223 480
\pinlabel {$= \quad \rho_0^a$} at 195 435
\endlabellist
\includegraphics[scale=0.42]{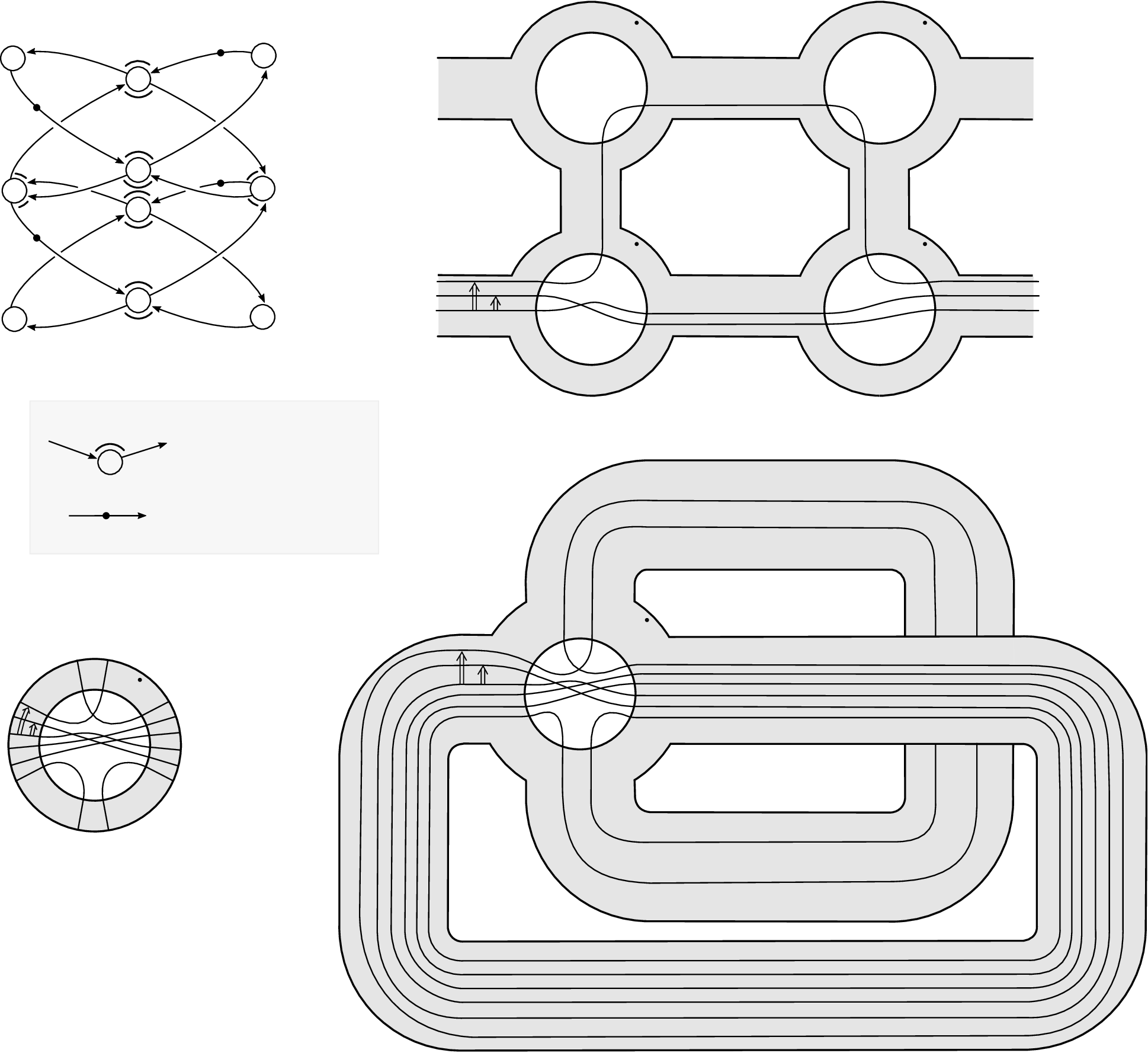}
\caption{Three (related) extended type D structures: over $\widetilde{\sB}$ (lower left) where $m=4$ has been omitted from the notation; over $\widetilde{\sA}$ (lower right); and over the algebra described by the quiver in the top left (upper right). The associated surface in this last example is made up of four 0-handles and six 1-handles (it should be viewed in the cylinder $S^1\times \R$), and each 1-handle attachment gives rise to a quotient of the quiver, with relations as described by the key in the shaded box. The end result is a subalgebra of $\widetilde{\sB}^{(0,0)}\oplus\widetilde{\sB}^{(1,0)}\oplus\widetilde{\sB}^{(0,1)}\oplus\widetilde{\sB}^{(1,1)}$. This last example covers the example associated with the torus. The reader can check that the matrices underlying the first two examples  are identical, while the last example requires 4 matrices (one for each 0-handle).}
\label{fig:handles-example}
\end{figure}

We note that the move (M1), along with the local moves for train tracks, apply in precisely the same way to train tracks in a marked disk, where the role of the corners of the square are played by points on the boundary of the disk between two adjacent quiver vertices (and outside of the neighborhoods of these vertices in which all primary switches lie). Thus by Proposition \ref{prop:curves-plus-crossovers} the train tracks within each 0-handle can be represented by arcs with clockwise moving crossover arrows, and any crossover arrows passing a corner can be immediately removed up to a change of basis in the corresponding module. These steps make sense for modules over $\sB$ before passing to the quotient $\Alg_F$ (i.e. before attaching 1-handles). In contrast, the move (M2) does not make sense without 1-handles, but once we pass to train tracks representing type D structures over $\Alg_F$ the obvious analog of (M2) is allowed: up to an appropriate change of basis, we may slide a crossover arrow near the boundary of a 0-handle across a 1-handle into the 0-handle on the other end.

We conclude by observing that the arrow sliding algorithm described in Section \ref{sub:removing} applies to train tracks in arbitrary surfaces. After removing crossover arrows which pass a corner of a 0-handle, all crossover arrows can be pushed into a 1-handle. The arrow sliding algorithm takes place primarily in one 1-handle at a time, where we rearrange a depth $m$ configuration of arrows in that 1-handle to remove all depth $m$ arrows. The inductive step follows from repeating this procedure in each 1-handle. The only difference for general surfaces is that the colors of strands can not be written as words in the letters $\{n, e, s, w\}$, but rather colors encode a path through the Cayley graph for $\pi_1(F)$. An ordering on colors, and thus a partial ordering on strands, can still be defined as before. So, while we are primarily interested in the torus algebra for this work, we have in fact shown:

\begin{theorem}\label{thm:higher-genus}Let $\Alg_F$ be the algebra associated with a marked surface $F$ equipped with a decomposition into finitely many 0- and 1-handles. Then any extendable type D structure over $\Alg_F$ may be realised as a collection of immersed curves in $F$ decorated with local systems. \end{theorem}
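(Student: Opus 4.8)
The plan is to run the proof of Theorem~\ref{thm:structure} essentially verbatim, exploiting the fact that every intermediate result was already formulated so as to apply to an arbitrary handle decomposition rather than just the torus.

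First I would fix an extendable type~D structure $N$ over $\Alg_F$ and choose an extension $\widetilde N$ over $\widetilde{\Alg}_F$; the existence of such an extension is the only consequence of extendability that gets used. Following Section~\ref{sub:aside}, $\widetilde N$ is assembled from its $0$-handles: each $0$-handle $H_i$ carries a square matrix $M_i$ over $\F[U]/U^2$, and the identity $\widetilde\partial^2 = U$ restricts on $H_i$ to $M_i^2 = U I_{2n_i}$, where $2n_i$ is the number of primary switches of $\widetilde N$ on $\partial H_i$. I would then realise $\widetilde N$ as an $\Alg_F$-train track in $F$ exactly as in Proposition~\ref{prp:mod-track-matrix}, with the arcs inside $H_i$ dictated by $M_i$ and the $1$-handles carrying parallel strands joining the identified generators. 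Applying Lemma~\ref{lem:linear_algebra} to each $M_i$ gives $M_i = P_i \bar M_i P_i^{-1}$, with $\bar M_i$ in the permutation-like normal form of that lemma and $P_i$ a product of elementary matrices of type $A_{j,k}$ with $j<k$ or of type $A^U_{j,k}$; as in Section~\ref{sub:aside}, $\bar M_i$ corresponds to an immersed multicurve through $H_i$, while each elementary factor of $P_i$ corresponds to a clockwise crossover arrow in a collar of $\partial H_i$, which move~(M2) pushes into the adjacent $1$-handles. The result is a train track of the shape described by Proposition~\ref{prop:curves-plus-crossovers} and depicted schematically (for the torus) in Figure~\ref{fig:anatomy}: immersed curves, embedded within each $0$-handle, together with clockwise crossover arrows confined to arrow boxes on the $1$-handles.

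Next I would invoke the arrow-sliding algorithm of Section~\ref{sub:removing}. The coloring-and-order bookkeeping survives because $F$ deformation-retracts onto its spine, so its universal cover is a ribbon tree embedded in $\mathbb{H}^2$: the coloring of a strand remains a reduced word in the (free) group $\pi_1(F)$, its set of possible endpoints still lies in a Cantor subset of $\partial\mathbb{H}^2$ carrying a natural cyclic order, and the partial orders $<_m$ behave exactly as in the torus case. Lemmas~\ref{Lemma0} and~\ref{Lemma1} and Proposition~\ref{prop:IncreaseDepth} are stated purely in terms of arrow boxes, permutation boxes and the corner box, and their proofs use only the local moves of Figure~\ref{fig:crossover_moves} together with (M1), (M2), and a compactness bound on the depth; none of this is particular to the torus. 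Hence the curve configuration simplifies to a curve-like train track, and the local-system extraction concluding Section~\ref{sub:removing}---replacing crossings between parallel curves by triples of alternating crossover arrows and reading off the $n\times n$ monodromy matrix over $\F$---produces the desired collection of immersed curves in $F$ decorated with local systems, isomorphic as an $\Alg_F$-module to $N$.

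The main obstacle is precisely the geometric input quoted in the previous paragraph. The foundations of Section~\ref{sub:removing}---that the left-projection of any strand has net clockwise rotation, that the ``no counterclockwise fishtail'' condition is preserved under the moves, and that colorings are reduced words---were verified for the torus using the concrete model $\R^2/\Z^2$, and one must confirm that they persist for a general marked surface $F$. I expect this to be routine rather than deep: near any single $0$-handle or $1$-handle the picture is identical to the torus case, and the only global fact needed is that the universal cover is tree-like, which holds because $F$ retracts onto a graph; so the check reduces to confirming that no step in the proofs of Lemmas~\ref{Lemma0}, \ref{Lemma1} and Proposition~\ref{prop:IncreaseDepth} secretly used a relation in $\pi_1$ that is unavailable in the free-group setting.
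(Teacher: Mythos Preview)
Your proposal is correct and follows essentially the same approach as the paper: the paper's own proof is simply the observation that Lemma~\ref{lem:linear_algebra} is local to each 0-handle and that the arrow-sliding algorithm of Proposition~\ref{prop:IncreaseDepth} goes through unchanged for an arbitrary handle decomposition. Your write-up is in fact more detailed than the paper's, which dispatches the theorem in a short paragraph without explicitly isolating the tree-like universal cover or the free-group coloring as the points to verify.
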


This recovers a result due to Haiden, Katzarkov, and Kontsevitch \cite{HKK} via rather different techniques.

\section{Decorated immersed curves as type D structures}\label{sec:cat}
\label{sec:categories}

The previous section shows how an extended type D structure over $\AlgExt$ can be interpreted as a collection of immersed curves decorated by local systems in the marked torus $T$. The goal of this section is to show that this in fact gives a bijection between the set of homotopy equivalence classes of extendable type D structures and the set of decorated immersed multicurves in the marked torus. Moreover, we show that the pairing on extendable type D structures coming from the box tensor product is given by the intersection Floer homology of the corresponding curves. 

\subsection{Intersection Floer homology}

In \cite{Abouzaid2008}, Abouzaid gave a combinatorial definition of the  intersection Floer homology of immersed curves in a symplectic surface  ({\it cf.} \cite{deSilvaRobbinSalamon} for a detailed discussion in the embedded case, and section 2 of \cite{AurouxSmith} for a recent summary). We will use a slight variation on this construction: rather than working with coefficients in a Novikov ring, as is standard in symplectic geometry, we use the notion of an admissible diagram, which was introduced by Ozsv{\'a}th and Szab{\'o} \cite{OSz2004} in the context of Heegaard Floer homology (see also Kotelskiy \cite{Kotelskiy2019}). 

Let \(S\) be a (possibly noncompact) orientable surface with \(\chi(S)<0\), so the universal cover \(\widetilde{S} \) is homeomorphic to \( \R^2\). A \emph{curve} \(\gamma\) in \(S\) is either an immersed closed curve \(\gamma\co S^1 \to  S\) that represents a nontrivial element of \(\pi_1(S)\) (a \emph{loop}), or a  properly immersed arc \(\gamma\co \R \to S\) (an \emph{arc}). If \(\gamma\) is an arc\, it lifts to an arc \(\widetilde{\gamma}\) in \(\widetilde{S}\). If \(\gamma\) is a loop, we consider the composition \( \gamma \circ p\co \R \to S\), where \(p\co\R \to S^1 \) is the covering map. This map lifts to an arc in \(\widetilde{S}\), which we again denote by \(\widetilde{\gamma}\). 
Of course, \(\widetilde{\gamma}\) is not unique in either case, but any two such lifts are related by the action of the group of deck transformations on \(\widetilde{S}\). 

\begin{definition}
If \(\widetilde{\gamma}\) is embedded, we say that \(\gamma\) is unobstructed.
\end{definition}

By \cite[Lemma 2.2]{AurouxSmith} this definition is equivalent to \(\gamma\) having no immersed fishtail. 
More generally, we define a multicurve \(\bgamma\), to be a finite union of curves \(\gamma_i\), and say that \(\bgamma\) is unobstructed if each \(\gamma_i\) is. 

 If \(\gamma\) is a loop, we fix a basepoint \(* \in S\) which in the image of \(\gamma\) but not a double point, and consider the covering space \(S_\gamma \to S\) defined by  
\(\pi_1(S_\gamma,*') = \langle \gamma \rangle \subset \pi_1(S,*)\). Let \(\overline{\gamma}\) be the lift of \(\gamma\) to \(S_\gamma\) that passes though the basepoint \(*'\). It is easy to see  that  \(S_\gamma \cong S^1 \times \R\), and that \(\gamma\) is unobstructed if and only if \(\bargamma\) is embedded.

\begin{definition}
Loops \(\gamma, \gamma' \subset S\) are \emph{commensurable} if there is an immersed loop \(\delta \subset S\) and integers \(n, n'\) such that \(\gamma\) is freely homotopic to \(\delta^{n}\) and \(\gamma' \) is freely homotopic to \(\delta^{n'}\).
\end{definition}

For curves in the marked torus \(T\), the condition that two curves are commensurable is equivalent to them being globally parallel, in the sense of Definition~\ref{def:parallel}. 
Group theoretically, this condition can be described as follows. As above, we assume our basepoint  \(*\) lies on \(\gamma\), so \(\gamma\) determines an element \(\gamma \in \pi_1(S)\). The free homotopy class of \(\gamma'\) determines a conjugacy class \(\{x\gamma'x^{-1}\} \subset \pi_1(S)\). Then \(\gamma\) and \(\gamma'\) are commensurable if and only if there is a cyclic subgroup \(C \subset \pi_1(S)\) containing both \(\gamma\) and \(x\gamma'x^{-1}\) for some \(x\).

\begin{lemma} 
\label{Lem:Sgamma}
Suppose \(S\) is noncompact so that \(\pi_1(S)\) is free. If \(\gamma, \gamma'\) are not commensurable, any lift of \(\gamma'\) to \(S_\gamma\) is an arc. If they are commensurable, there is precisely one lift of \(\gamma'\) to \(S_\gamma\) that is a loop. 
\end{lemma}

\begin{proof} The different lifts of \(\gamma'\) to \(S_\gamma\) correspond to conjugates \(x\gamma'x^{-1}\) of \(\gamma'\) in the free group \(\pi_1(S)\). The element \(x\gamma'x^{-1}\) lifts to a loop if and only if some power of \(x \gamma'x^{-1}\) is contained in \(\langle \gamma \rangle\), {\it i.e.} 
\(x (\gamma')^nx^{-1} = \gamma^m\) for some \(n,m\). Consider the subgroup \(G = \langle x\gamma'x^{-1}, \gamma\rangle \subset \pi_1(S)\). By hypothesis, \(\pi_1(S)\) is free, so \(G\) is free as well; \(G\) contains a nontrivial central element (namely \(x (\gamma')^nx^{-1} = \gamma^m\)) so it must be free of rank \(1\). Hence \(\gamma \) and \(\gamma'\) are commensurable if and only if some lift of \(\gamma'\) is a loop. 

Now suppose that \(\gamma'\) and \(\gamma\) are commensurable. After replacing \(\gamma'\) with \(x\gamma' x^{-1}\), we may assume that \(\gamma'\) and \(\gamma\) belong to the same cyclic subgroup of \(\pi_1(S)\), so there is some \(\delta \in \pi_1(S)\) with \(\gamma = \delta^r\), \(\gamma' = \delta^s\). Saying that \(\gamma'\) has a different lift that is a loop means there is some \(w \in \pi_1(S)\) with \(w \gamma' w^{-1} \neq \gamma'\) and \(w(\gamma')^nw^{-1} = \gamma^m\), or equivalently, \(w\delta^{sn} w^{-1} = \delta^{rm}\). 

Suppose this is the case, and consider the groups \(H = \langle w, \delta \rangle \subset \pi_1(S)\), and \(F_2\), the free group generated by \(w\) and \(\delta\). \(H\) is free, and there is an obvious surjection \(\pi\co F_2 \to H\). By Grushko's theorem, the rank of \(H\) is \(\leq 2\), and if it is equal to \(2\), \(\pi\) is an isomorphism. Now \(w\) and \(\delta\) satisfy the nontrivial relation \(w\delta^{sn} w^{-1} = \delta^{rm}\) in \(H\), so \(\pi \) cannot be an isomorphism. Thus \(H\) is free of rank \(1\), so it is abelian. But in this case  \(w\) commutes with \(\delta\), and thus also with \(\gamma'\). This contradicts our assumption that  \(w \gamma' w^{-1} \neq \gamma'\). We conclude that no other lift of \(\gamma'\) is a loop. 
\end{proof}

 Suppose that \(\gamma, \gamma'\) are unobstructed curves and that \(x, y \in \gamma \cap \gamma'\).  Let \(\pi_2(x,y)\) be the set of homotopy classes of bigons from \(x\) to \(y\).
 
\begin{lemma}
 If \(\gamma\) and \(\gamma'\) are incommensurable, \(\pi_2(x,y)\) is either empty or a single point. If  \(\gamma\) and \(\gamma'\) are commensurable, \(\pi_2(x,y)\) is either empty or a \(\Z\)-torsor. 
\end{lemma}

\begin{proof} The set \(\pi_2(x,y)\) is either empty or a torsor over \(\pi_2(x,x)\). The latter group can be computed from the long exact sequence 
\[\pi_2(S) \to \pi_2(x,x) \to \pi_1(\gamma)\oplus\pi_1(\gamma') \xrightarrow\alpha  \pi_1(S),\]
where \(\alpha(m,n) = \gamma^m(\gamma')^n\). (Note that this is only a map of sets, not a homomorphism.) We have  \(\pi_2(S)=0\), and \(\gamma, \gamma'\) represent nontrivial elements of \(\pi_1(S)\), so \(\alpha^{-1}(1)\) is either the identity element in \(\pi_1(\gamma)\oplus\pi_1(\gamma')\)  (if \(\gamma\) and \(\gamma'\) are incommensurable), or isomorphic to \(\Z\). 
\end{proof}

 Now we are ready to discuss the Floer chain complex. 
 Suppose that \(\bgamma = \cup \gamma_i\) and \( \bgamma' = \cup \gamma_j'\) are unobstructed transversally intersecting multicurves and that every component of \(\bgamma\) is a loop.
 As above, if \(x, y \in \bgamma \cap \bgamma'\), we let \(\pi_2(x,y)\) be the set of homotopy classes of bigons from \(x\) to \(y\), where a bigon from \(x\) to \(y\) is a continuous map \(\psi\co D^2 \to S\) such that \(\psi(-i) = x, \psi(i) = y\), the restriction of \(\psi\) to the right half of \(S^1\) factors through some \(\gamma_i\), and its restriction to the left half of \(S^1\) factors through some \(\gamma_j'\). It is immediate from the definition that \(\pi_2(x,y) = \emptyset\) unless \(x\) and \(y\)  belong to the same \(\gamma_i\) and \(\gamma_j'\).

 Roughly speaking, we want the Floer chain complex \(\CF(\bgamma, \bgamma')\) to be the \(\F\) vector space generated by \(\bgamma \cap \bgamma'\), with differential given by 
\(dx = \sum_y n(x,y) y\), where \(n(x,y)\) is the count of immersed bigons from \(x\) to \(y\), just as in Section~\ref{sec:pairing-train-tracks}.  Now \(\pi_2(x,y) = \emptyset\) unless \(x\) and \(y\) both belong to the same \(\gamma_i\) and the same  \(\gamma_j'\), so   \[\CF(\bgamma, \bgamma') \cong \bigoplus_{i,j} \CF(\gamma_i, \gamma_j').\]
Hence in what follows we can restrict attention to the case where \(\bgamma= \gamma\) and \(\bgamma' = \gamma'\) are individual curves.

The fact that \(d^2=0\) when \(\gamma\) and \(\gamma'\) are unobstructed follows from a combinatorial count of disks exactly as in \cite[Lemma 2.11]{Abouzaid2008}; Figure \ref{fig:dsquared} illustrates the idea of the proof.   
\begin{figure}[h]
\labellist
\pinlabel {$x$} at 305 535
\pinlabel {$y$} at 367 535
\pinlabel {$z$} at 245 535
\pinlabel {$x$} at 230 382
\pinlabel {$y$} at 290 382
\pinlabel {$z$} at 168 382
\pinlabel {$x$} at 434 382
\pinlabel {$y$} at 494 382
\pinlabel {$z$} at 390 382
\endlabellist
\includegraphics[scale=0.5]{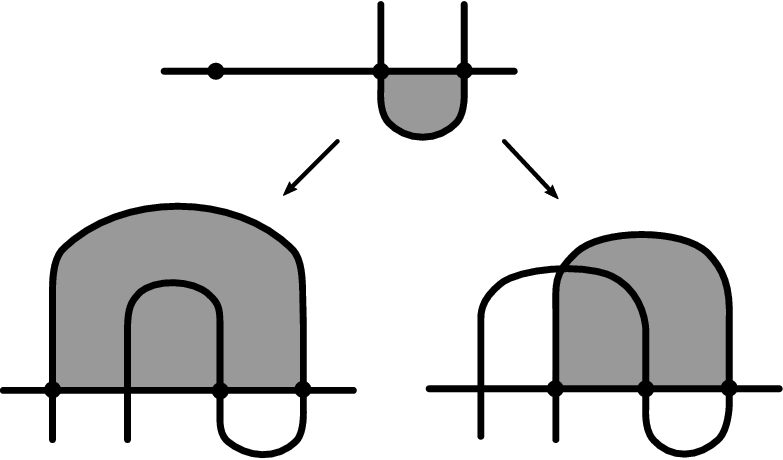}
\caption{Local pictures illustrating  why $d^2=0$: Either $d^2(x)=0$ owing to the fact that 4 bigons must be present (as illustrated on the left), or there must be a cusp in one of the $\gamma_i$ (as illustrated on the right).}\label{fig:dsquared}
\end{figure}

There are a few subtleties to consider. The first is the issue of local systems.  Abstractly, a local system on a curve \(\gamma\)  assigns a vector space \(V_x\) to each point in the domain of \(\gamma\), together with an isomorphism \(\phi_\alpha\co V_x \to V_y\) for each homotopy class of paths \(\alpha\) from \(x\) to \(y\), with the property that \(\phi_{\alpha\circ \alpha'} = \phi_\alpha \circ \phi_{\alpha'}\). If \(\gamma\) is an arc, two local systems on \(\gamma\) are isomorphic if and only if they have the same dimension. If \(\gamma\) is an oriented loop,  a local system on \(\gamma\)  is determined up to isomorphism by \(k = \dim V_x\) and a  matrix \(A \in GL_k(\F)\) (well defined up to conjugacy) that represents the monodromy \(\phi_\alpha\co V_x \to V_x\) where \(\alpha\) is a generator of \(\pi_1(S^1,x)\). We represent \(\gamma\) equipped with a local system as above by the triple \( \bgamma = (\gamma, k, A)\), where \(\gamma\) carries an orientation given by our choice of generator for \(\pi_1\). Changing the orientation on \(\gamma\)  has the effect of replacing \(A\) with \(A^{-1}\). 

If \(\bgamma, \bgamma'\) are curves as above equipped with local systems \((V,\phi)\)  and \((V',\phi')\), then we define
\[\CF(\bgamma,\bgamma')= \bigoplus_{x \in \bgamma \cap \bgamma'} V_x \otimes V_x' .\]
An immersed bigon \(\psi \in \pi_2(x,y)\)  induces a map 
\(d_\psi:  V_x \otimes V_x' \to V_y \otimes V_y'\) given by \(d_\psi = \phi_{\psi_1} \otimes \phi_{\psi_2}\) where \(\psi_1\) and \(\psi_2\) are the sides of \(\psi\) running along \(\gamma\) and \(\gamma'\) respectively.  Both \(\psi_1\) and \(\psi_2\) are oriented to point from \(x\) to \(y\). The differential on \(\CF(\bgamma,\bgamma')\) is given by 
\(d = \sum_{\psi} d_\psi\), where the sum runs over all immersed bigons in \(S\). (A more common convention is to use \(\CF(\bgamma,\bgamma')= \bigoplus_{x \in \bgamma \cap \bgamma'} \Hom(V_x, V_x')\). The two are related by replacing \(\bgamma' = (\gamma', k, A)\) with \((\gamma',k, (A^{-1})^T)\).) 

The second issue we must address is the finiteness of this sum. 
The definition we have given differs from the standard one in symplectic geometry \cite{Abouzaid2008}, which uses  coefficients in a Novikov ring and keeps track of the symplectic area of the disks. Since we are working with coefficients in \(\F\) rather than the Novikov ring, we must ensure that there are only finitely many immersed bigons   in order for the sum in the definition to make sense. If \(\gamma\) and \(\gamma'\) are incommensurable, this is automatic: there are finitely many \(x,y \in \gamma \cap \gamma'\);  \(\pi_2(x,y)\) contains either \(0\) or \(1\) representative, and each homotopy class contains at most one immersed bigon. 

If \(\gamma\) and \(\gamma'\) are commensurable, we must analyze the situation more closely. Since   the curves are commensurable, \(\alpha = \gamma^n\) is homotopic to \(\alpha'=\gamma'^m\) for some \(n,m\in \Z\). Since they  are unobstructed, \(\alpha\) and \(\alpha'\) lift to homotopic simple closed curves \(\widetilde{\alpha}\) and \(\widetilde{\alpha}'\)  in the covering space \(S_{\alpha}\).

The following lemma is well-known, but since the proof is short, we give it here. 

\begin{lemma}
\label{Lem:cover}
Suppose that \(\gamma\) is a loop, that \(p\co\overline{S} \to S\) is a covering map, and that \(\overline{\gamma}\co S^1 \to \overline{S}\) is a lift of \(\gamma\) to \(\overline{S}\). Then \(\CF(\gamma, \gamma') \cong \CF(\overline{\gamma},p^{-1}(\gamma')) \). 
\end{lemma}

\begin{proof} Since \(\overline{\gamma}\) is a lift of \(\gamma\), 
\(p\) gives a bijection between \(\bargamma \cap p^{-1}(\gamma')\) and \(\gamma \cap \gamma'\). Hence the two complexes have the same generators. 

Suppose \(\psi\co D^2 \to S\) is an immersed disk from \(x\) to \(y\), and let \(\overline{x}\) and \(\overline{y}\)  be the preimage of \(x\) and \(y\) in \(\bargamma\). Since \(D^2\) is simply connected, \(\psi\) lifts to an immersion
\(\overline{\psi}\co D^2 \to \overline{S}\) with \(\overline{\psi}(-i) = \overline{x}\). The side of \(\psi\) running along \(\gamma\) lifts to \(\bargamma\), so \(\psi(i) = \overline{y}\), and \(\overline{\psi} \) is an immersed disk from \(\overline{x}\) to \(\overline{y}\). Conversely, if  \(\overline{\psi} \) is an immersed disk from \(\overline{x}\) to \(\overline{y}\), \(p \circ \psi\) is an immersed disk from \(x\) to \(y\). It follows that the two complexes have the same differentials as well. 
\end{proof}

\begin{corollary} 
\label{cor:extra cover}
Suppose \(\gamma\) and \(\gamma'\) are commensurable, and \(\alpha, \alpha'\) are as above. Then the sum appearing in the boundary operator for \(\CF(\gamma, \gamma')\) is finite if and only if the sum for 
\(\CF(\widetilde{\alpha}, \widetilde{\alpha}')\) is. 
\end{corollary}

\begin{proof}
We apply the lemma to the cover \(p_\gamma\co S_\gamma \to S\). By Lemma~\ref{Lem:Sgamma}, \(p_\gamma^{-1}(\gamma')\) contains finitely many arcs \(\beta_1', \ldots, \beta_k'\) intersecting \(\bargamma\), along with a single loop \(\beta_0'\) that is commensurable with \(\bargamma\), so 
\[\CF(\gamma, \gamma') \cong \bigoplus_{i=0}^k \CF(\bargamma, \beta_i').\]
If \(\gamma^n = \gamma'^m\) then \(\beta_0 = \bargamma^n \) in \(\pi_1(S_\gamma) \cong \Z\). Applying the lemma again, this time to the \(n\)-fold-cyclic cover \(p\co S_\alpha \to S_\gamma\) we see that \(\CF(\bargamma, \beta_0) \cong 
\CF(p^{-1}( \bargamma), \overline{\beta_0}) = \CF(\widetilde{\alpha}, \widetilde{\alpha}')\).
For \(i>0\), \(\beta_i\) is an arc, so the differential in the complex \(\CF(\bargamma, \beta_i)\) is a finite sum. Hence the differential on \(\CF(\gamma, \gamma')\) is well-defined if and only if the differential on \(\CF(\widetilde{\alpha},\widetilde{ \alpha}')\) is. 
\end{proof}

 We define \(\sP_{\alpha, \alpha'}\) to be the  \emph{periodic domain} bounded by \(\alpha\) and \(\alpha'\), that is,  the domain of the compactly supported 2-chain in \(S_{\alpha}\) which is bounded by \(\widetilde{\alpha} - \widetilde{\alpha}'\). If  \(z \in S_\alpha  - \widetilde{\alpha} - \widetilde{\alpha}'\),  we let \(n_z(\sP_{\alpha, \alpha'})\) be the multiplicity of \(\sP_{\alpha, \alpha'}\) at \(z\). 

\begin{definition} We say that \(\gamma\) and \( \gamma'\) are in \emph{admissible position} if \(\sP_{\alpha, \alpha'}\) has both positive and negative multiplicities. 
\end{definition}

 Since \(\widetilde{\alpha}\) and \(\widetilde{\alpha'}\) are transverse embedded curves in \(S^1\times \R\), \(\gamma\) and \( \gamma'\) are in admissible position if and only if \(\alpha\) and \(\alpha'\) intersect.

 Suppose that  \(\gamma\) and \(\gamma'\) are in admissible position, that \(x, y\in \alpha \cap \alpha'\), and that \(\psi_1,\psi_2 \in \pi_2(x,y)\).  If \(\mathcal{D}(\psi_i)\) is the domain associated with \(\psi\), then  \(\mathcal{D}(\psi_1) = \mathcal{D}(\psi_2)  + k \sP_{\alpha, \alpha'} \) for some \(k \in \Z\). Since \(\sP_{\alpha, \alpha'}\) has both positive and negative multiplicities, there are only finitely many values of \(k\) for which all the multiplicities of \(\mathcal{D}(\psi_1)\) are positive, and hence only finitely many terms in the sum for the differential on \(\CF(\alpha, \alpha')\). Hence if \(\gamma\) and \(\gamma'\) are in admissible position, \(\CF(\gamma, \gamma')\) is well-defined.

More generally, if \(\bgamma = \cup \bgamma_i\) and \(\bgamma' = \cup \bgamma_j'\) are  unobstructed immersed curves equipped with local systems, we say \(\bgamma\) and \(\bgamma'\) are in admissible position if \(\gamma_i\) and \(\gamma_j'\) are in admissible position for each \(i\) and \(j\). If this is the case, the Floer chain complex \(\CF(\bgamma, \bgamma')\) is well defined. We will now explain how to compute its homology \(\HF(\bgamma, \bgamma')\).  We start with an elementary lemma, whose proof is left to the reader. 

\begin{lemma}
\label{Lem:LocalSystems}
If \(\bgamma_1 = (\gamma, k_1, A_1)\) and \(\bgamma_2 = (\gamma, k_2,A_2)\), then 
\(\CF(\bgamma_1\cup\bgamma_2,\bgamma') = \CF(\bgamma_3, \bgamma')\), 
where \(\bgamma_3 = (\gamma, k_1+k_2, A_1\oplus A_2)\). 
Similarly, if \(\bgamma_1 = (\gamma_1,k,A)\), where \(\gamma_1\) is the \(n\)-fold cover of \(\gamma\), then \(\CF(\bgamma_1,\bgamma') = \CF(\bgamma,\bgamma')\), where \(\bgamma=(\gamma, nk, A')\) and \(A'\) is a block matrix of the form
\begin{equation}
\label{eq:monodromy}
\left[\begin{array}{c|c|c|c}
0 & 0 & 0 & A \\  \hline
I_k & 0 & 0 & 0 \\  \hline
0 & I_k & 0 & 0 \\  \hline
0 & 0 & I_k & 0
\end{array} \right] 
\end{equation}
\end{lemma}

Next, we consider the effect of varying a loop  \(\gamma\) by a homotopy. 
Fix some \(\gamma'\); we say that the homotopy \(\gamma_t\) is \emph{good} if for each \(t\in \{0,1\}\), \(\gamma_t\) is unobstructed and the pair \((\gamma_t,\gamma')\) is admissible. If \((V_0, \phi_0)\) is a local system on \(\gamma_0\), the pullback to \(S^1 \times [0,1]\) induces a local system \((V_1, \phi_1)\) on \(\gamma_1\). 

\begin{theorem}
\label{Thm:Isotopy Invariance} Suppose that \(\bgamma_0, \bgamma'\) are unobstructed curves equipped with local systems. If \(\gamma_t\) is a good homotopy from \(\gamma_0\) to \(\gamma_1\), then \(\HF(\bgamma_0, \bgamma') \cong \HF(\bgamma_1, \bgamma')\). 
\end{theorem}

\begin{proof}
This is a standard result in the symplectic context, where we work with area preserving isotopies and coefficients in the Novikov ring. For a proof in this setting, see \cite[Section 4]{Abouzaid2008} or \cite[Lemma 2.12]{AurouxSmith}. We sketch the necessary changes to make the proof work in the context of admissible diagrams. As in the proof of Proposition 4.1 in \cite{Abouzaid2008}, we first pass to the cover \(S_{\gamma_0}\), and thus reduce to the case where \(\gamma_0\) and \(\gamma_1\) are embedded. After splitting the preimage of \(\gamma'\) into separate components and passing to a further cover, as in the proof of Corollary~\ref{cor:extra cover}, we can further assume that \(\gamma'\) is either (a) incommensurable with \(\gamma_0\) or (b) embedded and isotopic to \(\gamma_0\). Case (b) is actually a special case of the invariance of sutured Floer homology  \cite{JuhaszSFH}, since the  sutured Heegaard diagrams \((S^1\times [0,1], \gamma_0, \gamma')\) and \((S^1\times [0,1], \gamma_1, \gamma')\) are admissible sutured Heegaard diagrams representing the same sutured manifold. Henceforth we assume we are in case (a). 

By breaking the isotopy \(\gamma_t\) into a sequence of smaller isotopies, as in \cite[Lemma 4.2]{Abouzaid2008}, we may assume that \(\gamma_0\) and \(\gamma_1\) are parallel curves intersecting in two points \(\theta_1\) and \(\theta_2\). Then a (combinatorially defined) count of immersed triangles corresponding to composition with \(\theta_1\) and \(\theta_2\) gives chain maps \(f\co \CF(\gamma_0, \gamma') \to \CF(\gamma_1, \gamma')\) and \(g: \CF(\gamma_1, \gamma') \to \CF(\gamma_2, \gamma')\), where \(\gamma_2\) is a small translate of \(\gamma_0\) intersecting it in two points. Finally, the associativity relation (obtained by counting immersed quadrilaterals) ensures that \(f \circ g\) is homotopic to the natural map identifying \(\CF(\gamma_0,\gamma')\) with \(\CF(\gamma_2, \gamma')\). 

This argument carries over largely unchanged to the admissible setting, but we must check that the  counts of immersed triangles and quadrilaterals used to define the chain maps and homotopies are finite. We discuss the case of quadrilaterals; the argument for triangles is very similar. 
 In order to check that there are only finitely many homotopy classes that contribute to the sum, we must show that for a given \(N\), there are only finitely many periodic domains bounded by \(\gamma_0, \gamma_1, \gamma_2\) and \(\gamma'\) whose multiplicities are all between \(-N\) and \(N\). To do this, it suffices to find a basis \(\sP_i\) for the space of periodic domains and points 
\(z_i^\pm\) such that \(n_{z_i^+}(\sP_i) >0\), \(n_{z_i^-}(\sP_i)<0\), and 
\(n_{z_i^\pm}(\sP_j) = 0\) for \(i\neq j\). In this case we say the points \emph{separate} the periodic domains. Since we are in case (a) \(\gamma_0\) and \(\gamma'\) are incommensurable, and all periodic domains are bounded by  \(\gamma_0, \gamma_1\), and \( \gamma_2\). By choosing \(\gamma_2\) appropriately, we can assume the configuration is as shown in Figure~\ref{fig:periodic_domains}. The periodic domains \(\sP_1\) and \(\sP_2\)  form a basis and are separated by the points shown in the figure.
\end{proof}

\begin{figure}[h]
\labellist
\pinlabel {$\sP_1$} at 77 30 
\pinlabel {$\sP_2$} at 133 30 
\endlabellist
\includegraphics[scale=2]{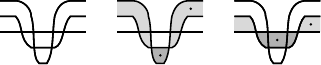}
\caption{The curves \(\gamma_0,\gamma_1, \gamma_2\) in the cylinder \(S_{\gamma_0}\), together with a basis of periodic domains.}\label{fig:periodic_domains}
\end{figure}

Recall that if \(\gamma, \gamma'\) are essential immersed curves in \(S\), their \emph{intersection number} is the minimal geometric intersection number between transverse curves \(\widehat{\gamma}, \widehat{\gamma}'\) that are homotopic to \(\gamma, \gamma'\). By a theorem of Freedman, Hass, and Scott \cite{FreedmanHassScott}, this minimal number is realized when \(\widehat{\gamma}, \widehat{\gamma}'\) are shortest length geodesics in the free homotopy classes of \(\gamma, \gamma'\). Similarly, if \(\gamma\) is primitive, the self-intersection number \(i(\gamma)\) is the minimal number of self-intersections among immersed curves homotopic to \(\gamma\). It is realized by the number of self-intersections of the shortest geodesic representing \(\gamma\). 

\begin{corollary}
\label{cor:ComputeHF}
 Suppose \(\bgamma = (\gamma, k, A)\) and \(\bgamma' = (\gamma', k', A')\) are primitive unobstructed immersed curves equipped with local systems. If \(\gamma\) and \(\gamma'\) are incommensurable, then \[\dim \HF(\bgamma, \bgamma') = kk' \cdot i(\gamma,\gamma').\] If \(\gamma' = \gamma^{-1}\), 
then \[\dim \HF(\bgamma, \bgamma') = 2 kk' \cdot i(\gamma) + 2  \dim(\ker \Phi_{A,A'}),\] where \(\Phi_{A,A'}\) is the endomorphism of \(\F^k \otimes \F^{k'}\) given by 
\(\Phi_{A,A'} = I \otimes I + A \otimes A'\). 
As a consequence, \( \HF(\bgamma, \bgamma)\) is always a nontrivial. 
\end{corollary}

If \(\gamma, \gamma'\) are commensurable but not primitive, we can use Lemma~\ref{Lem:LocalSystems} to reduce to the case where we have two parallel primitive curves. Hence the corollary enables us to compute 
\(\HF(\gamma, \gamma')\) for any two decorated curves \(\gamma, \gamma'\).  

\begin{proof} Let \(\widehat{\gamma}, \widehat{\gamma}'\) be shortest geodesics homotopic to \(\gamma, \gamma'\). Since \(\gamma\) and \(\gamma'\) are primitive, we can assume they are either incommensurable or \(\widehat{\gamma}' = \widehat{\gamma}^{\pm 1}\). In the first case, \(\CF(\bgamma, \bgamma') \cong \CF(\widehat{\bgamma}, \widehat{\bgamma}')\), and \(\CF(\widehat{\bgamma}, \widehat{\bgamma}')\) has \(kk' \cdot i(\gamma, \gamma')\) generators. To understand the differential, we consider the cover \(S_\gamma\). By section 3 of \cite{FreedmanHassScott}, each component of \(p_\gamma^{-1}(\widehat{\gamma}')\) intersects the lift of \(\widehat{\gamma}\) in either \(0\) or \(1\) point. Hence \(\pi_2(x,y) = 0\) for any two distinct generators of \( \CF(\widehat{\bgamma}, \widehat{\bgamma}')\). This proves the first statement. 

If \(\gamma'\) is homotopic to \(\gamma^{-1}\), \(\HF(\bgamma, \bgamma') \cong \HF(\widehat{\bgamma}, \bgamma'')\), where \(\gamma''\) is a small translate of \(\widehat{\gamma}\) intersecting it in two points. In this case \(p_\gamma^{-1}(\gamma'')\) consists of some arcs and a single loop that is a small translate of  \(\widehat{\bargamma}\) and intersects it in two points. Since \(\gamma''\) is a small translate of \(\widehat{\gamma}\), as before, each of the arcs intersects \(\widehat{\bargamma}\) in a single point. There are two such intersections for each self-intersection of \(\widehat{\gamma}\), so in total these contribute \(2kk' \cdot i(\gamma)\) generators, and no differential.
The final part of the chain complex is the summand \(\CF(\widehat{\bargamma}, \widehat{\bargamma})\) for which there are two intersection points, each contributing \(\Hom(V',V) \cong \F^k \otimes \F^{k'}\) to the set of generators, and two embedded disks. The differential in this complex is given by \(\Phi_{A,A'} \), so the dimension of its homology is \(2  \dim (\ker \Phi_{A,A'})\). 

For the final claim, reverse the orientation on \(\gamma\) to get 
\[\HF((\gamma,k,A), (\gamma,k,A)) = \HF((\gamma,k,A),(\gamma^{-1},k, A'))\] where \(A' = (A^{-1})^T\). The operator \(\Phi_{A,A'}\) can be viewed as a map from \(\End(\F^k)\) to itself. Thinking of  \( \End(\F^k)\) as the space of \(k\times k\) matrices with entries in \(\F\), we have \(\Phi_{A,A'}(X) = AXA^{-1} + X\).
The \(k \times k\) identity matrix is always in the kernel of \(\Phi\), so \(\HF((\gamma,k,A), (\gamma,k,A))\) is always nontrivial. 
\end{proof}

 \subsection{From Type D structures to curves} 

We now define  a set \(\Curves\), which is the target of the invariant \(\HFhat(M)\). Consider collections of finitely many oriented immersed unobstructed loops decorated with local systems in the marked torus; recall that by marked torus we mean the punctured torus $T$ with a choice of parametrizing curves $\alpha$ and $\beta$. Each decorated curve is a triple $(\gamma, k, A)$, where $\gamma$ is an oriented unobstructed immersed curve,  $k$ is a positive integer, and $A$ is a similarity class of $k\times k$ matrices over $\F$. As discussed earlier, the pair \((k,A)\) determines a \(k\)-dimensional local system on \(\gamma\). 

We impose an equivalence relation on collections of such curves, generated by the following relations, which can be applied to any subset of a collection:

\begin{itemize}
\item If \(\gamma\) is homotopic to \(\gamma'\), $(\gamma, k, A)$ is equivalent to $(\gamma', k, A)$.
\item  $(\gamma, k, A)$ is equivalent to $(-\gamma, k, A^{-1})$, where 
\(-\gamma\) denotes \(\gamma\) with the orientation reversed. 
\item The pair $\{(\gamma, k_1, A_1),(\gamma, k_2, A_2)\}$ is equivalent to the single curve $(\gamma, k_1+k_2, A_1\oplus A_2)$.
\item 
 If $\gamma$ is homotopic to $(\gamma')^m$ then $(\gamma, k, A)$ is equivalent to $(\gamma', mk, A')$, where $A'$ is a block matrix of the form
shown in equation~\eqref{eq:monodromy}. 
\end{itemize}
We define $\Curves$ to be the set of equivalence classes. 
Each equivalence class contains a representative which is 
 minimal in the sense that the underlying curves are all primitive and no two of them are homotopic. This representative is unique up to orientation reversal, conjugation of the monodromy, and  homotopy of the underlying curves. 
 
 Given  $\bgamma_1,\bgamma_2 \in \Curves$, we define their pairing to be $\langle \bgamma_1, \bgamma_2 \rangle = \HFa(\bgamma_1, r(\bgamma_2))$, where $r$ denotes the orientation reversing diffeomorphism of the marked torus that exchanges $\alpha$ and $-\beta$. 
Lemma~\ref{Lem:LocalSystems} and Theorem \ref{Thm:Isotopy Invariance} imply that the pairing is well-defined.


Let $\Extend$ denote the set of extendable type D structures over $\Alg$, up to homotopy equivalence.  The set $\Extend$ is equipped with a pairing given by the box tensor product: if $N_1, N_2 \in \Extend$, their pairing is  $\langle N_1, N_2 \rangle =N_1^A \boxtimes N_2$, where $N_1^A$ is the $\Ainfty$ module corresponding to $N^1$ under the duality described in Section \ref{subsec:graphs and loops} (see Remark \ref{rmk:pairtracks}).

 The main result of this section is:

\begin{theorem}
\label{Thm:TheBijection}
There is a bijection \(f \co \Extend \to \Curves\), with inverse $g\co\Curves \to \Extend$, satisfying
\(\langle N_1, N_2 \rangle \simeq \langle f(N_1), f(N_2)\rangle\) and, equivalently, \(\langle \gamma_1, \gamma_2 \rangle \simeq  \langle g(\gamma_1), g(\gamma_2) \rangle\). 
\end{theorem}

We first define the inverse \(g\co\Curves \to \Extend\).
Given a decorated immersed curve $\bgamma = (\gamma, k, A)$ in the marked torus, we construct a train track by replacing $\gamma$ with $k$ parallel copies of $\gamma$, writing $A$ as a product of elementary matrices $P_1 \cdots P_\ell$, and adding a crossover arrow between copies of $\gamma$ for each elementary matrix, where an elementary matrix of type $A_{ij}$ corresponds to an arrow connecting the $i$th copy of $\gamma$ to the $j$th copy of $\gamma$ and the arrows corresponding to $P_\ell, P_{\ell-1}, \ldots, P_1$ appear in order according to the orientation of $\gamma$. We assume that $\gamma$ is homotoped so that it has minimal intersection with $\alpha \cup \beta$, and we take care that no crossover arrows intersect $\alpha$ or $\beta$. The result is a valid reduced train track; this corresponds to an extended type D structure $\tN$, which has an underlying (extendable) type D structure $N$. We define $g(\bgamma)=N$. In doing so, we made some choices: namely a matrix representing the similarity class $A$ and an elementary decomposition of that matrix. Changing either choice corresponds to sliding crossover arrows around $\gamma$ by moves which  preserve the homotopy equivalence class of the corresponding extended type D module; thus $g$ is well-defined.
Finally, if \(\bgamma = \bigcup \bgamma_i\) is a union of decorated curves, we define \(g(\bgamma) = \bigoplus g(\bgamma_i)\). 

Next, we consider the effect of \(g\) on the pairing. 
 Recall that if  \(\tracks\) is a curve-like train track (as in  Definition~\ref{def:curve-like}), there is an associated local system  \((V, \phi)\) on the  underlying curve \(\gamma\).  If \(x \) is a point on \(\gamma\), \(V_x\) is the \(\F\) vector space spanned by vectors \(e_p\) for \(p \in S_x:= \tracks \cap \ell_x\), where \(\ell_x\) is a small  arc transverse to \(\gamma\) and passing through \(x\). If \(\alpha\) is an arc from \(x\) to \(y\) along \(\gamma\), then \(\phi_\alpha: V_x \to V_y\) is given by \(\phi_\alpha(e_p) = \sum n_{pq} e_q\), where \(n_{pq}\) counts the number of paths on from \(p\) to \(q\) on \(\theta_i\) which project to the path \(\alpha\) on \(\gamma\). 
Applying this procedure to the train track used to define \(g(\bgamma)\) returns the decorated curve \(\bgamma\). 
 
 Now suppose that \(\tracks_1, \tracks_2\) are two  valid reduced   train tracks, and consider the chain complex \(\sC(\tracks_1, \tracks_2)\),  defined in Section~\ref{subsec:extended tracks}.  If \(A(\tracks_1)\) and \(D(\tracks_2)\) are in admissible  position,  \(\sC(\tracks_1, \tracks_2)\) is  their intersection Floer chain complex. Note that \(A(\tracks_1)\) is isotopic to \(\tracks_1\), while \(D(\tracks_2) = r(A(\tracks_2))\) is isotopic to \(r(\tracks_2)\). 
 
 \begin{lemma}
 \label{lem:Two HFs}
Suppose that \(\tracks\) and \( \tracks'\) are curve-like train tracks with associated decorated curves \(A(\tracks) = \bgamma  = (\gamma, V, \phi)\) and \(D(\tracks_2)=\bgamma' = (\gamma', V', \phi')\), and that \(\tracks\) and \(\tracks'\) are in admissible  position. 
Then \(\sC(\tracks, \tracks') \cong \CF(\bgamma, \bgamma')\).
\end{lemma}

\begin{proof}
  Generators of 
 \(\sC(\tracks,\tracks')\) correspond to triples \((x,p,q)\), where \(x \in \gamma \cap \gamma'\), \(p \in S_{x}\) and \(q \in S'_{x}\), while generators of \(\CF(\bgamma, \bgamma')\) correspond to pairs \((x,A)\), where \(A \) is a generator of  \(V_x \otimes V_x'\). They are in bijection via the map that sends \((x,p,q)\) to \((x,e_p \otimes e_q)\). 
 
 Similarly, immersed disks with boundaries on \(\tracks\) and \(\tracks'\) are in bijection with triples \((\psi, \alpha, \alpha')\), where \(\psi\) is an immersed disk  with boundary on \(\gamma_1\) and \( \gamma_2\),  \(\alpha\) is a path on \(\tracks\) which projects to the part of the boundary of \(\psi\) which lies on \(\gamma\) (and similarly for \(\alpha'\)), and  \(\alpha\) and \(\alpha'\) have the same endpoints. Under the bijection above, the contribution to the differential on \(\sC(\tracks_1, \tracks_2)\) coming from \(\psi\) matches the contribution to the differential on \(\CF(\bgamma_1, \bgamma_2)\) coming from \(\psi\). 
 \end{proof}

\begin{proposition} 
\label{Prop:Pairing}
If \(\bgamma_1, \bgamma_2 \in \Curves\) then \(\langle \bgamma_1, \bgamma_2 \rangle \simeq \langle  g(\bgamma_1), g(\bgamma_2)\rangle.\) 
\end{proposition}

\begin{proof}
Let \(N_i = g(\bgamma_i)\), let  $\tracks_i$ be the  curve-like train track associated with \(\bgamma_i\), and consider the complex $\sC(\tracks_1, \tracks_2)$. 
In order for $\sC(\tracks_1, \tracks_2)$ to be defined, we need the pair to be admissible---there can be no immersed annuli carried by $A(\tracks_1)$ and $D(\tracks_2)$. To ensure this, we will modify $\tracks_2$ so that it has no component for which the underlying curve consists only of the following four types of segments:
\begin{center}
\includegraphics[scale=0.5]{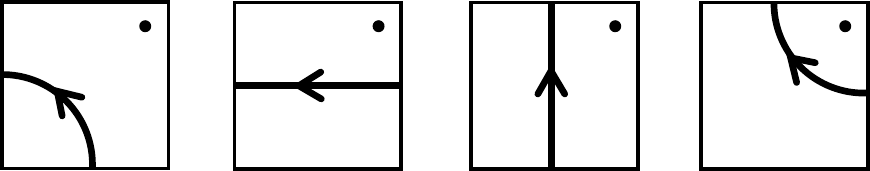}
\end{center}
This ensures that the pair is admissible, since any immersed annulus, when cut by $\alpha$ and $\beta$, consists only of the connecting pieces shown in Figure \ref{fig:bigon-pieces}. If any component of $\tracks_2$ does contain only the four segment types above, we choose one of the latter three types of segments and modify $\tracks_2$ by applying a finger-move isotopy to the given segment across either $\alpha$ or $\beta$, as in Figure \ref{fig:almost}. If $\tracks_2$ has parallel copies of the relevant segment we apply the isotopy to each copy, keeping them parallel, and ensure that the finger-move avoids any crossover arrows. Let $\tracks'_2$ denote the result of applying these finger-moves, if necessary, to $\tracks_2$. Note that $\tracks'_2$ is an almost reduced train track whose corresponding type D structure $N'_2$ is homotopy equivalent to $N_2$, where the homotopy equivalence replaces certain $\rho_{12}$, $\rho_{23}$, or $\rho_{123}$ arrows with zig-zags as in Figure \ref{fig:almost}; $N'_2$ is both bounded and almost reduced.

By Theorem \ref{thm:pairtracks}, $\sC(\tracks_1, \tracks'_2)$ is isomorphic to $N_1^A\boxtimes N'_2$, which is quasi-isomorphic to $ N_1^A \boxtimes N_2 = \langle g(\bgamma_1), g(\bgamma_2) \rangle$. On the other hand, 
by Lemma~\ref{lem:Two HFs}, $\sC(\tracks_1, \tracks'_2) \cong \CF(\bgamma_1, r(\bgamma_2'))$, where \(\bgamma_2' \) is isotopic to \(\bgamma_2\). Passing to homology gives the  statement of the proposition. 
\end{proof}

To prove Theorem~\ref{Thm:TheBijection}, we must show that \(g\) is injective and surjective. For surjectivity, we appeal to Theorem~\ref{thm:structure}. 
Given an element of $\Extend$, we choose a reduced representative $N$ and an extension $\tN$ of $N$. By Theorem~\ref{thm:structure}, \(N\) is homotopy equivalent to a reduced type D structure \(N'\) whose associated train track is a collection of curve-like train tracks. As we explained in the proof of Theorem~\ref{thm:structure}, to such a train track we can naturally associate an immersed decorated multicurve \(\bgamma\). To see that \(\bgamma\) is an element of \(\Curves\), we must check that each curve  \(\gamma \subset \bgamma\) is unobstructed, or equivalently, that  \(\widetilde{\gamma}\) is embedded in the universal cover. This follows from the fact that each individual segment of \(\gamma\) is embedded together with the fact that \(N'\) is reduced, so the path traced out by \(\gamma\) in the universal cover (thought of as a thickening of the regular 4-valent tree) never retraces itself. Then \(g(\bgamma) = N'\) is homotopy equivalent to our original \(N\), so \(g\) is surjective. 

\subsection{Injectivity of \(g\)}
\label{subsec:Injectivity of g}
 To complete the proof of Theorem~\ref{Thm:TheBijection}, we show that \(g\) is injective. 
Suppose that $N_1 = g(\bgamma_1)$ and $N_2=g(\bgamma_2)$  are homotopy equivalent to each other. Since $N_1$ and $N_2$ are homotopy equivalent, they have the same pairing behavior; that is, $$H_*(N_1^A\boxtimes N) \cong H_*(N_2^A\boxtimes N)$$ for any reduced type D structure $N$. 
For $i=1,2$, let $\tracks_i$ be a curve-like train track in \(T\) 
that represents \(\bgamma_i\).  It follows from Theorem \ref{thm:pairtracks} that $H_*(\sC(\tracks_1,  \tracks)) \cong H_* (\sC(\tracks_2,  \tracks))$ for any reduced weakly valid  train track $ \tracks$. We will show that \(\bgamma_i\) is determined by the groups $H_*(\sC(\tracks_i,  \tracks))$ as \(\tracks\) varies; thus \(\bgamma_1 = \bgamma_2\). 

Since $\tracks_i$ is a curve-like train track, a component of $\tracks_i$ consists of the following data: an immersed curve $\gamma$ in the punctured torus that intersects $\alpha\cup\beta$ minimally, an integer determining how many parallel copies of \(\gamma\) there are, and a collection of crossover arrows between parallel copies of $\gamma$. The curve $\gamma$ may be represented as a reduced cyclic word in $\{\alpha^{\pm 1}, \beta^{\pm 1}\}$ by traversing $\gamma$ and recording the intersections with the parametrizing curves $\alpha$ and $\beta$, with negative crossings recorded as inverse letters. Equivalently, we realize $\gamma$ as an element of the fundamental group of the marked torus, which is the free group on two generators. Here we take the generators $\alpha$ and $\beta$ of the fundamental group to be the simple closed curves dual to the parametrizing curves $\alpha$ and $\beta$. Note that viewing $\gamma$ as an element of the fundamental group requires choosing a basepoint, but choosing a different basepoint only changes the resulting word by cyclic permutation.

The underlying curves of $\tracks_i$ are determined by the results of pairing  $\tracks_i$ with certain immersed line segments. To see this, we first define the immersed segments in question. Given a reduced word $I$ in $\{\alpha^{\pm 1}, \beta^{\pm 1}\}$ we construct a  train track $\tracks_I$ from a sequence of horizontal and vertical segments, with each segment having endpoints near the center of the marked torus and crossing either $\alpha$ or $\beta$ exactly once. Each segment corresponds to a letter in the word $I$ depending on which parametrizing curve it crosses, where negative crossings are interpreted as an inverse letter. In particular, each letter $\alpha$ 
in $I$ gives rise to horizontal segment oriented rightward, 
and each letter $\beta$ 
in $I$ gives rise to a vertical segment oriented downward. 
We construct $\tracks_I$ by connecting these segments end to end, smoothing the corners, in the order determined by the word $I$; this results in an oriented immersed curve segment with the property that, when traversed, the intersections with  \(\alpha\) and \(\beta\) read off the word $I$. Some examples are shown in Figure \ref{fig:Segments_and_Duals} and Figure \ref{fig:Exp-for-inject-proof}. Note that traversing the segment with the opposite orientation gives the inverse word $I^{-1}$ in the free group, so $\tracks_I = -\tracks_{I^{-1}}$. We will generally not distinguish $\tracks_I$ from $\tracks_{I^{-1}}$, since the orientation is not relevant for pairing. Following Section \ref{sec:tracks}, $\tracks_I$ represents a type D structure with one generator for each segment and arrows determined by Figure \ref{fig:algebra-edges}. 
Note that this type D structure satisfies $\partial^2=0$ but it is not extendable. Equivalently (by Proposition \ref{prp:mod-track-matrix-weak}), the train track is only weakly valid.

For any given word $I$ in the free group generated by $\alpha$ and $\beta$ we can define a new word $\bar I$, which may be viewed as dual to $I$. Rather than define this new word algebraically, we define a procedure for constructing the  immersed line segment $\tracks_{\bar{I}}$ from $\tracks_{I}$.
More precisely, we will first construct \(D(\tracks_{\bar{I}})\), and then determine 
 $\tracks_{\bar{I}}$ from it. Assuming that the length of \(I\) is greater than \(1\), 
  $D( \tracks_{\bar{I}})$  is chosen so that (1) the first segment of $D( \tracks_{\bar{I}})$ intersects the first segment of $A(\tracks_I)$, (2) the last segment of $D(\tracks_{\bar I})$ intersects the last segment of $A(\tracks_I)$, and (3) the paths in $D( \tracks_{\bar I})$ and $A(\tracks_I)$ connecting these first and last intersections are homotopic rel endpoints; see Figure \ref{fig:Segments_and_Duals}. These rules uniquely determine the sequence of horizontal and vertical segments appearing in \(D(\tracks_{\bar{I}})\): for each step after the first, we choose the segment so that it exits the square (the fundamental domain for \(T\)) by the same side as \(A(\tracks_I)\). 
 In the special case where the length of \(I\) is \(1\), we introduce the convention that $\bar I = I$, so $D( \tracks_{\bar I})$ intersects \(A(\tracks_I)\) in a single point. Note that  reflecting across the antidiagonal in the torus takes $A(\tracks_I)$ to $D(\tracks_I)$ and $D( \tracks_{\bar I})$ to $A( \tracks_{\bar I})$ so that applying the dual operation to $\bar I$ recovers $I$.

 \begin{figure}
\labellist
\pinlabel {$\tracks_{\alpha\beta\alpha^{-1}\beta}$} at 70 85
\tiny 

\pinlabel {$\alpha$} at 16 174 
\pinlabel {$\alpha^{\!\!-\!1}$} at 24 219
\pinlabel {$\beta$} at 45 140 
\pinlabel {$\beta$} at 95 140

\pinlabel {$z$} at 112 241
\pinlabel {$z$} at 413 241
\pinlabel {$z$} at 718 241
\endlabellist
\includegraphics[scale=0.4]{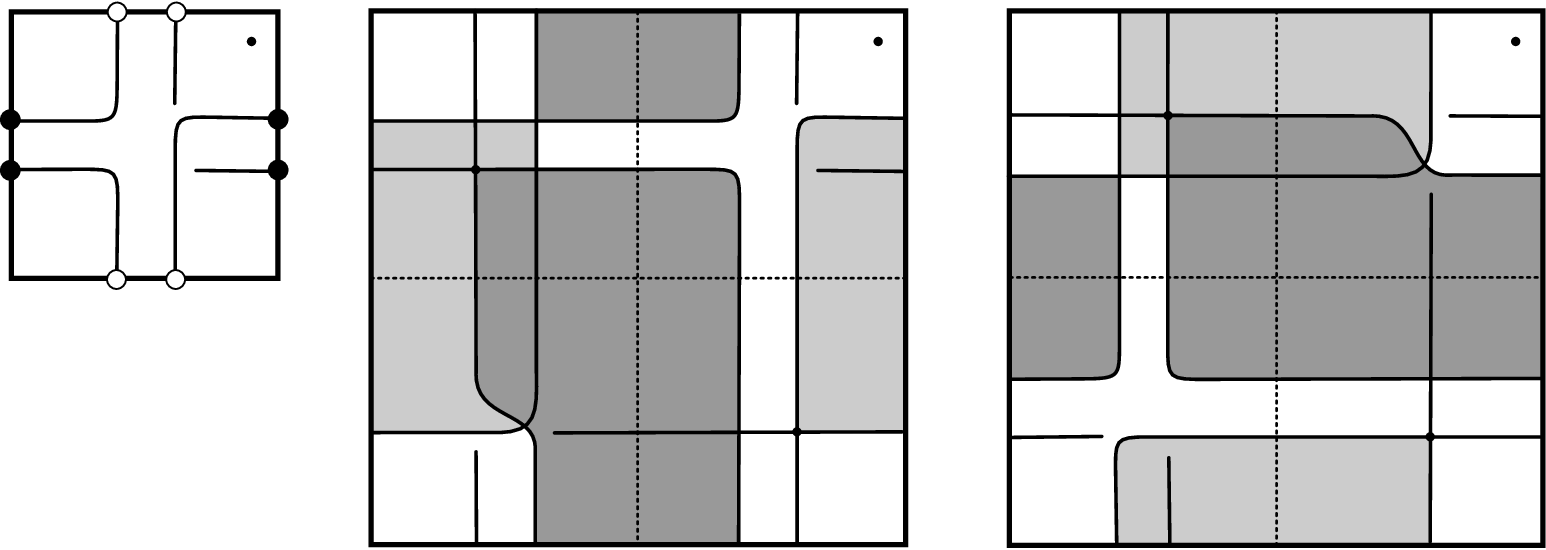}
\caption{Starting from the word $I=\alpha\beta\alpha^{-1}\beta$ we construct $\tracks_{I}$ (left). Next (center) we construct $A(\tracks_{I})$  by applying the operations of Section \ref{sec:pairing-train-tracks}, and then find $D(\tracks_{\bar{I}})$  via the construction in the text. The shaded bigons illustrate the homotopy rel endpoints between the two curves. Note that the start and end points of $D(\tracks_{\bar{I}})$  (as well as any corners) must lie in the lower left quadrant of the square. Finally, we reflect across the antidiagonal (right) to obtain $A(\tracks_{\bar{I}})$, and hence \(\tracks_{\bar{I}}\). This determines $\bar{I} = \alpha \alpha \beta^{-1}$. Since $A(\tracks_{\bar{I}})$ and $D(\tracks_{I})$ satisfy the same relationship as  $A(\tracks_{I})$ and $D(\tracks_{\bar{I}})$,  $\bar{\bar{I}} = I$.}\label{fig:Segments_and_Duals}
\end{figure}

These immersed curve segments will be used as test train tracks against which we will pair the curve-like train tracks $\tracks_i$ we hope to identify. 
It turns out that for any word $I$, the result of pairing with $\tracks_{\bar I}$ gives us information about how many times the word $I$ appears in the cyclic word defining a curve-like train track. More precisely, for a curve-like train track $\tracks_i$ and a word $I$, let $n_I(\tracks_i)$ be the number of times either $I$ or $I^{-1}$ appears in the cyclic words representing the underlying curves of $\tracks_i$, where each appearance is counted with multiplicity associated with the relevant cyclic word (i.e. the dimension of the local system associated with the corresponding curve component of $\tracks_i$).

\begin{lemma}
\label{Lem:Determine Curves}
For a curve-like train track $\tracks$, \(n_I(\tracks)\) is determined by 
 the dimension of the homology of $\sC(\tracks, \tracks_{\bar{I}})$ together with the values of \(n_J(\tracks)\) for all words $J$ for which $\bar J$ is a subword of $\bar I$.
\end{lemma}
%
%
%
%
%

\begin{proof}
The homology of the complex $\sC(\tracks, \tracks_{\bar I})$ does not depend on the local system on $\tracks$ (that is, on the crossover arrows) since  $\tracks_{\bar I}$ is not a closed curve. Without loss of generality then, assume that all local systems are trivial and $\tracks$ has no crossover arrows. Each component of $\tracks$ is represented by a cyclic word in $\alpha$ and $\beta$ with some multiplicity. Note that in $A(\tracks)$ there is a horizontal segment for each instance of $\alpha$ or $\alpha^{-1}$ in the words representing $\tracks$ (counted with multiplicity), and there is a vertical segment for each instance of $\beta$ or $\beta^{-1}$. Moreover, the immersed arc obtained by following $A(\tracks)$ from one horizontal or vertical segment to another can be interpreted as $A(\tracks_I)$ for a subword $I$ of the cyclic word representing the relevant component of $\tracks$, and the arc $A(\tracks_I)$ (ignoring orientation) appears for each instance of $I$ or $I^{-1}$ in the cyclic words representing $\tracks$.



We first observe that the dimension of $H_* (\sC(\tracks_, \tracks_\alpha))$ is precisely $n_\alpha(\tracks)$. This follows from the fact that $D(\tracks_\alpha)$ consists of a single vertical segment and no bigons are formed in this case, so intersecting $D(\tracks_\alpha)$ with $A(\tracks)$ simply counts the number of horizontal segments in $A(\tracks)$. Similarly, $n_\beta(\tracks)$ is determined by $H_*( \sC(\tracks_, \tracks_\alpha))$. We will demonstrate the proof for longer words $I$ in an example before commenting on the general case. Let $I = \alpha\beta^{-1}\alpha$, so \(A(\tracks_I)\) and \(D(\tracks_{\bar I})\) are as shown in Figure \ref{fig:Exp-for-inject-proof}; note that $\bar I = \alpha^{-1} \beta \alpha^{-1}$.  To compute the homology of $\sC(\tracks, \tracks_{\bar I})$ we first find the generators. These count $\alpha$-$\alpha$ or $\beta$-$\beta$ intersections, so there are $n_\alpha(\tracks)$ generators for each $\alpha$ or $\alpha^{-1}$ in $I$ and $n_\beta(\tracks)$ for each $\beta$ or $\beta^{-1}$ in $I$. In this example, the number of generators is given by \(2n_\alpha(\tracks) + n_\beta(\tracks)\). We next consider the differential. The length two subwords of $\bar I$ are $\bar J_1 = \alpha^{-1} \beta$ and $\bar J_2 = \beta \alpha^{-1}$, with dual words $J_1 = \alpha \beta^{-1}$ and $J_2 = \beta^{-1} \alpha$ (note that in this example $J_1$ and $J_2$ are subwords of $I$, but this need not be the case). Letting $J$ be either $J_1$ or $J_2$, for each instance of $J$ or $J^{-1}$ in the cyclic words representing $\tracks$, there is  a bigon between the portion of $A(\tracks)$ corresponding to $J$ and the portion of $D(\tracks_{\bar I})$ corresponding to $\bar J$. These bigons take the form of the lightly shaded bigon in Figure \ref{fig:Exp-for-inject-proof} for $J = J_1$ or the darkly shaded bigon in the figure for $J=J_2$. It is not difficult to see that all bigons contributing to the differential occur in this way. If all such bigons were disjoint then we would simply cancel two generators for each bigon. However, if two bigons share an endpoint then this computation is incorrect, as it would over-cancel. We next observe that if $A(\tracks)$ contains a segment that looks like $A(\tracks_{\alpha \beta^{-1} \alpha})$ then intersecting with $D(\tracks_{\bar I})$ produces a pair of bigons sharing an endpoint as in Figure \ref{fig:Exp-for-inject-proof}, and any pair of bigons formed with $D(\tracks_{\bar I})$ and sharing an endpoint must have this form. Thus we can correct for the over-cancellation by adding $2 n_{\alpha \beta^{-1} \alpha}(\tracks)$, and we have
\begin{equation*}
\dim H_*( \sC(\tracks, \tracks_{\bar I}) ) = 2 n_\alpha(\tracks) + n_{\beta}(\tracks) 
  - 2n_{\alpha\beta^{-1}}( \tracks) - 2n_{\beta^{-1}\alpha}( \tracks) 
  + 2n_{ \alpha\beta^{-1}\alpha}( \tracks)
\end{equation*}
The last term on the right is $n_I(\tracks)$ and all other terms on the right are $n_J(\tracks)$ where  $\bar J$ is a subword of $ \bar I$, as desired.

 \begin{figure}
\labellist
\pinlabel {$\tracks_{\alpha\beta^{-1}\alpha}$} at 70 85
\tiny 
\pinlabel {$z$} at 112 241
\pinlabel {$z$} at 413 241
\endlabellist
\includegraphics[scale=0.4]{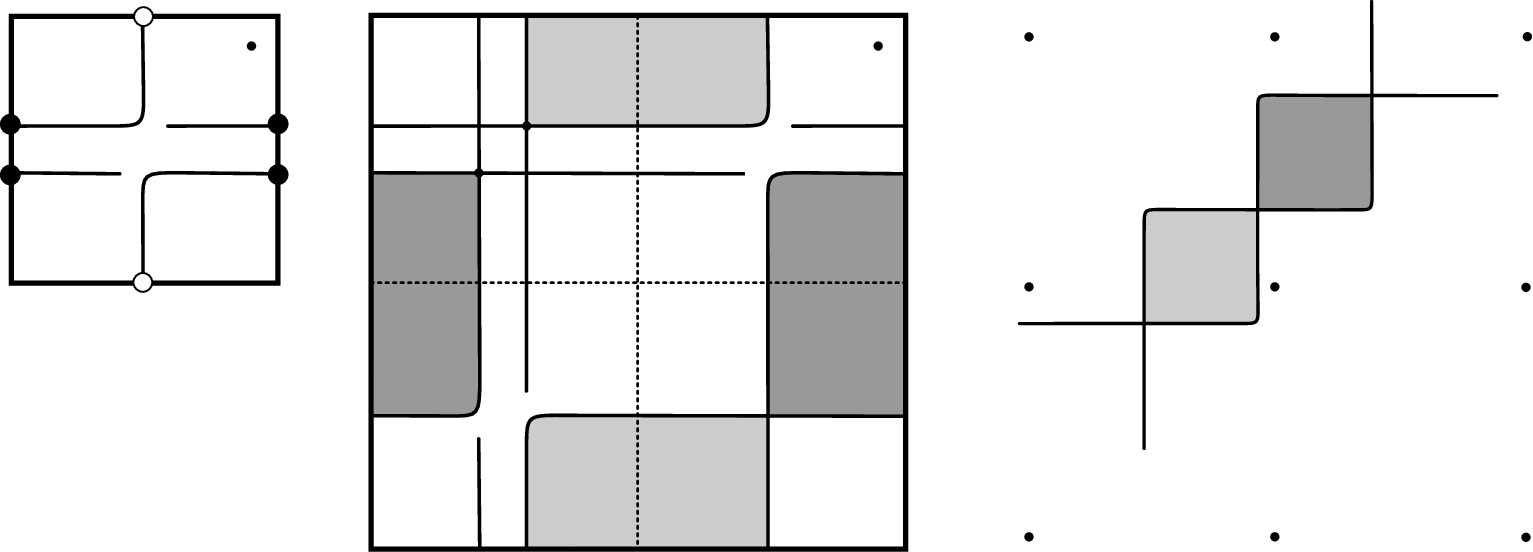}
\caption{As simple example illustrating the key step in the proof of Lemma \ref{Lem:Determine Curves}, the pairing of $A(\tracks_{I})$ with $D(\tracks_{\bar I})$ for $I = \alpha \beta^{-1} \alpha$, shown both in the punctured torus and a covering space.
}\label{fig:Exp-for-inject-proof}
\end{figure}

We now consider the case of an arbitrary word $I$. A key fact implicit in the example above is that any intersection point $x$ between $A(\tracks)$ and $D(\tracks_{\bar I})$ is an end of at most two bigons, and if there are two then they both start at $x$ or they both end at $x$. This fact holds in the general case, as we will show below. Assuming this, note that the Floer complex $C(\tracks, \tracks_{\bar I})$ is a direct sum of \emph{zig-zag chains}, i.e. complexes that can be represented by a chain of dots along a line connected by arrows on that line pointing in alternating directions. 
The dimension of $H_*(C(\tracks, \tracks_{\bar I}))$ is a function of the number of generators and the number of zig-zag chains of length $k$ for each $k > 0$; note that when we count zig-zag chains of length $k$ we will include subchains of longer chains, so for instance a chain of length three also contributes two to the count of length two chains and three to the count of length one chains. Now consider a bigon or chain of bigons starting at an intersection point $x$ and ending at an intersection point $y$, contributing a zig-zag chain to $C(\tracks, \tracks_{\bar I})$ (we do not assume the chain of bigons is maximal, so the zig-zag chain may be a subchain of a longer zig-zag chain in the complex). The portion of $D(\tracks_{\bar I})$ from the segment containing $x$ to the segment containing $y$ is $D(\tracks_{\bar J})$ for some subword $\bar J$ of $\bar I$. The portion of $A(\tracks)$ from the segment containing $x$ to the segment containing $y$ runs parallel to $D(\tracks_{\bar J})$ and so by the definition of the bar operation on segments this portion of $A(\tracks)$ is $A(\tracks_J)$. This indicates that $J$ or $J^{-1}$ appears in the cyclic word representing $\tracks$. Conversely, each time $J$ or $J^{-1}$ appears in the word representing $\tracks$ a corresponding subset of $A(\tracks)$ 
agrees with $A(\tracks_J)$ and forms such a chain of bigons with the subset $D(\tracks_{\bar J})$ of $D(\tracks_{\bar I})$. It follows that chains of bigons of that particular form are precisely counted by $n_J(\tracks)$, and the counts of all chains is determined by $n_J(\tracks)$ for all $J$ for which $\bar J$ is a subword of $\bar I$. If $n_J(\tracks)$ is known for all $J$ for which $\bar J$ is a strict subword of $\bar I$, then $n_I(\tracks)$ can be determined from $\dim( H_*(\tracks, \tracks_{\bar I}) )$.

It remains to show that the complex $C(\tracks, \tracks_{\bar I})$ breaks into zig-zag chains. This ultimately relies on the fact that $\tracks$ and $\tracks_{\bar I}$ are reduced, but it is especially apparent given the particular pairing position we assume for $C(\tracks, \tracks_{\bar I})$. In particular, recall that any bigon contributing to the differential in $C(\tracks, \tracks_{\bar I})$ must decompose into pieces of the forms shown in Figure~\ref{fig:bigon-pieces}. We can exclude the pieces of small bigons in the top row of the figure because $\tracks_{\bar I}$ is assumed to be reduced rather than only almost reduced. By inspecting these pieces it is straightforward to check that $x$  cannot be both a starting and an ending point of a bigon. For example, if $x$ is in the top left quadrant of the square, a bigon starting at $x$ is only possible if the path in $A(\tracks)$ starting rightward from $x$ first leaves the square on the top edge or if the path in $D(\tracks_{\bar I})$ starting downward from $x$ first leaves the square on the left edge, but a bigon ending at $x$ is only possible if both these paths first leave the square on the right or bottom edge. 

Inspecting the pieces in Figure~\ref{fig:bigon-pieces} 
reveals %
that an intersection point $x$ can be the start or end of at most one bigon covering a given quadrant locally near $x$. Indeed, the paths in $A(\tracks)$ and $D(\tracks_{\bar I})$ starting from $x$ in the directions required to surround a given quadrant near $x$ uniquely determine a chain of pieces like those in Figure~\ref{fig:bigon-pieces}, which ends the first time the paths leave the square on different sides. A bigon is formed only if the paths eventually cross, so that the chain of pieces gets to an ending piece, and once this happens there is no way of adding more pieces to get a second bigon with a corner at $x$. 
More precisely, again by inspecting Figure~\ref{fig:bigon-pieces}, observe that both the left and right boundary of any bigon, viewed as paths from the starting intersection point to the ending intersection point, must move monotonically upward and leftward in the following sense:
considering only the midpoints of the horizontal and vertical segments making up the path (recall that these are where intersections occur), the projection to the vector $(-1,1)$ in the plane is non-decreasing, and it is strictly increasing except possibly at the beginning or end of the path (this happens in the case of the first or third starting piece or the first or third ending piece in Figure~\ref{fig:bigon-pieces}). Now suppose there are two bigons starting at $x$ covering the same quadrant locally near $x$ (the argument for bigons ending at $x$ is similar). Call these bigons $B_1$ and $B_2$, call their endpoints $y_1$ and $y_2$, call their left boundaries by $\partial_L B_1$ and $\partial_L B_2$ and call their right boundaries by $\partial_R B_1$ and $\partial_R B_2$. Let $B_1$ be the bigon with the shorter left boundary, so that $\partial_L B_1 \subset \partial_L B_2$; it can be shown that $\partial_R B_2$ must a subset of $\partial_R B_1$, since otherwise $B_2$ would have a region of negative multiplicity near $y_1$ (at least in an appropriate cover). In fact, $\partial_L B_2$ must intersect $\partial_R B_1$ at some other point $y_3$ between $y_2$ and $y_1$. But in this case $y_3$ occurs before $y_1$ on $\partial_R B_1$ and after $y_1$ on $\partial_L B_2$, violating  the fact that both paths move monotonically in the upward/leftward direction. (Note that monotonicity of $\partial_L B_2$ implies that $y_3$ is strictly above/left from $y_1$ since neither of them is an end of $\partial_L B_2$).
\end{proof}

The preceding lemma (and induction on the length of $\tracks_{\bar I}$) shows that if the dimension of the homology of $\sC(\tracks, \tracks_{\bar J})$ is known for every word $J$, then the number of instances of $I$ or $I^{-1}$ in $\tracks$ can be determined for any word $I$. Since $\tracks$ has a finite number of curves that have a finite length, sufficiently long words will only occur a nonzero number of times if they repeat the entire cyclic word for one of the immersed curves in $\tracks$. In this way, the underlying immersed curves in $\tracks$ can be determined, with multiplicity.

It remains to show that if \(\tracks_1\) and \(\tracks_2\) are curve-like train tracks with the same underlying curves and multiplicities but with different local systems, then there is some \(\tracks'\) with \(\sC(\tracks_1,\tracks') \not \simeq \sC(\tracks_2, \tracks')\). Let \(\gamma\) be an underlying curve of \(\tracks_1\) and \(\tracks_2\) for which the local systems are non-isomorphic. We will take \(\tracks'\) to be curve-like with underlying curve \(\gamma\). Since the local systems on the other components of \(\tracks_1\) and \(\tracks_2\) have the same multiplicities, they will have the same pairing with \(\tracks'\). Thus we may assume that \(\tracks_i\) has a single underlying curve \(\gamma\) and that the monodromy of the local system is some \(k \times k\) matrix \(A_i\). Referring to the calculation in Corollary~\ref{cor:ComputeHF}, we see that it suffices to prove the following result. 

\begin{proposition}\label{prp:self-pair}
If $A_1$ and $A_2$ are $k\times k$ matrices over $\F$ which are not similar, then there exists a matrix $B$ such that $\rk( A_1\otimes B + I\otimes I) \neq \rk( A_2\otimes B + I\otimes I )$.
\end{proposition}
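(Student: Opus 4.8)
The plan is to reformulate the statement in terms of finitely generated torsion modules over the principal ideal domain $\F[t]$ and then to recover the similarity class of $A$ from a computation of $\Hom$-spaces. Note first that we use (and in fact need) that $A_1$ and $A_2$ are invertible; this holds in our situation because these matrices represent local systems and are therefore automorphisms. Invertibility is genuinely necessary: if $A$ is nilpotent then $A\otimes B$ is nilpotent for every $B$, so $A\otimes B-I$ is invertible and $\rk(A\otimes B-I)=k\cdot(\text{size of }B)$ regardless of $A$, which already defeats the statement for e.g. a rank-one nilpotent $2\times2$ matrix versus the zero matrix. Regard $\F^k$ as an $\F[t]$-module $V_A$ with $t$ acting by $A$; then the assertion is that the isomorphism type of such a module (on which $t$ acts invertibly) is detected by the numbers $\rk(A\otimes B-I)$.

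The first step is to set up the dictionary. Using the canonical isomorphism $V\otimes W\cong\Hom_\F(W^{*},V)$, the operator $A\otimes B$ corresponds to $\phi\mapsto A\,\phi\,B^{*}$, where $B^{*}$ denotes the transpose of $B$. Hence, since $B$ (equivalently $B^{*}$) is invertible, $\ker(A\otimes B-I)$ is identified with $\{\phi\co W^{*}\to V : A\phi=\phi(B^{*})^{-1}\}$, which is precisely $\Hom_{\F[t]}(W^{*}_{(B^{*})^{-1}},V_A)$, where $W^{*}$ is made an $\F[t]$-module via $t\mapsto(B^{*})^{-1}$. Writing $\ell$ for the size of $B$, this gives $\rk(A\otimes B-I)=k\ell-\dim_\F\Hom_{\F[t]}(W^{*}_{(B^{*})^{-1}},V_A)$. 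As $B$ runs over all invertible matrices, $(B^{*})^{-1}$ runs over all similarity classes of invertible matrices, so it is enough to compute, for each monic irreducible $p\neq t$ and each $m\ge1$, the quantity $\nu_m(A,p):=\dim_\F\Hom_{\F[t]}(\F[t]/(p^{m}),V_A)$, obtained by taking $B$ with $(B^{*})^{-1}$ equal to the companion matrix of $p^{m}$ (which is invertible precisely because $p\neq t$).

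The second step is the elementary-divisor computation. By the structure theorem, $V_A\cong\bigoplus_{q,j}\bigl(\F[t]/(q^{j})\bigr)^{n_{q,j}}$, the sum over monic irreducibles $q$ and exponents $j\ge1$, with no $q=t$ occurring since $A$ is invertible; the similarity class of $A$ is exactly the data $\{n_{q,j}\}$. Using $\Hom_{\F[t]}(\F[t]/(p^{m}),\F[t]/(q^{j}))=0$ for $p\neq q$ and $\dim_\F\Hom_{\F[t]}(\F[t]/(p^{m}),\F[t]/(p^{j}))=\deg(p)\cdot\min(m,j)$, one gets $\nu_m(A,p)=\deg(p)\sum_j\min(m,j)\,n_{p,j}$. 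Then $\nu_m(A,p)-\nu_{m-1}(A,p)=\deg(p)\sum_{j\ge m}n_{p,j}$, so the sequence $\bigl(\nu_m(A,p)\bigr)_{m\ge1}$ determines all partial sums $\sum_{j\ge m}n_{p,j}$ and hence every $n_{p,j}$; only finitely many pairs $(p,m)$ matter, since $n_{p,j}=0$ unless $p$ divides the characteristic polynomial and $\nu_m(A,p)$ is eventually constant in $m$. Thus the full collection $\{\rk(A\otimes B-I)\}_B$ determines the similarity class of $A$, which proves the Proposition: if $A_1$ and $A_2$ are not similar, some $\nu_m(A_1,p)\neq\nu_m(A_2,p)$, and taking $B$ with $(B^{*})^{-1}$ the companion matrix of $p^{m}$ yields $\rk(A_1\otimes B-I)\neq\rk(A_2\otimes B-I)$.

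The only delicate point — the main obstacle — is the bookkeeping in the identification $\ker(A\otimes B-I)\cong\Hom_{\F[t]}(W^{*}_{(B^{*})^{-1}},V_A)$: one must keep the transposes and inverses straight and check that an arbitrary invertible operator can indeed be realized as $(B^{*})^{-1}$, and one must notice that invertibility of the $A_i$ is being used. Once these are in place the remainder is the classical module theory over a PID.
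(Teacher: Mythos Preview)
Your proof is correct and takes a genuinely different route from the paper's. The paper chooses $B$ to be the companion matrix of the minimal polynomial of $A_1$, then explicitly row-reduces $A\otimes B + I$ to compare ranks; it is presented as a sketch with details left to the reader, and the reduction to the case where $A_1$ and $A_2$ are themselves single companion blocks is not spelled out. Your argument instead passes through the identification $\ker(A\otimes B - I)\cong\Hom_{\F[t]}(W^{*}_{(B^{*})^{-1}},V_A)$ and reads off the elementary divisors of $A$ from the numbers $\nu_m(A,p)$ via the structure theorem. This is cleaner and more complete: it makes transparent exactly which $B$ are needed (companion blocks of prime powers $p^m$ with $p\neq t$), it avoids any ad hoc row reduction, and it handles in one stroke the case where $A_1$ and $A_2$ share a minimal polynomial but differ in their elementary-divisor multiplicities. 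You also correctly isolate the hidden hypothesis that the $A_i$ are invertible, which the paper uses implicitly (its chosen polynomial has constant term $1$) and which is genuinely necessary, as your nilpotent counterexample shows; in context this is guaranteed since the matrices come from local systems.
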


\begin{proof}
We outline the key steps but leave the details of the proof as an exercise to the reader. We assume that $A_1$, $A_2$, and $B$ are written in rational canonical form, and we may choose $B$ to have a single block, so that
$$B = \left[ \begin{matrix}
0 & 0 & \cdots & 0 & 1 \\
1 & 0 & \cdots & 0 & b_1 \\
0 & 1 & \cdots & 0 & b_2 \\
\vdots & \vdots & \ddots & \ddots & \vdots \\
0 & 0 & \cdots & 1 & b_{n-1}
\end{matrix} \right ]$$
It is sufficient to consider the case that $A_1$ and $A_2$ have this form as well. It follows that
$$ A\otimes B + I = \left[ \begin{matrix}
I & 0 & \cdots & 0 & A \\
A & I & \cdots & 0 & b_1 A \\
0 & A & \cdots & 0 & b_2 A \\
\vdots & \vdots & \ddots & \ddots & \vdots \\
0 & 0 & \cdots & 1 & I + b_{n-1} A
\end{matrix} \right ]$$
which row reduces to
$$\left[ \begin{matrix}
I & 0 & \cdots & 0 & M_0 \\
0 & I & \cdots & 0 & M_1 \\
0 & 0 & \cdots & 0 & M_2 \\
\vdots & \vdots & \ddots & \ddots & \vdots \\
0 & 0 & \cdots & 0 & I + M_{n-1}
\end{matrix} \right ]$$
where $M_0 = A$ and $M_i = A(b_i + M_{i-1})$ for $i>0$. The rank of this is determined by the rank of the lower right block. We choose the coefficients in $B$ so that $p(x) = x^n + b_1 x^{n-1} + \ldots + b_{n-1} x + 1$ is the minimal polynomial of $A_1$, which we assume without loss of generality does not divide the minimal polynomial of $A_2$. It follows that $A_1\otimes B + I\otimes I$ has rank $k(n-1)$ and $A_2\otimes B+I\otimes I $ has rank strictly greater than $k(n-1)$.
\end{proof}

In summary, we have shown that the map \(g\) is injective: if \(g(\bgamma_1) = g(\bgamma_2)\), then Lemma~\ref{Lem:Determine Curves} shows that \(\bgamma_1\) and \(\bgamma_2\) have the same underlying curves with multiplicities, and Proposition~\ref{prp:self-pair} shows that the local systems on each curve are isomorphic. 

We have now shown that  \(g\) is both injective and surjective. Together with Proposition~\ref{Prop:Pairing}, this completes the proof of Theorem~\ref{Thm:TheBijection}. 
\qed

 We conclude this section with the observation that, in particular, any two extensions of an extendable type D module correspond to the same decorated immersed curve and are thus homotopy equivalent.

\begin{proposition}\label{prop:unique-extension}
An extendable type D module over $\Alg$ has a unique extension up to homotopy equivalence.
\end{proposition}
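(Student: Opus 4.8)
The plan is to deduce this immediately from the bijection $f\colon\Extend\to\Curves$ together with the fact (established in the previous subsections) that $f$ is well-defined, i.e.\ that it does not depend on the choice of extension $\widetilde N$ used to build the train track. Concretely: let $N$ be an extendable type D module over $\Alg$ and let $\widetilde N_1$, $\widetilde N_2$ be two extensions of $N$. Both restrict to $N$ after setting $\rho_0=0$ (equivalently, setting $U=0$ in the associated matrices). I would first record that $f$ is in fact defined on extensions: the construction of $f$ starts from a reduced representative $N'$ of the homotopy class, picks an extension $\widetilde N'$, realises $\widetilde N'$ as an $\Alg$-train track via Proposition~\ref{prp:mod-track-matrix}, reduces it to a curve-like train track using Proposition~\ref{prop:curves-plus-crossovers} and the arrow-sliding algorithm of Section~\ref{sub:removing} (Proposition~\ref{prop:IncreaseDepth}), and reads off a decorated immersed curve. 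The well-definedness argument of the preceding subsection shows the resulting element of $\Curves$ depends only on the homotopy equivalence class of the \emph{underlying} type D structure $N$, and in particular not on which extension was chosen. Hence $f(\widetilde N_1)$ and $f(\widetilde N_2)$ are the same decorated multicurve $\Gamma\in\Curves$.

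Next I would invoke the inverse map $f^{-1}\colon\Curves\to\Extend$, also constructed in Section~\ref{sec:cat}: given $\Gamma=(\gamma,k,A)$, one writes $A$ as a product of elementary matrices, builds a valid reduced $\Alg$-train track from $k$ parallel copies of $\gamma$ with the corresponding crossover arrows, and takes the associated extended type D structure $\widetilde N_\Gamma$. The key point already noted in that section is that $f^{-1}$ is well-defined up to homotopy equivalence of \emph{extended} type D modules over $\AlgExt$: changing the matrix representative of the similarity class $A$, or changing its elementary decomposition, amounts to sliding crossover arrows around $\gamma$, and by Proposition~\ref{prp:moves} these moves (M1), (M2) and the local moves of Figure~\ref{fig:crossover_moves} induce isomorphisms (hence homotopy equivalences) of the associated $\AlgExt$-modules. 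So $f^{-1}(\Gamma)$ is a well-defined homotopy class of extended type D structures.

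The remaining step is to close the loop: each $\widetilde N_i$ is carried by a curve-like train track $\tracks_i$ with $f(\widetilde N_i)=\Gamma$, and the arrow-reduction algorithm turning $\tracks_i$ into its curve-like form proceeds entirely by the moves (M1), (M2) and the local moves of Figure~\ref{fig:crossover_moves}, each of which by Proposition~\ref{prp:moves} is a homotopy equivalence of $\AlgExt$-modules. Therefore $\widetilde N_i$ is homotopy equivalent (over $\AlgExt$) to the extended type D structure $\widetilde N_\Gamma$ built from $\Gamma$ by the inverse construction; since this holds for both $i=1,2$, we conclude $\widetilde N_1\simeq\widetilde N_\Gamma\simeq\widetilde N_2$ as $\AlgExt$-modules, which is the desired uniqueness.

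The main obstacle I anticipate is bookkeeping rather than anything conceptually deep: one must be careful that the reductions and arrow slides applied to $\tracks_i$ are genuinely homotopy equivalences at the level of $\AlgExt$-modules and not merely of the underlying $\Alg$-modules — i.e.\ that no step silently discards $U$-weighted data. This is exactly the content of Proposition~\ref{prp:moves} (for (M1), (M2)) and of the equivalence-class statements in Proposition~\ref{prp:mod-track-matrix}, so the argument reduces to citing those correctly and checking that the algorithm of Section~\ref{sub:removing} only ever invokes moves covered by Proposition~\ref{prp:moves}. One minor point to address explicitly: the algorithm may begin with a change of basis to arrange that each primary switch has a single unoriented edge on each side (as noted in Section~\ref{sub:calculus}); a change of basis of the extended type D structure is tautologically a homotopy equivalence over $\AlgExt$, so this causes no difficulty.
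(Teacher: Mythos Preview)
Your proposal is correct and follows the same approach as the paper. In fact, the paper presents this proposition as an immediate observation following the well-definedness argument, with essentially no further proof beyond the single sentence ``any two extensions of an extendable type D module correspond to the same decorated immersed curve and are thus homotopy equivalen[t]''; you have carefully spelled out the details the paper leaves implicit, namely that the reduction algorithm uses only moves which by Proposition~\ref{prp:moves} are \(\AlgExt\)-module isomorphisms, so that both extensions are \(\AlgExt\)-equivalent to the canonical extension built from the common curve \(\Gamma\).
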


\section{Bordered Floer invariants as decorated immersed curves} \label{sec:invariance}

We now return our attention to Floer theoretic invariants of manifolds with torus boundary. Our goal in this section is to prove Theorems~\ref{thm:invariance} and \ref{thm:pairing}. Suppose that \((M, \alpha, \beta)\) is a parametrized three-manifold with torus boundary. 
By Theorem \ref{thm:extension} (proved in the Appendix), the type D structure \(\CFD(M,\alpha, \beta)\) is extendable. 
The results of the preceding two sections imply that there is a well-defined collection of immersed curves with local systems associated with \(\CFD(M,\alpha,\beta)\). This collection of curves lives in the abstract torus \(T\), which we identify with 
\(T_M\) via a map that identifies the vertical edge of \(T\) with \(\alpha \subset \partial M\) and the horizontal edge of \(T\) with 
\(\beta \subset \partial M\). We denote the result by \(f_{\alpha, \beta}(\CFD(M,\alpha, \beta))\). 

It remains to check that this  collection of curves  does not depend on the choice of parametrization 
\((\alpha, \beta)\). 

\begin{proposition}\label{prp:Dehn-twists} There are equivalences 
\begin{align}
f_{\alpha,\beta}(\CFD(M,\alpha,\beta)) &\cong f_{\alpha,\beta\pm\alpha}(\CFD(M,\alpha,\beta\pm\alpha)) \label{eqn:alpha-twists}\\ 
f_{\alpha,\beta}(\CFD(M,\alpha,\beta)) &\cong f_{\alpha\pm\beta,\beta}(\CFD(M,\alpha\pm\beta,\beta)) \label{eqn:beta-twists}
\end{align} where $\cong$ denotes regular homotopy of curves with local systems.\end{proposition}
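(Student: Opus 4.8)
The plan is to reduce the independence of parametrization to a statement about how a Dehn twist of $\partial M$ acts on $\CFD(M,\alpha,\beta)$, and then to track this action through the bijection $f$ constructed in Section~\ref{sec:cat}. Since the mapping class group of the torus is generated by Dehn twists along $\alpha$ and $\beta$, it suffices to establish the two displayed equivalences; all other changes of parametrization are compositions of these. The first observation is that the left-hand sides and right-hand sides of \eqref{eqn:alpha-twists} and \eqref{eqn:beta-twists} compare curves in the \emph{same} punctured torus $T_M$: the identification of $T$ with $T(M)$ built into $f_{\alpha,\beta}$ and into $f_{\alpha,\beta\pm\alpha}$ differ by a diffeomorphism of $T$ realizing the corresponding Dehn twist, so the content of the proposition is exactly that this diffeomorphism carries the curve set of $\CFD(M,\alpha,\beta)$ to the curve set of $\CFD(M,\alpha,\beta\pm\alpha)$, up to regular homotopy of curves and isomorphism of local systems.

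First I would recall the change-of-framing formula from Lipshitz--Ozsv\'ath--Thurston \cite{LOT}, which describes $\CFD(M,\alpha,\beta\pm\alpha)$ (respectively $\CFD(M,\alpha\pm\beta,\beta)$) in terms of $\CFD(M,\alpha,\beta)$ by tensoring with the type DA bimodule of a Dehn twist (a solid torus with appropriately framed boundary). The key point is that this bimodule operation has a clean geometric interpretation: on the level of train tracks it corresponds to pre- or post-composing with an affine shear of $T$, i.e. the map on $\R^2/\Z^2$ induced by an elementary matrix $\left(\begin{smallmatrix}1 & \pm 1\\0&1\end{smallmatrix}\right)$ or its transpose. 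So the strategy is: (i) express the twisted invariant as a bimodule tensor product; (ii) realize this tensor product geometrically as an $\Alg$-train track obtained from $\tracks(M,\alpha,\beta)$ by the corresponding shear, possibly introducing new crossover arrows or switches near $\alpha\cup\beta$; (iii) apply the simplification algorithm of Section~\ref{subsec:simplify tracks} (Proposition~\ref{prop:curves-plus-crossovers} and the arrow-sliding algorithm of Proposition~\ref{prop:IncreaseDepth}) to reduce both sides to curve-like train tracks; (iv) invoke the uniqueness statement established in Section~\ref{sec:cat} — namely that the decorated curve associated to an extendable type D structure is a well-defined invariant up to regular homotopy and local system isomorphism — to conclude that the two curve sets agree, since they represent isomorphic (indeed equal) type D structures over $\Alg$. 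Alternatively, and perhaps more cleanly, one can bypass the explicit train-track manipulation by using the pairing characterization from the well-definedness argument: $f_{\alpha,\beta}(\CFD(M,\alpha,\beta))$ is determined by the numbers $\dim H_*(\sC(\tracks,\tracks_J))$ for all words $J$, these compute $\HFa$ of closed manifolds obtained by gluing, and gluing is manifestly independent of how $\partial M$ was parametrized.

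The main obstacle, I expect, is step (ii): matching the algebraic effect of the Dehn-twist bimodule on $\CFD$ with the purely geometric shear of the punctured torus, while keeping careful track of the basepoint $z=(1-\epsilon,1-\epsilon)$ and of the conventions relating $(\alpha,\beta)$ to the $\alpha$-arcs of a bordered Heegaard diagram (Section~\ref{sec:conventions}). A shear of $\R^2$ does not literally fix the puncture, so one must homotope the sheared train track back into standard position relative to $z$, and verify that this homotopy does not change the equivalence class of the $\Alg$-train track — in particular that no spurious counterclockwise fishtails or basepoint-covering bigons are created. Once the geometric picture is pinned down, the rest follows formally: the simplification algorithm is insensitive to the choice of starting train track within an equivalence class, and Proposition~\ref{prop:unique-extension} together with the bijection $f$ guarantees that homotopic type D structures yield the same decorated curve. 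I would also note that since $\CFD(M,\alpha,\beta)$ and $\CFD(M,\alpha,\beta\pm\alpha)$ are genuinely different type D structures (over the same algebra $\Alg$), the equivalence $\cong$ in the statement really is an equivalence of the \emph{geometric} objects in $T_M$ under the two different identifications, not an isomorphism of type D structures; making this precise in the write-up is a matter of bookkeeping rather than mathematical difficulty.
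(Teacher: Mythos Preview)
Your main outline (steps (i)--(iv)) is correct and is essentially the route the paper takes: it invokes the Dehn-twist DA bimodules $\widehat{T}_\alpha^{\pm 1}$ from \cite{LOT-bimodules}, and the heart of the argument is exactly your step (ii), verifying that tensoring a curve-like train track with $\widehat{T}_\alpha$ agrees with applying a geometric Dehn twist along $\alpha$. The paper does this by starting from a curve-like $\tracks$ (rather than a general train track), writing down explicitly what each operation in the bimodule does to the decorated graph, and matching this with the effect of a shear in a thin strip along $\alpha$. Two points require care there that you should anticipate: the bimodule has operations with multiple $\Ainfty$ inputs (the $(\rho_3,\rho_{23})$ and $(\rho_3,\rho_2)$ terms), which correspond to extra edges that must be added to the sheared track, and the shear produces $\rho_\varnothing$ arrows (bigons with $\beta$) which must then be cancelled. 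After this, there is no need for your step (iii): the result is already curve-like, visibly the Dehn twist of the original curve.

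Your proposed alternative via the pairing characterization does not work as stated. The test objects $\tracks_J$ used in the well-definedness argument of Section~\ref{sec:cat} are immersed \emph{arcs}, not closed curves; they are merely weakly valid $\Alg$-train tracks and do not arise as $\CFD$ of any three-manifold. Hence the pairings $\dim H_*(\sC(\tracks,\tracks_J))$ are not $\HFhat$ of closed manifolds, and there is no ``gluing is independent of parametrization'' principle to invoke. Even if you replace the $\tracks_J$ by genuine manifold invariants, the argument becomes circular: to conclude that $f_{\alpha,\beta}(N)$ and $f_{\alpha,\beta+\alpha}(N')$ agree in $T_M$ you would need to know that the auto-equivalence of the curve category induced by $\widehat{T}_\alpha$ is the geometric Dehn twist, which is precisely the content of step (ii). So the explicit bimodule computation cannot be bypassed.
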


 \begin{proof} We will establish the equivalence (\ref{eqn:alpha-twists}); the equivalence (\ref{eqn:beta-twists}) is nearly identical, and is left to the reader. To this end, we recall the bimodules associated with a Dehn twist established in \cite{LOT-bimodules}. These give rise to 
 \[\widehat{T}_\alpha^{\pm 1} \boxtimes \CFD(M,\alpha,\beta) \cong \CFD(M,\alpha,\beta\pm\alpha)\]
 where $\widehat{T}_\alpha$ is the type DA bimodule denoted $\widehat{\mathit{CFDA}}(\tau_\mu)$ in the notation of \cite[Section 10]{LOT-bimodules}. This bimodule, and its inverse, are shown in Figure \ref{fig:Dehn-twist-bimodules}. Thus we need to show that $ f_{\alpha,\beta}(N) \cong f_{\alpha,\beta\pm\alpha}(\widehat{T}_\alpha^{\pm 1}\boxtimes N),$
where $N$ is the extendable type D structure $\CFD(M,\alpha,\beta)$.

\begin{figure}[ht!]
\begin{tikzpicture}[>=stealth'] 
\node at (0,0) {$\ast$}; 
\node at (-2,3) {$\bullet$}; 	
\node at (2,3) {$\circ$};
\draw[thick, shorten <=0.1cm] (2,3) arc (180:360:0.5);\draw[thick, ->] (3,3) arc (0:173:0.5); 
\draw[thick, shorten <=3pt, shorten >= 3pt, ->] (-2,3) to[bend left, looseness=0.75] (2,3); 
\draw[thick, shorten <=3pt, shorten >= 3pt, ->] (0,0) to[bend right, looseness=1] (-2,3);
\draw[thick, shorten <=3pt, shorten >= 3pt, <-] (0,0) to[bend left, looseness=1] (-2,3); 
\draw[thick, shorten <=3pt, shorten >= 3pt, <-] (0,0) to[bend right, looseness=1] (2,3);
\draw[thick, shorten <=3pt, shorten >= 3pt, ->] (0,0) to[bend left, looseness=1] (2,3); 
\node at (2.5,3.7) {$\scriptstyle \rho_{23}\otimes\rho_{23}$};
\node at (-0.3,4.25) {$\scriptstyle \rho_1\otimes \rho_1$};
	\node at (0,3.95) {$\scriptstyle +\rho_{123}\otimes \rho_{123}$};
	\node at (0.12,3.65) {$\scriptstyle +\rho_3\otimes (\rho_3,\rho_{23})$};	
\node at (-1.1,1.9) {$\scriptstyle \rho_{2}\otimes1$};
\node at (-2.1,0.8)  {$\scriptstyle \rho_{123}\otimes\rho_{12}$};\node at (-1.9,0.5)  {$\scriptstyle +\rho_{3}\otimes(\rho_{3},\rho_{2})$};
\node at (1.1,1.9) {$\scriptstyle 1\otimes\rho_{3}$};
\node at (1.9,0.8) {$\scriptstyle \rho_{23}\otimes\rho_{2}$};
\node at (7,0) {$\ast$}; 
\node at (5,3) {$\bullet$}; 	
\node at (9,3) {$\circ$};
\draw[thick, ->, shorten <=0.1cm] (5,3) arc (0:352:0.5);
\draw[thick, shorten <=0.1cm] (9,3) arc (180:360:0.5);\draw[thick, ->] (10,3) arc (0:173:0.5); 
\draw[thick, shorten <=3pt, shorten >= 3pt, ->] (5,3) to[bend left, looseness=0.75] (9,3); 
\draw[thick, shorten <=3pt, shorten >= 3pt, <-] (5,3) to (9,3);
\draw[thick, shorten <=3pt, shorten >= 3pt, ->] (7,0) to[bend right, looseness=1] (5,3);
\draw[thick, shorten <=3pt, shorten >= 8pt, <-] (7,0) to[bend left, looseness=1] (5,3); 
\draw[thick, shorten <=3pt, shorten >= 3pt, <-] (7,0) to[bend right, looseness=1] (9,3);
\draw[thick, shorten <=3pt, shorten >= 3pt, ->] (7,0) to[bend left, looseness=1] (9,3); 
\node at (4.5,3.7) {$\scriptstyle \rho_{12}\otimes(\rho_{123}\otimes\rho_2)$};
\node at (9.5,3.7) {$\scriptstyle \rho_{23}\otimes\rho_{23}$};
\node at (6.7,3.95) {$\scriptstyle \rho_1\otimes \rho_1$};
	\node at (7,3.65) {$\scriptstyle +\rho_{123}\otimes \rho_{123}$};
	\node at (7,2.75) {$\scriptstyle \rho_2\otimes (\rho_{23},\rho_{2})$};	
\filldraw [fill=white,white] (6,1.7) rectangle (7,2);\node at (6.2,1.9) {$\scriptstyle \rho_{2}\otimes(\rho_3,\rho_2)$};
\node at (4.8,0.8)  {$\scriptstyle \rho_{3}\otimes1+\rho_1\otimes\rho_{12}$};
\node at (8.15,1.9) {$\scriptstyle \rho_{23}\otimes\rho_{3}$};
\node at (8.8,0.8) {$\scriptstyle 1\otimes\rho_{2}$};
\end{tikzpicture}		
\caption{Graphical representations of the Dehn twist bimodules $\widehat{T}_\alpha$ (left) and $\widehat{T}_\alpha^{-1}$ (right), following \cite[Section 10]{LOT-bimodules}.} 	\label{fig:Dehn-twist-bimodules}
\end{figure}

We may work with one component of $\CFD(M,\alpha,\beta)$ at a time, so we will assume that $f_{\alpha,\beta}(N)$ consists of a single immersed curve $\gamma$ decorated by a local system. Let $\tracks$ be the corresponding curve-like train track, consisting of parallel copies of $\gamma$ connected by crossover arrows. Recall that the valid reduced train track $\tracks$ determines an extendable type D structure which is homotopy equivalent to $N$; since we may modify $N$ by homotopy equivalence, we will assume $\tracks$ corresponds directly to $N$. We can view the crossover arrows as all being concentrated in a single box, which slides freely along \(\gamma\). By sliding this box, we can ensure that no crossover arrow has its head or tail on a segment of curve corresponding to a \(\rho_3\) arrow. 

We claim that the effect on $\tracks$ of tensoring $N$ with $\widehat{T}_\alpha$ is easy to describe: it is applying a negative Dehn twist along $\alpha$ (and homotoping the result if necessary to achieve minimal intersection with $\beta$). Let $\tracks'$ be the train track obtained from $\tracks$ by applying a negative  Dehn twist in a thin strip along the left edge of the square $T\setminus(\alpha\cup\beta)$ (see Figure \ref{fig:dehn-twist-track}). The effect of this transformation can be described as follows:
\begin{itemize}
\item For each intersection $x$ with $\alpha$ we add a new intersection point $x'$ with $\beta$, such that $x$ and $x'$ are connected by a segment connecting the bottom and left edges of the square;
\item Each path from the bottom of the square to the left of the square ending at $x$ becomes a path from the bottom to the top ending at $x'$;
\item Each path from the right of the square to the left ending at $x$ becomes a path from the right to the top ending at $x'$;
\item Each path from the top of the square to the left ending at $x$ becomes a path from the top edge of the square to itself ending at $x'$; and
\item intersection points with $\beta$ and curve segments that do not meet the left edge of the square are uneffected.
\end{itemize}
We can attempt to read off a type D structure from this new train track in the usual way, although we note that the result may not be a valid type D structure (it may fail to satisfy $\partial^2 = 0$). In any case, the result is precisely what is obtained from $N$ by tensoring with $\widehat{T}_\alpha$, if we ignore the two operations in $\widehat{T}_\alpha$ with multiple $\Ainfty$ inputs. Indeed, referring to Figure \ref{fig:Dehn-twist-bimodules}, this has the following effect on $N$:
\begin{itemize}
\item For each $\iota_0$ generator $x$ we add a new $\iota_1$ generator $x'$ such that $x$ and $x'$ are connected by a $\rho_2$ arrow;
\item Each $\rho_2$ arrow ending at $x$ becomes a $\rho_{23}$ arrow ending at $x'$;
\item Each $\rho_{12}$ arrow ending at $x$ becomes a $\rho_{123}$ arrow ending at $x'$;
\item Each path $\rho_3$ arrow starting at $x$ becomes an arrow labelled by $1 = \rho_\varnothing$ starting at $x'$; and
\item $\iota_0$ generators, $\rho_1$, $\rho_{123}$ and $\rho_{23}$ arrows are unchanged.
\end{itemize}

To be precise, the new generator \(x'\) corresponds to \(* \boxtimes x\), while the new \(x\) (in \( \widehat{T}_\alpha \boxtimes N\)) corresponds to 
 \(\bullet \boxtimes x\).
Note that each $\rho_3$ in $N$ gives rise to a $\rho_\varnothing$ labelled arrow in the tensor product, and a corresponding bigon between $\tracks'$ and $\beta$. We will now use the assumption that $\tracks$ is curve-like and that there are no crossover arrows between segments corresponding to $\rho_3$ arrows in $N$; it follows that bigons between $\tracks'$ and $\beta$ appear in one of the three configurations shown in Figure \ref{fig:dehn-twist-track-bigons}. These three configurations arise from $\rho_3$ arrows in $\tracks$ that are followed by, respectively, backwards $\rho_1$ arrows, $\rho_{23}$ arrows, and $\rho_2$ arrows. Note that in the latter two configurations, adding the gray segments shown in the figure corresponds to incorporating the two operations of $\widehat{T}_\alpha$ that we ignored until now into the box tensor product with $N$---each ($\rho_3$, $\rho_{23}$) or $(\rho_3, \rho_2)$ sequence in $N$ gives becomes a $\rho_3$ in the tensor product. Let $\tracks''$ be the result of adding these gray lines to $\tracks'$. Note that $\tracks''$ is a valid (not necessarily reduced) train track and the corresponding type D structure is precisely $\widehat{T}_\alpha \boxtimes N$.

Now consider the effect of removing the bigons between $\tracks''$ and $\beta$. In the first configuration, removing the bigons by homotoping the parallel immersed curves has precisely the effect of canceling $\rho_\varnothing$ arrows in the underlying type D structures.  Each $\rho_1$ arrow into a generator $z$ for which there is a $\rho_\varnothing$ arrow from $x'$ to $z$ and a $\rho_2$ arrow from $x'$ to $x$ becomes a $\rho_{12}$ into $x$. In the second two configurations, the segments of $\tracks''$ forming the bigon can simply be deleted. This is because in the corresponding type D structure, the terminal endpoints of the $\rho_\varnothing$ arrows have no other incoming arrows, so the endpoints of the $\rho_\varnothing$ arrows can be canceled without adding any new arrows. In all three configurations, it is clear that the result of removing the bigons from $\tracks''$ in the appropriate way is simply the result of homotoping $\tracks'$ to be in minimal position with $\beta$.

We have shown that a curve-like train track associated with $\widehat{T}_\alpha\boxtimes N$ is obtained from $\tracks$ by applying a negative Dehn twist about $\alpha$. Thus, interpreting curve-like train tracks as decorated immersed curves, we have that $f_{\alpha,\beta}(\widehat{T}_\alpha\boxtimes N)$ is obtained from $f(N)$ by applying the same Dehn twist to the underlying curve $\gamma$ of $f(N)$. It follows that  $f_{\alpha,\beta}(N) \cong f_{\alpha,\beta+\alpha}( \widehat{T}_\alpha\boxtimes N)$, since the change of marking cancels the action of the Dehn twist.

A similar argument applies to the bimodule $\widehat{T}_\alpha^{-1}$, where tensoring by this bimodule has the effect of applying a positive Dehn twist about $\alpha$ to the corresponding train track and to $f(N)$. However, it is easier to  observe that $f_{\alpha,\beta}(N) = f_{\alpha,\beta}(\widehat{T}_\alpha \boxtimes \widehat{T}_\alpha^{-1}\boxtimes N)$ is obtained from $f_{\alpha,\beta}(\widehat{T}_\alpha^{-1}\boxtimes N)$ by applying a negative Dehn twist, so $f_{\alpha,\beta}(\widehat{T}_\alpha^{-1}\boxtimes N)$ is obtained from $f_{\alpha,\beta}(N)$ by applying a positive Dehn twist. Again, we see that $f_{\alpha,\beta}(N) \cong f_{\alpha,\beta-\alpha}( \widehat{T}_\alpha^{-1}\boxtimes N)$, since the change of marking cancels the action of the Dehn twist. \end{proof}

\begin{figure}
\includegraphics[scale=.7]{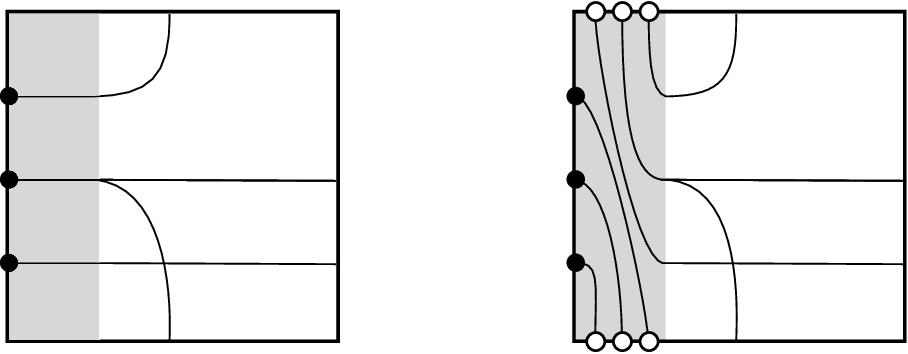}
\caption{Applying a Dehn twist to a train track}
\label{fig:dehn-twist-track}
\end{figure}

\begin{figure}
\includegraphics[scale=.5]{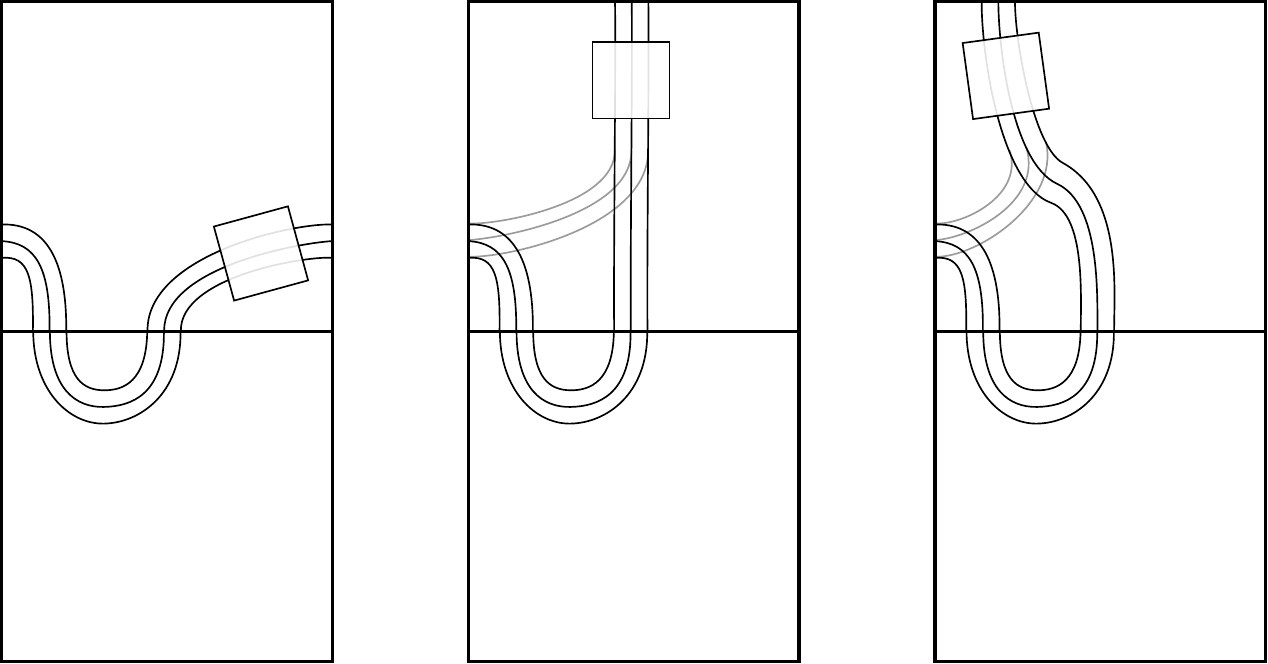}
\caption{Possible configurations of bigons in $\tracks'$ corresponding to $\rho_3$ arrows in $N$ that are followed by backward $\rho_1$ arrows, $\rho_{23}$ arrows, and $\rho_2$ arrows, respectively. The assumption that $\tracks$ has no crossover arrows between segments corresponding to $\rho_3$ arrows ensures that the bigons do not involve any crossover arrows; there could be crossover arrows in the boxes shown. The black segments are part of $\tracks'$ and $\tracks''$ is obtained from $\tracks'$ by adding the gray segments in the second and third configuration.}
\label{fig:dehn-twist-track-bigons}
\end{figure}

Since we can move between any two markings/bordered structures by a sequence of Dehn twists, Proposition \ref{prp:Dehn-twists} ensures that
$$f_{\alpha,\beta}(\CFD(M,\alpha,\beta)) \cong f_{\alpha',\beta'\;}(\CFD(M,\alpha',\beta'))$$
for any two pairs of parametrizing curves $(\alpha, \beta)$ and $(\alpha', \beta')$. 

We will write
\[\HFhat(M) = f_{\alpha,\beta}\left(\CFD(M,\alpha,\beta)\right).\]
As we have shown, this definition is independent of the choice of parametrization \((\alpha, \beta)\). 
This completes the proof of Theorem \ref{thm:invariance}.

We now turn to the proof of Theorem \ref{thm:pairing}. Recalling the set-up of the theorem, suppose $M_0$ and $M_1$ are manifolds with torus boundary and $h\co\partial M_1\to \partial M_2$ is a gluing map. Chose a parametrization $(\alpha_1, \beta_1)$ for $\partial M_1$ and fix the parametrization $(\alpha_0,\beta_0)$ of $\partial M_0$ by setting $\alpha_0 = h(\beta_1)$ and $\beta_0 = h(\alpha_1)$. Following the conventions for bordered Heegaard Floer invariants (see Section \ref{sec:conventions}), these parametrizations are consistent with the gluing map $h$; that is, by the pairing theorem in \cite{LOT}, $\HFhat(M_0\cup_h M_1)$ is given by the homology of $\CFA(M_0,\alpha_0,\beta_0)\boxtimes\CFD(M_1,\alpha_1,\beta_1)$. By the equivalence of categories proved Section \ref{sec:categories}, the homology of this box tensor product agrees with the pairing of the decorated immersed curves $f(\CFD(M_0,\alpha_0,\beta_0))$ and $f(\CFA(M_1,\alpha_1, \beta_1))$. To pair these curves we include them in the same marked torus, reflecting one across the anti-diagonal, and take intersection Floer homology. Note that as 3-manifold invariants, we think of $\curves{M_0} = f(\CFD(M_0,\alpha_0,\beta_0))$ and $\curves{M_1} = f(\CFA(M_1,\alpha_1, \beta_1))$ as living in two different marked tori, the parametrized boundaries of $M_0$ and $M_1$. Identifying these marked tori with a reflection across the anti-diagonal corresponds to a gluing map which takes $\alpha_1$ to $-\beta_0$ and $\beta_1$ to $\-\alpha_0$; this is precisely the map $\bar{h}$, the elliptic involution composed with $h$. Letting $\boldsymbol{\gamma_0}$ denote $\curves{M_0}$ and $\boldsymbol{\gamma_1}$ denote the image of $\curves{M_1}$ under $\bar{h}$, the pairing is given by $\HFa(\boldsymbol{\gamma_0}, \boldsymbol{\gamma_1})$, as claimed.

\begin{remark}\label{rmk:symmetry and pairing}
Theorem \ref{thm:pairing}, which follows from our graphical representation of type D structures in terms of train tracks (Section \ref{sec:tracks}) and the structure theorem for these tracks in terms of immersed curves with local systems (Section \ref{sec:extend}), inherits some features from the box tensor product in bordered Floer homology.  For instance, the appearance of $\bar h$ instead of $h$ was initially a surprise to us. Of course, if \(\curves{M_1}\) is invariant under the elliptic involution, there is no difference between using \(h\) and \(\bar h\).  This symmetry is known to hold  when $M_1$ is a  knot complement in $S^3$ by  a result of Xiu \cite{Xiu}, and when $M_1$ is a graph manifold as discussed in \cite{HW}. In the companion to this paper \cite{HRW-companion}, we show that it holds for all manifolds with torus boundary. 

\end{remark}

\section{Gradings}
\label{sec:gradings} The bordered invariants of $M$ come equipped with a mod 2 and a $\spinc$ grading. In this section, we explain how the invariant $\HFhat(M)$ can be enhanced to capture this information, leading to a proof of Theorem~\ref{thm:gradings}. While this does not capture all of the graded information in the bordered Heegaard Floer package, it is sufficient for the applications in this paper; a more thorough discussion of gradings, including the Maslov grading, appears in the companion to this paper \cite{HRW-companion}.

\subsection{The $\Z/2\Z$ grading.}
We first recall the \(\Z/2\Z\) grading in bordered Floer homology. 
For each spin$^c$ structure $\spin$, $\CFD(M,\alpha,\beta; \spin)$ admits a relative $\Z/2\Z$ grading $\gr^D$ as defined by Petkova in \cite{Petkova-decategorification}. (This grading may be identified with a specialization of the full grading package on the bordered invariants; see \cite[Appendix A]{HLW}.) The relative $\Z/2\Z$ grading satisfies $\gr^D(\partial x) = \gr^D(x) - 1$ and $\gr^D(a \otimes x) = \gr^D(a)+\gr^D(x)$ for $a\in\Alg$, where $\gr^D(\rho_1) = \gr^D(\rho_3) = 0$ and $\gr^D(\rho_2) = \gr^D(\rho_{123}) = \gr^D(\rho_{12}) = \gr^D(\rho_{23}) = 1$. Note that if $\CFD(M,\alpha,\beta;\spin)$ is connected (see below), the relative $\Ztwo$ grading is completely determined by this condition. The generators of $\CFA(M,\alpha,\beta;\spin)$ inherit a grading $\gr^A$ from the corresponding generators in $\CFD(M,\alpha,\beta;\spin)$, where the grading of generators with idempotent $\iota_0$ is reversed. A generator $x_0\otimes x_1$ in a box tensor product $\CFA(M_0,\alpha_0,\beta_0)\boxtimes\CFD(M_1,\alpha_1,\beta_1)$ inherits the grading $\gr^A(x_0) + \gr^D(x_1)$, which recovers the relative $\Ztwo$ grading on $\CFhat(M_0\cup M_1)$.

 The graph representing a $\Z/2\Z$ graded type D structure can be enhanced, replacing the vertex labeling $\{\bu,\circ\}$ with $\{\bullet^+, \bullet^-, \circ^+,\circ^-\}$, where $+$ designates \(\gr^A\) grading 0 and $-$ designates \(\gr^A\) grading 1. Referring to the conventions above, we see that an edge labeled \(2\) or \({123}\) joins two vertices with opposite sign, while an edge with any other label joins two vertices with the same sign. 

\piccaption[]{Vertex types for $\sA$-decorated graphs. \label{fig:vertex-types}}
\parpic[r]{
	\begin{minipage}{60mm}{
			\begin{tikzpicture}[>=stealth',scale=0.8] 
			\node at (0,0) {$\bullet$}; 
			\node at (0,1) {$\bullet$}; 
			\node at (0,2) {$\bullet$};
			\draw[thick, ->,shorten <=0.1cm] (0,0) -- (1,0); \node at (0.5,0.25) {$\scriptstyle{123}$};
			\draw[thick, ->, shorten <=0.1cm] (0,1) -- (1,1); \node at (0.5,1.25) {$\scriptstyle{12}$};
			\draw[thick, ->, shorten <=0.1cm] (0,2) -- (1,2); \node at (0.5,2.25) {$\scriptstyle{1}$}; \node at (0.5,-0.75){\bf{I}$_\bullet$};
			\node at (2,0) {$\bullet$}; 
			\node at (2,1) {$\bullet$}; 
			\node at (2,2) {$\bullet$};
			\draw[thick, <-,shorten <=0.1cm] (2,0) -- (3,0); \node at (2.5,0.25) {$\scriptstyle{12}$};
			\draw[thick, <-, shorten <=0.1cm] (2,1) -- (3,1); \node at (2.5,1.25) {$\scriptstyle{2}$};
			\draw[thick, ->, shorten <=0.1cm] (2,2) -- (3,2); \node at (2.5,2.25) {$\scriptstyle{3}$}; \node at (2.5,-0.75){\bf{II}$_\bullet$};
			\node at (4,0) {$\circ$}; 
			\node at (4,1) {$\circ$}; 
			\node at (4,2) {$\circ$};
			\draw[thick, <-,shorten <=0.1cm] (4,0) -- (5,0); \node at (4.5,0.25) {$\scriptstyle{1}$};
			\draw[thick, ->, shorten <=0.1cm] (4,1) -- (5,1); \node at (4.5,1.25) {$\scriptstyle{23}$};
			\draw[thick, ->, shorten <=0.1cm] (4,2) -- (5,2); \node at (4.5,2.25) {$\scriptstyle{2}$}; \node at (4.5,-0.75){\bf{I}$_\circ$};
			\node at (6,0) {$\circ$}; 
			\node at (6,1) {$\circ$}; 
			\node at (6,2) {$\circ$};
			\draw[thick, <-,shorten <=0.1cm] (6,0) -- (7,0); \node at (6.5,0.25) {$\scriptstyle{123}$};
			\draw[thick, <-, shorten <=0.1cm] (6,1) -- (7,1); \node at (6.5,1.25) {$\scriptstyle{23}$};
			\draw[thick, <-, shorten <=0.1cm] (6,2) -- (7,2); \node at (6.5,2.25) {$\scriptstyle{3}$}; \node at (6.5,-0.75){\bf{II}$_\circ$};
			\end{tikzpicture}} 
	\end{minipage}
}
We first observe that this data is equivalent to a choice of orientation on each edge of the underlying (undecorated) graph. Given a vertex \(v\), we partition the edges incident to \(v\) into two types, as shown in Figure \ref{fig:vertex-types}. That is, if \(v\) has label \(\bu\), incident edges are either of type {\bf I}$_\bu$ or type  {\bf II}$_\bu$, and similarly if \(v\) has label \(\circ\). This is a natural partition to consider in the context of extendable type D structures:

\begin{proposition}\label{prp:type-incedent} Suppose that $\Gamma$ is an $\sA$-decorated graph associated with a reduced extendable type D structure $N$. Then every vertex $\bu$ has at least one incident edge of type {\bf I}$_\bu$ and at least one  incident edge of type {\bf II}$_\bu$, and every vertex $\circ$ has at least one incident edge of type {\bf I}$_\circ$ and at least  one incident edge of type {\bf II}$_\circ$.
\end{proposition}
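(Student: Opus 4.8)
The plan is to reduce the statement to a computation with the extended type D structure $\tN$ realizing $N$, using the fact that $\tN$ satisfies $\td^2 = U$. First I would observe that the four vertex types in Figure \ref{fig:vertex-types} are exactly the ``halves'' of the extended coefficient map: a primary switch on, say, $\alpha$ records an $\iota_0$-generator $x$ together with the collection of corner paths (entering and leaving) incident to it, and passing to the extension $\tN$ these corner paths organize into the rows and columns of the matrix $M_\tracks$ described in Proposition \ref{prp:mod-track-matrix}. The condition $\td^2(x) = Ux$ says precisely that the product of the outgoing coefficient data with the incoming coefficient data picks out exactly the length-four Reeb chord wrapping once around the puncture (i.e.\ $\rho_{1230} + \rho_{3012}$ for an $\iota_0$ generator, $\rho_{2301} + \rho_{0123}$ for $\iota_1$). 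In train-track language this is the statement that $\tracks$ has no counterclockwise fishtail and that at each primary switch there is an odd number of unoriented corner edges leaving each side (the observation already made at the start of Section \ref{sub:calculus}).

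The key step is then a local analysis at a single primary switch. Consider a $\bu$ vertex $x$. The incident edges in the \emph{extended} graph come in the pairs $\{1, 12, 123, 1230, \ldots\}$ (outgoing, starting with $\rho_1$, i.e.\ leaving through the ``$\alpha$-to-$\beta$'' side) and $\{2, 23, 230, \ldots\}$ versus $\{0, 01, 012, \ldots\}$ at the other sides, together with the incoming analogues. The relations defining $\AlgExt$ (namely $\rho_i\rho_{i-1}=0$) force that the algebra elements labelling edges into and out of $x$ are exactly those realizable by clockwise paths in the square; in particular the \emph{original} (non-extended) labels incident to $x$ lie in $\{1,2,3,12,23,123\}$. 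Now I would simply enumerate: an edge of type $\mathbf{I}_\bu$ is an outgoing edge labelled $1$, $12$, or $123$; an edge of type $\mathbf{II}_\bu$ is an edge (of the appropriate orientation) labelled $2$, $3$, or $12$ — reading off Figure \ref{fig:vertex-types}. Because $\td^2(x) = Ux$ and not $0$, the coefficient map cannot be identically zero near $x$; more precisely, the $\rho_{1230}\otimes x$ term of $\td^2(x)$ must be produced by concatenating a path leaving $x$ that begins with $\rho_1$ (equivalently, passes the ``$1$'' corner) with a path returning to $x$ that ends with $\rho_0$, and the $\rho_{3012}\otimes x$ term by concatenating a path leaving $x$ beginning with $\rho_3$ with a path returning ending with $\rho_2$. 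The first of these forces the existence of an outgoing edge through the $1$-corner, i.e.\ a type $\mathbf{I}_\bu$ edge (or a longer edge $1230$, but since $N$ is reduced such a length-four edge is absent, so after restricting to $\sA$ one genuinely sees a $\rho_1$, $\rho_{12}$, or $\rho_{123}$ edge); the second forces an outgoing $\rho_3$ edge or an incoming $\rho_2$ or $\rho_{12}$ edge, i.e.\ a type $\mathbf{II}_\bu$ edge. The $\circ$ case is identical after rotating the square by $180^\circ$ (exchanging $\iota_0 \leftrightarrow \iota_1$, $1\leftrightarrow 3$, $0\leftrightarrow 2$), using the terms $\rho_{2301}\otimes x$ and $\rho_{0123}\otimes x$ of $\td^2(x)$.

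The main obstacle I anticipate is bookkeeping rather than conceptual: one must be careful that reducedness of $N$ does not accidentally kill the edge whose existence is being claimed — a priori $\td^2(x)=Ux$ could be witnessed by a long wrapping edge (a $U$-weighted corner path) rather than by an honest edge in $\{1,2,3,12,23,123\}$. The resolution is to use that $N$ is \emph{reduced}: by the description of $M_\tracks^1$ versus $M_\tracks^U$ in \eqref{eqn:block}, the $U$-weighted corner paths live in $M_\tracks^U$, and a reduced train track has no immersed path beginning and ending on the same side of the square unless it is $U$-weighted — so the paths through the $1$- and $3$-corners that feed $\td^2$ must actually decompose as a genuine $M_\tracks^1$ edge (an element of $\{1,2,3,12,23,123\}$) followed by the remainder. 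Spelling this out is the only delicate point; everything else is a direct case-check against Figure \ref{fig:vertex-types}.
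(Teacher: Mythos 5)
Your argument is essentially correct, and it is genuinely different from the paper's treatment: the paper disposes of this proposition by a one-line citation to \cite[Proposition~3.3]{HW}, whereas you give a self-contained argument internal to the present framework, deriving the claim directly from the extension condition $\td^2(x)=Ux$. This is a real gain: the cited result in \cite{HW} is phrased for bordered invariants of three-manifolds, while the present proposition is stated for arbitrary reduced extendable type D structures, so a direct argument from $\td^2 = U$ is more honest to the level of generality claimed.

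The core of your argument is right, but the phrasing in the middle paragraph is a little loose and worth tightening. For an $\iota_0$-generator $x$, the term $\rho_{1230}\otimes x$ of $\td^2(x)$ must arise from a factorization $\rho_I\rho_J = \rho_{1230}$ with both $I,J$ nonempty (nonemptiness uses that $N$, hence $\tN$, is reduced). The three nontrivial factorizations are $\rho_1\cdot\rho_{230}$, $\rho_{12}\cdot\rho_{30}$, $\rho_{123}\cdot\rho_0$, and in every case the \emph{first} factor is $0$-free, i.e.\ lies in $\{1,12,123\}$, hence is an honest outgoing edge of $\Gamma$ at $x$ of type $\mathbf{I}_\bu$. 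For $\rho_{3012}\otimes x$, the factorizations are $\rho_3\cdot\rho_{012}$, $\rho_{30}\cdot\rho_{12}$, $\rho_{301}\cdot\rho_2$; here the $0$-free factor may be the first ($\rho_3$, an outgoing $\mathbf{II}_\bu$ edge) or the second ($\rho_{12}$ or $\rho_2$, an incoming $\mathbf{II}_\bu$ edge), but in each case one gets a $\mathbf{II}_\bu$ edge in $\Gamma$. Your sentence ``the $\rho_{3012}\otimes x$ term by concatenating a path leaving $x$ beginning with $\rho_3$ with a path returning ending with $\rho_2$'' reads as if you are imposing both constraints on a single factorization; it should instead read as the dichotomy above. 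The $\iota_1$ case works identically with $\rho_{2301}$ (giving $\mathbf{I}_\circ$: the $0$-free factor among $\rho_2\cdot\rho_{301}$, $\rho_{23}\cdot\rho_{01}$, $\rho_{230}\cdot\rho_1$ is an outgoing $2$ or $23$, or an incoming $1$) and $\rho_{0123}$ (giving $\mathbf{II}_\circ$: the $0$-free factor among $\rho_0\cdot\rho_{123}$, $\rho_{01}\cdot\rho_{23}$, $\rho_{012}\cdot\rho_3$ is always the second, an incoming $123$, $23$, or $3$).

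One small correction to your final paragraph: reducedness of $N$ does not by itself exclude $D_{1230}\neq 0$ in the extension (the diagonal blocks of $M_\tracks^U$ are permitted to be nonzero for a reduced train track). What rules out the ``single long wrapping edge'' worry is simply that $\td^2$ is a two-fold composition of $\td^1$'s each of which has no idempotent coefficient, so $\rho_{1230}\otimes x$ cannot be produced by a lone $D_{1230}$ coefficient; it must come from one of the three nontrivial splittings, each of which surfaces a $0$-free edge in $\Gamma$.
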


\begin{proof}This is essentially \cite[Proposition 3.3]{HW}.\end{proof}

In particular, when $\Gamma$ has valence 2 (and hence the associated train track is an immersed curve), there is exactly one of each edge type incident at every vertex.

This partition allows us to convert the system of signed vertices into an orientation on the underlying graph. The orientations on edges incident to any given vertex are assigned following the rules in Table \ref{tab:orient}. See Figure \ref{fig:oriented-graph} for this process carried out in an example.  Note that reversing the overall orientation on this graph corresponds to switching the grading of every vertex, and vice versa. For consistency with the gradings on the relevant bordered invariants, if an orientation in a given component is to be changed one must reverse orientations on every connected component corresponding to the relevant summand $\CFD(M,\alpha,\beta;\spin)$. 

\begin{table}[t]
	\begin{tabular}{|c|c|c|c|c|c|}
		\hline
		\rowcolor{lightgray} Edge type & Vertex sign  & Orientation   &Edge type & Vertex sign  & Orientation    \\
		\hline
		{\bf I}$_\bu$ &
		$+$ & 
		\begin{tikzpicture}[thick, decoration={
			markings,
			mark=at position 0.5 with {\arrow{>}}}] \draw[-,postaction={decorate}] (1,0) -- (0,0);\node at (-0.05,0) {$\bu$}; 
		\end{tikzpicture}
		&  {\bf I}$_\circ$ &
		$+$ & 
		\begin{tikzpicture}[thick, decoration={
			markings,
			mark=at position 0.5 with {\arrow{>}}}] \draw[-,postaction={decorate}] (0,0) -- (1,0);\node at (-0.05,0) {$\circ$}; 
		\end{tikzpicture}
		\\
		{\bf I}$_\bu$ &
		$-$ & 
		\begin{tikzpicture}[thick, decoration={
			markings,
			mark=at position 0.5 with {\arrow{>}}}] \draw[-,postaction={decorate}] (0,0) -- (1,0);\node at (-0.05,0) {$\bu$}; 
		\end{tikzpicture}
		&  {\bf I}$_\circ$ &
		$-$ & 
		\begin{tikzpicture}[thick, decoration={
			markings,
			mark=at position 0.5 with {\arrow{>}}}] \draw[-,postaction={decorate}] (1,0) -- (0,0);\node at (-0.05,0) {$\circ$}; 
		\end{tikzpicture}
		\\
		{\bf II}$_\bu$ &
		$+$ & 
		\begin{tikzpicture}[thick, decoration={
			markings,
			mark=at position 0.5 with {\arrow{>}}}] \draw[-,postaction={decorate}] (0,0) -- (1,0);\node at (-0.05,0) {$\bu$}; 
		\end{tikzpicture}
		&  {\bf II}$_\circ$ &
		$+$ & 
		\begin{tikzpicture}[thick, decoration={
			markings,
			mark=at position 0.5 with {\arrow{>}}}] \draw[-,postaction={decorate}] (1,0) -- (0,0);\node at (-0.05,0) {$\circ$}; 
		\end{tikzpicture}
		\\
		{\bf II}$_\bu$ &
		$-$ & 
		\begin{tikzpicture}[thick, decoration={
			markings,
			mark=at position 0.5 with {\arrow{>}}}] \draw[-,postaction={decorate}] (1,0) -- (0,0);\node at (-0.05,0) {$\bu$}; 
		\end{tikzpicture}
		&  {\bf II}$_\circ$ &
		$-$ & 
		\begin{tikzpicture}[thick, decoration={
			markings,
			mark=at position 0.5 with {\arrow{>}}}] \draw[-,postaction={decorate}] (0,0) -- (1,0);\node at (-0.05,0) {$\circ$}; 
		\end{tikzpicture}\\
		\hline
	\end{tabular}
	\vskip0.1in
	\caption{Producing the oriented graph given a signed $\sA$-decorated graph $\Gamma$ and the vertex types from Figure \ref{fig:vertex-types}.}\label{tab:orient}
\end{table}

\begin{figure}[ht!]
	\begin{tikzpicture}[scale=0.75,>=stealth', thick] 
	\def \radius {3.5cm} \def \outer {3.9cm}
	\node (d) at ({360/(11) * (1- 1)}:\radius) {$\bu$};
	\node (j) at ({360/(11) * (2- 1)}:\radius) {$\ci$};
	\node (a) at ({360/(11) * (3 -1)}:\radius) {$\bu$};
	\node (k) at ({360/(11) * (4 -1)}:\radius) {$\ci$};
	\node (f) at ({360/(11) * (5 -1)}:\radius) {$\bu$};
	\node (g) at ({360/(11) * (6 -1)}:\radius) {$\ci$};
	\node (h) at ({360/(11) * (7 -1)}:\radius) {$\ci$};
	\node (c) at ({360/(11) * (8 -1)}:\radius) {$\bu$};
	\node (e) at ({360/(11) * (9 -1)}:\radius) {$\bu$};
	\node (i) at ({360/(11) * (10 -1)}:\radius) {$\ci$};
	\node (b) at ({360/(11) * (11 -1)}:\radius) {$\bu$};

	\draw[->, bend right = 15] (d) to (j);
	\draw[->, bend left = 15] (a) to (j);
	\draw[->, bend right = 15] (a) to (k);
	\draw[->, bend right = 15] (k) to (f);
	\draw[->, bend right = 15] (f) to (g);
	\draw[->, bend left = 15] (h) to (g);
	\draw[->, bend left = 15] (c) to (h);
	\draw[->, bend right = 15] (c) to (e);
	\draw[->, bend right = 15] (e) to (i);
	\draw[->, bend left = 15] (b) to (i);
	\draw[->, bend right = 15] (b) to (d);
	
	\draw[->, bend right = 15] (d) to node[right]{$\scriptstyle{1}$} (i);
	\draw[->, bend right = 30] (d) to node[right]{$\scriptstyle{12}$} (e);
	\draw[->, bend right = 10] (d) to node[above]{$\scriptstyle{1}$} (h);
	\draw[->, bend right = 10] (j) to node[above]{$\scriptstyle{23}$} (g);

	\node at ({360/(11) * (1- (1/2))}:\outer) {$\scriptstyle{1}$};
	\node at ({360/(11) * (2-(1/2))}:\outer) {$\scriptstyle{3}$};
	\node at ({360/(11) * (3-(1/2))}:\outer) {$\scriptstyle{123}$};
	\node at ({360/(11) * (4-(1/2))}:\outer) {$\scriptstyle{2}$};
	\node at ({360/(11) * (5-(1/2))}:\outer) {$\scriptstyle{1}$};
	\node at ({360/(11) * (6-(1/2))}:\outer) {$\scriptstyle{23}$};
	\node at ({360/(11) * (7-(1/2))}:\outer) {$\scriptstyle{3}$};
	\node at ({360/(11) * (8-(1/2))}:\outer) {$\scriptstyle{12}$};
	\node at ({360/(11) * (9-(1/2))}:\outer) {$\scriptstyle{1}$};
	\node at ({360/(11) * (10-(1/2))}:\outer) {$\scriptstyle{3}$};
	\node at ({360/(11) * (11-(1/2))}:\outer) {$\scriptstyle{12}$};
	
	\node at ({360/(11) * (0)}:\outer) {$\footnotesize{+}$};
	\node at ({360/(11) * (1)}:\outer) {$\footnotesize{+}$};
	\node at ({360/(11) * (2)}:\outer) {$\footnotesize{+}$};
	\node at ({360/(11) * (3)}:\outer) {$\footnotesize{-}$};
	\node at ({360/(11) * (4)}:\outer) {$\footnotesize{+}$};
	\node at ({360/(11) * (5)}:\outer) {$\footnotesize{+}$};
	\node at ({360/(11) * (6)}:\outer) {$\footnotesize{+}$};
	\node at ({360/(11) * (7)}:\outer) {$\footnotesize{+}$};
	\node at ({360/(11) * (8)}:\outer) {$\footnotesize{+}$};
	\node at ({360/(11) * (9)}:\outer) {$\footnotesize{+}$};
	\node at ({360/(11) * (10)}:\outer) {$\footnotesize{+}$};

	\end{tikzpicture} \hspace{1 cm}
	\begin{tikzpicture}[scale=0.75, thick,decoration={
		markings,
		mark=at position 0.5 with {\arrow{>}}}] 
	\def \radius {3.5cm} \def \outer {3.9cm}
	\node (d) at ({360/(11) * (1- 1)}:\radius) {$\bu$};
	\node (j) at ({360/(11) * (2- 1)}:\radius) {$\ci$};
	\node (a) at ({360/(11) * (3 -1)}:\radius) {$\bu$};
	\node (k) at ({360/(11) * (4 -1)}:\radius) {$\ci$};
	\node (f) at ({360/(11) * (5 -1)}:\radius) {$\bu$};
	\node (g) at ({360/(11) * (6 -1)}:\radius) {$\ci$};
	\node (h) at ({360/(11) * (7 -1)}:\radius) {$\ci$};
	\node (c) at ({360/(11) * (8 -1)}:\radius) {$\bu$};
	\node (e) at ({360/(11) * (9 -1)}:\radius) {$\bu$};
	\node (i) at ({360/(11) * (10 -1)}:\radius) {$\ci$};
	\node (b) at ({360/(11) * (11 -1)}:\radius) {$\bu$};
	
	\draw[-, bend left = 15,postaction={decorate}] (j) to (d);
	\draw[-, bend left = 15,postaction={decorate}] (a) to (j);
	\draw[-, bend left = 15,postaction={decorate}] (k) to (a);
	\draw[-, bend left = 15,postaction={decorate}] (f) to (k);
	\draw[-, bend left = 15,postaction={decorate}] (g) to (f);
	\draw[-, bend left = 15,postaction={decorate}] (h) to (g);
	\draw[-, bend left = 15,postaction={decorate}] (c) to (h);
	\draw[-, bend left = 15,postaction={decorate}] (e) to (c);
	\draw[-, bend left = 15,postaction={decorate}] (i) to (e);
	\draw[-, bend left = 15,postaction={decorate}] (b) to (i);
	\draw[-, bend left = 15,postaction={decorate}] (d) to (b);
	
	\draw[-, bend left = 15,postaction={decorate}] (i) to (d);
	\draw[-, bend left = 30,postaction={decorate}] (e) to (d);
	\draw[-, bend left = 10,postaction={decorate}] (h) to (d);
	\draw[-, bend right = 10,postaction={decorate}] (j) to (g);
	
	\end{tikzpicture} 
	\caption{A $\Z/2\Z$ augmentation of an $\sA$-decorated graph (left), and the associated orientation graph (right).}\label{fig:oriented-graph}
\end{figure}

\begin{lemma}
	The conventions of Table~\ref{tab:orient} define a consistent orientation on each edge of the graph. 
\end{lemma}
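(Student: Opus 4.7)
The plan is to verify the lemma by a direct case-by-case analysis over the six possible edge labels $I \in \{1, 2, 3, 12, 23, 123\}$ of the decorated graph. Each label dictates the idempotent type ($\bu$ or $\circ$) of each endpoint and fixes, together with the graph's edge direction, the edge-at-vertex type ($\mathbf{I}_\bu$, $\mathbf{II}_\bu$, $\mathbf{I}_\circ$, or $\mathbf{II}_\circ$) at each end; this is read directly off Figure~\ref{fig:vertex-types}. I would first tabulate this data: for an $I$-edge oriented from $x$ to $y$, the label $I$ determines both whether $x$ and $y$ are $\bu$- or $\circ$-vertices and which of the four edge-at-vertex types appears at each end (source versus target matters for the labels $12$ and $23$, whose two endpoints share an idempotent).

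Next, for each label I would compute the parity relation between the signs $s_x$ and $s_y$ at the two endpoints, using the grading condition $\gr^D(\partial x) = \gr^D(x) - 1$ together with $\gr^D(\rho_I \otimes y) = \gr^D(\rho_I) + \gr^D(y)$ and the specified $\Z/2\Z$ gradings of the algebra elements. Modulo $2$ this collapses to a simple binary condition: the signs at the two endpoints are either required to agree or required to disagree, depending only on the parity of $\gr^D(\rho_I)$.

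With these two pieces of data in hand, I would consult Table~\ref{tab:orient} and, for each grading-compatible choice of $(s_x, s_y)$, read off the orientation prescribed at each endpoint. The content of the lemma is precisely that in every one of the six cases the two prescriptions coincide, so the edge acquires a well-defined global direction. For example, for a $\rho_{123}$-edge from a $\bu$-vertex $x$ to a $\circ$-vertex $y$, the edge is of type $\mathbf{I}_\bu$ at $x$ and of type $\mathbf{II}_\circ$ at $y$; the grading formula forces $s_x = s_y$; and then the two relevant rows of Table~\ref{tab:orient} yield the same direction for the edge in either of the two possible sign choices. The other five labels are handled identically.

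The only subtlety is bookkeeping: one must correctly distinguish outgoing from incoming edges at each endpoint when assigning edge-at-vertex types from Figure~\ref{fig:vertex-types}, particularly for $\rho_{12}$ and $\rho_{23}$ where the two endpoints share an idempotent but have genuinely different types ($\mathbf{I}$ at the source, $\mathbf{II}$ at the target). Once this is done systematically, the consistency of Table~\ref{tab:orient} with the grading constraints is simply a tabular check and no step requires anything beyond inspection.
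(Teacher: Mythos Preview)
Your approach is essentially the paper's: both are direct case checks over the six edge labels, with the paper condensing the work by first extracting the pattern that Table~\ref{tab:orient} assigns a consistent orientation precisely when the two ends have the same type ($\mathbf{I}$/$\mathbf{II}$) if and only if they have the same sign, and then checking which labels preserve the grading and which labels have mixed type.

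However, your worked example for $\rho_{123}$ is wrong, and in a way that suggests you did not actually carry out the verification. You claim the grading forces $s_x = s_y$; but the paper's proof states (and the explicit example in Figure~\ref{fig:oriented-graph} confirms, e.g.\ the edge $a\,\bullet^+ \to k\,\circ^-$) that a $\rho_{123}$ edge \emph{changes} the $\Z/2\Z$ grading, so $s_x \neq s_y$. Worse, under your claimed $s_x = s_y = +$, Table~\ref{tab:orient} gives $\mathbf{I}_\bullet$ with $+$ oriented \emph{into} the $\bullet$-vertex and $\mathbf{II}_\circ$ with $+$ oriented \emph{into} the $\circ$-vertex, which are contradictory---so your assertion that ``the two relevant rows \ldots\ yield the same direction'' is false on its own terms. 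The naive computation $\gr^D(y) = \gr^D(x) - 1 - \gr^D(\rho_{123}) \equiv \gr^D(x)$ from the displayed algebra gradings misses a subtlety in how the relative $\Z/2\Z$ grading interacts with the idempotent; the correct parity shifts are exactly those stated in the paper's proof (labels $2$ and $123$ flip sign, the others preserve it). Your outline is fine, but you must redo the table check with the correct grading parities before it constitutes a proof.
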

\begin{proof}
	It is enough to check that edges labelled $2$ and $123$ change the grading and that all other labelings preserve it.
	It is easy to see from the table that the two ends of an edge have the same mod $2$ grading if they are both of the same type ({\it i.e.} both {\bf I} or both {\bf II}). Referring to Figure~\ref{fig:vertex-types}, we see that edges labeled $2$ and $123$ have one end of type 
	{\bf I} and one of type {\bf II}; other edges have ends of the same type. 
\end{proof}

\begin{remark} The condition from Proposition \ref{prp:type-incedent} ensures that every vertex of the graph has both an inward pointing and an outward pointing edge. If the graph is a loop corresponding to an immersed curve \(\gamma\) in \(T_M\), it follows that the \(\Z/2\Z\) grading is completely determined by the choice of an orientation on \(\gamma\). More generally, if the graph corresponds to an immersed curve with a connected local system, all the parallel curves in the local system must carry the same orientation. Thus to specify the \(\Z/2\Z\) grading it is again enough to specify an orientation on the underlying immersed curve. 
 \end{remark}

\subsection{The \(\spinc\) grading} Now we review the \(\spinc\) grading on bordered Floer homology. Recall that  each generator \(x\) of \(\CFD(M,\alpha,\beta)\) has an associated spin$^c$ structure \(\spin(x) \in \spinc(M)\). The elements of \(\spinc(M)\) are homology classes of nonvanishing vector fields on \(M\), and \(\spinc(M)\) has the structure of an affine set modeled on \(H^2(M) \cong H_1(M, \partial M)\) ({\it i.e.} an \(H^2(M)\)--torsor.) The same decomposition holds for $\CFA(M,\alpha,\beta)$, so that 
\[\CFD(M,\alpha,\beta ) = \bigoplus_{\spin \in \spinc(M)} \CFD(M,\alpha,\beta;\spin)\quad\text{and}\quad\CFA(M,\alpha,\beta ) = \bigoplus_{\spin \in \spinc(M)} \CFA(M,\alpha,\beta;\spin).\]
Restricting attention to the generators in a particular idempotent \(\iota\), we can also define a refined spin$^c$ grading \(\spinbar_\iota(x)\), which lives in an affine set \(\spinc(M, \iota)\) modeled on \(H^2(M, \partial M) \cong H_1(M)\).  Elements of \(\spinc(M,\iota)\) are homology classes of nonvanishing vector fields with prescribed behavior on \(\partial M\), and \(\spin(x)\) is the image of \(\spinbar_\iota(x)\) in \(\spinc(M)\).

To compare the refined gradings of two generators, we adopt the following. Given \(\spin \in \spinc(M)\), let 
\[ \spinc(M, \iota, \spin) = \{ \spinbar \in \spinc(M, \iota) \, | \, \spinbar = \spin \ \text{in} \ \spinc(M)\}.\]
If \(j_*\co H_1(\partial M) \to H_1(M)\) is the map induced by inclusion, 
\(\spinc(M, \iota, \spin)\) is an affine set modeled on \(H_M = \im j_* \cong H_1(\partial M)/\ker j_* \). 
We define \(\spinc(M,\spin) = \spinc(M, \iota_0, \spin) \cup 
\spinc(M, \iota_1, \spin)\). If \(x\) is a generator of \(\CFD(M,\alpha, \beta, \spin)\) in idempotent \(\iota\), we write \(\spinbar(x) = \spinbar_\iota(x) \in \spinc(M,\spin)\). 

Let \(2S \subset H_1(\partial M)\) be the kernel of the linear map 
\(H_1(\partial M) \to \Z/2\Z\) given by \(\alpha\mapsto 1\), \(\beta \mapsto 1\), and consider the lattices \(\frac {1}{2}H_1(M) \) and  \(S = \frac{1}{2} (2S) \) inside \(H_1(\partial M;\R)\). We have inclusions 
\(H_1(M) \subset  S \subset \frac{1}{2}H_1(M)\), and each lattice is index two in the next larger one. Let \(S_M\) be the  image of \(S\) in \(H_1(\partial M; \R)/\ker j_*\) so we have inclusions \( H_M \subset S_M \subset \frac{1}{2}H_M\), where each subgroup is index two in the next larger one. (Note that \(S\) and \(S_M\) depend on \(\alpha\) and \(\beta\), although this is not reflected in the notation.) 
 
\begin{table}[t]
	\begin{tabular}{|c|c|c|c|}
		\hline
		\rowcolor{lightgray} Labeled edge  & \(\spinbar(y)-\spinbar(x)\)   & Labeled edge & \(\spinbar(y)-\spinbar(x)\)   \\
		\hline
		\begin{tikzpicture}[thick, >=stealth'] \draw[->] (0,0) -- (1,0);\node [above] at (0.5,0) {$\scriptstyle{1}$};
		\node [left] at (0,0) {$x$}; \node [right] at (1,0) {$y$};\end{tikzpicture}
		& $-(\alpha+\beta)/2$  
		&\begin{tikzpicture}[thick, >=stealth'] \draw[->] (0,0) -- (1,0);\node [above] at (0.5,0) {$\scriptstyle{12}$};
		\node [left] at (0,0) {$x$}; \node [right] at (1,0) {$y$};\end{tikzpicture}
		& $-\beta$ \\
		\begin{tikzpicture}[thick, >=stealth'] \draw[->] (0,0) -- (1,0);\node [above] at (0.5,0) {$\scriptstyle{2}$};
		\node [left] at (0,0) {$x$}; \node [right] at (1,0) {$y$};\end{tikzpicture}& $(\alpha -\beta)/2$ 
		& \begin{tikzpicture}[thick, >=stealth'] \draw[->] (0,0) -- (1,0);\node [above] at (0.5,0) {$\scriptstyle{23}$};
		\node [left] at (0,0) {$x$}; \node [right] at (1,0) {$y$};\end{tikzpicture}
		& $\alpha$\\
		\begin{tikzpicture}[thick, >=stealth'] \draw[->] (0,0) -- (1,0);\node [above] at (0.5,0) {$\scriptstyle{3}$};
		\node [left] at (0,0) {$x$}; \node [right] at (1,0) {$y$};\end{tikzpicture}
		&  $(\alpha+\beta)/2$ 
		& \begin{tikzpicture}[thick, >=stealth'] \draw[->] (0,0) -- (1,0);\node [above] at (0.5,0) {$\scriptstyle{123}$};
		\node [left] at (0,0) {$x$}; \node [right] at (1,0) {$y$};\end{tikzpicture}
		&  $(\alpha-\beta)/2$ \\
		\hline
	\end{tabular}
	\vskip0.1in
	\caption{Grading shifts in $\CFD(M,\alpha,\beta)$ associated with labelled edges.}\label{tab:spinc}
\end{table}

\begin{lemma}
	\label{Lem:spindiff} \(\spinc(M,\spin)\) can be given the structure of an \(S_M\)--torsor such that (a) the action of \(H_M \subset S_M\) agrees with the pre-existing action of \(H_M\) on \(\spinc(M,\spin)\) and (b) if \(x, y\) are generators of \(\CFD(M, \alpha, \beta, \spin)\) that are joined by an arrow, then the difference \(\spinbar(y) - \spinbar(x)\) is given by Table~\ref{tab:spinc}. 
\end{lemma}

\begin{proof}
	This is  a rephrasing of \cite[Lemma 3.9]{RR}; compare \cite[Lemma 11.42]{LOT}. Specifically, the action of \(H_M\) on \(S_M\) has two cosets \(H\) and \(H'\). Each coset is an \(H_M\)--torsor, and  \(H' = H - (\alpha+\beta)/2\). Choose some \(H_M\)-equivariant identification \(\varphi\co\spinc(M, \iota_0,\spin) \to H\). For \(\spinbar \in \spinc(M, \iota_1,\spin)\)  define  \(\varphi (\spinbar) = \varphi (i^{-1}(\spinbar))-(\alpha + \beta)/2\), where \(i\co\spinc(M, \iota_0,\spin) \to  \spinc(M, \iota_1,\spin)\) is the \(H_M\)-equivariant map defined in \cite[Lemma 3.9]{RR}. Then \(\varphi(\spinc(M,\iota_1,\spin)) = H'\), so \(\varphi\) defines an \(H_M\) equivariant bijection between \(\spinc(M,\spin)\) and \(S_M\). This gives \(\spinc(M,\spin)\) the structure of an \(S_M\) torsor satisfying property (a). Property (b) follows from the corresponding list of grading shifts in \cite[Lemma 3.9]{RR}. 
\end{proof}

As in the introduction, we define \(p\co\barT_M \to T_M\) to be the covering map defined by the condition that \(\pi_1(\barT_M) \)  is the kernel of the composite map
$ \pi_1(T_M) \to H_1(T_M) \to H_1(\partial M) \to H_1(M).$ Equivalently, if 
\(\ell\) generates the kernel of \(j_*\co H_1(\partial M;\Z) \to H_1(M;\Z)\), then 
\(\barT_M\) is the quotient of \(H_1(\partial M;\R) \setminus H_1(\partial M;\Z)\) by the action of \(\ell\). The group of deck transformations on \(\barT_M\) is  isomorphic to \(H_M\). The class \(\ell \in H_1(\partial_M;\Z)\) is the {\em homological longitude}. It is an integer multiple of a primitive class \(\lambda \in H_1(\partial M; \Z)\), which is called the \emph{rational longitude}. 

Let \(q_\alpha, q_\beta \in T_M\) be the midpoints of the arcs \(\alpha\) and \(\beta\), and define \(S_\alpha = p^{-1}(q_\alpha) \subset \barT_M\), \(S_\beta = p^{-1}(q_\beta) \subset \barT_M.\)  Then \(S_\alpha \cup S_\beta\) is naturally an \(S_M\)--torsor and the subsets \(S_\alpha\) and \(S_\beta\) are identified with the \(H_M\)--cosets \(H\) and \(H'\). By Lemma~\ref{Lem:spindiff}, the \(S_M\)--torsor \(\spinc(M, \spin)\) can be identified with \(S_\alpha \cup S_\beta\) (as \(S_M\)--torsors) in such a way that \(\spinc(M,\iota_0,\spin)\) maps to \(S_\alpha\) and \(\spinc(M,\iota_1,\spin)\) maps to \(S_\beta\).  Equivalently, by identifying an arc with its midpoint, we can identify \(\spinc(M,\spin)\) with the set of lifts of \(\alpha\) and \(\beta\) to \(\barT_M\).  We fix one such identification and write \(\alpha_{\spinbar}\) or \(\beta_{\spinbar}\) for the lift of \(\alpha\) or \(\beta\) corresponding to \(\spinbar\). Note that any other such identification differs from our fixed one by the action of an element of \(H_M\).

In Section \ref{sec:tracks} we explained how to assign a train track \(\tracks(M,\alpha, \beta)\) in \(T_M \) to the type D structure  \(\CFD(M,\alpha,\beta)\). Since the differential on \(\CFD(M,\alpha,\beta)\) respects the decomposition into $\spinc$ structures, \(\tracks(M,\alpha,\beta)\) decomposes as a union of train tracks \(\tracks(M,\alpha,\beta;\spin)\), where \(\spin\) runs over 
\(\spinc(M)\).

\begin{proposition}
	\(\tracks(M,\alpha, \beta;\spin)\) lifts to a unique train track \(\bartracks(M,\alpha, \beta;\spin)\) in \(\barT_M\) with the property that if \(x\) is a switch of   \(\tracks(M,\alpha, \beta;\spin)\) lying on \(\alpha\) (resp. \(\beta\)) its lift lies on \(\alpha_{\spinbar(x)}\) (resp. \(\beta_{\spinbar(x)}\)). 	\end{proposition}

\begin{proof}
	We lift each switch \(x\) on \(\alpha\) (resp. \(\beta\)) to its preimage on  \(\alpha_{\spinbar(x)}\) (resp. \(\beta_{\spinbar(x)}\)). It remains to check that we can lift each section of track, which corresponds to some arrow in \(\CFD(M, \alpha, \beta)\), in a way that is compatible with the lifts of its endpoints.  Comparing the grading shifts in Table~\ref{tab:spinc} with Figure~\ref{fig:algebra-edges}, we see that this is indeed the case. Uniqueness of \(\bartracks(M,\alpha, \beta;\spin)\) is clear, since the given condition determines the positions of all the switches in the lift. 
\end{proof}

Finally, note that if  we had chosen a different identification of \(\spinc(M,\spin)\) with 
\(S_\alpha \cup S_\beta\), the resulting lift of \(\tracks(M,\alpha, \beta,\spin)\) would differ from the one constructed above by the action of an element of \(H_M\).

\begin{proof}[Proof of Theorem \ref{thm:gradings}]
	Orient the track \(\tracks(M, \alpha, \beta;\spin)\)  and lift it to a collection of oriented tracks 
	\(\bartracks(M, \alpha, \beta; \spin)\) in \(\Tbar_M\). The result is a compact train track in the cover \(\Tbar_M\), which we now simplify as in Section~\ref{sec:extend}. In fact, we can simplify \(\bartracks(M, \alpha, \beta;\spin)\) using exactly the same moves as we used to simplify \(\tracks(M, \alpha, \beta)\) down in \(T_M\). Each move we perform involves some arcs and/or crossover arrows that are supported in the 0-handle of the downstairs track. It is easy to see from the definition that all of the arcs involved lift up to a single 0-handle in the cover, so the move can be performed on the cover as well. 
	
	It is also easy to see that these moves respect the relative \(\Z/2\Z\) grading. For example, consider move (M1), which eliminates 
	a clockwise running crossover arrow which runs from \(x\) to \(y\), as in Proposition~\ref{prp:moves}. Let \(z\) be the generator connected to \(y\) by the two-way arc, so that \(\partial y\) contains a term of the form \(\rho_J \otimes  z\). The presence of the crossover arrow implies that \(\partial x\) contains a term of the form \(\rho_{IJ} \otimes z\). It follows that \(x\) and \(\rho_I \otimes y\) have the same \(\Z/2\Z\) grading, so the change of basis in which we replace \(x\) by \(x + \rho_I \otimes y\) respects the \(\Z/2\Z\) grading. The argument for move (M2) is very similar. 
	
	 We define \(\bartracks(M,\spin)\) to be this simplified train track; its image in \(T_M\) is the track obtained by simplifying \(\tracks(M,\spin)\). Finally, the same argument as in Section~\ref{sec:invariance} shows that 
	 \(\bartracks(M,\spin)\) does not depend on the choice of parametrization. We denote the resulting invariant by \(\curves{M,\spin}\).
	\end{proof}
	
Let \(\widehat{T}_M = H_1(\partial M;\R)/H_1(\partial M; \Z)\) be \(T_M\) with the puncture filled in. 	
	
\begin{corollary}\label{crl:pull-tight-to-longitude} Each component of \(\HFhat(M)\) is homotopic to some multiple of \(\ell\) in \(\widehat{T}_M\).  If \(b_1(M)=1\),  \(\HFhat(M, \spin)\) represents the homology class \(\ell \in H_1(\widehat{T}_M)\) for each \(\spin \in \spinc(M)\). If \(b_1(M)>1\), \(\HFhat(M,\spin)\) is null homologous in \(\widehat{T}_M\). 
\end{corollary}

\begin{proof} Filling in the punctures of \(\barT_M\) gives a covering space \(\widehat{\barT}_M\) of \(\widehat{T}_M\) that is homeomorphic to the cylinder \(H_1(\partial M; \R)/\langle \ell \rangle \). The image of a  component of \(\HFhat(M,\spin)\) in this covering space is homotopic to some multiple of \(\ell\).  Pushing everything back down to \(\widehat{T}_M\) gives the first statement. 

For the second,  recall that  the \(\Z/2\) grading induces a well-defined orientation on the components of \(\curves M\), so  it makes senses to talk about the homology class it represents. To compute this class, we Dehn fill \(M\) along some slope \(\alpha\) which is not the rational homological longitude, so \(b_1(M(\alpha)) = b_1(M)-1\). If \(\spin \in \spinc(M(\alpha))\), \(\HFhat(M(\alpha), \spin)\) is obtained by pairing \(\HFhat(M, \spin|_{M})\) with the image of some line \(L_{\alpha,\spin}\) with slope \(\alpha\) in \(\barT_M\). Since \(\alpha\) is not a multiple of \(\ell\), the image of \(L_{\alpha,\spin}\) in \(\barT_M\) is noncompact and generates the Borel-Moore homology \(H_1^{BM}(\widehat{\barT}_M) \simeq \Z\). In particular, the intersection number 
\(L_{\alpha, \spin} \cdot \ell = 1\), so if \([\HFhat(M,\spin) ] = k \ell \in H_1(\widehat{\barT}_M)\), then \(L_{\alpha,\spin} \cdot \HFhat(M,\spin) = k\). On the other hand, \(L_{\alpha,\spin} \cdot \HFhat(M,\spin) = \chi(\HFhat(M(\alpha), \spin))\). The result now follows from the well-known fact that \(\chi(\HFhat(M(\alpha), \spin)) = 1\) if \(b_1(M(\alpha)) = 0\), and is \(0\) otherwise. 
\end{proof}


To sum up, for each \(\spin \in \spinc(M)\),  \(\barcurves{M,\spin}\) is a well-defined collection of curves equipped with local systems in the cover \( \barT_M\).  When we want to emphasize the fact that we are thinking of the copy of \(\barT_M\)  containing \(\barcurves{M,\spin}\), we will denote it by \(\barT_{M,\spin}\).  

It is tempting to try to combine all the  \(\barcurves{M,\spin}\) for different \(\spin\) into a single well-defined collection of curves in \(\barT_M\)---for example, by identifying relative \(\spinc\) structures with the same value of \(\langle c_1(\spinbar),x\rangle\), where   \(x\) is a generator of \(H_2(M, \partial M)\). Although this can sometimes be made to work, it is not possible in general. A good example to consider is  when \(M\) is the twisted \(I\)-bundle over the Klein bottle (Example~\ref{ex:twisted I}b) where \(\spinc(M)\) consists of two \(\spinc\) structures \(\spin_0, \spin_1\) with the property that if \(\spinbar\in \spinc(M, \spin_i)\) then \(\frac{1}{2}\langle c_1(\spinbar), x\rangle \equiv i \mod{2}\).

\subsection{The refined pairing theorem}
 Suppose that \(Y = M_0 \cup_h M_1\). The relative \(\spinc\) grading on \(\CFhat(Y)\) can be recovered from the box tensor product 
$\CFA(M_0,\alpha_0,\beta_0)\boxtimes\CFD(M_1,\alpha_1,\beta_1)$ as follows. Consider the box tensor products \(x_0\boxtimes x_1\) and \(y_0 \boxtimes y_1\), where  $x_0,y_0$ in $\CFA(M_0,\alpha_0,\beta_0)$ and $x_1,y_1$ in $\CFD(M_1,\alpha_1,\beta_1)$. We have 
\[\spinbar(x_i)  - \spinbar(y_i) \in S_{M_i}.\] Note that  \(x_0\) and \(y_0\) are in the same idempotent if and only if \(x_1\) and \(y_1\) are, so either both differences are in \(H_{M_i}\) or neither one is. Recalling that 
\(H_{M_i} = H_1(\partial M_i)/\ker j_{i,*}\), where $j_{i,*}$ is the map induced by the inclusion $j_i\co H_1(\partial M_i) \to H_1(M_i)$, we see that the sum $\spinbar(x_0)  - \spinbar(y_0)+  \spinbar(x_1)  - \spinbar(y_1)$  is a well defined element of
$$ H_1(\partial M_0) / \left( \ker(j_{0,*})\oplus \ker(j_{1,*})\right) \cong H_1(Y). $$ 
It is equal to the difference \(\spin(x_0 \boxtimes x_1) - \spin(y_0 \boxtimes y_1)\).

Restriction gives a surjective map $$\pi\co \spinc(Y) \to \spinc(M_1) \times \spinc(M_2).$$  If \(\spin_i \in \spinc (M_i)\), it is not hard to see that \(\pi^{-1}(\spin_0 \times \spin_1)\) is a torsor over  \(H_Y= H_1(\partial M_0)/ \langle \ell_0, h_*(\ell_1) \rangle)\), where \(\ell_i\) is the 
homological longitude of \(M_i\). Let \(\barT_Y\) be the covering space of \(T_{M_0}\) whose fundamental group is the kernel of the natural map \(\pi_1(T_{M_0}) \to H_1(T_{M_0})  \to H_Y\); \(\barT_Y\) is the largest covering space of \(T_M\) that is covered by both 
\(\barT_{M_0}\) and \(\barT_{M_1}\). 

Let \(p_i\co\barT_{M_i} \to \barT_Y\) be the projection, and let \(\barp_1\) be the composition with \(p_1\) with the elliptic involution on \(\barT_Y\). (The elliptic involution is the map which descends from the map \(w \mapsto -w\) on the universal cover. It is well defined up to the action of the deck group.) The images \(p_0(\barcurves{M_0,\spin_0})\) and \(\barp_1(\barcurves{M_1,\spin_1})\)  are well defined up to the action of the deck group \(H_Y\). 

\begin{proposition}
	\label{prop:refined-pairing}
	Suppose that \(\spin \in \spinc(Y)\) has \(\pi(\spin) = \spin_0 \times \spin_1\). There is some \(\alpha_{\spin} \in H_Y\) such that  \(\HFhat(Y,\spin)\) is given by the pairing of \(p_0(\barcurves{M_0,\spin_0})\) with \(\alpha_\spin \cdot \barp_1(\barcurves{M_1, \spin_1})\) in \(\barT_Y\). Moreover, the relative \(\Z/2\Z\) grading of a generator of 
	\(\HFhat(Y, \spin)\) is given by the sign of the corresponding intersection point. 
\end{proposition}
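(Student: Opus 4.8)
The plan is to upgrade the unrefined pairing theorem (Theorem~\ref{thm:pairing}, together with the chain-level model provided by Theorem~\ref{thm:pairtracks} and the work of Section~\ref{sec:categories-pairing}) so that it respects the $\spinc$ and $\Z/2\Z$ gradings. First I would recall that $\HFhat(Y)\cong H_*\big(\CFA(M_0,\alpha_0,\beta_0)\boxtimes\CFD(M_1,\alpha_1,\beta_1)\big)$ for parametrizations compatible with $h$, and that this box tensor product splits along pairs $(\spin_0,\spin_1)\in\spinc(M_0)\times\spinc(M_1)$; by the discussion preceding Lemma~\ref{Lem:spindiff}, the relative grading difference of two generators $x_0\otimes x_1$ and $y_0\otimes y_1$ living over the same pair $(\spin_0,\spin_1)$ is a well-defined element of $H_1(\partial M_0)/(\ker j_{0,*}\oplus\ker j_{1,*})\cong H_Y$, and tracking this grading difference is exactly what refines the splitting from $\spinc(M_0)\times\spinc(M_1)$ to $\spinc(Y)$. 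So the generators of $\HFhat(Y,\spin)$ for $\spin$ with $\pi(\spin)=\spin_0\times\spin_1$ are precisely the intersection points in $\sC(\tracks_0,\tracks_1)$ whose relative $H_Y$-grading lies in the coset $\spin$.

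Next I would reinterpret this grading geometrically using the lifted train tracks of the previous subsection. The point is that Lemma~\ref{Lem:spindiff} matches the $\spinc$ grading shifts in $\CFD(M_i,\alpha_i,\beta_i)$ with the geometry of $\barT_{M_i}$: lifting $\tracks(M_i,\alpha_i,\beta_i;\spin_i)$ to $\bartracks(M_i,\alpha_i,\beta_i;\spin_i)\subset\barT_{M_i}$ records exactly the refined grading $\spin_{\iota}(x)$ of each generator as the deck-translate of the corresponding primary switch. Now $\barT_Y$ is by construction the largest common quotient-cover of $\barT_{M_0}$ and $\barT_{M_1}$ (equivalently, the cover of $T_{M_0}$ associated to the kernel of $\pi_1(T_{M_0})\to H_Y$), so projecting the lifted tracks via $p_0$ and $\barp_1$ to $\barT_Y$ keeps track of precisely the combined grading difference that distinguishes $\spinc$ structures on $Y$ within a fixed pair $(\spin_0,\spin_1)$. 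The crucial bookkeeping claim is then: an intersection point of $A(\tracks_0)$ with $D(\tracks_1)$ downstairs, i.e.\ a generator of $\sC(\tracks_0,\tracks_1)$ in the relevant pair, contributes to $\HFhat(Y,\spin)$ if and only if, after choosing lifts, the corresponding arcs of $p_0(\bartracks(M_0;\spin_0))$ and $\alpha_\spin\cdot\barp_1(\bartracks(M_1;\spin_1))$ actually intersect in $\barT_Y$; the element $\alpha_\spin\in H_Y$ is the ambiguity coming from the choices of lifts of $q_0,q_1$ and encodes which coset of $\pi^{-1}(\spin_0\times\spin_1)$ we are in. One checks this by the same local correspondence between bigons and box-tensor operations used in Theorem~\ref{thm:pairtracks}, observing that the reflection/inclusion construction of $A(\tracks_0)$ and $D(\tracks_1)$ lifts coherently because every bigon is supported near the $0$-handle, whose lifts behave well (as in the proof of the proposition lifting $\tracks(M,\alpha,\beta;\spin)$). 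The finger moves and arrow-slide moves used to make the pair admissible and to reduce to immersed curves with local systems also lift, so after passing to $\HFhat(M_i,\spin_i)=\curves{M_i,\spin_i}$ the count becomes the honest Lagrangian intersection number (with multiplicities from local systems) in $\barT_Y$, as asserted.

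For the $\Z/2\Z$ statement I would invoke the material of Section~\ref{sec:gradings}: orient $\tracks(M_i,\spin_i)$ (equivalently, choose the orientation on $\curves{M_i,\spin_i}$ encoding $\gr^D$ via Table~\ref{tab:orient}); then the relative $\Z/2\Z$ grading on $\CFhat(M_0\cup M_1)$ is $\gr^A(x_0)+\gr^D(x_1)$, where the $\iota_0$-idempotent generators of the type~A side have their grading reversed. In the geometric picture, reversing the $\iota_0$ grading corresponds to the reflection across the anti-diagonal used to pass from type~A to type~D position together with orientation reversal of the $\alpha$/$\beta$ roles, which is exactly $\barp_1$ (the elliptic involution composed with $p_1$). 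Consequently the two oriented arcs meeting at an intersection point carry orientations whose relative sign of intersection computes $\gr^A(x_0)+\gr^D(x_1)$; this is a direct check on the local models (vertex types in Figure~\ref{fig:vertex-types} and the edge orientations of Table~\ref{tab:orient}) showing that the local move performed to put tracks in position is orientation-compatible, exactly as in the argument that moves (M1) and (M2) respect the $\Z/2\Z$ grading in the proof of Theorem~\ref{thm:gradings}.

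The main obstacle I expect is the careful identification of the translation class $\alpha_\spin\in H_Y$ and the verification that the three $H$-torsors in play — the lifts of $q_0$ (modeled on $\spinc(M_0,\iota_0)$), the lifts of $q_1$ (modeled on $\spinc(M_1,\iota_1)$), and the fiber $\pi^{-1}(\spin_0\times\spin_1)$ (modeled on $H_Y$) — are glued consistently, i.e.\ that the normalization in Lemma~\ref{Lem:spindiff} for each $M_i$ is compatible with the image in $\barT_Y$ and that changing the arbitrary choices of lift changes $\alpha_\spin$ and nothing else. Everything else is a routine transfer of the bigon/box-tensor dictionary and the admissibility/finger-move lemmas from $T_{M_0}$ to the cover $\barT_Y$, using that all the relevant geometry is concentrated in a single $0$-handle that lifts homeomorphically.
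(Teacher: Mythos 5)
Your proposal is correct and follows essentially the same route as the paper: reduce to the level of train tracks before simplification, observe that the $\spinc$ component of a generator $y_0\otimes y_1$ within $\pi^{-1}(\spin_0\times\spin_1)$ is detected by the relative $H_Y$-grading, identify this grading difference with whether the corresponding segments intersect in $\barT_Y$ after a suitable deck translate $\alpha_\spin$, and handle the $\Z/2\Z$ grading via the orientation/vertex-type conventions of Section~\ref{sec:gradings} together with the elliptic-involution reflection. The one place you do more work than necessary is in lifting the bigon/box-tensor dictionary: the paper sidesteps this by noting that the differential is already determined by the unrefined pairing theorem applied to each $(\spin_0,\spin_1)$-pair, so the refined statement only needs to be verified at the level of generators — which is precisely the torsor bookkeeping you (correctly) flag as the main obstacle and resolve by normalizing $\alpha_\spin$ against one fixed intersection point.
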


\begin{proof}
	
	It suffices to check that the statement about \(\spinc\) structures holds for the train tracks \(\tracks(M_0;\spin_0), \tracks(M_1,\spin_1)\) (before simplification), since the pairings before and after simplification are the same. Suppose \(x_0,x_1\)
	are generators of \(\CFA(M_0), \CFD(M_1)\) that pair to give a generator of \(\CFhat(Y,\spin)\). Then we must have 
	\(x_0 \in \CFA(M_0,\spin_0)\) and \(x_1 \in \CFD(M_1,\spin_1)\), and we may choose \(\alpha_\spin\) so that the intersection between the train tracks \(p_0(\tracks(M_0);\spin)\) and \(\alpha_\spin \cdot \barp_1(\tracks(M_1;\spin))\) contains the intersection between the segments corresponding to \(x_0\) and \(x_1\). If \(y_0,y_1\) are other generators of \(\CFA(M_0,\spin_0)\) and \( \CFD(M_1,\spin_1)\),  \(y_0 \boxtimes y_1\) will be a generator of \(\CFhat(Y,\spin)\) if and only if the image of \(\spinbar(x_0) - \spinbar(y_0) - \spinbar(x_1) + \spinbar(y_1) \) is \(0\) in \(H_1(Y)\). This is equivalent to saying that \(\spinbar(x_0)-\spinbar(y_0)\) and \( \spinbar(x_1)-\spinbar(y_1)\) have the same image in \(H_1(Y)\), and this  occurs if and only if the segments corresponding to \(y_0\) and \(y_1\) intersect in \(p_0(\tracks(M_0;\spin_0)) \cap \alpha_\spin \cdot \barp_1(\tracks(M_1;\spin_1))\). 
	
Next, consider the relative $\Ztwo$ grading. Recall that fixing the $\Ztwo$ grading on $\CFA(M_0,\alpha_0,\beta_0)$ corresponds to choosing an orientation of each curve 
in $\boldsymbol{\gamma}_0 = \HFhat(M_0)$. In particular, upward (respectively downward) oriented $\alpha_0$ segments in $\boldsymbol{\gamma}_0$ correspond to $\circ^+$ (respectively $\circ^-$) generators of $\CFA(M_0,\alpha_0,\beta_0)$, and rightward (respectively leftward) oriented $\beta_0$ segments in $\boldsymbol{\gamma}_0$ correspond to $\bullet^+$ (respectively $\bullet^-$) generators of $\CFA(M_0, \alpha_0, \beta_0$). Fixing the $\Ztwo$ grading on $\CFD(M_1, \alpha_1, \beta_1)$ corresponds to choosing orientations on $\boldsymbol{\gamma_1} = \curves{M_1}$ with similar identification except that the grading is reversed on $\bullet$ generators. Reflecting across the anti-diagonal, it follows that upward (respectively downward) oriented segments in $\boldsymbol{\gamma}_1$ correspond to $\bullet^+$ (respectively $\bullet^-$) generators of $\CFD(M_1,\alpha_1,\beta_1)$ and rightward (respectively leftward) oriented segments in $\boldsymbol{\gamma}_1$ correspond to $\circ^-$ (respectively $\circ^+$) generators of $\CFD(M_1, \alpha_1, \beta_1$). The (relative) $\Ztwo$ grading on $\HFhat(M_0\cup_h M_1)$ is given by $\gr(x\otimes y) = \gr^A(x) + \gr^D(y)$, while the grading of $\HFa({\boldsymbol \gamma}_0, {\boldsymbol \gamma}_1)$ is given by intersection signs. It is straightforward to check that generators of $\HFhat(M_0\cup_h M_1)$ of the form $\circ^+ \otimes \circ^+$, $\circ^- \otimes \circ^-$, $\bullet^+\otimes\bullet^+$, and $\bullet^-\otimes\bullet^-$, which all have the same $\Ztwo$ grading, correspond exactly to positive intersection points and the remaining generators correspond to negative intersection points.
\end{proof}

\subsection{Examples}
\label{subsec:heights}

We conclude with some examples of how the \(\spinc\) grading is computed.

\begin{example}
Suppose that \(M\) is a homology \(S^1\times D^2\). In this case the homological longitude \(\ell\) coincides with the rational longitude \(\lambda\).   Take  \(x \in H_2(M,\partial M)\) with \(\partial x =  \lambda\), and  \(\mu \in H_1(M)\) with \(\mu \cdot \lambda =1\), so that \(\mu \cdot x =1\). If we use the parametrization \(\alpha = \mu\) and \(\beta = \lambda\), we have 
$$\iota_0  \CFD(M, \mu, \lambda) \simeq \mathit{SFH}(M,\gamma_\mu) \simeq \HFK(K_\mu) $$
	where \(K_\mu\) is the core of the Dehn filling \(M(\mu)\). 
\end{example}	

\piccaption[]{Lifts of \(\mu\) and their intersections with the curve representing the right-hand trefoil. \label{fig:HFKtrefoil}.}
	\parpic[r]{
 \begin{minipage}{40mm}
 \centering
 \includegraphics[scale=0.75]{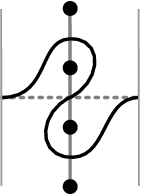}
  \end{minipage}%
}
There is a unique \(\spinc\) structure \(\spin\) on \(M\), and the set \(\spinc(M,\iota_0,\spin)\) is a \(\Z\)--torsor. Let \(\spinbar_i \in \spinc(M,\spin)\) be the relative \(\spinc\) structure with \(\langle c_1(\spinbar), x\rangle = 2i\), and let \(\mu_i\) be the associated lift of \(\mu\). There is a natural height function \(h\co\barT_M \to \R\) such that the height of the midpoint of \(\mu_i\) associated to \(\spinbar\) is \(i\), and  
\(HF(\HFhat(M,\spin_0), \mu_i) = \HFKhat(K_\mu, i)\) is the knot Floer homology of \(K_\mu\) in Alexander grading \(i\). Figure \ref{fig:HFKtrefoil} illustrates the computation for \(\HFKhat(T(2,3))\). The vertical segments in the middle of the diagram are \(\mu_{-1}, \mu_0\), and \(\mu_1\), so there is one generator in each of \(\spinbar_{-1}, \spinbar_{0}\), and \(\spinbar_1\).

	More generally, the same method can be used to compute 
	\(\HFK(K_\alpha)\) from \(\HFhat(M)\), where \(\alpha\) is any primitive curve on \(\partial M\). This and other relations between \(\HFhat(M)\) and knot Floer homology are discussed  in  \cite{HRW-companion}.
	
\begin{example}
Now suppose that \(M_0\) and \(M_1\) are homology \(S^1\times D^2\)'s, and  that \(h\) identifies their homological longitudes, so that \(H_1(Y) \simeq \Z\). There is a unique \(\spinc\) structure \(\spin_i\) on \(M_i\), but \(\spinc (Y)\) is a \(\Z\)-torsor, and  \(\barT_Y = \barT_{M_0} = \barT_{M_1}\).

 Let \(\mathfrak{t}_i \in \spinc(Y)\) satisfy 
\(\langle c_1(\mathfrak{t}_i), y \rangle = 2i\), where \(y\) generates \(H_2(Y)\). Then
\(\HFhat(Y, \mathfrak{t}_i) = HF(\gamma_0, \alpha_i \cdot \gamma_1)\) where \(\gamma_0 = \HFhat(M_0,\spin_0)\) and \(\gamma_1\) is obtained by starting with \(\HFhat(M_1, \spin_1)\) and applying the elliptic involution, which sends \(\mu_{k}\) to \(\mu_{-k}\). The action of \(\alpha_i\) shifts the height by \(i\) units, so the net effect is to superimpose \(\gamma_0\) and \(\gamma_1\) in such a way that \(\mu_k\) in the first diagram is identified with \(\mu_{i-k}\) in the second. 
\end{example}

In general, suppose we are given \(M\) whose rational longitude \(\lambda\)  has order \(k\) in \(H_1(M)\). Choose a class \(x \in H_2(M,\partial M)\) with \(\partial x = k  \lambda\), and a class \(\mu\) in \(H_1(M)\) with \(\mu \cdot \lambda =1\), so that \(\mu \cdot x =k\).  Then \(H_M \cong \Z \oplus \Z/k\), and there is a height function  \(h_x\co\barT_{M,\spin} \to \R\) that satisfies 
\(h_x(q_{\spinbar}) = \frac{1}{2} \langle c_1(\spinbar),x\rangle\). If \(\spinbar, \spinbar' \in \spinc(M, \iota_0, \spin)\), then 
\[h_x(q_{\spinbar})- h_x(q_{\spinbar'}) = \frac{1}{2} PD(c_1(\spinbar)-c_1(\spinbar')) \cdot x\]
is a multiple of \(k\). Hence all the heights for a given \(\spin\) are congruent to a single value modulo \({k}\), and there are \(k\) meridians at each such height. 
		
\begin{example}	
	Now suppose that we have two manifolds \(M_0\) and \(M_1\), with rational longitudes \(\lambda_i\) of order \(k_i\), that are identified by \(h\). Let \(k=\gcd(k_1,k_2)\), and define \(c_i=k_i/k\). Given \(x_i \in H_2(M_i,\partial M_i)\) as above, we can form a primitive  class \(y = c_2x_1-c_1x_2 \in H_2(Y)\). Given \(\spin_i \in \spinc(M_i)\), let \(\gamma_i = \curves{M_i,\spin_i}\). Then for  \(\mathfrak{t} \in \pi^{-1}(\spin_0,\spin_1)\),
	 \[\HFhat(Y,\mathfrak{t}) = \HF (p_0(\gamma_0),\alpha_{\mathfrak{t}} \cdot \overline{p}_1(\gamma_1)),\]
	 where \(\alpha_{\mathfrak{t}} \in H_Y = H_1(\partial M_0)/\langle k_0 \lambda,k_1 \lambda \rangle \simeq \Z\oplus \Z/k\Z\). 
	 
	 The height functions \(h_{x_i}\) on \(\barT_{M_i}\) induce height functions \(h_{y,i}:\barT_Y \to \R\) given by \(h_{y,1} (p_1(w)) = c_2 h_{x_1}(w) \) and \(h_{y,2} (p_1(w)) = -c_1 h_{x_1}(w) \). 
	 Then \( \langle c_1(\mathfrak{t}), y \rangle \) measures the shift in heights induced by 
	 \(\alpha_{\mathfrak{t}}\) so that
	\[ \langle c_1(\mathfrak{t}), y \rangle = 2(h_{y,1}(\alpha_t \cdot w)- h_{y,2}(w)) \]
	for all \(w \in \barT_Y\).
	
	\end{example}

\section{Applications}\label{sec:applications}

With invariance (Theorem \ref{thm:invariance}) and pairing (Theorem \ref{thm:pairing}) for $\HFhat(M)$ established, we can now provide proofs for the applications stated in the introduction.

\subsection{Peg-board diagrams}To facilitate the proofs, we will first discuss a convenient way of arranging the curves $\HFhat(M)$.


\piccaption[]{Pulling the invariant for the trefoil tight. \label{fig:trefoil-peg}}
\parpic[r]{
 \begin{minipage}{55mm}
 \centering
\includegraphics[scale=0.5]{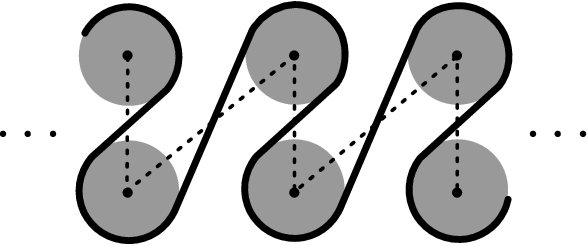}
  \end{minipage}%
  }Suppose \(\gamma\) is a component of  \(\HFhat(M)\). To understand the pairing between \(\gamma\) and some other curve it is helpful to ``pull the curves tight.'' Intuitively, we can imagine \(T_M\) as being  a flat Euclidean torus (like a board with the sides identified) and the puncture as represented by a peg pounded into the board. We place a rubber band in the homotopy class determined by \(\gamma\), and see what position it settles into. To draw this  curve, it is convenient to  first lift it to the cover \(\barT_{M}\) and then take the preimage of this lift in the larger cover \(\tildeT_{M}\). The latter space can be identified with flat Euclidean \(\R^2\) with a lattice of pegs---in other words, a peg-board. Hence we refer to these curves as \emph{peg-board diagrams}. Figure~\ref{fig:trefoil-peg} illustrates this process for the trefoil complement. 
We offer three different mathematical formulations of this idea, and explain how they are related.

We first describe the notion of an \emph{\(\epsilon\)-geodesic}. Fix a flat metric \(g\) on \(\widehat{T}_M =  H_1(\partial M;\R)/H_1(\partial M,\Z)\)  by choosing a parametrization of \(\partial M\) and using it to identify \(H_1(\partial M;\R)\) with \(\R^2\).  Consider a manifold \(T_{M,\epsilon}\) with a complete Riemannian metric \(g_\epsilon\) defined as follows.  \(T_{M,\epsilon}\) is the union of two parts. The first part is the complement of the ball of radius \(2\epsilon\) centered at the origin in \(T_M\), equipped with the flat metric \(g\). The second part is modeled on a surface of revolution in \(\R^3\), as illustrated in Figure~\ref{fig:metric-model}. The smooth curve  that we rotate to get the surface should agree with the \(x\)-axis for \(x>{2\epsilon}\), with the line \(x = \epsilon\) for \(y>\epsilon\), and should be given by the graph of some smooth function \(y = f(x)\) with \(f'(x)<0\) and \(f''(x)>0\) for \(x\in (\epsilon, 2\epsilon)\). 
We will refer to this second part of \(T_{M,\epsilon}\) as the \emph{peg}. The constructed surface \(T_{M,\epsilon}\) is homeomorphic to \(T_M\), and the conditions on \(f\) ensure that \(g_\epsilon\) is non-positively curved. The flat part of \(T_{M,\epsilon}\) has two connected components: one in the interior, and one that is boundary parallel.  It is clear from the construction that  \(T_{M,\epsilon}\) embeds in \(\widehat{T}_M \times \R\). Let \(p\co T_{M,\epsilon} \to T_M\) be the projection. 

\labellist
\tiny
\pinlabel $\epsilon$ at 30 35 \pinlabel $2\epsilon$ at 50 33
\pinlabel $\epsilon$ at 455 71 \pinlabel $2\epsilon$ at 479 72
\endlabellist
\begin{figure}
\includegraphics[scale=0.5]{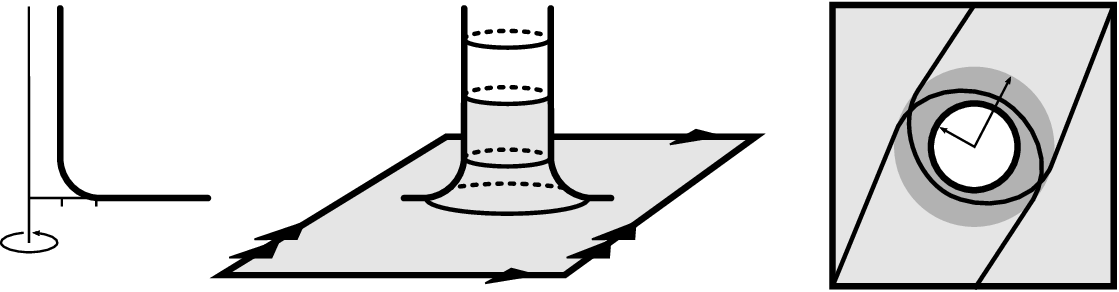}
\caption{A geometric model for the punctured torus: Attaching an infinite tube of radius $\epsilon$ to the flat torus results in a  complete metric that is non-positively curved and flat away from the smoothed attaching region. In the centre, we have shaded the compact region used in our appeal to Thorbergsson's work \cite{Thorbergsson1978}. On the right, we have projected to the torus and illustrated 
the invariant associated with the right-hand trefoil exterior; compare Figure \ref{fig:trefoil-peg}.}
\label{fig:metric-model}
\end{figure}

A curve \(\gamma \subset T_{M,\epsilon}\) is an \emph{\(\epsilon\)-geodesic} if it is geodesic for the metric \(g_\epsilon\). We summarize the relevant properties of such curves here: 

\begin{lemma} Any nontrivial free homotopy class of loops in \(T_M\) can be represented by an \(\epsilon\)-geodesic \(\gamma\). This representative is unique unless \(\gamma\) is entirely contained in the flat part of \(T_{M,\epsilon}\), {\it i.e.} is either boundary parallel or a Euclidean geodesic in the interior flat part. Any two distinct  \(\epsilon\)-geodesics intersect minimally and transversally. 
\end{lemma}

\begin{proof} Consider the compact subset \(K \subset T_{M,\epsilon}\) obtained as the complement of the boundary flat part. The only free homotopy classes that have representatives disjoint from \(K\) are boundary parallel. Such classes clearly have geodesic representatives---namely a meridional circle around the peg. By a theorem of Thorbegsson \cite[Theorem 3.2(i)]{Thorbergsson1978}, any other free homotopy class has a geodesic representative \(\gamma\). A standard argument using the Gauss--Bonnet theorem shows that if \(\gamma, \gamma'\) are two geodesic representatives in the same free homotopy class, they are entirely contained in the flat part of \(T_{M,\epsilon}\). Such a geodesic is contained in either the interior flat part of the torus, in which case it is a line, or in the boundary flat part, in which case it is boundary parallel. For the last statement, note that two distinct geodesics always intersect transversally, and the theorem of Freedman, Hass, and Scott \cite{FreedmanHassScott} ensures that they intersect minimally. 
\end{proof}

We say that a geodesic contained in the flat part of \(T_{M,\epsilon}\) is \emph{loose}; all other geodesics are \emph{tight}. A \emph{corner} of an \(\epsilon\)-geodesic \(\gamma_\epsilon\) is a connected component of the intersections of \(\gamma_\epsilon\) with the curved part of \(T_{M,\epsilon}\). The property of being loose is equivalent to having no corners.

Given an  \(\epsilon\)-geodesic \(\gamma\), we will construct another curve \(\gamma' \subset T_M\) that is homotopic to \(p(\gamma)\) and refer to \(\gamma'\) as an \emph{\( \epsilon\)-pegboard diagram}. The curve \(\gamma'\) is obtained by modifying the corners of \(\gamma\), as illustrated in Figure~\ref{fig:pegboard-corner}. To make this precise, let \(C_{2 \epsilon}\) be the circle of radius \(2\epsilon\) centered on the puncture in \(T_M\), so that \(C_{2\epsilon}\) is the boundary of the interior flat part of \(T_{M,\epsilon}\). 
If \(c\) is a corner of \(\gamma\), let \(p_1\) and \(p_2\) be the two points at which  $c$ intersects \(C_{2\epsilon}\). Let \(L\) be the perpendicular bisector of the line between \(p_1\) and \(p_2\), which also intersects the center of \(C_{2\epsilon}\).  Reflection in \(L\) induces an isometry of \(T_{M,\epsilon}\) that preserves the endpoints of the geodesic \(c\), and hence \(c\) itself. Let \(L_1\) and \(L_2\) be the Euclidean lines in \(T_M\) extending the parts of \(\gamma\) in the flat part of \(T_{M,\epsilon}\). Reflection in \(L\) exchanges \(L_1\) and \(L_2\), so  there is a circle \(C\) centered at the origin and tangent to \(L_1\) and \(L_2\).  Since \(L_1\) and \(L_2\) cut  \(C_{2\epsilon}\), the radius of \(C\) is less than \(2 \epsilon\). We can homotope \(c\) to obtain a curve \(c'\) that  runs along \(L_1\) until it reaches \(C\),  runs along \(C\) for some time, and then exits along \(L_2\). 

\labellist
\pinlabel $C_{2\epsilon}$ at 20 68 \pinlabel $c$ at 40 23
\pinlabel $\theta$ at 75 43 \pinlabel $\theta$ at 65 96
\pinlabel $L_2$ at 120 58 \pinlabel $L_1$ at 120 93
\pinlabel $C$ at 187 68
\pinlabel $c'$ at 335 23
\endlabellist
\begin{figure}
\includegraphics[scale=1]{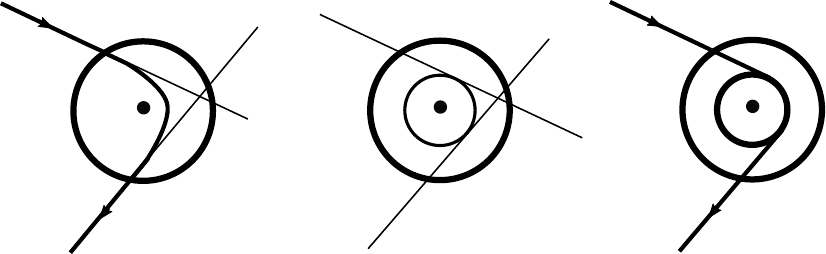}
\caption{Constructing a pegboard corner \(c'\) from a geodesic corner \(c\). The first step shows the original \(c\) together with the two lines \(L_1\) and \(L_2\). The second step shows the new circle \(C\), and the last shows the new curve \(c'\).}
\label{fig:pegboard-corner}
\end{figure}

 Let \(\theta(c)\) be the angle that \(L_1\) makes with \(C_{2\epsilon}\) (by symmetry, this is also the angle that \(L_2\) makes with \(C_{2\epsilon}\). There are two important quantities describing \(c\) that are determined by \(\theta(c)\). The first is  \(r(c)\), the radius of  the circle \(C\). The second is the length of the arc obtained by projecting \(c\) radially outward to \(C_{2\epsilon}\);  we denote this quantity by \(\psi(c)\). Observe that  \(r\) is a strictly decreasing function of \(\theta\), 
  while  \(\psi\) is a strictly increasing  function of \(\theta\).
  It follows that \(r\) is a strictly decreasing function of \(\psi\). 
 
 If \(c_1\) and \(c_2\) are two corners with distinct endpoints, the intersection number  \(i(c_1,c_2)\), is the minimal number of intersections between transversally intersecting curves in \(D_{2\epsilon} \setminus 0\) that are homotopic rel boundary to \(c_1\) and \(c_2\). We say \(c_1\) and \(c_2\) are in \emph{minimal position} if they realize the intersection number.  For example, if \(c_1\) and \(c_2\) are corners of \(\epsilon\)-geodesics, they are in minimal position.

\begin{lemma}\label{lem:corner-trans} Suppose that \(c_1'\) and \(c_2'\) are two corners constructed as above from geodesic corners \(c_1\) and \(c_2\). If \(c_1'\) and \(c_2'\) intersect transversally,  they are in minimal position. 
\end{lemma}
\begin{proof} Identify the universal cover of the punctured disk with \(\R \times (0,2\epsilon]\). The curves \(c_1'\) and \(c_2'\) are in minimal position if \(\widetilde{c}_1'\) and \(\widetilde{c}_2'\) are in minimal position for every lift 
\(\widetilde{c}_1'\)  of \(c_1'\) and every lift \(\widetilde{c}_2'\) of \(c_2'\). The shape of \(\widetilde{c}_1'\) and \(\widetilde{c}_2'\) is illustrated in  Figure~\ref{fig:corner-cover}. The depth of the trapezoidal shape made by the lift of \(c_i'\) is \(2 \epsilon- r(c_i)\), and its width is \(\psi(c_i)\). Since \(r\) is a strictly decreasing function of \(\psi\), the lifts have no excess intersection. 
\end{proof}

\labellist
\pinlabel $\widetilde{c}_1'$ at 160 86 \pinlabel $\widetilde{c}_2'$ at 500 80
\endlabellist
\begin{figure}[h]
\includegraphics[scale=0.5]{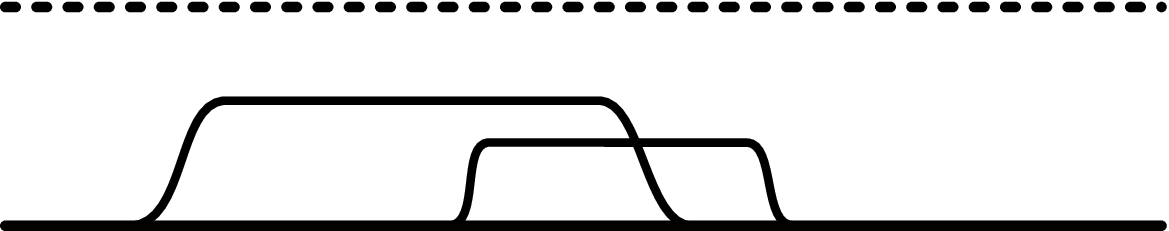}
\caption{The universal cover of the punctured disk, with lifts of \(c_1'\) and \(c_2'\). The key point is that the deeper shape is also wider. To compute the intersection number, we take the intersection of \(\widetilde{c}_1'\) with the orbit of \(\widetilde{c}_2'\) under the action of the deck group. }
\label{fig:corner-cover}
\end{figure}

If  \(r(c_1)=r(c_2)\), \(c_1'\) and \(c_2'\)  will not intersect transversally, but we can perturb them into minimal position by slightly changing \(r(c_2)\). Since \( r(c_1) = r(c_2)\) implies that  \(\psi(c_1) = \psi(c_2)\), \(c_1'\) and the perturbation of \(c_2'\) will be in minimal position   as long as \(c_1\) and \(c_2\) have distinct endpoints.

If \(\gamma\) is an \(\epsilon\)-geodesic, we define the \(\epsilon\)-pegboard diagram \(\gamma'\) to be the curve obtained by applying the operation above to every corner of \(\gamma\), so that 
an \(\epsilon\)-pegboard diagram is  composed of straight line segments together with circular arcs that are tangent to these line segments.

\begin{corollary}
If \(\gamma_1'\) and \(\gamma_2'\) are the \(\epsilon\)-pegboard diagrams induced by distinct \(\epsilon\)-geodesics \(\gamma_1\) and \(\gamma_2\), then they are in minimal position. 
\end{corollary}

\begin{proof}
Since \(\gamma_1\) and \(\gamma_2\) are distinct \(\epsilon\)-geodesics, they are in minimal position. By Lemma \ref{lem:corner-trans} \(\gamma_1'\) and \(\gamma_2'\) have the same number of intersections as \(\gamma_1\) and \(\gamma_2\). 
\end{proof}

Finally, we discuss the third  notion of peg-board position, which corresponds to letting the radius of the peg go to \(0\). We use our chosen parametrization of \(T_M\) to identify a fundamental domain for \(T_M\) with the square \([-\frac{1}{2}, \frac{1}{2}] \times [-\frac{1}{2}, \frac{1}{2}]\); the puncture is the point \((\pm \frac{1}{2},\pm \frac{1}{2})\), the vertical sides are identified with \(\alpha\), and the horizontal sides are identified with \(\beta\). 


Any essential loop in \(T_M\) is free homotopic to a curve \(\gamma\) with no backtracking. Such a loop is represented  by a cyclic word  in \(\alpha^{\pm1}\) and \(\beta^{\pm1}\), which we also call \(\gamma\). Recall that, by familiar abuse of notation, the letters of  the word \(\gamma\) correspond to the intersections of  the curve \(\gamma\) with the horizontal and vertical sides of the square; see Figure \ref{fig:weak-tight}, below. More precisely, if the \(i\)th letter is  \(\alpha^{\pm 1}\), the corresponding intersection point has coordinates \(\bv_i = (\pm \frac{1}{2}, x_i)\); if it is  \(\beta^{\pm 1}\), the intersection has coordinates \(\bv_i = (x_i, \pm \frac{1}{2})\) for some \(x_i \in [-\half, \half]\).  (Note that with this notation, the generators \(\alpha\) and \(\beta\) of \(\pi_1\) are \emph{dual} to the corresponding parametrizing curves.)

The curve \(\gamma\) is homotopic to a piecewise linear curve obtained by replacing the segment of \(\gamma\) between \(\bv_i\) and \(\bv_{i+1}\) with a line segment. Conversely, suppose that the word \(\gamma\) has \(n\) letters. Given \(\bx \in [-\frac{1}{2}, \frac{1}{2}]^n\), we let \(\gamma_\bx\) be the piecewise linear loop in \(T\) whose vertices are the points \(\bv_i\).  Consider \(F\co [-\frac{1}{2}, \frac{1}{2}]^n \to \R\) given by \(F(\bx) = \ell(\gamma_\bx)\), the length of \(\gamma_\bx\). We can express this length as follows. If we cut \(T_M\) open to form the square \([-\frac{1}{2}, \frac{1}{2}] \times [-\frac{1}{2}, \frac{1}{2}]\), the curve \(\gamma_\bx\) gets cut into a sequence of oriented line segments. Each point \(\bv_i\) has two preimages \((\pm \frac{1}{2}, x_i)\) in the square. We label these \(\bv_i^{\text{in}}\), \(\bv_i^{\text{out}}\) in such a way that the \(i\)th segment runs from \(\bv_i^{\text{out}}\) to \(\bv_{i+1}^{\text{in}}\). Then 
\(F(\bx) = \sum_{i=1}^{n} d(\bv_i^\text{out},\bv_{i+1}^\text{in})\), where \(d\) denotes the Euclidean distance, and \(\bv_{n+1} = \bv_1\).  

Note that since we allow \(x_i \in [-\frac{1}{2}, \frac{1}{2}]\), the points  \(\bv_i^{\text{in}}\), \(\bv_i^{\text{out}}\) may lie at the corners of the square corresponding to the punctures. If we have two consecutive points  \(\bv_i^{\text{out}}\), \(\bv_{i+1}^{\text{in}}\) which lie on the same corner of the square, the corresponding line segment can have zero length. 

\begin{definition} If \(\bx\) is a minimum for the function \(F\) we call \(\gamma_\bx\) a \emph{singular pegboard diagram} for \(\gamma\). \end{definition}

Since the domain of \(F\) is compact, every essential loop \(\gamma\) has at least one singular pegboard diagram \(\gamma_\bx\). Note that \(\gamma_\bx\) is not actually a curve in \(T_M\) unless \(\bx\) is in the interior of \([-\frac{1}{2}, \frac{1}{2}]^n\). However \(\gamma_\bx\) is homotopic to \(\gamma\) in \(\widehat{T}_M\), so \(\gamma_\bx\) lifts to  the covering space \(\widehat{\barT}_M\) obtained by filling in the punctures of  \(\barT_M\). If \(\bv_i\) is a vertex of \(\gamma_\bx\) for which \(x_i = \pm \frac{1}{2}\), we say \(\bv_i\) is a \emph{corner} of \(\gamma_\bx\). 

\labellist
\tiny
\pinlabel $\alpha$ at 76 120 
\pinlabel $\beta$ at 35 76
\endlabellist
\piccaption[]{A weakly tight curve is obtained from the word $\alpha\beta^{-1}\alpha^2\beta$. To make the curve easier to see, we've drawn a fundamental domain with the puncture in the middle.  \label{fig:weak-tight}}
\parpic[r]{
 \begin{minipage}{40mm}
 \centering
\includegraphics[scale=0.5]{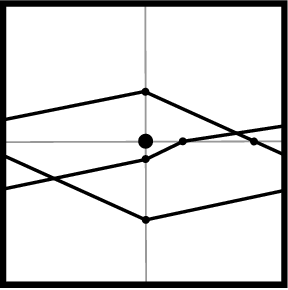}
  \end{minipage}%
  }
Note that it is possible for multiple points \(\bx\) to give the same underlying curve \(\gamma_\bx\). This occurs if a segment of \(\gamma_\bx\) containing three or more vertices lies along one of  the parametrizing curves, say \(\alpha\). In this case we can move the interior vertices that lie on \(\alpha\) freely along \(\alpha\) without changing the underlying curve; see the proof of Proposition \ref{prp:strong-weak-loose}, below. 

If \(\bx\) is an unique global minimum for \(F\), we say that \(\gamma_\bx\) is \emph{strongly tight}. If \(\bx\) is a global minimum for \(F\) and every other global minimum gives rise to the same underlying curve \(\gamma_\bx\) (as described in the paragraph above), we say \(\gamma_\bx\) is \emph{weakly tight.} Finally, if \(\gamma_\bx\) is a line in \(T_M\) and can be freely deformed to nearby parallel geodesics 
we say \(\gamma_\bx\) is \emph{loose}.  Observe that if \(\gamma_\epsilon\) is a loose \(\epsilon\)-pegboard diagram for \(\gamma\), the same curve is also a loose singular pegboard diagram for \(\gamma\). 

\begin{proposition}\label{prp:strong-weak-loose}
If \(\bx\) is a local minimum for \(F\), then \(\gamma_\bx\) is either strongly tight, weakly tight, or loose. 
\end{proposition}

\begin{proof}Write \(F = \sum_{i=1}^n f_i\), where either  \(f_i = \left(1+(x_i-x_{i+1})^2\right)^{1/2}\) or \(f_i = \left((x_i\pm \frac{1}{2})^2+(x_{i+1}\pm \frac{1}{2})\right)^{1/2}\). Since the functions \(f(u) = (1+u^2)^{1/2}\) and \(g(u,v) = (u^2+v^2)\) are both convex, it follows that \(f_i\) and \(F\) are convex functions as well. 
The convexity of $F$ places restrictions on its minima: 
Any local minimum \(\bx\) is necessarily a global minimum, and if  \(\bx\) and \(\bx'\) are two global minima, \(F\) is constant along the line segment that joins \(\bx\) and \(\bx'\). We refer to such a segment as a null line for \(F\). 

Suppose that \(\bx + t \bw\), \(t \in [0,1]\), is a null line for \(F\), and let \( \gamma_{\bx+t\bw}\) be the corresponding deformation of \(\gamma_\bx\). We divide the vertices of \(\gamma_{\bx+t\bw}\) into two types: those for which \(\bv_i(t)\) moves as \(t\) varies, and those for which it stays fixed. We claim that if there is one fixed vertex, then the underlying curve \(\gamma_{\bx+t\bw}\) stays fixed as \(t\) varies. To see this, suppose we have two fixed vertices \(\bv_{i}\) and \(\bv_{j}\), separated by some number of moving vertices. (Since the \(\bv_i\) are cyclically ordered, we can take \(\bv_i = \bv_j\) if necessary, so one fixed vertex suffices.) Since \(\bv_{i+1}\) is a moving vertex, we can find some \(t\in[0,1]\) for which \(\bv_{i+1}(t)\) lies  in the interior of the interval on which it moves. But for every \(t\in[0,1]\),  \(\bx + t\bw\) is a global minimum for \(F\). It follows that \(\bv_{i}(t), \bv_{i+1}(t)\), and \(\bv_{i+2}(t) \) must lie on a line; otherwise we could shorten \(\gamma_{\bx+t\bw}\) by moving \(\bv_{i+1}(t) \) and fixing the other \(\bv_i(t) \).  Repeating, we find that \(\bv_{i}(t), \bv_{i+1}(t), \ldots, \bv_{j}(t)\) are all collinear. But \(\bv_{i}\) and \(\bv_{j}\) were fixed by hypothesis, so the line they lie on is fixed as well. If this line happens to coincide with an \(\alpha\) or \(\beta\) curve, then the intervening points can move along it. In this case \(\gamma_\bx\) is weakly tight. Otherwise, they are fixed as well, and if this is the case for all segments of \(\gamma_\bx\), it is strongly tight.  In either case, the underlying curve remains the same. 

Finally, suppose that every vertex is moving, so we can find some small \(t\) such that all the \(\bv_i\) lie in the interior of the interval in which they move. The same argument as above shows that the \(\bv_i\) are all collinear, so the curve is loose. 
\end{proof}

We  relate the different notions of pegboard representative by showing that when \(\epsilon\) is small, an \(\epsilon\)-geodesic for a curve \(\gamma\) is close to a singular pegboard diagram for \(\gamma\). 
We say that curves \(\gamma_1, \gamma_2 \subset \widehat{T}_M\) are \(\delta\)-close if they can be parametrized so that \(|\gamma_1(t) - \gamma_2(t)| < \delta\) for all \(t\).

\begin{proposition}\label{prp:del-close}
Suppose \(\gamma_\bx\) is a singular pegboard representative for \(\gamma\) and that we are given  \(\delta>0\). Then there exists an \(\epsilon_0>0\) such that, for every \(\epsilon < \epsilon_0\), there is a \(\epsilon\)-geodesic \(\gamma_\epsilon\) representing  \(\gamma\) for which \(p(\gamma_\epsilon)\)  is \(\delta\)-close to \(\gamma_\bx\). 
\end{proposition}

\begin{proof} 

Suppose that  \(\gamma_\bx\) is  tight. 
Let \(X_\epsilon\) be the  image of the map \(p\co T_{M,\epsilon} \to T\); it is the complement of an open ball of radius \(\epsilon\) centered at the origin. There is a unique continuous map \(s\co X_\epsilon \to T_{M,\epsilon}\) satisfying \(p \circ s = 1_{X_\epsilon}\).  Let \(\gamma\) be a path in \(T_{M, \epsilon}\). Since \(T_{M,\epsilon}\) embeds in \(\widehat{T}_M \times \R\), the length of \(\gamma\) is greater than or equal to the length of \(p(\gamma)\). 

Let \(C(r)\) be the cube \([-r,r]^n\), and let \(C^\circ(r)\) be its interior.  Define \(F_{\epsilon}\co C(\frac{1}{2}-\epsilon) \to \R\) by 
\(F_{\epsilon}(\bx) = \sum_{i=1}^n d_\epsilon(s(\bv_i),s(\bv_{i+1}))\), where \(d_{\epsilon}\)  denotes the Riemannian distance in \(T_{M,\epsilon}\). Said another way, \(F_{\epsilon}(\bx)\) is the length of the piecewise geodesic arc \(\gamma_{\bx,\epsilon} \) in \(T_{M,\epsilon}\) obtained by joining the points \(s(\bv_i)\) and \(s(\bv_{i+1})\) by geodesic segments. 

\(F_{\epsilon}\) has the following properties. First, the fact that \(\ell(\gamma) \geq \ell(p(\gamma))\) implies that \(F_\epsilon \geq F\). Second, if \(|x_i|,|x_{i+1}| < \frac{1}{2} - 4 \epsilon\), then the geodesic between \(s(\bv_i)\) and \(s(\bv_{i+1})\) is the straight line in \(T_M\). Hence \(F= F_{\epsilon}\) on \(C(\frac{1}{2}-4\epsilon)\). Third, if \(\bx\) is a local minimum for \(F_\epsilon\), then \(\gamma_{\bx,\epsilon}\) is a geodesic. This follows from the usual principle that rounding a corner of a piecewise geodesic segment reduces its length.

Fix some \(r>0\), and consider:
\[N_r = \{\bz \in C(1/2) \, | \, d(\bx,\bz) \leq r \ \text{for some global minimum}\ \bx \ \text{for} \ F\}\] 
 \[N^\circ_r = \{\bz \in C(1/2) \, | \, d(\bx,\bz) < r \ \text{for some global minimum}\ \bx \ \text{for} \ F\}\] 
 Set \(Y_r = N_r \setminus N^\circ_r\) so that \(N_r\) and \(Y_r\) are closed subsets of \(C(\half)\) while \(N_r^\circ\) is an open subset.  
  No point of \(Y_r\) is a global minimum, so \(F(\by) > F(\bx)\) for all \(\by \in Y_r\).   \(Y_r\) is compact, so there is some \(\eta>0\) such that \(F(\by) \geq F(\bx) + \eta\) for \(\by \in Y_r\). Since \(F\) is continuous, we can find some \(\bz \in N_r \cap C^\circ(\half)\) with \(F(\bz) < F(\bx) + \eta\). Finally, choose \(\epsilon_0>0\) so that \(\bz \in C^\circ(\frac{1}{2}-4\epsilon_0)\). Suppose that \(\epsilon<\epsilon_0\) and let \(\bz'\) be a global minimum point for \(F_\epsilon\) on the compact set \(N_r \cap C(\frac{1}{2}-\epsilon)\). Using the first and second properties above, we see that for \(\by \in Y_r\),  \[F_\epsilon(\by)\geq F(\by) > F(\bz) = F_\epsilon(\bz).\] Hence \(\bz' \in N_r^\circ \cap C(\frac{1}{2}-\epsilon)\), which implies that \(\bz'\) is a local minimum for \(F_\epsilon\). By the third property, \(\gamma_{\bz', \epsilon}\) is a geodesic.  
Now if \(\epsilon<\delta/4\) it is easy to see that there is some \(r>0\) such that \(p(\gamma_{\bz,\epsilon})\) and \(\gamma_\bx\) are \(\delta\)-close whenever \(\Vert \bz - \bx \Vert < r\). Taking this value of \(r\) in the argument above gives the desired statement. 

If \(\gamma_\bx\) is loose, the statement is easy: we choose \(\epsilon_0\) small enough that there is a line in the flat part of \(T_{M,\epsilon_0}\) that is \(\delta\)-close to \(\gamma_\bx\).
\end{proof}

\begin{corollary}
Suppose that \(\gamma_0\) (resp. \(\gamma_\epsilon\)) is a singular pegboard diagram (resp. \(\epsilon\)-pegboard diagram) for \(\gamma\). If \(\gamma_0\) is tight, then \(p(\gamma_\epsilon) \to \gamma_0\) in the Hausdorff metric as \(\epsilon \to 0\).  
\end{corollary}

\begin{proof}
If \(\epsilon_i \to 0\), Proposition \ref{prp:del-close} tells us that there is some sequence of \(\epsilon_i\)-geodesics \(\{\gamma_{\epsilon_i}\}\) such that \(p(\gamma_{\epsilon_i})\) converges to \(\gamma_0\) in the Hausdorff metric. 
As \(\gamma_0\) is tight, \(\gamma_{\epsilon_i}\) is tight for all \(\epsilon_i\). Hence  \(\gamma_{\epsilon_i}\) is the unique \(g_{\epsilon_i}\) geodesic in the homotopy class of \(\gamma\).  So if \(\{\gamma'_{\epsilon_i}\}\) is any sequence of  \(\epsilon_i\)-geodesics for \(\gamma\), we must have  \(\{\gamma'_{\epsilon_i}\} =  \{\gamma_{\epsilon_i}\}\). 
\end{proof}

If \(\gamma_1, \gamma_2\) are \(\epsilon\)-geodesics (or pegboard diagrams), we can write 
\(i(\gamma_1,\gamma_2)= i^\circ(\gamma_1, \gamma_2) + \sum_{(c_1,c_2)} i(c_1, c_2) \)
where \(i^\circ(\gamma_1, \gamma_2)\) is the intersection number of the flat parts, of \(\gamma_1\) and \(\gamma_2\), and the  the sum runs over all pairs of corners \((c_1, c_2)\) for \(\gamma_1\) and \( \gamma_2\). 

For some arguments it is useful to have a similar formula for singular pegboard diagrams, but 
when we pass to the limit \(\epsilon\to 0\) to obtain a singular pegboard diagram \(\bargamma\), we lose  information about the homotopy class of the corners. To be precise, the underlying curve \(\bargamma\) does not depend on the homotopy class of the corners. However, if we remember not just \(\bargamma\), but the minimizer \(\bx\) that determines it, we can recover the homotopy class from the number of consecutive vertices \(\bv_i\) with \(d(\bv_i,\bv_{i+1}) = 0\) at the corner. 
Taking the limit of the formula above as \(\epsilon \to 0\), we obtain:

\begin{corollary} If \(\bargamma_1, \bargamma_2\) are singular pegboard diagrams such that no slope of \(\bargamma_1\) is also a slope of \(\bargamma_2\), then 
\[ i(\gamma_1, \gamma_2) = i^\circ(\bargamma_1, \bargamma_2) + \sum_{(c_1,c_2)} i(c_1,c_2) \]
where \( i^\circ(\bargamma_1, \bargamma_2)\) is the number of intersections away from the corners, and \(i(c_1,c_2)\) is a local intersection number determined by the homotopy data at the corners.\qed
\end{corollary}

To specify the homotopy class at a corner, we choose a small disk \(D\) centered at the puncture and project a path realizing the homotopy class out to \(\partial D\).  The resulting homotopy class is determined by the total angle \(\phi\) that it covers. (The angle \(\phi\) plays the same role as the quantity \(\psi\) for which we used for \(\epsilon\)-geodesics.)   Note that a corner with angle \(|\phi|<\pi\) cannot be realized as part of a singular pegboard diagram (since the resulting curve can be shortened). Hence the smallest possible angle always has 
\(\pi \leq |\phi| \leq 2\pi\). If the angle is \(\geq 2 \pi\), we say that \(\gamma\) 
\emph{wraps the peg} at that corner. The boundary case where \(\phi =\pi\) often requires special treatment in later arguments: we refer to such a corner as a \emph{\(\pi\)-corner} (or, sometimes, a \emph{straight corner}). When a distinction is required, non-$\pi$-corners then will be called \emph{true corners}.

If neither \(c_1\) nor \(c_2\) wraps the peg, the local intersection number 
\(i(c_1,c_2)\) is easily described. Let \(A_1\) and \(A_2\) be the arcs specifying the homotopy classes of \(c_1\) and \(c_2\). Then we have:
\begin{equation*}
i(c_1,c_2) = \begin{cases} 0 & \quad \text{if} \ A_1 \  \text{and} \ A_2 \  \text{are nested } \\
1 & \quad \text{if} \ A_1 \  \text{and} \ A_2 \  \text{are not nested  and }  \ A_1 \cap A_2 \ \text{has one component} \\
2 & \quad \text{if} \ A_1 \  \text{and} \ A_2 \  \text{are not nested  and } \  A_1 \cap A_2 \ \text{has two components}
\end{cases} 
\end{equation*}

\subsection{Solid torus like curves}\label{sub:STL}
Having introduced peg-board diagrams, we pause to discuss two key classes of immersed curves that arise. These are illustrated by the following two examples:

\begin{example}
\label{ex:twisted I}
(a) If $M$ is the solid torus $S^1\times D^2$, then $\curves M$ is the longitude $\partial D^2\times\{\text{pt}\}$ decorated with the trivial 1-dimensional local system.

(b) If $M$ is the twisted $I$-bundle over the Klein bottle, \(\CFD(M)\) was computed in \cite{BGW2013}; see also \cite{HW}. In our conventions, as an $\Alg$-decorated graph, it has two connected components that, with an appropriate choice of bordered structure, can be described as follows: 

\[ 
\begin{tikzpicture}[scale=0.75,>=stealth', thick] 
\def \n {2}
\def \radius {1.25cm}
\def \margin {5}
 \def \bigmargin {10}
\foreach \s in {1,...,\n}
{
  \node at ({360/\n * (\s - 1)}:\radius) {$\bu$};
  \node at ({360/(\n) * (\s- (1/2))}:\radius) {$\scriptstyle{12}$};
  \draw({360/\n * (\s - 1)+\margin}:\radius) arc ({360/\n * (\s - 1)+\margin}:{360/\n * (\s-(1/2))-\bigmargin}:\radius);
  \draw[->] ({360/\n * (\s - (1/2))+\bigmargin}:\radius) arc ({360/\n * (\s - (1/2))+\bigmargin}:{360/\n * (\s)-\margin}:\radius);
}
\end{tikzpicture}
\qquad
\begin{tikzpicture}[scale=0.75,>=stealth', thick]
\def \n {4}
\def \radius {1.25cm}
\def \margin {5} 
 \def \bigmargin {10}
\def \s{1}
  \node at ({360/\n * (\s - 1)}:\radius) {$\bu$};
  \node at ({360/(\n) * (\s- (1/2))}:\radius) {$\scriptstyle{1}$};
  \draw({360/\n * (\s - 1)+\margin}:\radius) arc ({360/\n * (\s - 1)+\margin}:{360/\n * (\s-(1/2))-\bigmargin}:\radius);
  \draw[->] ({360/\n * (\s - (1/2))+\bigmargin}:\radius) arc ({360/\n * (\s - (1/2))+\bigmargin}:{360/\n * (\s)-\margin}:\radius);

 \node at ({360/\n * (\s+1 - 1)}:\radius) {$\circ$};
  \node at ({360/(\n) * (\s+1- (1/2))}:\radius) {$\scriptstyle{3}$};
  \draw[<-]({360/\n * (\s +1- 1)+\margin}:\radius) arc ({360/\n * (\s +1- 1)+\margin}:{360/\n * (\s+1-(1/2))-\bigmargin}:\radius);
  \draw ({360/\n * (\s+1 - (1/2))+\bigmargin}:\radius) arc ({360/\n * (\s+1 - (1/2))+\bigmargin}:{360/\n * (\s+1)-\margin}:\radius);
  
   \node at ({360/\n * (\s+2 - 1)}:\radius) {$\bu$};
  \node at ({360/(\n) * (\s+2- (1/2))}:\radius) {$\scriptstyle{123}$};
  \draw({360/\n * (\s +2- 1)+\margin}:\radius) arc ({360/\n * (\s +2- 1)+\margin}:{360/\n * (\s+2-(1/2))-\bigmargin}:\radius);
  \draw[->] ({360/\n * (\s+2- (1/2))+\bigmargin}:\radius) arc ({360/\n * (\s+2 - (1/2))+\bigmargin}:{360/\n * (\s+2)-\margin}:\radius);
  
   \node at ({360/\n * (\s +3- 1)}:\radius) {$\circ$};
  \node at ({360/(\n) * (\s+3- (1/2))}:\radius) {$\scriptstyle{2}$};
  \draw({360/\n * (\s+3 - 1)+\margin}:\radius) arc ({360/\n * (\s+3 - 1)+\margin}:{360/\n * (\s+3-(1/2))-\bigmargin}:\radius);
  \draw[->] ({360/\n * (\s+3 - (1/2))+\bigmargin}:\radius) arc ({360/\n * (\s +3- (1/2))+\bigmargin}:{360/\n * (\s+3)-\margin}:\radius);
 
\end{tikzpicture}
\]

Both components are loop-type, and it is easy to see that $\curves M$ consists of the two immersed curves shown in Figure \ref{fig:alt-twisted-I}, each decorated with the trivial local system.
\end{example}

\piccaption[]{Pulling the invariant associated with the twisted $I$-bundle over the Klein bottle tight.  \label{fig:alt-twisted-I}}
\parpic[r]{
 \begin{minipage}{35mm}
 \centering
\includegraphics[scale=0.5]{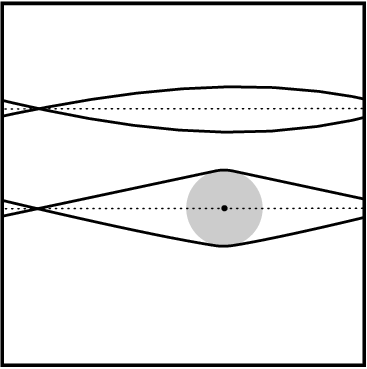}
  \end{minipage}%
  }
  
  The invariant of the solid torus pulls tight to a singular pegboard diagram lying on a line, and is loose. In contrast,  both curves for the twisted $I$-bundle over the Klein bottle lie on a single line when you pull them tight, but only one of them is a loose curve. The other curve (the lower one in the figure) has two \(\pi\)-corners. 
  The manifolds $M$ for which every component of $\curves M$ has this form form an interesting class. Consider the following definition, which was first made in \cite{solid-tori} (see also \cite{HW}):

\begin{definition}
Suppose \(M\) is a three-manifold with torus boundary, that \(\lambda \in H_1(\partial M)\) is the rational longitude, and that \(\mu \cdot \lambda = 1\). We say that 
$M$ is a \emph{Heegaard Floer homology solid torus} if 
$\CFD(M,\mu,\lambda)\cong\CFD(M,\mu+\lambda,\lambda)$. 
\end{definition}

A longer discussion of this class of manifolds is given in \cite[Section 1.5]{HRW-companion}. For our purposes, this definition can be rephrased in terms of curves; compare \cite[Theorem 27]{HRW-companion}. 

\begin{proposition}
\label{prop:HFST}
$M$ is a {Heegaard Floer homology solid torus} if and only if  the immersed multicurve $\curves M$ is homotopic to a curve that lies in a small neighbourhood of a representative  \(\lambda\) running through the puncture. 
\end{proposition}

Before giving the proof, we discuss the action of Dehn twists on an immersed closed curve. 
Let \((T, \alpha, \beta)\) be  a marked torus.  An immersed closed curve \(\gamma \subset T\) determines a reduced cyclic word, which we also denote by \(\gamma\), in \(\pi_1(T) \cong \langle \alpha, \beta \rangle\), where the generators \(\alpha\) and \(\beta\) are  closed curves dual to the parametrizing arcs. Let \(\ell(\gamma)\) be the length of this word. 

\begin{lemma}\label{lem:int-to-inf} If the intersection number \(i({\beta},\gamma)\neq 0\) then  
\(\ell(\tau^n_{{\beta}}(\gamma)) \to \infty\)  as \(n \to \infty.\) 
\end{lemma} 

\begin{proof}  Intersections of \(\gamma\) with the closed curve \({\beta}\) are in bijection with cyclic subwords of \(\gamma\) of the form \(\alpha \beta^{k}\alpha\) and \(\alpha^{-1} \beta^k \alpha^{-1}\) for \(k \in \Z\). The action of \(\tau^n_{\beta}\) replaces such subwords with \(\alpha \beta^{k+n}\alpha \) or \(\alpha^{-1}\beta^{k-n}\alpha^{-1}\), respectively. The resulting word is still cyclically reduced. As long is there as at least one intersection with   \({\beta}\),  its length goes to  \(\infty\) as \(n\) does, since the value of \(k\) in each subword is fixed. 
\end{proof}

\begin{proof}[Proof of Proposition~\ref{prop:HFST}]
Let \(\tau_\lambda\co \partial M \to \partial M\) be the Dehn twist along \(\lambda\). Then \(\tau_\lambda(\mu+\lambda) = \mu\) and \(\tau_\lambda(\lambda) = \lambda\), so \(M\) is a Heegaard Floer solid torus if and only if $\CFD(M,\mu,\lambda)\cong \widehat{T}_\lambda \boxtimes \CFD(M,\mu,\lambda)$, where \(\widehat{T}_\lambda\) is the Dehn twist bimodule considered in Section~\ref{sec:invariance}.  By the results of that section, this is equivalent to saying that \(\curves{M}\) is invariant under the action of \(\tau_\lambda\), {\it i.e.} that up to regular homotopy
\(\tau_\lambda(\curves{M})\) represents the same immersed multicurve as \(\curves{M}\). 

The hypothesis of the proposition is equivalent the statement that \(\curves{M}\) is homotopic to a curve that is disjoint from the closed curve representing \(\lambda\). If this is the case, \(\curves{M}\) is clearly preserved by the action of \(\tau_\lambda\). 

Conversely, suppose \(\curves{M}\) has a component \(\gamma\) for which \(i(\gamma, \lambda) \neq 0\). Then by Lemma \ref{lem:int-to-inf} \(\ell(\tau_\lambda^n(\gamma)) \to \infty \) as \(n \to \infty\) and  so, for sufficiently large \(n\), \(\tau_\lambda^n(\gamma)\) cannot be a component of \(\curves{M}\). It follows that \(\curves{M}\) is not fixed by the action of \(\tau_\lambda\), and \(M\) cannot be a Heegaard Floer solid torus. 
\end{proof}

If \(\curves M\) consists of a single loose curve with trivial local system in each \(\spinc\) structure, we say \(M\) is solid torus like. Gillespie studied such manifolds and showed that they are boundary compressible \cite[Corollary 2.9]{Gillespie}. More generally, 
we say \(\curves M\) is loose if each component of \(\curves M\) is a loose curve (possibly equipped with a nontrivial local system). Since \(\curves M\) lifts to \(\barT_M\), all such components must be parallel. 
We will prove the following generalization of Gillespie's result.

\begin{proposition}
	If \(\curves M\) is loose then \(M\) is boundary compressible. 
	\label{prop:loose}
	\end{proposition}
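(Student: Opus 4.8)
The plan is to reduce the statement to a computation with the pairing theorem (Theorem \ref{thm:pairing}), exploiting the fact that boundary incompressibility is detected by Heegaard Floer homology of surgeries. Recall that $M$ is boundary compressible precisely when $M$ is a solid torus (if $M$ is irreducible) or reducible; more usefully, $M$ fails to be boundary incompressible exactly when gluing $M$ to the solid torus along \emph{some} slope produces a reducible or otherwise degenerate manifold, and this can be read off from $\HFhat$. Concretely, I would use the contrapositive: if $M$ is boundary incompressible then $\curves{M}$ cannot consist entirely of loose curves. So I would assume $M$ is boundary incompressible and derive a contradiction from looseness of $\curves M$.

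First I would record the key geometric consequence of looseness: if every component of $\curves M$ is a loose curve, then (since all components lift to closed curves in $\barT_M$, hence are parallel to the homological longitude $\lambda$) there is a slope $\mu$, transverse to $\lambda$, such that a representative line of slope $\mu$ can be made disjoint from $\curves M$ in the punctured torus $T_M$ — one simply pushes the loose curves into a small neighbourhood of a closed curve parallel to $\lambda$ avoiding the basepoint, and takes $\mu$ to intersect $\lambda$ once while missing that neighbourhood. In fact, because a loose curve pulls tight to a straight line avoiding all pegs, the line of slope $\mu$ can be chosen through the basepoint and still disjoint from $\curves M$. Then I would invoke Theorem \ref{thm:pairing} (and the computation $\iota_0\CFD(M,\mu,\lambda)\simeq \mathit{SFH}(M,\gamma_\mu)\simeq \HFK(K_\mu)$ discussed in Section \ref{subsec:heights}, or more simply the pairing with the solid torus $\curves{D^2\times S^1}$): pairing $\curves M$ with the longitude of a solid torus glued so that the solid-torus longitude is sent to slope $\mu$ computes $\HFhat$ of the filling, and disjointness of the curves forces this dimension to be $0$ in at least one $\spinc$ component, or forces the filling to have $\HFhat$ of rank strictly smaller than is possible for a manifold with incompressible boundary piece. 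The cleanest route is: disjointness of $\curves M$ from a line through the basepoint means some Dehn filling $M(\mu)$ has $\widehat{HF}$ supported in a way that is impossible unless $M$ is a solid-torus-like or reducible piece.

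The main obstacle — and the step I would spend the most care on — is turning "disjoint in the punctured torus" into the right topological conclusion. The subtlety is that looseness with a possibly nontrivial local system is allowed, so the pairing count is governed by $\rk(A\otimes B + I)$ terms (Definition \ref{def:floer-homology-decorated-curves}), not merely geometric intersection number; I would need to check that for a loose curve paired against a single straight line the local system contributes nothing (the annular correction term vanishes because the line, not being closed/parallel to $\lambda$, cobounds no immersed annulus with the loose curves), so the pairing genuinely vanishes. Then the contradiction comes from the following classical input, which I would cite rather than reprove: a boundary incompressible $M$ with torus boundary has $\dim\HFhat(M(\alpha))\ge 1$ for \emph{every} slope $\alpha$ — equivalently, no Dehn filling of an incompressible-boundary $M$ can have vanishing $\widehat{HF}$ — whereas the vanishing pairing above exhibits a slope $\mu$ with $\dim\HFhat(M(\mu)) = 0$ in some $\spinc$ structure, or more strongly with a filling that is an $S^1\times S^2$ summand.

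Finally I would package this: assuming $M$ boundary incompressible, looseness of $\curves M$ produces via Theorem \ref{thm:pairing} a filling slope realizing an impossible Floer-homology dimension, a contradiction; hence $\curves M$ loose implies $M$ boundary compressible, which is Proposition \ref{prop:loose}. I would remark that the case distinction (reducible versus solid torus) on the compressible side is automatic and need not be tracked, since the statement only asserts existence of a boundary compressing disk. This recovers Gillespie's \cite[Corollary 2.9]{Gillespie} when the single loose curve carries the trivial local system, and the argument above handles the general (multiple components, possibly nontrivial local system) case uniformly because all loose components are forced to be mutually parallel and parallel to $\lambda$.
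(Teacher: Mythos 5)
Your proposal hinges on the claim that, because the components of \(\curves M\) are loose, there is a slope \(\mu\) transverse to \(\lambda\) such that a line of slope \(\mu\) (even one through the basepoint) can be made disjoint from \(\curves M\) in \(T_M\). This is false, and it is the fatal gap. A loose curve is, up to homotopy in \(T_M\), \(n\) times a simple closed curve parallel to the homological longitude \(\lambda\) for some \(n\geq 1\); its mod-\(2\) (indeed integral) homological intersection number with any transverse slope \(\mu\) is \(n\cdot|\mu\cdot\lambda|\neq 0\). No homotopy in the punctured torus can remove these intersections; ``missing the neighbourhood of \(\lambda\)'' is impossible for a closed curve of slope \(\mu\) that must traverse the \(\mu\)-direction of the torus. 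Pulling the loose curve tight to a straight line does not help: in the cover \(\tilde T\) these are horizontal lines and the lifts of \(\mu\) are transverse lines, which of course cross. Consequently \(\HFhat(M(\mu))\) is not zero, and the contradiction you aim for (``boundary incompressible \(M\) has no filling with vanishing \(\HFhat\)'') never materializes. The subsidiary point about the annular correction term is also moot: that correction is additive (it can only increase the rank), so even if the geometric count were what needed controlling, it could not help you reach zero.

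More fundamentally, the obstruction to looseness is not detected by the \emph{total} rank of any single Dehn filling, which is what your argument tries to use; rather it lives in the \(\spinc\)-graded structure. The paper's actual proof goes through the heights of the loose components in the cover \(\barT_{M,\spin}\) (Section~\ref{subsec:heights}): conjugation symmetry of knot Floer homology forces the heights to be symmetric about \(0\), say ranging from \(-n_+\) to \(n_+\), and the detection of the Thurston norm by \(\HFK\) gives \(\widetilde{c}(S)=2n_+-k\) for a norm-minimizing surface \(S\) with \(\partial S = k\lambda\). One then glues \(M\) to itself along a longitude-to-longitude map, computes the Thurston norm of the doubled class \(y\) in \(Y=M\cup_h M\) via the pairing theorem (the top graded piece is \(\Hom\) of a local system with itself, which is nonzero since it contains the identity), obtaining \(\|y\|=4n_+\), and compares with the complexity \(2c(S)\) of the doubled surface \(DS\). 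If \(M\) were boundary incompressible, \(S_0\neq D^2\), which forces \(c(DS)=4n_+-2k<\|y\|\), a contradiction; so \(S_0=D^2\), a compressing disk. If you want to pursue a pairing-theoretic proof you would need to build in this height/\(\spinc\) bookkeeping and the Thurston norm input -- a purely ungraded ``disjointness implies zero'' shortcut does not exist here.
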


Before giving the proof, we recall some facts about knot Floer homology and the Thurston norm. 
Let \(\lambda\) be the rational longitude of \(M\), and suppose that \(\lambda\) has order \(k\) in \(H_1(M)\). 
Choose a meridian \(\mu \in H_1(\partial M)\) with \(\mu \cdot \lambda = 1\).  Then \(\mu\) is \(k\) times a primitive element of  \(H_1(M)\) . 

Fix a class \(x \in H_2(M,\partial M)\) with \(\partial x = \lambda\) and define
$$\HFK(K_\mu,x,i) = \bigoplus_{\langle c_1(\mathfrak{t}),x \rangle = 2i} \HFK(K_\mu, \mathfrak{t})$$
where the sum runs over \(\mathfrak{t} \in \spinc(M,\partial M)\). 
Then \[2 \cdot  \max \{i \,|\, \HFK(K,x,i)\neq 0\} = \|x\|_\mu\]
Here \(\|\cdot\|_\mu\) is the generalized Thurston norm given by 
\[\|x\|_\mu =\min_{[S]=x} c_\mu(S),\]
where the minimum is taken over all properly embedded orientable surfaces representing \(x\). If \(S\) is connected, \(c_\mu(S) = \max\{-\chi(S)+|\partial S \cdot \mu|, 0\}\); otherwise  
\(c_\mu(S) = \sum c_\mu(S_i)\), where \(S_i\) are the connected components of \(S\).  

As we described in Section~\ref{subsec:heights}, the \(\spinc\) decomposition of  \(\HFK(K_,\mu)\) can be computed by intersecting 
 \(\curves{M,\spin}\) with lifts of  \(\mu\) to  \(\barT_{M,\spin}\), where \(\spin\) runs over \(\spinc(M)\). 
The choice of \(x\) determines a height function on \(\barT_{M,\spin}\) such that intersections of \(\curves{M,\spin}\) with lifts of the meridian at height \(i\)  correspond to elements of \( \HFK(K_\mu,x,i)\). 
	 
Similarly, if \(Y\) is a closed manifold, and  \(y \in H_2(Y)\) is a primitive class, we define 
$$\HFhat(Y,y,i) = \bigoplus_{\langle c_1(\spin),[\Sigma]\rangle = 2i} \HFhat(Y, \spin).$$
Then  \( \|y\| =2 \cdot  \max \{i \, | \, \HFhat(Y,y,i) \neq 0\}\). 

\begin{proof}[Proof of Proposition~\ref{prop:loose}]

   Fix a class \(x\) as above, and choose a norm-minimizing surface \(S\) realizing \(x\). After tubing to remove excess boundary components, we may assume that \(\partial S\) consists of precisely \(k\) parallel copies of \(\lambda\). Since \(\lambda\) has order \(k\) in \(H_1(M)\), all \(k\) boundary components lie on a single component of \(S\). Let \(S_0\) be the component with boundary, and consider the modified complexity \(\widetilde{c}(S) = -\chi(S_0) + \sum_{i>0} c(S_i)\), where the sum runs over the other components of \(S\). Note that 
   \(\widetilde{c}(S) = c(S)\) unless \(k=1\) and \(S_0=D^2\).

By hypothesis, \(\curves{M}\) is a union of loose curves in \(T_M\). Each individual curve lifts to some \(\barT_{M,\spin}\) as a circle parallel to \(\lambda\). This circle is  at some height \(n \in \Z\) with respect to the height function determined by \(x\). 
Let \(n_+\) and \(n_-\) be the heights of the highest and lowest circles in \(\curves{M}\). 
Then \(n_+\) is the largest value of 
\(i\)  for which \(\HFK (K_\mu,x,i)\)  is nontrivial, and \(n_-\) is the lowest. Since knot Floer homology is conjugation-symmetric,  we must have \(n_-=-n_+\).  The relation between \(\HFK\) and the Thurston norm implies that \(\widetilde{c}(S) = 2n_+ -k \). 

Now consider the closed manifold \(Y = M \cup_h M\), where \(h(\lambda) = - \lambda\) and \(h(\mu) = \mu\), and let \(y \in H_2(Y)\) be the class obtained by doubling \(x\). Then \(\HFhat(Y,y,i)\) is calculated by pairing \(\curves{M,\spin} \) with \(\alpha \cdot \overline{h}(\curves{M,\spin'})\), where \(\spin, \spin'\) run over \(\spinc(M)\), and \(\alpha\) runs over all deck transformations which shift height by \(i\). The largest value of \(i\) for which \(\HFhat(Y,y,i)\) is potentially nontrivial is obtained by pairing the highest set of curves in \(\curves{M}\) with the lowest set of curves in \(\overline{h}(\curves{M})\). The latter is simply the image of the highest set of curves in \(\curves{M}\) under \(\overline{h}\), so the maximal value of \(i\) is 
\(n_+-n_- = 2n_+\).  We can view the highest set of curves as a single local system \((V,A)\) on \(S^1\), so  $$\HFhat(Y,y,2n_+) = HF (S^1_{(V,A)},S^1_{(V,A)}).$$ The latter group is nonzero by Corollary~\ref{cor:ComputeHF}.  Since \(\HFhat(Y,y,2n_+) \neq 0\), we see that \(\|y\| = 4n_+\).
  
   On the other hand, the double \(DS\) of \(S\) represents \(y\) and has complexity \(2c(S)\). If \(M\) is boundary incompressible, then \(S_0 \neq D^2\), so \(c(S) = \widetilde{c}(S) = 2n_+-k\). In this case, we would have 
  \(c(DS)=4n_+-2k <\|y\|\), which is impossible. We conclude that \(S_0 = D^2\) and  \(M\) was boundary compressible. 	
	\end{proof}

\begin{remark}
The proof shows that if \(\curves{M}\) is loose then it consists of curves that are all at a fixed height (up to overall shift, we may assume that these are all at height \(0\)). 
	Loose manifolds with curves at other heights do not exist. One possible explanation for this would be that \(\curves{M,\spin}\)  should represent an exact Lagrangian in \(\barT_M\). 
\end{remark}


\subsection{A dimension inequality for pinches} 

Consider the following construction: Let $Y$ be a rational homology sphere of the form $M_0\cup_h M_1$, so that each $M_i$ is a rational homology solid torus. Denote by $\lambda_i$ the rational longitude in each $\partial M_i$; recall that this slope is characterized by the property that some number of like oriented copies of $\lambda_i$ bounds a properly embedded surface in $M_i$. The Dehn surgery $Y_0=M_0(h(\lambda_1))$ is the result of pinching $M_1$ in $Y$. Note that $h(\lambda_1)$ is a well defined slope in  $\partial M_0$.
 In certain settings there is an associated degree $n$ map $Y\to Y_0$, where 
\(n\) is the order of  $i_*([\lambda_1])$ in $H_1(M_1)$. In particular, when both $M_0$ and $M_1$ are integer homology soild tori, there is always a degree one map associated with a pinch.

\begin{theorem}\label{thm:pinch}
Given a rational homology sphere $Y=M_0 \cup_h M_1$, where $M_0$ and $M_1$ are rational homology solid tori, there is an inequality 
\[\dim \HFhat(Y) \ge n m \dim \HFhat(Y_0)\]
where $Y_0=M_0(h(\lambda_1))$, $n$ is the order of $i_*([\lambda_1])$ in $H_1(M_1)$, and $m=|\spinc(M_1)|=|H_1(M_1,\partial M_1)|$. 
\end{theorem}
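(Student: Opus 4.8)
The plan is to compute both sides of the inequality using Theorem~\ref{thm:pairing} (and its refined $\spinc$-graded version, Proposition~\ref{prop:refined-pairing}), realizing $\HFhat(Y)$ and $\HFhat(Y_0)$ as intersection Floer homology of immersed curves in a torus. Write $\boldsymbol\gamma_i = \curves{M_i}$ and let $\boldsymbol\gamma_1' = \bar h(\boldsymbol\gamma_1)$, so that $\dim\HFhat(Y) = \dim\HFa(\boldsymbol\gamma_0, \boldsymbol\gamma_1')$. On the other side, $Y_0 = M_0(h(\lambda_1))$ is obtained by filling $M_0$ along the slope $h(\lambda_1)$; equivalently $Y_0 = M_0 \cup_h (S^1\times D^2)$ where the solid torus is glued so that its meridian goes to the filling slope. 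Since $h(\lambda_1)$ is the image of the rational longitude of $M_1$, and $\curves{S^1\times D^2}$ is a single embedded longitudinal curve (Example following Theorem~\ref{thm:gradings}), the curve set $\bar h(\curves{S^1\times D^2})$ is a single embedded curve $\ell$ of the slope corresponding to $\lambda_1$, namely a \emph{loose} straight line in the relevant direction through $T_{M_0}$ after the appropriate reflection. Thus $\dim\HFhat(Y_0) = \dim\HFa(\boldsymbol\gamma_0, \ell)$, the minimal geometric intersection number of $\boldsymbol\gamma_0$ with this embedded curve (no local system enters because $\ell$ carries the trivial one-dimensional system, so $A_1\otimes A_2 + I$ contributions are accounted for by the intersection count, and there is no annulus since $\boldsymbol\gamma_0$ lifts to closed curves in $\barT_{M_0}$ while $\ell$ is parallel to the longitude only if a component of $\boldsymbol\gamma_0$ is loose of that slope --- a case to handle separately).

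The heart of the argument is a comparison of intersection numbers: I want to show that each component of $\boldsymbol\gamma_1'$ is ``at least as complicated'' as the straight line $\ell$ in the direction of $\bar h(\lambda_1)$, in a way that multiplies geometric intersection with $\boldsymbol\gamma_0$ by at least $nm$. Concretely, I would pass to the cover $\barT_{M_1}$, where by Theorem~\ref{thm:gradings} the curves $\barcurves{M_1,\spin}$ (one for each of the $m = |\spinc(M_1)|$ spin$^c$ structures) are oriented closed curves, each of which projects down to $T_{M_1}$ via the $n$-fold-type covering $p\colon \barT_{M_1}\to T_{M_1}$ (where $n$ is the order of $i_*[\lambda_1]$, the deck group being $H_{M_1}\cong\Z/n$ in the relevant direction once we further quotient, per the Definition preceding Theorem~\ref{thm:gradings}). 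The key point: a component of $\boldsymbol\gamma_1$ in $T_{M_1}$ that wraps $k$ times around its carrier lifts to $\barT_{M_1}$, and its ``height'' in the longitudinal direction forces that, when paired against any curve $\boldsymbol\gamma_0$ whose lifts are transverse, the intersection count is bounded below by $n$ times the ``reduced'' intersection obtained by collapsing to the line direction. I would make this precise by using the height function from Section~\ref{subsec:heights}: each lift $\mu_\alpha$ of a meridian-type arc, as $\alpha$ ranges over the $n$ deck translates times the $m$ spin$^c$ structures, contributes disjointly to the intersection, giving the factor $nm$.

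More carefully, I would argue as follows. Choose a parametrization so that $h(\lambda_1)$ is the meridian $\mu_0$ of $M_0$; then $\dim\HFhat(Y_0) = \dim \iota_0 \CFD(M_0,\mu_0,\lambda_0) = \#(\curves{M_0}\cap \bar\mu_0)$ where $\bar\mu_0$ is a straight line of slope $\mu_0$ through the basepoint (by the discussion in Section~\ref{subsec:heights} identifying this with $\HFK$ of the core of the filling). Meanwhile $\dim\HFhat(Y) = \#(\curves{M_0}\cap \bar h(\curves{M_1}))$. It therefore suffices to show $\#(\curves{M_0}\cap \bar h(\curves{M_1})) \ge nm \cdot \#(\curves{M_0}\cap \bar\mu_0)$ whenever both sides are computed in minimal (transverse peg-board) position as in Lemma~\ref{lem:peg-minimal-intersection}. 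Lift everything to $\barT_{M_0,\spin_0}$: the curve $\bar h(\curves{M_1})$, when lifted, consists of the reflected images of the curves $\barcurves{M_1,\spin_1}$, and each such curve, being a closed curve in $\barT_{M_1}$ that covers its image $n$-fold under the longitude-quotient, must cross every lift of $\bar\mu_0$ to $\barT_{M_0}$. Running over the $m$ choices of $\spin_1$ and the $n$ deck translates $\alpha_\spin$ appearing in Proposition~\ref{prop:refined-pairing}, and using that distinct $(\spin_1,\alpha)$ pairs give geometrically disjoint contributions (they sit at distinct heights, or distinct spin$^c$-sheets), the count multiplies as claimed. I would also need the loose/parallel degenerate case, where a component of $\boldsymbol\gamma_0$ is parallel to the longitude: there one invokes the local system contribution $2(k_1k_2 - \rk(A_1\otimes A_2 + I))$ from Definition~\ref{def:floer-homology-decorated-curves}, noting it is always $\ge 0$ and in fact the rank bound still gives the multiplicative estimate.

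\textbf{Main obstacle.} The hard part will be the combinatorial/geometric lower bound $\#(\boldsymbol\gamma_0\cap \bar h(\boldsymbol\gamma_1)) \ge nm\,\#(\boldsymbol\gamma_0\cap\bar\mu_0)$ itself: one must show that replacing a genuine curve set $\boldsymbol\gamma_1$ by the longitudinal line $\ell$ only decreases intersection with $\boldsymbol\gamma_0$, and by exactly the factor coming from the number of times the curve wraps/covers relative to $\ell$. This requires a careful argument in the cover $\barT_{M_0}$ (or $\barT_Y$) that each of the $nm$ ``sheets'' --- indexed by spin$^c$ structures on $M_1$ and by the deck group $\mathrm{im}\,j_*$ of order $n$ --- contributes at least $\#(\boldsymbol\gamma_0\cap\bar\mu_0)$ intersections, with no cancellation between sheets. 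Handling the case where $\boldsymbol\gamma_1$ (or $\boldsymbol\gamma_0$) carries a nontrivial local system, and the degenerate case of parallel loose components, are the places where one has to be most careful, and where the annulus term and the rank inequality $\rk(A_1\otimes A_2 + I) \le k_1 k_2$ do the work.
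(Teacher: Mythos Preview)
Your setup is right and matches the paper: both sides are intersection Floer counts, $\dim\HFhat(Y)=\dim\HFa(\boldsymbol\gamma_0,\boldsymbol\gamma_1')$ and $\dim\HFhat(Y_0)=\dim\HFa(\boldsymbol\gamma_0,\ell)$ with $\ell$ the embedded line of slope $h(\lambda_1)$. You also correctly identify that the crux is the inequality $\#(\boldsymbol\gamma_0\cap\boldsymbol\gamma_1')\ge nm\cdot\#(\boldsymbol\gamma_0\cap\ell)$. But your proposed mechanism for this inequality --- lifting to $\barT$ and arguing ``sheet by sheet'' via height functions --- does not actually close the gap. In the cover each $\barcurves{M_1,\spin_1}$ is still an arbitrary immersed closed curve, and the assertion that it ``must cross every lift of $\bar\mu_0$'' only gives an \emph{algebraic} intersection bound (coming from the homology class $n\cdot\lambda_1$), not a geometric one. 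Nothing in the covering picture prevents geometric cancellations once the curves are put in minimal position; the homological wrapping number tells you nothing about extra back-and-forth of $\boldsymbol\gamma_1'$ that could in principle be removed by a regular homotopy in $T_{M_0}$ crossing the basepoint. This is exactly the obstacle you flag, and the cover argument does not resolve it. (There is also a slip: $\dim\HFhat(Y_0)$ is the pairing with a line \emph{avoiding} the basepoint, i.e.\ the solid-torus curve, not $\dim\iota_0\CFD(M_0,\mu_0,\lambda_0)$, which is $\HFK$ of the core and corresponds to a line \emph{through} the basepoint.)

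The paper's argument is different and supplies precisely the missing idea. Rather than lifting, it works directly in $T_{M_0}$ and defines $\hat{\boldsymbol\gamma}_1$ by trivialising local systems, deleting components nullhomotopic in the (unpunctured) torus, and pulling the rest tight \emph{ignoring the basepoint}. By the homology class of $\curves{M_1}$ one gets $\hat{\boldsymbol\gamma}_1$ containing at least $m$ copies of $n\cdot\ell$, whence $\dim\HFa(\boldsymbol\gamma_0,\hat{\boldsymbol\gamma}_1)\ge nm\,\dim\HFa(\boldsymbol\gamma_0,\ell)$. The real work is then to show $\dim\HFa(\boldsymbol\gamma_0,\boldsymbol\gamma_1')\ge\dim\HFa(\boldsymbol\gamma_0,\hat{\boldsymbol\gamma}_1)$, and this is done by checking that each of the four moves taking $\boldsymbol\gamma_1'$ to $\hat{\boldsymbol\gamma}_1$ --- deleting a component, resolving a self-intersection, homotoping to transverse peg-board position, and sliding a corner over a peg --- is intersection-non-increasing with $\boldsymbol\gamma_0$. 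The only nontrivial move is the last, and the paper handles it with a local corner-matching argument: whenever sliding $\boldsymbol\gamma_1'$ over a peg creates two new intersections at some corner of $\boldsymbol\gamma_0$, there is another corner of $\boldsymbol\gamma_0$ (the next one along, approached from the opposite side of $\boldsymbol\gamma_1'$) where two intersections are destroyed. This peg-crossing monotonicity is the key geometric lemma your proposal is missing.
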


The proof of Theorem \ref{thm:degree-one-pinch} follows from a special case of Theorem \ref{thm:pinch} (with $n=m=1$). 



\begin{proof}[Proof of Theorem \ref{thm:pinch}]
Let $\boldsymbol{\gamma}_0$ and $\boldsymbol{\gamma}_1$ denote the decorated curves $\curves{M_0}$ and $\bar{h}(\curves{M_1})$, respectively, in $T_{M_0}=\partial M_0\setminus z$. By the gluing theorem, $\HFhat(Y)$ is equivalent to the Floer homology $\HFa(\boldsymbol{\gamma}_0, \boldsymbol{\gamma}_1)$ and in particular the dimension is at least the intersection number $i(\boldsymbol{\gamma}_0, \boldsymbol{\gamma}_1)$  between $\boldsymbol{\gamma}_0$ and $\boldsymbol{\gamma}_1$. 
Let $\frac{p}{q}$ be the slope of the curve in $T_{M_0}$ determined by $h(\lambda_1)$, and let $L_{p,q}$ be the corresponding simple closed curve $p\alpha_0 + q\beta_0$. Since $Y$ is a rational homology sphere we may assume that the slope of $\lambda_0$ is not $\frac{p}{q}$; in particular, no component of $\boldsymbol{\gamma}_0$ is commensurable to $L_{p,q}$, and by the gluing theorem $\dim \HFhat(Y_0) = i(\boldsymbol{\gamma}_0, L_{p,q})$.

Let $\hat{\boldsymbol{\gamma}}_1$ be the collection of curves obtained from $\boldsymbol{\gamma}_1$ by replacing any non-trivial local systems with trivial ones of the same dimension and pulling the curves tight in the torus $\partial M_0$ (ignoring the basepoint). More precisely, after replacing non-trivial local systems, we delete any curve components that are nullhomotopic  in the (non-punctured) torus $\partial M_0$, and we replace the remaining curves with minimal length representatives of their homotopy class in the flat torus. The resulting collection of curves is loose and represents the same homology class in $\partial M_0$ as $\boldsymbol{\gamma}_1$; we call $\hat{\boldsymbol{\gamma}}_1$ the {\it loose representative} of $\boldsymbol{\gamma}_1$. Recall that for each spin$^c$-structure $\spin$ of $M_1$, the multicurve $\curves{M_1, \spin}$ is homologous, ignoring the basepoint, to the homological longitude $n \lambda_1$; see Corollary \ref{crl:pull-tight-to-longitude}. Thus, for each spin$^c$-structure of $M_1$, the relevant components of $\boldsymbol{\gamma}_1$ are homologous in $\partial M_0$ to $n$ copies of $L_{p,q}$. It follows that $\hat{\boldsymbol{\gamma}}_1$ contains at least $nm$ copies of $L_{p,q}$, so that
\[i(\boldsymbol{\gamma}_0, \hat{\boldsymbol{\gamma}}_1) \ge n m \cdot i(\boldsymbol{\gamma}_0, L_{p,q}) = n m \dim\HFhat(Y_0).\]
It remains to show that 
$i(\boldsymbol{\gamma}_0, \boldsymbol{\gamma}_1) \ge i (\boldsymbol{\gamma}_0, \hat{\boldsymbol{\gamma}}_1)$. 


\labellist
\tiny
\pinlabel $\boldsymbol{\gamma}_1$ at 16 71 
\pinlabel $\hat{\boldsymbol{\gamma}}_1$ at 280 15
\endlabellist
\piccaption[]{Sliding a corner over the peg. The corner wraps around a circle of some radius between $\epsilon$ and $2\epsilon$ (the inner and outer radii of the shaded annulus). We replace the segment of curve through the annulus with a segment on the opposite side of the peg following the circle $C_{2\epsilon}$. \label{fig:main-pinch-move}}
\parpic[r]{
 \begin{minipage}{60mm}
 \centering
\includegraphics[scale=0.5]{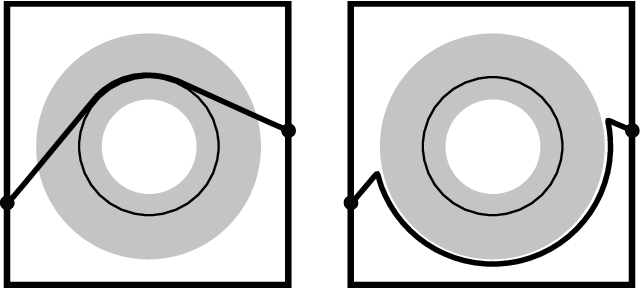}
 \end{minipage}%
  }We will fix a small $\epsilon$ and work in an $\epsilon$-pegboard diagram. Note that $\hat{\boldsymbol{\gamma}}_1$ can be obtained from ${\boldsymbol{\gamma}}_1$ by a sequence of the following moves: (i) replacing a local system with a trivial one of the same dimension; (ii) resolving a self-intersection to split off a closed component; (iii) deleting a component, (iv) homotopy in $T_{M_0}$ to put curves in $\epsilon$-pegboard position; and (v) passing the curve through a peg near a corner in the peg board diagram, as in Figure \ref{fig:main-pinch-move}. Moves (i) and (ii) clearly do not change the number of intersections with $\gamma_0$ at all, and move (iii) cannot increase the intersection number. Move (iv) also does not increase intersection number, since curves in an an $\epsilon$-pegboard diagram realize the minimal intersection for their homotopy classes in $T$. The remaining move, (v), requires some care, since it may introduce new intersection points; see Figure \ref{fig:types}.
  
After removing nullhomologous components, no component of ${\boldsymbol{\gamma}}_1$ is homotopic to a component of ${\boldsymbol{\gamma}}_0$; it follows that ${\boldsymbol{\gamma}}_0$ and ${\boldsymbol{\gamma}}_1$ intersect minimally and transversely in the $\epsilon$-pegboard diagram. By first applying (ii) as needed, we may assume that at each corner of ${\boldsymbol{\gamma}}_1$ the curve changes direction by at most an angle of $\pi$; that is, there is no peg-wrapping in $\gamma_1$. Now fix a corner $c_1$ of $\gamma_1$ and consider the effect of applying (v) at this corner; let $\hat{c}_1$ denote the modified corner segment along the circle $C_{2\epsilon}$. Since ${\boldsymbol{\gamma}}_1$ is unchanged outside the disk of radius $2\epsilon$, to compute the new intersection number we only need to consider the intersection of $\hat{c}_1$ with the corners of ${\boldsymbol{\gamma}}_0$. For each corner of ${\boldsymbol{\gamma}}_0$, replacing $c_1$ with $\hat{c}_1$ either adds two intersection points, preserves intersection number, or removes two intersection points (see Figure \ref{fig:types}). Replacing $c_1$ with $\hat{c}_1$ will not increase the overall intersection number as long as ${\boldsymbol{\gamma}}_0$ has at least as many type (d) corners as type (a) corners.

\begin{figure}[h]
\labellist
\tiny
\pinlabel $c_1$ at 70 100 
\pinlabel $\hat{c}_1$ at 70 14 
\pinlabel $c_0$ at 80 83

\pinlabel $c_1$ at 250 100 
\pinlabel $\hat{c}_1$ at 250 14 
\pinlabel $c_0$ at 256 36

\pinlabel $c_1$ at 430 85 
\pinlabel $\hat{c}_1$ at 430 14
\pinlabel $c_0$ at 460 105

\pinlabel $c_1$ at 610 100 
\pinlabel $\hat{c}_1$ at 610 14 
\pinlabel $c_0$ at 599 50
\endlabellist
\includegraphics[scale=0.5]{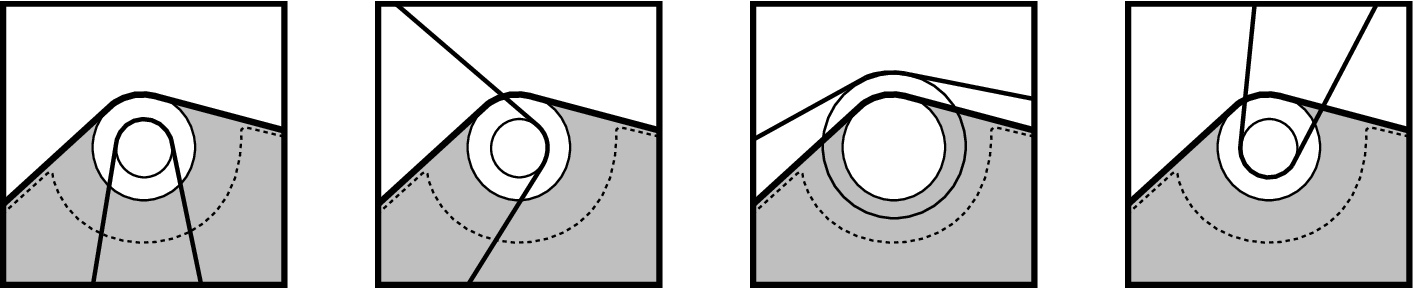}

(a) \hspace{2.45 cm} (b) \hspace{2.45 cm} (c) \hspace{2.45 cm} (d)

\caption{For a fixed corner $c_1$ of $\boldsymbol{\gamma}_1$, a corner $c_0$ of $\boldsymbol{\gamma}_0$ has one of four types. The inside of the corner $c_1$ is shaded, and the dashed line indicates the modified corner $\hat{c}_1$ resulting from move (v).}
\label{fig:types}
\end{figure}

\labellist
\tiny
\pinlabel $s_{\text{in}}$ at -9 39
\pinlabel $s_{\text{out}}$ at 157 77 
\pinlabel {\bf inside} at 72 15
\pinlabel $s_{\text{in}}$ at 173 39
\pinlabel $s_{\text{out}}$ at 339 77
\pinlabel $\bar s_{\text{in}}$ at 174 53 
\pinlabel $\bar s_{\text{out}}$ at 340 62
\pinlabel $\phi$ at 254 91
%
\endlabellist
\begin{figure}[h]
\includegraphics[scale=0.5]{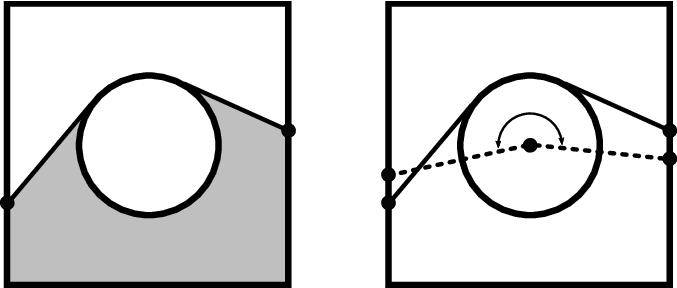} 
\caption{A square tile representing the torus $T_{M_0}$ together with the section $s$ of $\boldsymbol{\gamma}_1$ containing a chosen corner $c_1$. The inside of $s$ is determined by containment of the peg, as shown on the left; on the right, the corresponding segment $\bar{s}$ in a singular pegboard diagram. Note that $s$ lies in a neighborhood of width $4\epsilon$ around $\bar{s}$. We are most interested in the case that the angle $\phi$ associated with $c_1$ in the singular pegboard diagram is strictly bigger than $\pi$ (that is, $c_1$ is not a $\pi$-corner), so that $\bar{s}$ is not a straight line. In this case, the inside of $s$  covers strictly less than half the boundary of the square.}
\label{fig:parts}
\end{figure}


To analyze this situation, we decompose $T_{M_0}$ into a square tile (obtained by taking the closure of $T_{M_0}\smallsetminus (\alpha_0 \cup \beta_0)$), and consider the component of $\boldsymbol{\gamma}_1$ containing the chosen corner $c_1$ as in Figure \ref{fig:parts}. This segment of curve, which we call $s$, begins on the boundary of the square tile at at $s_{\text{in}}$, wraps some amount around a circle with radius between $\epsilon$ and $2\epsilon$ centered at the puncture, then exits the square tile at $s_{\text{out}}$. Note that $s$ divides the square tile into two sides; we call the side that contains the peg the {\it inside} of the corner $c_1$. Similarly, any corner $c_0$ of $\boldsymbol{\gamma}_0$ can be extended to a segment through the square tile; we will call this segment $x$, with endpoints $x_{\text{in}}$ and $x_{\text{out}}$ on the boundary of the tile. We can now more precisely classify corners of $\boldsymbol{\gamma}_0$ into the four types shown in Figure \ref{fig:types} by considering how the segment $x$ interacts with $s$. We say the corner $c_0$ is of type (a) if both endpoints lie on the inside of $c_1$, type (b) if exactly one endpoint lies inside $c_1$, type (c) if both endpoints lie outside $c_1$ and $c_0$ turns the same direction as $c_1$ with no peg-wrapping, and type (d) if both endpoints lie outside $c_1$ and $c_0$ wraps the peg or turns the opposite direction from $c_1$. Note that for type (a) corners, $c_0$ necessarily changes direction more than $c_1$ and so wraps closer to the puncture, giving the nested configuration in the figure. In this case $c_0$ is disjoint from $c_1$ but intersects the modified corner $\hat{c}_1$ twice. For type (b) corners, $c_0$ may turn either more or less tightly than $c_1$; in either case, $c_0$ intersects both $c_1$ and $\hat{c_1}$ once. For type (c) corners, $c_1$ must be nested inside of $c_0$, so $c_0$ is disjoint from both $c_1$ and $\hat{c}_1$. Finally, type (d) corners have two intersections with $c_1$ but are disjoint from $\hat{c}_1$.  Observe that replacing $c_1$ with $\hat{c}_1$ will not increase the overall intersection number as long as ${\boldsymbol{\gamma}}_0$ has at least as many type (d) corners as type (a) corners.

\labellist
\tiny
\pinlabel $c^i$ at 73 75  
\pinlabel $x^i_{\text{in}}$ at 48 -7 
\pinlabel $x^i_{\text{out}}$ at 106 -7
\pinlabel $x^{i-1}_{\text{out}}$ at 50 154
\pinlabel $x^{i+1}_{\text{in}}$ at 108 154
\endlabellist
\piccaption{Labelling endpoints for corners of $\boldsymbol{\gamma}_0$.
 \label{fig:in-out}}
\parpic[r]{
 \begin{minipage}{30mm}
 \centering
\includegraphics[scale=0.5]{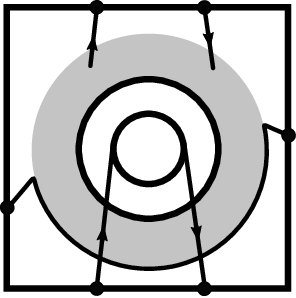}
 \end{minipage}%
  } We now need to consider two cases with different behavior, depending on whether the curve changes direction at the corner $c_1$ in a singular pegboard diagram. Let $\bar{s}$ denote the piecewise linear segment of curve through the square tile and containing the corner $c_1$ in a singular peg-board diagram for $\boldsymbol{\gamma}_1$; in other words, $\bar{s}$ is the limit of $s$ if $\epsilon$ is taken to zero. We denote the endpoints of $\bar{s}$ by $\bar{s}_{\text{in}}$ and $\bar{s}_{\text{out}}$. We first consider the case that the singular peg-board representative of $\boldsymbol{\gamma}_1$ changes direction at $c_1$; that is, the segment $\bar{s}$ is not a straight line. In this case $\bar{s}_{\text{in}}$ and $\bar{s}_{\text{out}}$ cut the boundary of the square tile into two unequal pieces, with the smaller piece corresponding to the inside of $c_1$. Note that $s_{\text{in}}$ and $s_{\text{out}}$ are close to $\bar{s}_{\text{in}}$ and $\bar{s}_{\text{out}}$, respectively, where close means within a small multiple of $\epsilon$; in particular, since $s$ lies in the $2\epsilon$-neighborhood of $\bar s$ and since $\bar s$ meets the boundary of the square in an angle of at least $\tfrac \pi 4$, the distances are bounded by $2\sqrt{2} \epsilon$. Since we can take $\epsilon$ to be arbitrarily small, it follows that the inside of $c_1$ covers strictly less than half the boundary of the square tile. We now run through all the corners $\{c_0^i\}$ of $\boldsymbol{\gamma}_0$, labeling the endpoints of the corresponding curve segments in the square tile by $x^i_{\text{in}}$ and $x^i_{\text{out}}$, as in Figure \ref{fig:in-out} (the index $i$ should be interpreted cyclically, with the corners ordered as they appear traversing the curve). Note that the curve $\boldsymbol{\gamma}_0$ may pass through the square tile (without interacting with the peg) many times between $x^i_{\text{out}}$ and $x^{i+1}_{\text{in}}$; we ignore these portions of the curve.  Since $\boldsymbol{\gamma}_0$ does not change direction between corners, the slope of the segment leaving the tile at $x^i_{\text{out}}$ agrees with the slope of the segment entering the tile at $x^{i+1}_{\text{in}}$. It follows that $x^{i+1}_{\text{in}}$ is close to the antipodal point to $x^i_{\text{out}}$ on the boundary of the square (i.e. the point obtained by a half-rotation about the center of the square), where again close means within a small fixed multiple of $\epsilon$. More precisely, if we consider the analogous points $\bar{x}^i_{\text{out}}$ and $\bar{x}^{i+1}_{\text{in}}$ coming from a singular pegboard representative of $\boldsymbol{\gamma}_1$, we observe that these are antipodal points since the segments connecting them to the center of the square have the same slope. Because $\boldsymbol{\gamma}_0$ lies in a $2\epsilon$-neighborhood of the singular peg-board representative, $x^i_{\text{out}}$ is within $2\sqrt{2}\epsilon$ of $\bar{x}^i_{\text{out}}$, $x^{i+1}_{\text{in}}$ is within $2\sqrt{2}\epsilon$ of $\bar{x}^{i+1}_{\text{in}}$, and  $x^{i+1}_{\text{in}}$ is a distance of less than $4\sqrt{2}\epsilon$ from the antipodal point to $x^i_{\text{out}}$. For sufficiently small $\epsilon$, it follows that if $x^i_{\text{out}}$ is on the inside of $c_1$ then $x^{i+1}_{\text{in}}$ must be on the outside of $c_1$ and not close to $s_{\text{in}}$ or $s_{\text{out}}$ in the aforementioned sense. We also observe that if $c_0^i$ is a type (c) corner and $x^i_{\text{in}}$ is not close to $s_{\text{in}}$ or $s_{\text{out}}$, then $x^{i+1}_{\text{in}}$ is on the outside of $c_1$ and not close to $s_{\text{in}}$ or $s_{\text{out}}$. This follows from the fact that $c_1$ lies entirely inside $c_0$ and the antipodal point to $x^i_{\text{out}}$ lies either outside $c_0$ or close to $x^i_{\text{in}}$. In particular, if $c_0^i$ is type (a), then $x^{i+1}_{\text{in}}$ must lie outside of $c_1$ and $c_0^{i+1}$ must be type (b), (c), or (d). Moreover, if it is type (b) or (c) then $x^{i+2}_{\text{in}}$ lies outside $c_1$, and we can repeat the argument with $c_0^{i+2}$; we must reach a type (d) corner before reaching another type (a) corner. Thus there are at least as many type (d) corners as type (a) corners in $\boldsymbol{\gamma}_0$, and applying move (v) at the corner $c_1$ does not increase the total intersection number.
\labellist
\tiny
\endlabellist
\piccaption{Segments of $\boldsymbol{\gamma}_0$ interact with a neighbourhood of $\boldsymbol{\gamma}_1$ in two distinct ways.  \label{fig:pi-corner}}
\parpic[r]{
 \begin{minipage}{30mm}
 \centering
\includegraphics[scale=0.5]{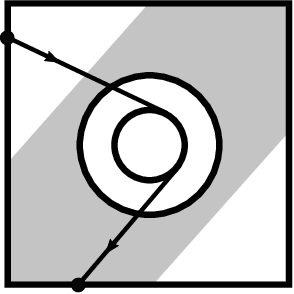}
 \end{minipage}%
  }It remains to deal with the case that the corner $c_1$ is straight in a singular pegboard diagram. The argument above breaks down in this case, since the inside of the corner may cover more than half of the boundary of the square tile, and indeed applying move (v) to a single corner that is straight in the singular diagram may increase the total intersection number. By applying (v) at all corners that are not straight in the singular diagram, along with moves (i)-(iv), we can reduce to the case that all corners of $\boldsymbol{\gamma}_1$ are straight; that is, $\boldsymbol{\gamma}_1$ is a Heegaard Floer solid torus curve. We now argue directly that for such a curve the loose representative $\hat{\boldsymbol{\gamma}}_1$ realizes the minimal intersection with $\boldsymbol{\gamma}_0$. To see this consider an annular neighbourhood of $\boldsymbol{\gamma}_1$ containing the radius $2\epsilon$ circle around the peg; see Figure \ref{fig:pi-corner}. Now decomposing $\boldsymbol{\gamma}_0$ into segments connecting the corners $\{c_0^i\}$ shows two distinct behaviours. We may ignore segments of $\boldsymbol{\gamma}_0$ that are contained in the neighbourhood of $\boldsymbol{\gamma}_1$, since the loose representative $\hat{\boldsymbol{\gamma}}_1$ does not intersect these segments of $\boldsymbol{\gamma}_0$. We consider then only the intersections with those segments that leave the  neighbourhood of $\boldsymbol{\gamma}_1$, and observe that the original curve $\boldsymbol{\gamma}_1$ and the loose representative $\hat{\boldsymbol{\gamma}}_1$ have the same number of intersections with these segments. As a result, we can replace $\boldsymbol{\gamma}_1$ with $\hat{\boldsymbol{\gamma}}_1$ without increasing the number of intersection points. 
 \end{proof}

\subsection{Heegaard Floer homology for toroidal manifolds} As another immediate consequence of the geometric interpretation of bordered Floer invariants, we will now prove Theorem \ref{thm:total-dim-toroidal}, which states that if $Y$ contains an essential torus that is separating then $\dim \HFhat(Y) \ge 5$.

\begin{proof}[Proof of Theorem \ref{thm:total-dim-toroidal}]
By cutting along the torus, realize $Y$ as $M_0 \cup_h M_1$ where $M_0$ and $M_1$ are manifolds with incompressible torus boundary and $h\co\partial M_1 \to \partial M_0$ is a diffeomorphism. Let $\boldsymbol{\gamma_0} = \curves{M_0}$ and $\boldsymbol{\gamma_1} = \bar h(\curves{M_1})$ be the relevant decorated immersed multicurves in $\partial M_0 \setminus z_0$. 

We first set some homological prerequisites in place. A manifold with torus boundary $M$ has rational longitude $\lambda$ and homological longitude $p\lambda$. In particular, $p\lambda$ bounds a properly embedded surface in $M$; and $\langle\lambda\rangle$ generates a  $\Z/p\Z$  subgroup in $H_1(M;\Z)$. Applying universal coefficients and Poincar\'e duality then gives a  $\Z/p\Z$  subgroup in $H^2(M;\Z)$, which in turn guarantees $p$ torsion spin$^c$-structures on $M$ that we denote $\{\spin_i\}_{i=0}^{p-1}$. Finally, observe that for any choice of meridian $\mu$, that is, a slope at distance one from $\lambda$, the Dehn filling $M(\mu)$ inherits $\Z/p\Z\subset H^2(M(\mu);\Z)$; by abuse of notation, we denote the associated $p$ torsion spin$^c$-structures on $M(\mu)$ by $\{\spin_i\}_{i=0}^{p-1}$ as well. With this homological data in hand, we turn to Heegaard Floer homology. In particular, note that $\HFhat(M(\mu),\spin_i)$ is non-trivial for each $i=0,\ldots,p-1$. This appears to be known to experts, and appeals to work of Lidman \cite{LidmanHFinfty}; an argument, due to the second author, is written down in \cite[Proposition 8.9]{KWZ-thin}. In particular, by applying our pairing theorem, this non-vanishing result guarantees that there is at least one curve in $\HFhat(M,\spin_i)$ for each $i=0,\ldots,p-1$. Moreover, if we forget the punctures, this curve is homologous to  $p\lambda$; compare Corollary \ref{crl:pull-tight-to-longitude}. 

As in the proof of Theorem \ref{thm:pinch}, there are two somewhat distinct cases to consider, with Heegaard Floer solid torus type curves requiring a different approach. Recall that a corner of $\boldsymbol{\gamma_i}$ is a $\pi$-corner if the associated angle in a singular pegboard diagram is exactly $\pi$, so that the corner pulls tight to a straight line, and that we call all other corners true corners. If $\boldsymbol{\gamma_i}$ has no true corners then $M_i$ is a Heegaard Floer solid torus. Note that if $\boldsymbol{\gamma_i}$ has one true corner, then it must have at least two true corners. This is because if $\boldsymbol{\gamma_i}$, a closed curve, changes direction at one corner it must change direction again to get back to its initial slope. The one potential exception is if $\boldsymbol{\gamma_i}$ contains a single true corner wrapping the peg an integer number of full wraps so that the curve does not change direction; however, it follows from Corollary \ref{cor:tangent_slopes} in the next section that such curves do not arise.

A key observation is that, if $\boldsymbol{\gamma_0}$ is not a Heegaard Floer solid torus curve, the minimal intersection number between $\boldsymbol{\gamma_0}$ and $\boldsymbol{\gamma_1}$ satisfies
\begin{equation}\label{eq:intersection-lower-bound}
i(\boldsymbol{\gamma_0}, \boldsymbol{\gamma_1}) \ge 2 \#\{\text{true corners of }\boldsymbol{\gamma_1}\} + \#\{\pi\text{-corners of }\boldsymbol{\gamma_1}\} + \#\{\text{loose components of }\boldsymbol{\gamma_1}\}.
\end{equation}
To show this, we first consider a fixed true corner $c_1$ of $\boldsymbol{\gamma_1}$ and argue that the combined intersection number of $c_1$ with all the corners $\{c_0^i\}$ of $\boldsymbol{\gamma_0}$ is at least two. As in the proof of Theorem \ref{thm:pinch}, we assign the corners of $\boldsymbol{\gamma_0}$ types as in Figure \ref{fig:types}, and we extend each corner $c_0^i$ to a segment $x^i$ through the square tile with endpoints $x^i_{\text{in}}$ and $x^i_{\text{out}}$. If $\boldsymbol{\gamma_0}$ has a type (d) corner, we are done since this corner alone contributes two intersection points with $c_1$. If $\boldsymbol{\gamma_0}$ has a type (a) corner, then the argument in the proof of Theorem \ref{thm:pinch} shows there must also be a type (d) corner. By assumption $\boldsymbol{\gamma_0}$ has at least one true corner. If $\boldsymbol{\gamma_0}$ has a true corner of type (b), take this to be $c_0^i$ and orient the curve so that $x^i_{\text{out}}$ is on the inside of $c_1$. We then have that $x^{i+1}_{\text{in}}$, being within a small neighborhood of the antipodal point of $x^i_{\text{out}}$, lies outside of $c_1$ so that the next corner $c_0^{i+1}$ must have type (b), (c), or (d). If $c_0^{i+1}$ is type (b) or (d) we are done, since this contributes at least one more intersection with $c_1$ in addition to the one intersection from $c_0^i$. The only way that $c_0^{i+1}$ can have type (c) is if the segment from $c_0^i$ to $x^i_{\text{out}}$ coincides with one of the segments defining $c_1$ when we pass to a singular peg-board diagram and $c_0^{i+1}$ is a $\pi$-corner. In this case, the same reasoning shows that $c_0^{i+1}$ is type (b), (c), or (d) and can only be type (c) under the same conditions. Since we must eventually reach a true corner, we eventually reach a type (b) or (d) corner, contributing at least the second intersection with $c_1$ (note that if we reach a type (b) corner, it can not be the corner $c_0^i$ we started with since the incoming segment has the same slope as the outgoing segment of $c_0^i$, and $c_0^i$ was not a $\pi$-corner). Finally, if $\boldsymbol{\gamma_0}$ has a true corner of type (c) then there is a straight line segment through the puncture in the square tile such that both endpoints lie strictly outside of this corner. Up to reparametrization we may assume this line is horizontal, so that the corner represents a local maximum of $\boldsymbol{\gamma_0}$. Being a closed curve, $\boldsymbol{\gamma_0}$ must also have a local minimum, and a corner at which a local minimum occurs is necessarily type (d). Thus the true corner $c_1$ contributes at least two intersections to $i(\boldsymbol{\gamma_0}, \boldsymbol{\gamma_1})$. 

If the fixed corner $c_1$ of $\boldsymbol{\gamma_1}$ is a $\pi$-corner, a similar argument shows the contribution is at least one. Note that any type (b) or (d) corner in $\boldsymbol{\gamma_0}$ contributes at least one intersection point with $c_1$. Any corner that is type (c) with respect to $c_1$ must be a $\pi$-corner, and by assumption $\boldsymbol{\gamma_0}$ has at least one true corner, so we are left with considering a true type (a) corner $c_0^i$. In this case, the endpoints $x^i_{\text{in}}$ or $x^i_{\text{out}}$ lie on the inside of $c_1$, and at least one of them is strictly on the inside $c_1$ (in the sense that it is at least a distance of $4\sqrt 2 \epsilon$ from the points $s_{\text{in}}$ or $s_{\text{out}}$). Suppose $x^i_{\text{out}}$ lies strictly inside of $c_1$; it follows that $x^{i+1}_{\text{in}}$ lies strictly on the outside of $c_1$ and $c_0^{i+1}$ is type (b) or (d), contributing the needed intersection point. 

Finally, we show that that a loose component of $\boldsymbol{\gamma_1}$ contributes at least one intersection. Indeed, if it did not then $\boldsymbol{\gamma_0}$ would have to lie in a neighborhood of a line with the same slope as the loose component, but this would imply that $\boldsymbol{\gamma_0}$ is a Heegaard Floer solid torus. This completes the proof of equation~\eqref{eq:intersection-lower-bound}. 

We first consider the case that neither $M_0$ nor $M_1$ is a Heegaard Floer solid torus. Equation \eqref{eq:intersection-lower-bound} then implies that $i(\boldsymbol{\gamma_0}, \boldsymbol{\gamma_1}) \ge 4$. Suppose that $i(\boldsymbol{\gamma_0}, \boldsymbol{\gamma_1}) = 4$. By Equation \eqref{eq:intersection-lower-bound} we must have that $\boldsymbol{\gamma_1}$ contains a single component with exactly two corners, and thus exactly two segments in a singular pegboard diagram. Interchanging the roles of $M_0$ and $M_1$ we see that the same is true for $\boldsymbol{\gamma_0}$. It is also clear that neither curve can carry a non-trivial local system, since this would multiply the number of intersection points by the dimension of the local system. Let $p_1$ and $p_2$ be the slopes of the two segments in a singular peg-board diagram for $\boldsymbol{\gamma}_0$, and let $q_1$ and $q_2$  be the slopes of the two segments for $\boldsymbol{\gamma}_1$. Observe that, since neither curve is a Heegaard Floer solid torus, \(p_1 \neq p_2\)  and \(q_1 \neq q_2.\)


The intersection points counted so far all occur at the corners; as $\epsilon$ goes to zero in an $\epsilon$-pegboard diagram, these intersection points approach the puncture. We will now consider intersections away from the corners. We observe that any two segments of slope $r_1$ and $r_2$ in the singular pegboard diagram intersect $d$ times, where $d = \Delta(r_1, r_2)$ is the distance between the slopes (calculated by taking minimal geometric intersection number). One of these intersection points is at the endpoints, but the remaining $d-1$ intersection points occur in the interior of the segments; these intersections remain in an $\epsilon$-pegboard diagram and are a finite distance from the peg. It follows that if $i(\boldsymbol{\gamma}_0, \boldsymbol{\gamma}_1) = 4$ then $\Delta(p_1, q_1)$, $\Delta(p_1, q_2)$, $\Delta(p_2, q_1)$, and $\Delta(p_2, q_2)$ are all at most 1. 
First suppose that the four slopes $p_1, p_2, q_1,$ and $q_2$ are all distinct. Up to reparametrization, we may assume $p_1 = \infty$; it follows that $q_1$ and $q_2$ are distinct integers. Since $p_2$ is distance 1 from each of these and distinct from $\infty$, we must have $p_2 = n$, $q_1 = n-1$ and $q_2 = n+1$. This case is ruled out, because it would imply that the rational longitude $\lambda_1$ of $M_1$ has order two, but as above this implies the existence of another component and gives $i(\boldsymbol{\gamma}_0, \boldsymbol{\gamma}_1) > 4$. Now instead suppose both curves have a segment of the same slope; up to reparametrization and reindexing, we may assume $p_1 = q_1 = \infty$. It follows that $p_2$ and $q_2$ are integers, with $|p_2 - q_2| \le 1$. Up to reparametrization and switching the roles of $\boldsymbol{\gamma}_0$ and $\boldsymbol{\gamma}_1$, we may assume that $p_2 = 0$ and $q_2$ is either 0 or 1. In the first case, the curves $\boldsymbol{\gamma}_0$ and $\boldsymbol{\gamma}_1$ coincide. This is the one setting in which an $\epsilon$-pegboard diagram does not give transverse intersection but, perturbing slightly from an $\epsilon$-pegboard diagram, it is a simple exercise to check that the minimal intersection $i(\boldsymbol{\gamma}_0, \boldsymbol{\gamma}_1)$ is 4. However, in this minimal position there is an immersed annulus, so achieving admissibility requires adding two additional intersection points and $\dim \HFa(\boldsymbol{\gamma}_0, \boldsymbol{\gamma}_1) = 6$. The remaining case is depicted in Figure \ref{fig:ZHS3-small} and has 5 intersection points.
\begin{figure}[t]
\includegraphics[scale=.8]{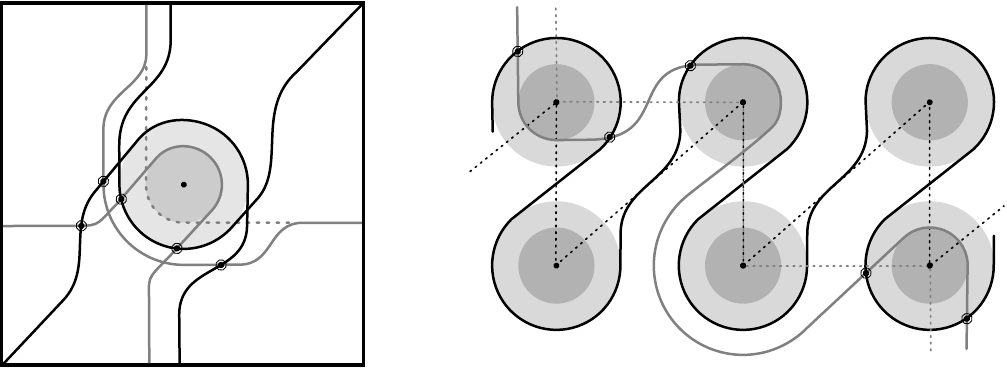}
\caption{The pairing of $\boldsymbol{\gamma_0}$ (gray) consisting of two segments of slopes 0 and $\infty$ with $\boldsymbol{\gamma_1}$ (black) consisting of two segments of slopes 1 and $\infty$, shown both in the torus and in the covering space $\R^2\setminus\Z^2$. This corresponds to a gluing of two trefoil complements. The result of the gluing is an integer homology sphere, since there is $\pm 1$ intersection point counted with sign. Note that, for the purposes of illustration, neither curve is pulled tight.}
\label{fig:ZHS3-small}
\end{figure}

We next consider the case that $M_1$ is a Heegaard Floer solid torus but  $M_0$ is not. If $\boldsymbol{\gamma_1}$ has true corners, the argument above applies so that $i(\boldsymbol{\gamma}_0, \boldsymbol{\gamma}_1) > 4$ unless $\boldsymbol{\gamma_1}$ has a single component and exactly two corners. It is easy to see that such a curve cannot be the invariant of a Heegaard Floer solid torus. If both segments in a singular peg-board diagram have the same slope then either the curve is nullhomologous or it is homologous to twice a primitive curve; in the first case there must be another component since $\boldsymbol{\gamma_1}$ is homologous to the rational longitude $\lambda_1$ of $M_1$, and in the second case there must be another component because if $\lambda_1$ has order two there are at least two spin$^c$ structures of $M_1$.  We conclude that $\boldsymbol{\gamma_1}$ has only $\pi$-corners. It is still the case that each $\pi$-corner of $\boldsymbol{\gamma_1}$ and each loose component of $\boldsymbol{\gamma_1}$ contributes one to $i(\boldsymbol{\gamma}_0, \boldsymbol{\gamma}_1)$; in fact, a loose component that is $p$ times a primitive curve contributes at least $p$ intersection points. If $p$ is the order of the rational longitude of $M_1$, then $\boldsymbol{\gamma_1}$ has at least $p$ components which each contribute at least $p$ intersection points, so $i(\boldsymbol{\gamma}_0, \boldsymbol{\gamma}_1) \ge p^2$. If $i(\boldsymbol{\gamma}_0, \boldsymbol{\gamma}_1) < 5$ then $p = 2$ and $\boldsymbol{\gamma_1}$ has exactly two components, each of which is one of the two curves associated with the twisted $I$-bundle (see Figure \ref{fig:alt-twisted-I}). Since $M_1$ has incompressible boundary, at least one of the components of $\boldsymbol{\gamma_1}$ is tight. It is now a simple exercise that $i(\boldsymbol{\gamma}_0, \boldsymbol{\gamma}_1) \ge 6$. The lower bound of four given previously records two intersections with a loose component and two intersections near corners in a tight component. However, if $\boldsymbol{\gamma}_0$ has two segments with slope different from the slope of $\lambda_1$ then there will be two type (d) corners, and if one segment of $\boldsymbol{\gamma}_0$ has the same slope as $\lambda_1$ this segment will have two additional intersection points, so the tight component of $\boldsymbol{\gamma}_0$ contributes at least four intersections.

The final case to consider is when both $M_0$ and $M_1$ are Heegaard Floer solid tori; that is, each multicurve pulls tight to a straight line in a singular pegboard diagram. These curve-sets may be divided into sets of loose and tight components. By irreducibility and Proposition~\ref{prop:loose}, both $\boldsymbol{\gamma}_0$ and $\boldsymbol{\gamma}_1$ must have at least one tight component. We claim that each multicurve must also have at least one loose component. In fact, each must have a loose component for which the associated local system has 1 as an eigenvalue (note that when discussing local systems on loose components we will assume the underlying curve is primitive by applying Lemma \ref{Lem:LocalSystems} if needed). To see this, consider pairing with $L_i$, a simple closed curve in $\partial M_i$ representing the rational longitude $\lambda_i$ of $M_i$. The loose curve $L_i$ pairs trivially with any tight component of $\boldsymbol{\gamma}_i$, since the curves can be made disjoint without any immersed annuli. Any loose component of $\boldsymbol{\gamma}_i$ decorated with a local system $A$ pairs with $L_i$ to give Floer homology of dimension $2\dim(\ker(I+A))$; if $I+A$ is invertible (equivalently if 1 is not an eigenvalue of $A$) then this contribution is again trivial. Since the Floer homology $\HFa(\boldsymbol{\gamma}_i, L_i)$ computes $\HFhat$ of the Dehn filling $M_i(\lambda_i)$, which must be nontrivial, there must be at least one component of $\boldsymbol{\gamma}_i$ which is loose and with 1 as an eigenvalue of the local system.
  
\labellist
\tiny
\endlabellist
\piccaption[]{Pairing two twisted $I$-bundles over the Klein bottle: At left, the loose components contribute two intersections and the tight components contribute two intersections. Imposing admissibility to the tight components as on the right, brings the intersection number to 6.  \label{fig:4-admiss}}
\parpic[r]{
 \begin{minipage}{60mm}
 \centering
\includegraphics[scale=0.5]{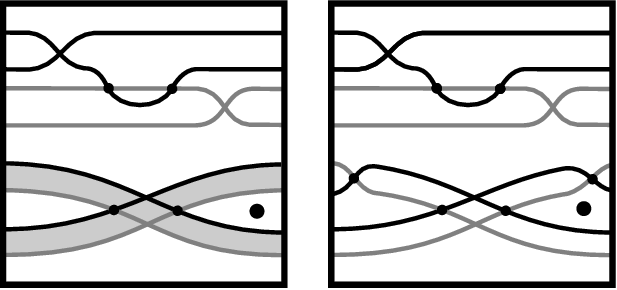}
 \end{minipage}%
  } Let $p_i$ be the order of the rational longitude $\lambda_i$ of $M_i$. Recall that $\boldsymbol{\gamma}_i$ has at least $p_i$ components, each homologous to at least $p_i$ copies of $\lambda_i$. From this it easily follows that if $Y$ is a rational homology sphere, $i(\boldsymbol{\gamma}_0, \boldsymbol{\gamma}_1) \ge p_0^2 p_1^2$. Moreover, if a tight component of $\boldsymbol{\gamma}_i$ has only $\pi$-corners then $p_i$ must be at least two, and if this holds for both curves then $i(\boldsymbol{\gamma}_0, \boldsymbol{\gamma}_1) \ge 16$ for a rational homology sphere gluing. There are a few pathological examples to consider (which likely do not arise from manifolds) of tight Heegaard Floer solid torus curves with $p_i = 1$, but such curves traverse the underlying primitive curve at least three times and still give a lower bound of 9 for rational homology sphere gluings. The more interesting case is when $h(\lambda_1) = \lambda_0$; we will reparametrize so that both curves are horizontal. In this setting, any pair of tight components contribute at least two intersection points to Floer homology; to see this note that each curve must pass above the peg at least once and below the peg at least once, and a (rightward moving) segment moving from a corner below the peg to a corner above the peg always intersects a segment moving from a corner above the peg to a corner below the peg. Any pair of loose curves with local systems $A_0$ and $A_1$ for which $1$ as an eigenvalue also have Floer homology of dimension at least two. Although the minimal intersection number is zero, by Corollary \ref{cor:ComputeHF} the dimension of Floer homology is given by $2\ker(I + A_0 \otimes  A_1)$; since 1 is an eigenvalue of both $A_0$ and $A_1$, it is also an eigenvalue of $A_0 \otimes A_1$ and so $I + A_0 \otimes  A_1$ is singular. To summarize, if  $x_i$ is the number of tight components of $\boldsymbol{\gamma}_i$ and $y_i$ is the number of loose components of $\boldsymbol{\gamma}_i$ with 1 as an eigenvalue of the local system, then $\dim \HFa(\boldsymbol{\gamma}_0, \boldsymbol{\gamma}_1)$ is bounded below by $2x_0x_1+2y_0y_1$. The discussion above ensures that $x_i$ and $y_i$ are both nonzero for $i = 0,1$, and so this bound is at least 4. Moreover, the bound is only 4 if $x_0 = x_1 = y_0 = y_1 = 1$. In this case, both rational longitudes must have order two (since each multicurve has only two components), and there is a unique tight curve which wraps twice around the torus following the horizontal line. Since the tight component of $\boldsymbol{\gamma}_0$ is commensurate with the tight component of $\boldsymbol{\gamma}_1$, one can check that pairing these curves in fact contributes at least 4 to $\dim \HFa(\boldsymbol{\gamma}_0, \boldsymbol{\gamma}_1)$, raising the lower bound in this case to 6. Note that this bound can be realized by letting $M_0$ and $M_1$ both be twisted $I$-bundles over the Klein bottle; this example is shown in Figure \ref{fig:4-admiss}.
\end{proof}

The case in Figure \ref{fig:ZHS3-small} is realized by gluing two trefoil complements. For instance, it arises from gluing the complement of two right handed trefoil complements by identifying 0-framed longitude with meridian and meridian with 1-framed longitude. This provides an example of a toroidal integer homology sphere $Y$ realizing $\dim \HFhat(Y) = 5$.

With this example in hand, consider the function $\mathpzc{f}$ taking values in the natural numbers defined by 
\[\mathpzc{f}(n)=
\min\{\dim\HFhat(Y) \,|\, Y\ \text{is a rational homology sphere with\ } n\ \text{separating JSJ\ tori} \}\]
and note that $\mathpzc{f}(0)=1$ (realized by the  three-sphere or the Poincar\'e homology sphere) and  $\mathpzc{f}(1)=5$ (realized by gluing two trefoil exteriors as in Theorem \ref{thm:toroidal-detailed}, see Figure \ref{fig:ZHS3-small}). For Heegaard Floer aficionados, we propose: 
\begin{question}\label{qst:growth} Is it possible to determine $\mathpzc{f}$ or describe properties of $\mathpzc{f}$ as $n$ grows?\end{question}

If we further assume that $Y$ is an L-space, we can in fact strengthen the result of Theorem \ref{thm:total-dim-toroidal} by enumerating all examples with $\dim \HFhat(Y) < 7$; see Theorem \ref{thm:toroidal-detailed}. This will make use of a result specific to L-spaces. Toward establishing this result, we now turn our attention to properties of the immersed curve invariants relevant to L-spaces.

\subsection{Characterizing L-space slopes} Given a manifold with torus boundary $M$, let $\sL_M$ denote the set of L-space slopes, that is, the subset of slopes giving rise to L-spaces on Dehn filling, and let $\sL^\circ_M$ denote the interior of $\sL_M$. This section gives a characterization of the set $\sL^\circ_M$ in terms of the collection of curves $\curves{M}$.

 Given a spin$^c$ structure $\spin$ on $M$, we will define a set of slopes $S(M, \spin) \subset \RP$ associated with the multicurve $\curves{M, \spin}$. For each $\epsilon > 0$, we consider an $\epsilon$-pegboard diagram for the curves $\curves{M, \spin}$. If $\curves{M, \spin}$ has single component equipped with the trivial local system we define $S_\epsilon(M, \spin)$ as the set of tangent slopes of the $\epsilon$-pegboard representative of $\curves{M, \spin}$. Otherwise, if $\curves{M, \spin}$ is disconnected or has a non-trivial local system we define $S_\epsilon(M, \spin)$ to be all of $\RP$. Note that when computing the set of tangent slopes in a $\epsilon$-pegboard diagram, it is convenient to lift to the covering space $\overline{T}_M$. We now define $S(M, \spin)$ by taking $\epsilon\to 0$, in the sense that $S(M, \spin)=\bigcap_{n=N}^\infty S_{\frac{1}{n}}(M, \spin)$. We define $S(M)$ to be the union of $S(M, \spin)$ over all spin$^c$ structures.


We can define a related set $S^{sing}(M, \spin)$ using a singular pegboard diagram. If \(\curves{M, \spin}\) has a single component carrying a trivial local system, $S^{sing}(M, \spin)$ is the set of slopes of tangent lines to the singular peg-board representative, where we say a line $L$ through a corner is a tangent line if $L$ coincides with one of the two segments at that corner or if both segments lie on the same side of $L$. As with $S(M, \spin)$, we set $S^{sing}(M, \spin) = \RP$ if for some \(\spinc\) structure \(\spin\), $\curves{M,\spin}$ has more than one curve  or a non-trivial local system.  It is straightforward to see that $S(M) = S^{sing}(M) $ when \(\curves{M}\) does not wrap any pegs. {\it A priori}, it seems like the two sets may differ in the presence of peg-wrapping, but we will prove in Corollary~\ref{cor:tangent_slopes} below that if there is any peg-wrapping, we must have \(S^{sing}(M) = S(M) = \RP\).

Using tangent slopes in peg-board diagrams, we can give a new interpretation of L-space slopes for a loop-type manifold.

\begin{theorem}\label{thm:slope_detection}
If $M$ is a manifold with torus boundary then $\sL^\circ_M$ is the complement in $\Q P^1$ of the set $S^{sing}_\Q(M) = S^{sing}(M)\cap\Q P^1$. 
\end{theorem}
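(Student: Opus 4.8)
The plan is to reduce the statement to a computation of Dehn fillings via the pairing theorem, and then translate the L-space condition into geometry of peg-board diagrams. First I would recall that for a slope $r \in \Q P^1$, the filled manifold $M(r)$ can be realized as $M \cup_h (S^1 \times D^2)$, where the immersed-curve invariant of the solid torus is a single embedded closed curve $L_r$ of slope $r$ (the longitude $\partial D^2 \times \{\text{pt}\}$) carrying the trivial local system; this is Example (a) in Section~\ref{sub:STL}. By Theorem~\ref{thm:pairing}, $\dim \HFhat(M(r)) = \dim \HFa(\curves{M}, \bar h(L_r))$, and since $\bar h(L_r)$ is again an embedded straight curve of some slope in the marked torus, this equals the minimal geometric intersection number of $\curves{M}$ with a straight line of that slope (plus possibly an annulus correction when $\curves{M}$ is loose, but by Proposition~\ref{prop:loose} such $M$ are boundary-compressible and can be treated separately). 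On the other hand $|H_1(M(r))|$ can be computed from the homology class of $L_r$ relative to the rational longitude. So $r \in \sL_M$ iff this intersection number attains its minimal possible value $|H_1(M(r))|$, which (at least for $r$ away from the rational longitude) is exactly the condition that the straight line of slope $r$ meets $\curves{M}$ \emph{geometrically minimally and without any cancellation in homology}, i.e. all intersection points have the same sign.

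Next I would make the geometric translation precise using peg-board representatives. Put $\curves{M}$ in singular peg-board position in the cover $\tildeT_M \cong \R^2 \setminus \Z^2$, so it is a union of straight segments joining lattice points, and consider a straight line $\ell$ of rational slope $r$. The key observation is that $\ell$ meets $\curves{M}$ with all intersections of the same sign precisely when $r \notin S^{sing}(M)$: if $r$ is a tangent slope of the singular diagram, then there is a corner where both adjacent segments lie (weakly) on one side of a line of slope $r$, and pushing $\ell$ to pass through that corner from the concave side produces a pair of oppositely-signed intersections that cannot be removed; conversely, if $r$ is not a tangent slope then at every corner the two segments lie on opposite sides of any slope-$r$ line, so $\curves{M}$ is ``monotone'' with respect to the slope-$r$ foliation and all intersections with $\ell$ have the same sign. (Here I also need $S(M) = S^{sing}(M)$, i.e. no peg-wrapping occurs; this is where Lemma~\ref{lem:Floer_simple_slopes}, forthcoming in the next subsection, is invoked — a manifold with more than one L-space filling is Floer simple and its curve cannot wrap a peg, and if there are $\le 1$ L-space fillings the statement about $\sL_M^\circ$ is vacuous or trivial.) Combining this with the previous paragraph gives: $r \in \sL_M$ and $r$ is not an endpoint of a maximal interval of non-L-space slopes $\iff r \notin S^{sing}(M)$, which is the claim about the interior $\sL_M^\circ$.

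I would then handle the remaining bookkeeping: the case where $\curves{M}$ has multiple components or a nontrivial local system in some $\spinc$ structure, where $S^{sing}(M) = \Q P^1$ by definition and one checks directly that $\sL_M$ has empty interior (a manifold with $|\sL_M| > 1$ must have a connected curve with trivial local system — this is stated in the introduction around Theorem~\ref{thm:slope_detection}); and the degenerate case where $\curves{M}$ is a union of straight lines parallel to the rational longitude $\alpha$, where $S^{sing}(M) = \{\alpha\}$ and indeed every slope except $\alpha$ is an L-space slope (these are the Heegaard Floer solid tori). Finally I would confirm the edge behavior: a slope $r \in \partial \sL_M$ corresponds to a line of slope equal to one of the segment slopes at some corner, i.e.\ a boundary point of one of the tangent-slope intervals making up $S^{sing}(M)$, which is consistent with taking interiors on both sides.

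\textbf{Main obstacle.} The crux is the equivalence ``all slope-$r$ intersections with the singular peg-board curve have the same sign $\iff r \notin S^{sing}(M)$,'' together with verifying that this same-sign condition is exactly what makes $\dim\HFhat(M(r)) = |H_1(M(r))|$ rather than strictly larger. Getting the intersection-sign analysis right near corners — in particular showing that a tangent slope genuinely \emph{forces} a cancelling pair that survives all homotopies (using minimality of peg-board position, Lemma~\ref{lem:peg-minimal-intersection}) and that a non-tangent slope genuinely forbids one — is the technical heart, and it is also where the no-peg-wrapping input (hence the Floer-simple hypothesis and Lemma~\ref{lem:Floer_simple_slopes}) is indispensable, so the logical dependence on the following subsection needs to be arranged carefully.
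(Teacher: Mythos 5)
Your proposal follows the same broad strategy as the paper's proof: convert Dehn filling to intersection with a straight line via the pairing theorem, then read off L-space slopes from the geometry of the singular peg-board diagram. The two key places where you diverge are worth examining.

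First, you translate the L-space condition as ``all intersection points have the same sign.'' This is equivalent to the paper's criterion, but it requires the extra input that \(\chi(\HFhat(Y,\spin))=\pm 1\) with uniform sign across all \(\spin\) for a rational homology sphere \(Y\). The paper sidesteps this entirely by observing that \(M(\alpha)\) is not an L-space iff \(\dim\HFhat(M(\alpha);\spin)>1\) for \emph{some} \(\spin\), i.e.\ two intersection points lying in the same \(\spinc\) class, which needs no sign or Euler-characteristic input. In the forward direction the paper lifts these two same-\(\spinc\) points to the singular representative and applies the extended mean value theorem to extract a tangent slope; in the converse direction it pushes a slope-\(\alpha\) line off a corner so that it meets both adjacent segments, and these two intersections survive by minimality of peg-board position (Lemma~\ref{lem:peg-minimal-intersection}). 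Your sign-based version of this should work, but it is carrying extra weight.

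Second, and this is the real issue: your parenthetical invocation of Lemma~\ref{lem:Floer_simple_slopes} to get \(S(M)=S^{sing}(M)\) is both unnecessary and, as you half-sense, logically backwards. In the paper that lemma is proved \emph{after} Theorem~\ref{thm:slope_detection}, and Corollary~\ref{cor:tangent_slopes} depends on the theorem; so importing it here would be circular. The paper never needs it because the theorem is stated entirely in terms of the combinatorial set \(S^{sing}\), and the only Lagrangian you ever pair against is a \emph{straight line} \(L_\alpha\) which can be chosen a definite distance from every lattice point. For such a line, intersections with a peg-board representative all occur along the linear segments away from the pegs, so any peg-wrapping (which is confined to \(\epsilon\)-disks around lattice points) is invisible to the pairing; the intersection pattern, and its \(\spinc\) and sign data, are identical to what the singular diagram predicts. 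So you should simply delete the appeal to Floer-simplicity and argue directly that the line never sees the wound-up part of the curve. Your fallback for the \(|\sL_M|\le 1\) case (``vacuous or trivial'') also does not actually hold up: showing \(S^{sing}(M)=\Q P^1\) when \(\sL_M^\circ=\emptyset\) is precisely an instance of the forward direction of the theorem, not something you get for free. Once the peg-wrapping detour is removed, however, that case needs no separate treatment.
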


\begin{proof}By the pairing theorem, $\HFhat(M(\alpha))$ is equivalent to the intersection Floer homology of $\curves{M}$ and a straight line with slope $\alpha$, which we denote $L_\alpha$. Suppose first that $\alpha$ is a non-L-space slope, so that the Dehn filling $M(\alpha)$ is not an L-space and there is some spin$^c$-structure $\bar\spin$ of $M(\alpha)$ so that $\dim\HFhat(M(\alpha);\bar\spin)>1$. It follows that (for any homotopy representative of $\curves{M}$) there are two intersection points $x$ and $y$ between $\curves{M}$  and the line $L_\alpha$ with the same spin$^c$-grading. This means that $x$ and $y$ correspond to intersection points $\tilde x$ and $\tilde y$ between (a lift of) $L_\alpha$ and $\curves{M, \spin}$ in the covering space $\R^2 \setminus \Z^2$, for some spin$^c$-structure $\spin$ of $M$. If $\curves{M, \spin}$ contains more than one curve then $S^{sing}(M) = \RP$ by definition and $\alpha \in S_\Q^{sing}(M)$, so suppose that $\curves{M, \spin}$ contains a single curve. The intersection points $\tilde x$ and $\tilde y$ exist for any homotopy representative of $\curves{M, \spin}$ (in particular the pegboard representatives for arbitrarily small $\epsilon$) and since $L_\alpha$ can be taken to be some finite distance away from every peg, we can in fact take $\tilde x$ and $\tilde y$ to be intersections of the singular peg-board representative of $\curves{M, \spin}$ with a lift of $L_\alpha$. Note that the corners of the singular representative for $\curves{M, \spin}$ can be smoothed without affecting intersections with $L_\alpha$ and that $S^{sing}(M)$ is precisely the set of tangent slopes after this smoothing. By the (extended) mean value theorem, there is a point on the smoothing of the singular representative of $\curves{M, \spin}$ for which the tangent line has slope $\alpha$. We conclude that $\alpha\in S^{sing}_\Q(M)$ as claimed. We have proved that $\sL_M^c \subset S^{sing}_\Q(M)$; in fact, since $S^{sing}_\Q(M)$ is closed (as a subset of $\Q P^1$), we have $\overline{\sL_M^c} = (\sL^\circ_M)^c \subset S^{sing}_\Q(M)$. 

\piccaption[]{Intersection points for a slope in $S^{sing}(M)$.}
\parpic[r]{
 \begin{minipage}{34mm}
 \centering
\includegraphics[scale=0.6]{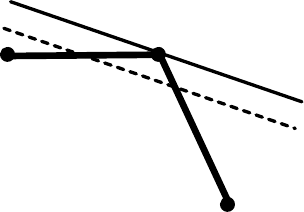}
  \end{minipage}%
  }
Conversely consider a slope $\alpha$ in $S^{sing}_\Q(M)$. We will work with the singular peg-board representative of $\curves{M}$. If $S^{sing}(M) = \{\alpha\}$, then $\alpha$ is the rational longitude of $M$ and thus not an L-space slope. Otherwise, $\alpha$ is in the interval of tangent slopes determined by some corner $c$; let $s_1$ and $s_2$ denote the two line segments meeting at $c$. First suppose $\alpha$ is in the interior of the interval for the corner $c$. Let $L_\alpha$ be the line of slope $\alpha$ through $c$; since $L_\alpha$ is a tangent line, $s_1$ and $s_2$ both lie on one side of line $L_\alpha$. Let $L'_\alpha$ be a small pushoff of $L_\alpha$ that intersects the segments $s_1$ and $s_2$ and is disjoint from all lattice points. For sufficiently small $\epsilon$ (smaller than the minimum distance between $L'_\alpha$ and any lattice point), replacing the singular representative for $\curves{M}$ with the radius $\epsilon$ peg-board representative preserves these two intersection points. These points give two generators with the same spin$^c$ grading in the intersection Floer chain complex of $\curves{M}$ with $L'$. Since both $L'_\alpha$ and $\curves{M}$ are $\epsilon$-pegboard representatives, they are in minimal position and both generators survive in homology. It follows that $M(\alpha)$ is not an L-space, that is, $\alpha \not\in \sL_M$. Now suppose $\alpha$ is a boundary of the interval of slopes determined by the corner $c$; there are slopes arbitrarily close to $\alpha$ in the interior of this interval, and these slopes are not in $\sL_M$, so $\alpha \not\in \sL^\circ_M$.
\end{proof}

Theorem \ref{thm:slope_detection} is in fact true for arbitrary immersed curves in $\Tp$  (using an appropriate definition of the set of L-space slopes $\sl$ for a curve), not just for the collections of curves associated with manifolds. For curves arising from manifolds, however, we can replace $S^{sing}(M)$ with $S(M)$. To prove this, we first make the following observation about Floer simple manifolds: 
\begin{lemma}\label{lem:Floer_simple_slopes}
If $M$ is Floer simple then $S(M) \neq \RP$.
\end{lemma}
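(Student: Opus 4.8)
\textbf{Proof proposal for Lemma \ref{lem:Floer_simple_slopes}.}

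The plan is to translate the Floer-simple hypothesis into a statement about the number of corners of (a lift of) $\curves{M}$, and then to observe that a curve with few corners cannot have $S(M) = \RP$. Recall that $M$ being Floer simple means that $|\sL_M| > 1$; equivalently, by Theorem \ref{thm:slope_detection}, the complement $S^{sing}_\Q(M)$ of $\sL^\circ_M$ is a proper subset of $\Q P^1$. Since $S^{sing}(M)$ is a finite union of closed intervals (one for each corner), $S^{sing}(M) \ne \RP$ already; in particular $\curves{M}$ has trivial local system and a single component in each $\spinc$ structure, and the singular peg-board representative has at least one corner but the intervals of tangent slopes at the corners do not cover $\RP$. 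The one thing that is \emph{not} immediately handed to us is that $S(M) = S^{sing}(M)$: a priori $S(M)$ could be strictly larger, in fact equal to all of $\RP$, if the peg-board representative of $\curves M$ exhibits peg-wrapping (a corner through which the curve turns by angle $\ge \pi$). So the real content of the lemma is: \emph{a Floer simple manifold has no peg-wrapping in $\curves M$.}

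First I would set up the following dichotomy for each $\spinc$ structure $\spin$: the singular peg-board representative of $\curves{M;\spin}$ either has no corners (so it is a straight line of slope equal to the rational longitude, contributing only $\{\alpha\}$ to $S(M)$, and certainly no peg-wrapping), or it has corners. In the latter case I would argue that peg-wrapping at some corner forces $S^{sing}_\Q(M)$ to be all of $\Q P^1$, contradicting Floer simplicity. The mechanism: if the curve wraps entirely around a lattice point $p$, then near $p$ it surrounds $p$, so for \emph{every} slope $r$ there is a line of that slope which meets the wrapping arc in (at least) two points of the same $\spinc$-grading that cannot be removed by a bigon — exactly the configuration used in the second half of the proof of Theorem \ref{thm:slope_detection} to certify a non-L-space slope. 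Hence every $r \in \Q P^1$ lies in $S^{sing}_\Q(M) = (\sL_M^\circ)^c$, forcing $\sL_M^\circ = \varnothing$ and hence $|\sL_M| \le 1$. (One must be slightly careful that $|\sL_M| = 1$ is still excluded: Floer simple means $|\sL_M| > 1$, so this is fine — but I would double check whether the intended reading of ``Floer simple'' here is $|\sL_M| \ge 1$ with some extra nondegeneracy, in which case one invokes instead that peg-wrapping contributes a full interval of width $> \pi$ worth of slopes to $(\sL_M^\circ)^c$, still impossible once $\sL_M^\circ \ne \varnothing$ and $S^{sing}(M) \ne \RP$.) With peg-wrapping ruled out, $S(M) = S^{sing}(M)$, and since the latter is a proper closed subset of $\RP$ (being a finite union of proper closed intervals which, by Theorem \ref{thm:slope_detection}, misses the nonempty open set $\sL_M^\circ$), we conclude $S(M) \ne \RP$.

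The main obstacle I anticipate is making the peg-wrapping argument fully rigorous: one needs that a genuine wrap of $\curves M$ around a peg — as opposed to an inessential wiggle that a regular homotopy could undo — produces, for each slope, a \emph{robust} pair of same-graded intersection points, i.e.\ one that persists in transverse peg-board position and survives to homology. This is morally the content of Lemma \ref{lem:peg-minimal-intersection} combined with the mean-value/intersection-counting argument in Theorem \ref{thm:slope_detection}, so the work is in assembling those pieces rather than proving something new; but the bookkeeping of which pushed-off line $L'_\alpha$ to use, and checking it stays away from all pegs simultaneously for small $\epsilon$, needs care. A secondary subtlety is the multi-component / nontrivial-local-system case: there $S(M) = \RP$ by convention, so one must note separately that such $M$ are never Floer simple (a nontrivial local system forces $|\sL_M| \le 1$, as remarked just after the statement of Theorem \ref{L-space-gluing-theorem}), which disposes of that case immediately and is consistent with the lemma.
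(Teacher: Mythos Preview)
Your approach is quite different from the paper's, and while the overall strategy is reasonable, the key step has a genuine gap.

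The paper's proof is short and structural: it invokes \cite[Proposition~6]{HRRW}, which says that a Floer simple manifold has \emph{simple loop type}, meaning that for some parametrization $(\alpha,\beta)$ the type D structure $\CFD(M,\alpha,\beta)$ consists of loops built only from $\rho_1$, $\rho_3$, and $\rho_{23}$ arrows. Translating to train tracks, every segment of the resulting immersed curve in $\tildeT$ moves up or to the right, so every tangent slope is nonnegative in this framing. Hence $S(M)$ misses the negative slopes and $S(M)\neq\RP$. No analysis of peg-wrapping is needed.

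Your route instead tries to deduce ``no peg-wrapping'' directly from Floer simplicity, via the claim that peg-wrapping forces every slope to be a non--L-space slope. The problem is the phrase ``surrounds $p$'': the paper defines peg-wrapping as the tangent direction turning through an angle of at least $\pi$, not $2\pi$. For a wrap of angle in $[\pi,2\pi)$ the curve does \emph{not} enclose the puncture, and the two local intersection points you produce with a line $L'_r$ near the peg are typically joined by a short sub-arc of the wrap that, together with the chord on $L'_r$, bounds a small lune \emph{not} containing the puncture --- a genuine bigon in $T_M$. So those two intersections cancel, and you have not certified $r\notin\sL_M$. Your appeal to ``exactly the configuration used in the second half of the proof of Theorem~\ref{thm:slope_detection}'' is also off: that argument finds the two intersection points on the linear \emph{segments} $s_1,s_2$ of the singular diagram (which persist in minimal position because $L'_\alpha$ is itself a peg-board representative), whereas your two points lie on an arc of radius $\epsilon$ around the peg, for which the minimality lemma does not apply once $L'_r$ is pushed inside the $\epsilon$-disk. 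Completing your argument would require controlling the global intersection pattern after this local cancellation, which is substantially more work --- and is in fact what the paper accomplishes only \emph{after} this lemma, in Corollary~\ref{cor:tangent_slopes}, by using the lemma itself.
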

\begin{proof}
If $M$ is Floer simple, then by \cite[Proposition 6]{HRRW} $M$ has simple loop type, that is, for some parametrization $(\alpha, \beta)$ of $\partial M$, $\CFD(M, \alpha, \beta)$ is a collection of loops (one for each \(\spinc\) structure on \(M\)) consisting only of $\rho_1$, $\rho_3$ and $\rho_{23}$ arrows. It follows that the corresponding curves in the plane travel only up and right. Clearly $S(M)$ contains only positive slopes (relative to the chosen parametrization), and so $S(M) \neq \RP$. Note that reparametrizing the boundary changes this interval but does not change whether or not it is all of $ \RP$.
\end{proof}

\begin{corollary}\label{cor:tangent_slopes}
For any manifold $M$, $S(M) = S^{sing}(M)$.
\end{corollary}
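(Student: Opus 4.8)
The plan is to show that the two candidate sets of tangent slopes, $S(M)$ and $S^{sing}(M)$, can only differ when there is peg-wrapping in $\tildecurves{M}$, and then to rule out peg-wrapping for loop-type manifolds. The first half is essentially the observation already recorded in the discussion preceding Theorem~\ref{thm:slope_detection}: passing from a peg-board representative to the singular peg-board representative only forgets whether a curve wraps fully around a peg (an angle change of at least $\pi$); if no such wrapping occurs, then smoothing the corners of the singular diagram and taking the $\epsilon \to 0$ limit of the honest peg-board representatives produce the same intervals of tangent slopes at each corner, so $S(M) = S^{sing}(M)$. Conversely, if peg-wrapping does occur somewhere then that corner contributes every slope to $S(M)$, forcing $S(M) = \RP$, whereas $S^{sing}(M)$ need not be all of $\RP$. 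Thus the corollary reduces to: for a loop-type manifold $M$, $\tildecurves{M}$ has no peg-wrapping, or else $S^{sing}(M) = \RP$ too.

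So the real content is to argue that a peg-wrapping component forces $S^{sing}(M) = \RP$. First I would dispose of the cases built into the definitions: if $\curves{M;\spin}$ is disconnected or carries a nontrivial local system for some $\spin$, both $S(M)$ and $S^{sing}(M)$ are $\RP$ by fiat, so assume each $\curves{M;\spin}$ is a single curve with trivial local system, i.e.\ $M$ is of simple loop type in the sense relevant here. Now invoke Lemma~\ref{lem:Floer_simple_slopes}: its proof shows that when $M$ is Floer simple (equivalently, simple loop type), for an appropriate parametrization $(\alpha,\beta)$ the curves travel only upward and rightward in the plane. A curve that moves only up and to the right at every corner can never reverse its direction by an angle of $\pi$ or more at a single peg --- the turning at each corner is strictly less than $\pi$ --- so there is no peg-wrapping. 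Hence $S(M) = S^{sing}(M)$. If $M$ is not Floer simple, then by Theorem~\ref{thm:slope_detection} (together with the fact that the complement of $\sL_M^\circ$ in $\Q P^1$ is then infinite, or more directly by the characterization of Floer simple manifolds) one checks $S^{sing}_\Q(M)$, hence $S^{sing}(M)$, is all of $\RP$, matching $S(M) = \RP$ in that case.

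The step I expect to be the main obstacle is pinning down precisely the dichotomy ``$M$ Floer simple $\Rightarrow$ no peg-wrapping'' versus ``$M$ not Floer simple $\Rightarrow S^{sing}(M) = \RP$,'' since the latter implication requires knowing that a non-Floer-simple loop-type $M$ has $\curves{M;\spin}$ which is either disconnected for some $\spin$, carries a nontrivial local system, or wraps around a peg in such a way that the singular diagram already sees a degenerate corner (contributing all slopes). This is a statement about the structure of loop-type modules that is not Floer simple; I would extract it from the classification of loop-type manifolds (the companion reference \cite{HW}, and \cite{HRRW}) rather than reprove it. Once that structural input is in hand, the corollary follows by combining it with Lemma~\ref{lem:Floer_simple_slopes} exactly as above: in all cases $S(M)$ and $S^{sing}(M)$ coincide.
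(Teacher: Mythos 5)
Your plan is essentially the paper's argument: the two sets can differ only in the presence of peg-wrapping, peg-wrapping forces $S(M)=\RP$, and Lemma~\ref{lem:Floer_simple_slopes} together with Theorem~\ref{thm:slope_detection} then force $S^{sing}(M)=\RP$ as well. But you mislocate the difficulty and end up inventing a gap that isn't there.

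You flag as the ``main obstacle'' the implication ``$M$ not Floer simple $\Rightarrow S^{sing}(M)=\RP$,'' and you say you would extract it from the structural classification of loop-type manifolds --- needing to know that such an $M$ has a disconnected $\curves{M;\spin}$, a nontrivial local system, or a degenerate corner in its singular diagram. None of that is required. ``Not Floer simple'' means, by definition, $\sL_M^\circ=\emptyset$. Theorem~\ref{thm:slope_detection} says $\sL_M^\circ = (S^{sing}_\Q(M))^c$, so $\sL_M^\circ=\emptyset$ gives $S^{sing}_\Q(M)=\Q P^1$ outright (not merely ``infinite,'' as you hedge), and since $S^{sing}(M)$ is a union of closed intervals with rational endpoints this forces $S^{sing}(M)=\RP$. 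That is the whole content of the paper's three-line proof; you already wrote this correctly in your penultimate paragraph and then second-guessed it. Also, a smaller point: having a single curve with trivial local system in each $\spinc$ structure is \emph{not} the same as simple loop type --- simple loop type is the stronger condition (only $\rho_1,\rho_3,\rho_{23}$ arrows, equivalently curves moving monotonically up and to the right), which is equivalent to Floer simple. You conflate these two in your setup, though you do use the correct equivalence later when invoking Lemma~\ref{lem:Floer_simple_slopes}.
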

\begin{proof}
First observe that if $M$ is not loop type (that is, the associated invariant carries a non-trivial local system) then $ S^{sing}(M)=S(M)=\RP$ by definition. Consider then the case when $M$ is loop type. In the absence of peg-wrapping $S(M) = S^{sing}(M)$. If there is peg-wrapping, then $S(M) = \RP$, and by Lemma \ref{lem:Floer_simple_slopes}, $M$ is not Floer simple. Equivalently, $\sL^\circ_M = \emptyset$ and by Theorem \ref{thm:slope_detection}  $S^{sing}(M) = \RP$.
\end{proof}

We remark that, as an immediate corollary of Theorem \ref{thm:slope_detection}, the set $\sL^\circ_M$ is either empty, an open interval with rational endpoints, or all of $\Q P^1$ but the rational longitude $\lambda$; this is because $S(M)$ contains at least $\lambda$ and is a closed interval. More generally, it is possible to characterize the set $\sL_M$ in terms of the tangent slopes in a singular peg-board diagram: roughly speaking, $\sL_M$ is the complement of the set $S'(M)$ obtained by taking the union over all corners of the \emph{interior} of the set of tangent slopes determined by the corner, with the convention that a degenerate corner with two segments of slope $\alpha$ contributes $\RP \setminus \{\alpha\}$, along with the slope of any segment such that $\HFhat(M)$ turns left at both of the corners connected by the segment or turns right at both corners. It follows that $\sL_M$ is either empty, a single point, a closed interval with rational endpoints, or $\Q P^1 \setminus \{\lambda\}$; this recovers 
\cite[Theorem 1.6]{RR} (see also  \cite[Theorem 1.2]{HW}).

\subsection{The L-space gluing theorem}
We now turn to the proof of Theorem \ref{L-space-gluing-theorem}. In previous works \cite{HW, RR, HRRW} the authors and S. Rasmussen prove this L-space gluing criterion when the manifolds have simple loop type (equivalently, when the manifolds are Floer simple). Interpreting bordered Floer homology as immersed curves with local systems allows us to give an elegant proof of this gluing theorem without requiring the simple loop type hypothesis on the manifolds.

\begin{proof}[Proof of Theorem \ref{L-space-gluing-theorem}]
By the pairing theorem, $M_0 \cup_h M_1$ is equivalent to the intersection Floer homology of $\boldsymbol{\gamma}_0 = \curves{M_0}$ and $\boldsymbol{\gamma}_1 = \bar{h}(\curves{M_1})$. Suppose first that $M_0 \cup_h M_1$ is not an L-space, implying that there is some $\spin$ so that $\dim\HFhat(M_0 \cup_h M_1,\spin)>1$. In other words, there are two intersection points $x$ and $y$ between $\boldsymbol{\gamma}_0$ and $\boldsymbol{\gamma}_1$ with the same spin$^c$ grading. This means, in particular, that $x$ and $y$ are intersection points of $\boldsymbol{\gamma}_{0,\spin_0} = \curves{M_0, \spin_0}$ and $\boldsymbol{\gamma}_{1,\spin_1} = \bar h(\curves{M_1, \spin_1})$ for some $\spin_0 \in \spinc(M_0)$ and $\spin_1 \in \spinc(M_1)$. We will assume for $i \in \{0,1\}$ that $\boldsymbol{\gamma}_{i,\spin_i}$ contains only one curve with a trivial local system, since otherwise $\sL^\circ_{M_i}$ is empty and $\sL^\circ_{M_0} \cup h( \sL^\circ_{M_1} ) \neq \Q P^1$. Let $c_0$ be the path from $x$ to $y$ in $\boldsymbol{\gamma}_{0,\spin_0}$ and let $c_1$ be the path from $x$ to $y$ in $\boldsymbol{\gamma}_{1,\spin_1}$. The fact that $x$ and $y$ have the same spin$^c$ grading implies that $[c_0-c_1] = 0 \in H_1(T)$, or equivalently that the paths lift to form a bigon in $\R^2$, the corners of which are lifts $\tilde x$ and $\tilde y$ of $x$ and $y$. Let $\alpha$ be the slope of the line segment connecting $\tilde x$ and $\tilde y$. By the (extended) mean value theorem, $\tilde c_0$ and $\tilde c_1$ each contain a point with slope $\alpha$.

This argument applies for any homotopy representative of $\boldsymbol{\gamma}_0$ and $\boldsymbol{\gamma}_1$. In particular, if we take $\epsilon$-pegboard diagrams relative to peg radius $\epsilon = \frac{1}{n}$ we find a slope $\alpha_n$ that is a tangent slope to both curves. It follows that $\alpha_n \in S_{\frac{1}{n}}(M_0) \cap h(S_{\frac{1}{n}}(M_1))$. Here, by abuse of notation, $h$ refers to the map on slopes induced by $h$; note that, as maps on slopes, $\bar{h}$ and $h$ agree since the elliptic involution preserves slopes. Since $S_{\frac{1}{n}}(M_0) \cap h(S_{\frac{1}{n}}(M_1))$ for positive integers $n$ gives a nested sequence of nonempty closed sets there is some slope $\alpha$ in the total intersection, which is simply $S(M_0) \cap h(S(M_1))$. Moreover, we can take this $\alpha$ to be rational, since $S(M_0)$ and $h(S(M_1))$ have rational endpoints. By Theorem \ref{thm:slope_detection} and Corollary \ref{cor:tangent_slopes}, $S_\Q(M_0) = (\sL^\circ_{M_0})^c$ and $S_\Q(M_1) = (\sL^\circ_{M_1})^c$. Thus $\alpha$ is not in $\sL^\circ_{M_0} \cup h( \sL^\circ_{M_1} )$.

Conversely, suppose that there is a slope $\alpha$ not in $\sL^\circ_{M_0} \cup h( \sL^\circ_{M_1} )$. Equivalently, $\alpha$ is in both $S^{sing}_\Q(M_0)$ and $h(S^{sing}_\Q(M_1))$, so $\alpha$ is a tangent slope of both $\boldsymbol{\gamma}_0$ and $\boldsymbol{\gamma}_1$ in a singular pegboard diagram. Let $c_0$ and $c_1$ be components of $\boldsymbol{\gamma}_0$ and $\boldsymbol{\gamma}_1$, respectively, for which $\alpha$ is a tangent slope to the singular pegboard representative. We may assume that $c_0$ and $c_1$ are not solid torus like components. If, for example, $c_0$ were solid torus like then $\alpha$ would be the rational longitude of $M_0$ and every component of $\boldsymbol{\gamma}_0$ would have a tangent line of slope $\alpha$; we could then replace $c_0$ by another component, at least one of which is not solid torus like because $M_0$ is not solid torus like. By reparametrizing $\partial M_0$ and $\partial M_1$, we may assume that $\alpha = 0$.

Fixing a small peg radius $\epsilon$, consider $\epsilon$-pegboard representatives of $c_0$ and $c_1$ with a chosen orientation on each curve. Recall that each corner wraps around a circle of different radius between $\epsilon$ and $2\epsilon$, with corners that change direction more wrapping closer to the puncture (as usual, if two radii agree we perturb one of them slightly to achieve transversality). Since $c_0$ and $c_1$ are in minimal position, to show that $M_0 \cup_h M_1$ is not an L-space it is enough to find two intersection points between $c_0$ and $c_1$ with opposite $\Ztwo$-grading. It is easy to find two such generators if both curves have peg-wrapping. In this case the curves $c_0$ and $c_1$ each contain a portion of sucrve bounding a teardrop enclosing the puncture. Suppose the relevant corners of $c_0$ and $c_1$ wrap around circles radius $\epsilon_0$ and $\epsilon_1$ around the puncture. Suppose, without loss of generality, that $\epsilon_0 < \epsilon_1$, so that the circle of radius $\epsilon_0$ is contained in the teardrop bounded by $c_1$. The curve $c_0$ also contains two line segments tangent at their ends to the circle of radius $\epsilon_0$, one oriented toward the circle and one oriented away from it. These two segments clearly intersect the teardrop portion of $c_1$ with opposite sign, and the two corresponding generators of $\HFa(\boldsymbol{\gamma}_0,\boldsymbol{\gamma}_1)$ have opposite $\Ztwo$ grading. Thus we may assume that there is no peg-wrapping in $c_0$.

In the singular peg-board diagram for $c_0$, the line segments connecting corners can be classified as moving upwards, downwards, or horizontally (once we have fixed an orientation on $c_0$). The segments can not be all upward or all downward, since then $\alpha = 0$ would not be a tangent slope. If the slopes are not all horizontal, we can choose a corner so that the segments preceding and following the corner are up and down, up and horizontal, or horizontal and down. Note that the corresponding corner in an $\epsilon$-pegboard diagram for $c_0$, which we suppose wraps around a circle of radius $\epsilon_0$, has a point of horizontal tangency on the top side of that circle at the point $(0, \epsilon_0)$. If every line segment in $c_0$ is horizontal, then every corner has  horizontal tangency to the peg, and we can chose the corner so that this point of tangency is on the top side of the peg. Moving away from this point of tangency in either direction, $c_0$ moves either horizontally or downwards. If it moves horizontally it wraps once around the torus and returns to the peg at another horizontal tangency on the top of the peg; after some number of horizontal segments like this, $c_0$ must move downwards. Thus, $c_0$ contains an upward moving piece $c_0^I$ oriented toward the point $(0,\epsilon_0)$ and a downward moving piece $c_0^O$ oriented away from $(0,\epsilon_0')$, where $\epsilon_0'=\epsilon_0$ if there are no horizontal segments in between. Combining $c_0^O$ and $c_0^I$ gives a curve segment as in Figure \ref{fig:gluing-proof-1}(a); note that if there are horizontal segments in between $c_0^O$ and $c_0^I$ we ignore them---in this case the endpoints do not match up perfectly since $\epsilon_0' \neq \epsilon_0$, but by extending one segment until it crosses the other one we get a piecewise smooth version of the segment in Figure \ref{fig:gluing-proof-1}(a). Similarly, we can choose a corner of $c_1$ such that the preceding segment in the singular pegboard diagram moves downward or horizontally and the following segment moves horizontally or upward, and such that in an $\epsilon$-pegboard diagram there is a horizontal tangency to the peg at $(0, -\epsilon_1)$. Let $c_1^I$ be the horizontal or downward piece of $c_1$ oriented toward $(0, -\epsilon_1)$ and let $c_1^O$ be the horizontal or upward piece oriented away from $(0, -\epsilon_1)$ (or, potentially, $(0, -\epsilon_1')$), as in Figure \ref{fig:gluing-proof-1}(b). Note that we ignore any full wraps around the peg as well as any horizontal segments between $c_1^I$ and $c_1^O$. We consider these curves in one fundamental domain of $\Tp$, the square $[-\frac{1}{2}, \frac{1}{2}]\times[-\frac{1}{2}, \frac{1}{2}]$. The intersection of $c_0^I$ and $c_0^O$ with the boundary of this square can be any point with non-positive $y$-coordinate; the intersection of $c_1^I$ and $c_1^O$ with the boundary of this square can have any non-negative $y$-coordinate or $y$-coordinate $-\epsilon_1$.
It is clear that if both $c_0^I$ and $c_0^O$ intersect the boundary of the square $[-\frac{1}{2}, \frac{1}{2}]\times[-\frac{1}{2}, \frac{1}{2}]$ with strictly negative $y$-value or if both $c_1^I$ and $c_1^O$ intersect the boundary of the square with strictly positive $y$-value (equivalently, if the corresponding segments in the singular diagram are not horizontal), then there are two intersections points with opposite $\Ztwo$ grading (see Figure \ref{fig:gluing-proof-1}(c)).

The remaining case is when one of the specified segments in both $c_0$ and $c_1$ correspond to horizontal segments in a singular diagram. We consider the case that $c_0^O$ and $c_1^O$ are both horizontal moving to the right (see Figure \ref{fig:gluing-proof-2}); cases when the incoming segments or both segments are horizontal are similar. In this case, we do not get two intersection points of $c_0^I \cup c_0^O$ and $c_1^I \cup c_1^O$ if we restrict to one fundamental domain of $\Tp$. However, $c_0^O$ and $c_1^O$ both wrap once around  $\Tp$ horizontally, leading to new corners of $c_0$ and $c_1$, respectively. Let $\epsilon_0''$ and $\epsilon_1''$ be the peg radii associated with these two corners. If $\epsilon_1'' < \epsilon_0''$, then $c_0^O$ intersects $c_1^O$ before reaching the next corner (Figure \ref{fig:gluing-proof-2}(a)). If $\epsilon_1'' > \epsilon_0''$, then $c_1$ must not have peg-wrapping at this corner. It follows that $c_1$ must continue by leaving the peg horizontally or upwards. If it leaves the peg moving upwards, it will intersect with  $c_0^I \cup c_0^O$ (Figure \ref{fig:gluing-proof-2}(b)). If it leaves the peg horizontally, it wraps around $\Tp$ once more to a new corner with some radius $\epsilon_1'''$, and we repeat the argument using this corner in place of the corner with radius $\epsilon_1''$. In either case, we find two intersection points between $c_0$ and $c_1$ with the opposite sign, as desired.
\end{proof}

\begin{figure}
\begin{overpic}[scale=0.5]{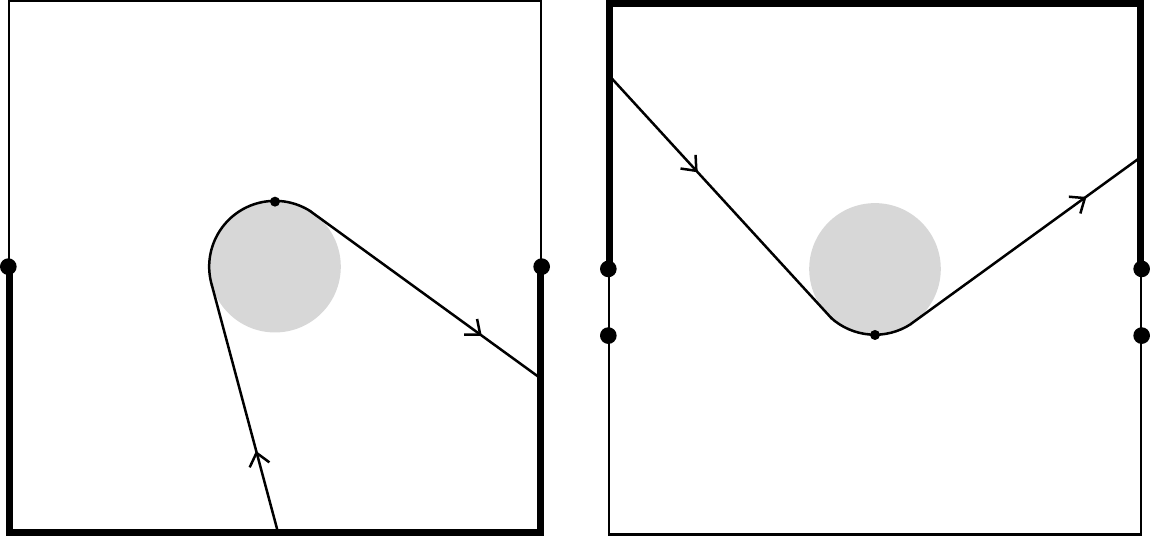}
\put(16, 12){$c_0^I$}
\put(36, 23){$c_0^O$}
\put(61, 23){$c_1^I$}
\put(85, 20){$c_1^O$}
\end{overpic}
\quad
\includegraphics[scale=0.5]{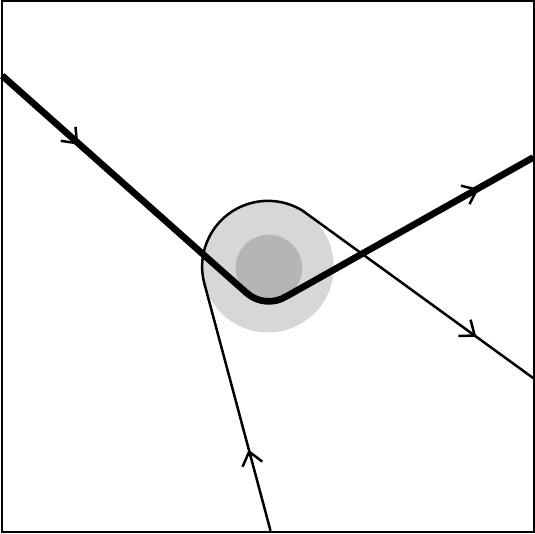}

(a) \hspace{4.5 cm} (b) \hspace{4.5 cm} (c)

\caption{(a) The segments $c_0^I$ and $c_0^O$ in $c_0$; these segments can enter/leave the square anywhere in the bottom half of its boundary. (b) The segments $c_1^I$ and $c_1^O$ in $c_1$; these segments can enter/leave the square anywhere in the top half of its boundary or though the points $(\pm\frac{1}{2}, -\epsilon_1)$. (c) The combined curve segments $c_0^I \cup c_0^O$ and $c_1^I \cup c_1^O$ have two intersections of opposite signs if the endpoints of $c_0^I$ and $c_0^O$ are strictly in the bottom half of the square or if the endpoints of $c_1^I$ and $c_1^O$ are strictly in the top half of the square.}
\label{fig:gluing-proof-1}
\end{figure}

\begin{figure}
\includegraphics[scale=0.6]{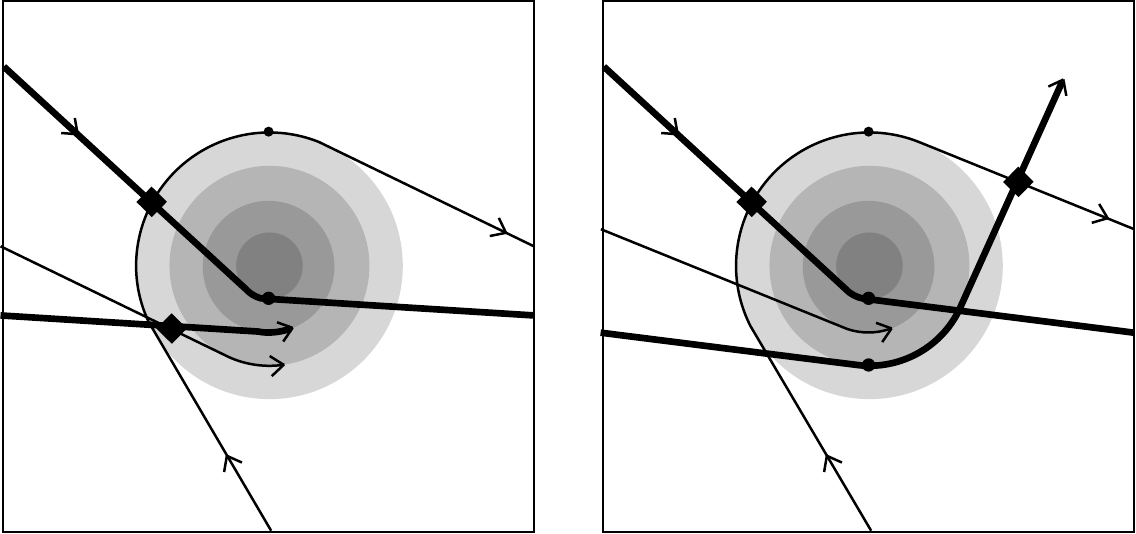}

(a) \hspace{5.5 cm} (b)

\caption{The combined segments $c_0^I \cup c_0^O$ and $c_1^I \cup c_1^O$ in an $\epsilon$-pegboard diagram when $c_0^O$ an $c_1^O$ both correspond to horizontal segments in singular diagrams for $c_0$ and $c_1$. There are two cases, depending on which curve wraps with smaller radius at the next corner; in either case there are two intersection points between $c_0$ and $c_1$ with opposite sign.}
\label{fig:gluing-proof-2}
\end{figure}

\subsection{Enumerating toroidal L-spaces} As our first application of the L-space gluing theorem, we classify small toroidal L-spaces. 

\begin{theorem}\label{thm:toroidal-detailed}
If $Y$ is a prime toroidal L-space then $|H_1(Y)|=5$ or $|H_1(Y)|\ge 7$. Moreover, if $|H_1(Y)|=5$ then $Y$ is one of precisely 4 manifolds obtained by gluing trefoil exteriors. 
\end{theorem}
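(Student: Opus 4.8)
The plan is to apply the L-space gluing theorem (Theorem \ref{L-space-gluing-theorem}) together with the characterization of the interior $\sL_M^\circ$ of the L-space interval in terms of $\curves M$ (Theorem \ref{thm:slope_detection} and Corollary \ref{cor:tangent_slopes}) to constrain the pieces of the JSJ decomposition. Write $Y = M_0 \cup_h M_1$ where the $M_i$ are boundary-incompressible (since $Y$ is prime and toroidal, we may cut along an essential torus, and an essential torus in a rational homology sphere is separating). The gluing theorem says $Y$ is an L-space iff $\sL_{M_0}^\circ \cup h(\sL_{M_1}^\circ) = \Q P^1$. Since each $\sL_{M_i}^\circ$ is either empty, an open interval with rational endpoints, or $\Q P^1$ minus the rational longitude, the union being all of $\Q P^1$ forces each $\sL_{M_i}^\circ$ to be nonempty — so each $M_i$ is Floer simple (boundary-incompressible with nonempty L-space interval), hence a Floer simple knot manifold, and moreover the two intervals must together cover $\Q P^1$, meaning $h$ must match them up tightly. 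In particular each $M_i$ is a rational homology solid torus (a Floer simple boundary-incompressible manifold with a nonempty L-space interval of slopes must be a rational homology solid torus).

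Next I would bound $|H_1(Y)|$ from below. Using the Mayer--Vietoris / gluing formula for a splitting along a torus, $|H_1(Y)|$ is computed from the slope $h(\lambda_1)$ relative to the filling data of $M_0$ and the orders of the rational longitudes; concretely $|H_1(Y)| = |H_1(M_0(h(\lambda_1)))| \cdot$ (correction term), and since $M_0$ and $M_1$ are Floer simple knot manifolds their Dehn fillings along slopes in $\sL_{M_i}^\circ$ have order $|H_1|$ prescribed by the Turaev torsion / the structure of the L-space interval. The key numerical input is that a Floer simple rational homology solid torus has a well-defined rational longitude $\lambda_i$ of some order $k_i \ge 1$ in $H_1(M_i)$, and the endpoints of $\sL_{M_i}^\circ$ have bounded "width" controlled by the genus-type invariant of $\curves{M_i}$; the smallest nontrivial examples are the trefoil exteriors, for which $\sL_{M_i}^\circ$ is an interval of a specific rational length. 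Pushing through the arithmetic of when two such intervals can cover $\Q P^1$, and what $|H_1|$ results, should rule out $|H_1(Y)| \in \{1,2,3,4,6\}$ (the case $|H_1(Y)|=1$ already being Corollary \ref{crl:ZHS3}) and show that the minimal value $5$ is achieved only when both pieces are trefoil exteriors.

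For the "moreover" clause, once we know $|H_1(Y)| = 5$ forces each $M_i$ to have the smallest possible $\curves{M_i}$ compatible with the covering condition — namely a single curve whose singular peg-board representative has exactly two segments, which by the discussion following Figure \ref{fig:ZHS3-small} (and the computation that the trefoil exterior realizes this, Figure \ref{fig:trefoil-peg}) identifies $M_i$ as a trefoil exterior (left- or right-handed) — I would enumerate the finitely many gluings. Up to homeomorphism the gluing map $h$ is determined by which pair of slopes $\{p_1,p_2\}$, $\{q_1,q_2\}$ appear and how they are matched; the constraint that $\sL_{M_0}^\circ \cup h(\sL_{M_1}^\circ) = \Q P^1$ with $|H_1(Y)| = 5$ restricts the distance conditions $\Delta(p_i,q_j)$ exactly as in the proof of Theorem \ref{thm:total-dim-toroidal}, leaving precisely $4$ manifolds (the gluings of the two chiralities of trefoil exterior in the two essentially different ways, i.e. $K_R \cup K_R$, $K_L \cup K_L$, $K_R \cup K_L$ with the two natural identifications, one pair of which coincide or are excluded, giving $4$). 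I would present the enumeration as a short case check once the slope constraints are in place.

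The main obstacle I expect is the numerical bookkeeping in the second paragraph: translating "the union of the two L-space intervals is $\Q P^1$" into a sharp lower bound on $|H_1(Y)|$ requires knowing, for a Floer simple knot manifold, the precise relationship between the rational length of $\sL_M^\circ$, the order $k$ of the rational longitude, and the orders of $H_1$ of the two Dehn fillings at the endpoint slopes. This amounts to understanding the minimal "size" of a non-solid-torus-like curve configuration and the arithmetic of Farey neighbors; making this rigorous and checking that $5$ is the first attainable nontrivial value (with the gap at $6$) is the delicate part, and it is where I would spend most of the effort, likely invoking the classification of L-space interval endpoints via the Turaev torsion as developed in \cite{RR,HRRW}.
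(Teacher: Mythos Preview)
Your overall strategy matches the paper's: apply Theorem~\ref{L-space-gluing-theorem} to force each $M_i$ to be Floer simple, normalize the L-space intervals, and then do arithmetic to rule out small $|H_1(Y)|$. But the proposal stops short of the concrete steps that actually carry the argument, and one of your guesses about those steps is off.

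The paper makes the numerics precise via the standard gluing formula
\[
|H_1(M_0\cup_h M_1)| = t_0 t_1\,|\lambda_0|\,|\lambda_1|\,\Delta(\lambda_0,h(\lambda_1)),
\]
together with the observation (recycled from the proof of Theorem~\ref{thm:total-dim-toroidal}) that the non-loose curves in the two pieces intersect at least four times, giving $|\lambda_0||\lambda_1|\Delta(\lambda_0,h(\lambda_1))\ge 4$; this forces $t_0=t_1=1$, i.e.\ both $M_i$ are \emph{integer} homology solid tori, so that $|H_1(Y)|=\Delta(\lambda_0,h(\lambda_1))=|r|$ where $h=\left(\begin{smallmatrix} s & r \\ q & p\end{smallmatrix}\right)$. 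Your proposal does not isolate this reduction. The heart of the proof is then a finite enumeration of matrices: after normalizing (via \cite{RR}) so that $\sL_1\subset[1,\infty]$ and $\sL_0\subset[\pm 1,\pm\infty]$, the requirement that $h(0,1)$, $h(1,0)$, $h(1,1)$ all land in the appropriate interval imposes the inequalities $1<|q|<|s|<|r|$ and $1<|q|<|p|<|r|$ along with $\det h=-1$ and a constraint on $h(1,1)$. One checks by hand that no such matrix exists for $r\in\{1,2,3,4,6\}$ and that exactly four survive for $r=5$. Finally, for each surviving matrix one shows the L-space interval endpoints are forced to be $\pm\infty$ and $\pm 1$, which pins the genus to $1$ and hence identifies each piece as a trefoil complement. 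Your description of the four manifolds as ``$K_R\cup K_R$, $K_L\cup K_L$, $K_R\cup K_L$ with two identifications'' is not how they arise: the four manifolds correspond to four specific gluing matrices (two with $\sL_0\subset[1,\infty]$ and two with $\sL_0\subset[-\infty,-1]$), not to a chirality count.
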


\begin{proof}
We begin by setting conventions. Consider manifolds  $M_0$ and $M_1$ with torus boundary, and let $h\co \partial M_1\to \partial M_0$ be an orientation reversing homeomorphism. Since we are interested in rational homology spheres $Y= M_0\cup_h M_1$, we may assume that both $M_0$ and $M_1$ are rational homology solid tori, with slopes $\lambda_i\in\partial M_i$ representing the rational longitude for $i=0,1$. Recall that $$|H_1(M_0\cup_h M_1;\Z)| = t_0t_1|\lambda_0||\lambda_1|\Delta(\lambda_0,h(\lambda_1))$$ where  $t_i$ is the order of the torsion subgroup of $H_1(M_i;\Z)$ and \(|\lambda_i|\) is the order of $\lambda_i$ in $H_1(M_i;\Z).$  The integer $\Delta(\lambda_0,h(\lambda_1))$ is the distance between the relevant slopes, that is, the minimal geometric intersection in $\partial M_0$. If we fix \(\spinc\) structures \(\spin_i\) on \(M_i\), the quantity \(|\lambda_0||\lambda_1|\Delta(\lambda_0,h(\lambda_1))\) is the number of \(\spinc\) structures on \(Y\) that restrict to
\(\spin_0\) on \(M_0\) and \(\spin_1\) on \(M_1\). 

Since \(M_i\) is boundary incompressible, there is at least one \(\spin_i \in \spinc(M_i)\) for which \(\gamma_i = \curves{M_i, \spin_i}\) is not loose. 
The argument used in the proof of Theorem~\ref{thm:total-dim-toroidal} shows that the curves representing \(\gamma_0\) and \(\gamma_1\) intersect in at least four points. Since \(Y\) is an L-space, this implies that \(|\lambda_0||\lambda_1|\Delta(\lambda_0,h(\lambda_1)\geq 4\). To have \(|H_1(Y;\Z)|\leq 6\), we must have \(t_0=t_1=1\), 
{\it i.e.} both \(M_i\) are integer homology solid tori. 

By abuse of notation we identify $h$ with a matrix $\left(\begin{smallmatrix} s & r \\ q& p\end{smallmatrix} \right)$, acting on the right, where $(1,0)$ is identified with $\lambda_1$ and $(0,1)$ represents a meridian $\mu_1$. With this notation, one checks that $|r|=\Delta(\lambda_0,h(\lambda_1))$. Towards minimizing $|H_1(M_0\cup_h M_1;\Z)|$, we may assume that the $M_i$ are integer homology solid tori, so that $|\lambda_0|=|\lambda_1|=1$ and $|H_1(M_0\cup_h M_1;\Z)|=|r|$.  

Now suppose that $Y$ is an L-space; the intervals $\sL_i=\sL_{M_i}$ are necessarily non-empty. Since \(H_1(M_i) = \Z\), the \(M_i\) cannot be Floer homology solid tori. Let \(b_i \lambda_i+a_i \mu_i\) be an endpoint of \(\sL_i\), where without loss of generality we assume \(a_i>0\). The characterization of \(\sL_i\) proved in \cite{RR} implies that \(\sL_i\) is contained either in the interval 
\([a_i/b_i, a_i/(b_{i}+1)]\) 
or in the interval 
\([a_i/(b_i-1),a_i/b_i])\). 
(Here our notation denotes the cyclic interval not containing \(0\) with the given endpoints.)  By choosing \(\mu_i\) appropriately, we may assume that \(0>b_i>-a_i\) in the first case, and that 
\(0<b_i<a_i\) in the second, so that \(\sL_i \subset [-\infty, -1]\) or \(\sL_i \subset [1,\infty]\). Finally, by reversing the global orientation on \(Y\), we may assume that \(\sL_1 \subset[1,\infty]\).

 We consider the images $h(0,1)=(q,p)$, $h(1,0)=(s,r)$, and $h(1,1)=(q+s,p+r)$ relative to the interval of slopes $\sL_0$. If $Y$ is an L-space then we must have $1<h(\alpha)<\infty$ for each of these slopes or $-\infty<h(\alpha)<-1$ for each of these slopes. 

First consider the case where \(\sL_0 \subset[1,\infty]\). Then \begin{align*}
1<\textstyle\frac{p}{q}<\infty &\Rightarrow 1<|q|<|p|\\
1<\textstyle\frac{r}{s}<\infty &\Rightarrow 1<|s|<|r|\\
\end{align*}
where $p$ and $q$ have the same sign and $r$ and $s$ have the same sign. Similarly, considering the inverse homeomorphism $h^{-1}=\left(\begin{smallmatrix} -p & r \\ q& -s\end{smallmatrix} \right)$, 
\begin{align*}
1<\textstyle-\frac{s}{q}<\infty &\Rightarrow 1<|q|<|s|\\
1<\textstyle-\frac{r}{p}<\infty &\Rightarrow 1<|p|<|r|\\
\end{align*}
 so that $p$ and $r$ have opposite signs and $q$ and $s$ have opposite signs. Additionally, note that $\det(h)=ps-qr=-1$ since $h$ reverses orientation. Without loss of generality, we may assume that $r>0$; one checks that there are no matrices with $r=1,2$ satisfying this list of constraints. 

When $r=3,4$ the only possible matrices are $\left(\begin{smallmatrix} 2 & 3\\ -1& -2\end{smallmatrix} \right)$, $\left(\begin{smallmatrix} 3 & 4\\ -2& -3\end{smallmatrix} \right)$. However a homeomorphism of the form $\left(\begin{smallmatrix} r-1 & r\\ 2-r& 1-r\end{smallmatrix} \right)$ is always ruled out since $h(1,1)=(1,1)$ in this case (so a boundary L-space slope is not mapped to an interior L-space slope, as required). The same argument applies in the case  $r=6$ 

 From this discussion we observe that in fact we seek to list matrices with determinant $-1$ satisfying $1<\frac{p+r}{q+s}<\frac{r}{s}<\frac{p}{q}<\infty$. For \(r=5\), this  leaves $
 \left(\begin{smallmatrix} 3 & 5\\ -1& -2\end{smallmatrix} \right)$ and $
 \left(\begin{smallmatrix} 2 & 5\\ -1& -3\end{smallmatrix} \right)$ 
 as a complete list of possible L-space gluings in this case.
 
 The case where \(\sL_0 \subset [-\infty,-1]\) is similar. Potential examples of L-spaces for which $r<5$ or $r=6$ are ruled out, by the requirement that $-\infty<\frac{p+r}{q+s}<\frac{r}{s}<\frac{p}{q}<-1$, leaving 
  $\left(\begin{smallmatrix} -3 & 5\\ 2& -3\end{smallmatrix} \right)$ and 
$ \left(\begin{smallmatrix} -2 & 5\\ 1& -2\end{smallmatrix} \right)$
  as a complete list of L-space gluings for \(r=5\).
  
  Next, we claim that in each of these four cases, we must have \(\sL_1 = [1, \infty]\). We provide the argument when \(h\) is given by \(\left(\begin{smallmatrix} s & r\\ q& p\end{smallmatrix} \right)=\left(\begin{smallmatrix} 2 & 5\\ -1& -3\end{smallmatrix} \right) \); the other cases are very similar. As observed above, 
  if \(a/b\) is the right endpoint of \(\sL_1\), then \(\sL_1 \subset [a/(b+1),a/b]\). 
  Since neither $\mu_0$ nor $\lambda_0+\mu_0$ are elements of $\sL_0^\circ$ both $h^{-1}(\mu_0)$ and $h^{-1}(\lambda_0+\mu_0)$ must be contained in $\sL_1^\circ$. We calculate that this includes the slopes $-\frac{s}{q}=2$ and $\frac{r-s}{q-p}=\frac{3}{2}$, respectively, in the set $\sL_1^\circ$,
	and deduce that \(\frac{a}{b+1}<\frac{3}{2}\) and \(\frac{a}{b}>2\) or, equivalently,	\(\frac{b}{a}+\frac{1}{a}>\frac{2}{3}\) and \(\frac{b}{a}<\frac{1}{2}\).
	It follows that \(a<6\), and a direct check shows that there are no open intervals $(a/(b+1),a/b)$ with \(0<b<a<6\) containing both $\frac{3}{2}$ and $2$. Thus the right-hand endpoint of \(\sL_1\) is \(\infty\), which implies that \(M_1\) has an integer-homology sphere L-space filling. In turn, this implies that \(\sL_1 =[2g-1,\infty]\), where \(g\) is the genus of \(M_1\). Since 
  	\(\frac{3}{2},2\in \sL_1^\circ\) it must be that \(g=1\). 
  	
  Reversing the roles of \(M_0\) and \(M_1\), we see that \(\sL_0 = [1,\infty]\) or \(\sL_1=[-\infty,-1]\). In either case, both \(M_0\) and 
  \(M_1\) are prime, boundary incompressible and have two different L-space homology sphere fillings. It follows from work of Ghiggini \cite{Ghiggini2008} that they are both homeomorphic to trefoil complements. 
\end{proof}

We expect that there will only be finitely many  toroidal L-spaces with \(|H_1(Y)|=7\), but cannot prove it, since we can't classify Floer simple manifolds with the same Alexander polynomial as \(T(2,5)\). In contrast, it is possible to obtain infinitely many toroidal L-spaces with \(|H_1(Y)|=8\) by gluing the trefoil complement to the twisted $I$-bundle over the Klein bottle.

\subsection{Satellite L-space knots} Given a pattern knot $P$ and a companion knot $C$, both in the three-sphere, denote by $P(C)$ the result of forming a satellite knot (note that this depends on some additional choices). The following is a conjecture of  Hom, Lidman, and Vafaee  \cite[Conjecture 1.7]{HLV2014}:
\begin{conjecture}\label{con:sat}
If $P(C)$ is an L-space knot, then $P$ and $C$ are L-space knots as well. 
\end{conjecture}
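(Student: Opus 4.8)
The strategy is to reduce Theorem~\ref{thm:sat-intro} to the L-space gluing theorem (Theorem~\ref{L-space-gluing-theorem}) applied to the JSJ-type decomposition of the satellite complement. Write $S^3 \setminus \nu(P(C)) = M_P \cup_T M_C$, where $M_C = S^3 \setminus \nu(C)$ is the companion exterior and $M_P$ is the exterior of the pattern $P$ viewed inside its solid torus $V$, so that $\partial M_P$ has two torus components: one glued to $\partial M_C$ along the splitting torus $T$, and one, call it $\partial_0$, which is the boundary of $\nu(P(C))$. Performing $\mu$-framed Dehn filling on $\partial_0$ (where $\mu$ is the meridian of $P(C)$) turns $M_P$ into the manifold-with-torus-boundary $M_P(\mu)$; similarly doing the surgery slope $s$ on $\partial_0$ produces $M_P(s)$. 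First I would observe that $P(C)$ being an L-space knot means there is a slope $s$ (in fact $s = 2g(P(C))-1$, an integer) with $\big(M_P(s)\big) \cup_T M_C$ an L-space, while $\mu$-filling recovers $S^3$ so $\big(M_P(\mu)\big)\cup_T M_C = S^3$ is an L-space (vacuously, it's $S^3$).

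\textbf{Key steps.} (1) Translate the hypothesis into the immersed-curve language: by Theorem~\ref{thm:pairing}, $\HFhat(P(C)_s)$ is the Lagrangian Floer homology of $\curves{M_C}$ against $\bar h(\curves{M_P(s)})$ in the punctured torus $T_{M_C}$, and the L-space condition forces, via Theorem~\ref{thm:slope_detection} and the gluing criterion Theorem~\ref{L-space-gluing-theorem}, that $\sL^\circ_{M_C} \cup h(\sL^\circ_{M_P(s)}) = \Q P^1$. (2) Deduce that $M_C$ has non-empty interior of L-space slopes, i.e. $|\sL_{M_C}| > 1$; since $M_C$ is a knot complement in $S^3$ with $H_1 = \Z$, this is exactly the statement that $C$ is an L-space knot (an L-space knot is one admitting a non-trivial L-space surgery, and by the interval structure of $\sL_{M_C}$ from \cite{RR}, a single interior L-space slope gives an interval, hence infinitely many L-space surgeries, including $2g(C)-1$). (3) For the pattern: $M_P(s)$ is a manifold with torus boundary sitting inside the solid torus $V$; I would show that $\curves{M_P(s)}$, pushed into $\partial V$ and then capping off $V$ (i.e. Dehn filling $\partial V$ along $\partial D^2$), computes $\HFhat$ of the knot $P$ inside $S^3$ with the appropriate surgery. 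More precisely, the winding-number and companion data organize $\curves{M_P(s)}$ so that its tangent-slope set controls $\sL_{M_P}$ (the L-space slopes of the pattern exterior inside $S^3$, i.e. the L-space surgeries on $P$). The gluing identity from step (1), combined with the fact that $\sL^\circ_{M_C}$ is a \emph{proper} subinterval of $\Q P^1$ (it omits the rational longitude of $M_C$, which is the $0$-slope since $H_1(M_C)=\Z$), forces $h(\sL^\circ_{M_P(s)})$ to be non-empty — hence $\sL^\circ_{M_P(s)} \neq \varnothing$. Chasing this back through the Dehn filling on $\partial_0$ shows $P$ admits an L-space surgery, so $P$ is an L-space knot.

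\textbf{Main obstacle.} The delicate point is step (3): making precise how the immersed curve $\curves{M_P(s)}$ in the \emph{once-punctured torus} $T_{M_C}$ relates to the L-space slopes of the pattern knot complement $M_P$, which is a manifold with \emph{two} torus boundary components. One must argue that the relevant component of $\curves{M_P(s)}$ — the one detecting tangent slopes and hence non-trivial L-space behaviour — comes from the ``innermost'' structure near $\partial_0$, i.e. that the winding around $V$ does not destroy the existence of an interior L-space slope for the pattern. Concretely, I expect one needs: the observation that $\curves{M_C}$ is a curve in minimal position whose interior tangent-slope set, $S^{sing}(M_C)$, is a proper closed interval by Lemma~\ref{lem:Floer_simple_slopes} applied to the Floer-simple $M_C$; hence its complement is a non-degenerate open interval that $h(\sL^\circ_{M_P(s)})$ must cover, which is impossible unless $M_P(s)$ itself has interior L-space slopes and — crucially — this property is inherited by $M_P$ under the $\partial_0$-filling by a monotonicity/restriction argument for L-space slopes under Dehn filling (the pattern exterior retracts onto $M_P(s)$ in the relevant sense). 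Handling the companion (step (2)) is comparatively routine once the gluing theorem is in hand; it is isolating the pattern's contribution, and controlling the interaction of the two boundary components of $M_P$, that constitutes the real work.
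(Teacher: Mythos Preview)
Your overall setup matches the paper's exactly: decompose an L-space surgery $Y = S^3_s(P(C))$ along the companion torus as $M_C \cup_h M_P(s)$, invoke Theorem~\ref{L-space-gluing-theorem} to get $\sL^\circ_{M_C}\cup h(\sL^\circ_{M_P(s)}) = \Q P^1$, and conclude that $\sL^\circ_{M_C}\neq\varnothing$, so $C$ is an L-space knot. That part is fine and is precisely what the paper does.

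Where you diverge is step (3), and what you flag as the ``main obstacle'' is not one. You are trying to pass through the two-boundary manifold $M_P$ and extract information about $\sL_{M_P}$ from $\curves{M_P(s)}$ via some monotonicity or restriction principle; none of that is needed. The paper's argument for the pattern is a single direct observation: since $\lambda_C$ (the Seifert longitude of $C$) is never an L-space slope for $M_C$, the covering condition forces $\lambda_C \in h(\sL^\circ_{M_P(s)})$, i.e.\ the specific slope $\alpha_P := h^{-1}(\lambda_C)$ lies in $\sL^\circ_{M_P(s)}$. But under the satellite gluing, $h^{-1}(\lambda_C)$ is exactly the longitude $\lambda_V = \{\text{pt}\}\times S^1$ of the pattern solid torus $V$, and Dehn filling $\partial V$ along $\lambda_V$ is the same as embedding $V$ unknottedly in $S^3$. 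Hence $M_P(s)(\alpha_P) = S^3_s(P)$, which is therefore an L-space. Since the meridian of $K$ and the meridian of $P$ coincide as curves on $\partial\nu(K)=\partial\nu(P)$, the slope $s$ is non-trivial for $P$ as well, so $P$ is an L-space knot. That is the whole argument.

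Two minor corrections to your write-up. First, where you write ``Dehn filling $\partial V$ along $\partial D^2$'' you presumably mean the filling that recovers $S^3$; that slope is the \emph{longitude} $\lambda_V$, not the meridian $\mu_V = \partial D^2\times\{\text{pt}\}$ (filling along $\mu_V$ yields $S^2\times S^1$). Second, you do not need Lemma~\ref{lem:Floer_simple_slopes} or any tangent-slope analysis here; the only input beyond the gluing theorem is that the rational longitude is never in $\sL_{M_C}$.
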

To conclude the section we prove that this conjecture holds. 
\begin{proof}[Proof of Theorem \ref{thm:sat-intro}]
Consider the toroidal L-space $Y$ resulting from surgery on a satellite knot $K$ and write  $Y=M_0\cup_h M_1$.  By Theorem \ref{L-space-gluing-theorem}, $\sL_{M_0}^\circ\cup h(\sL_{M_1}^\circ)=\Q P^1$ so that the manifolds $M_0$ and $M_1$ are, in fact, Floer simple manifolds (that is, the sets $\sL_{M_i}^\circ$ are non-empty). In particular, $M_0$ the complement of an L-space knot $C$ in $S^3$ (the companion knot). Since the Seifert longitude $\lambda_C$ of $C$ is not an L-space slope, it must be that $\lambda_C\in h(\sL_{M_1}^\circ)$ so that $\alpha_P=h^{-1}(\lambda_C)$ gives an L-space $M_1(\alpha_P)$.    

Now consider the pattern knot $P$ in $D^2\times S^1$, obtained from $M_K\setminus M_0$, where the boundary of $D^2\times S^1$ is framed so that $\alpha_P\simeq \{\text{pt}\}\times S^1$. Note that filling this manifold along $\alpha_P$ gives a knot in $S^3$ (the pattern knot), which we will also denote by $P$. Further, $P$ must be an L-space knot since there is more than one choice $\alpha$, which existed by Floer simplicity of $M_K$ (and was required for the construction of $Y$). This establishes the conjecture.\end{proof}

There is a converse to this statement that only requires a weaker version of the L-space gluing theorem from \cite{HRRW}; see Hom \cite{Hom}.

\appendix
\section{Three-manifold invariants are extendable}

  \label{sec:Extension}
  
We outline the proof of  Theorem~\ref{thm:extension}, which is mainly a matter of assembling the correct elements from \cite{LOT} and checking that they still apply in our situation.  We will assume the reader is familiar with the notation and terminology of \cite{LOT}. 
  
  Suppose that \((M, \alpha_1, \alpha_2)\) is a compact oriented three-manifold with parametrized \(T^2\) boundary, and that \( \mathcal{H} = (\Sigma, \balpha, \bbeta)\) is a bordered Heegaard diagram representing it. (There is a notation clash here with our previous use of \(\alpha, \beta\) for the parametrization.) 
  
  Our first step is to describe the moduli spaces of pseudo-holomorphic maps used to defined the generalized coefficient maps \(D_I\). The extended torus algebra \(\widetilde{A}\) has a natural basis \(\langle \rho_I \rangle\), where each 
  \(\rho_I\) corresponds to a Reeb chord on the ideal contact boundary  \(\mathcal{Z} = \partial\overline{\Sigma}\).  Suppose that \(x, y\) are generators for  \(\mathcal{H}\), that \(B \in \widetilde{\pi}_2(x, y)\), and that \(\rhoarrow\) is a sequence of Reeb chords whose product is equal to \(\rho_I\). 
  (Here, the notation  \(\widetilde{\pi}_2\) indicates that we consider domains which may have nonzero multiplicity at \(z\), as in \cite[Section 10.2]{LOT}). 
  
  We consider decorated sources as in \cite[Definition 5.2]{LOT}, but with boundary punctures labeled by arbitrary Reeb chords \(\rho_I\) on \(\mathcal{Z}\), including those which pass through the basepoint \(z\). (Sources of this type are not sufficient for a bordered theory of \(\mathit{HFK}^-\), since they lack interior punctures, but they are good enough for our needs.) 
  Given a decorated source \(S^\triangleright\) 
   we can consider the moduli space \(\widetilde{\sM}^B(x,y;S^\triangleright)\) of pseudoholomorphic maps  \(U\co S \to \Sigma \times [0,1] \times \R\)  which represent the homology class \(B\)  as in  \cite[Definition 5.3]{LOT}, but omitting axiom (M-9) (the requirement that \(S\) has 0 multiplicity near the basepoint). Similarly, if \(\vec{P}\) is an ordering of the punctures of \(S\), we can consider the open subset  \(\widetilde{\sM}^B(x,y;S^\triangleright;\vec{P})\) for which projection to the \(\R\) direction induces the given ordering \(\vec{P}\). Finally, we let 
   \(\sM^B(x,y;S^\triangleright;\vec{P})\) be the corresponding reduced moduli space.

   Most of the analysis of pseudo-holomorphic curves in \cite[Sections 5.2-5.6]{LOT} carries over unchanged to this context, with the notable exception of the fact that boundary degenerations (see \cite[Lemma 5.48]{LOT})  can (and do) occur. 
   More specifically, the proof of \cite[Lemma 5.48]{LOT} 
   relies on the fact that \(\pi_2(\Sigma, \alpha) = 0\). Since we are allowing sources whose boundary maps to 
   \(\rho_0\), we must instead consider \(\widetilde{\pi}_2(\Sigma, \alpha) = \Z\). The generator of this group 
   is the domain \([\Sigma]\), whose boundary is  \( \sum_{i=0}^3 \rho_i\). (Notice that since we do not 
   allow sources with    interior punctures, \(\widetilde{\pi}_2(\Sigma, \beta)=0\).) 
   
   Now suppose we are given a sequence of Reeb chords \(\vec{\rho}\) and that \(B\) is compatible with \(\vec{\rho}\) in the sense of  
    \cite[Definition 5.68]{LOT}. Following \cite[Definition 5.68]{LOT}, we let \(\sM^B(x,y;\vec{\rho})\) 
    be the union of those moduli spaces 
   \(\sM^B(x,y;S^\triangleright)\) whose constituent pseudo-holomorphic curves are {\em embedded} and for which \([\vec{P}]= \vec{\rho}\).

  Finally, following  \cite[Definition 6.3]{LOT}, we let 
  $$ D_I(x) = \sum_{y} \sum_{\{\vec{\rho} |a(- \vec{\rho}) = \rho_I\}} \sum_{\{B| \mathrm{ind}(B, \vec{\rho})=1\}} \#(\sM^B(x,y;\vec{\rho})) y $$
  and define 
 \( \widetilde{\partial}(x) = \sum_I \rho_I \otimes D_I(x)\), where the sum runs over all indices \(I\) which contain at most one \(0\). 
 
 \begin{proof}[Proof of Theorem \ref{thm:extension}]
 We must show that \(\widetilde{\partial}^2(x) = W \otimes x\) modulo terms of the form 
\(\rho_I  \otimes  y\), where \(I\) contains at least two \(0\)'s. 
To do so, we argue as in the  proofs of \cite[Proposition 6.7]{LOT} and  \cite[Proposition 11.30]{LOT}. 
 Write
 $$ \widetilde{\partial}^2(x) = \sum_I \sum_ y n_{xy,I} \rho_I y. $$
 Fix \(y\) and \(I\) which contains at most one \(0\), and consider the moduli space
$$\sM(x,y;\rho_I) = \bigcup_{\{\vec{\rho}| a(-\vec{\rho})=\rho_I\}} \bigcup_{\{B |\mathrm{ind}(B,\vec{\rho})=2\} }
\sM^B(x,y;\vec{\rho})$$
As in the proof of \cite[Theorem 6.7]{LOT}, \(n_{xy,I}\) is the mod 2 number of ends
\(\sM(x,y;\rho_I)\) which do not correspond to boundary degenerations. 

Suppose that boundary degenerations appear as ends of \(\sM(x,y;\rho_I)\).
Index considerations dictate that such a degeneration must be of the form \(\varphi_1 \vee \varphi_2\), where \(\varphi_1 \in \sM^{B_1}\)
for some \(B_1 \in \pi_2(\Sigma, \alpha)\) with index \(k\), and \(\varphi_2 \in \sM^{B_2}\), where 
\(B_2 \in \pi_2(x,y)\) has index \(-k\). Assuming we have chosen a generic almost complex structure, all moduli spaces have their expected dimensions, and \(k=0\). 
Thus \(\varphi_2\) is a union of trivial strips. The domain of \(B_1\) is 
some  multiple of the generator of \(\pi_2(\Sigma, \alpha)\). Since \(0\) occurs in \(I\) at most once, 
the domain of \(B_1\) must be \(\Sigma\), which implies that \(|I|=4\). We conclude that if \(|I| \neq 4\) or
or \(x \neq y\), there are no boundary degenerations and \(n_{xy,I}=0\). The case \(|I|=4\) and \(x=y\) was 
studied by Lipshitz, Ozsv{\'a}th, and Thurston in  \cite[Proposition 11.30]{LOT}. They showed that 
the number of boundary degenerations in this case is equal to \(1 \mod 2\), so \(n_{xy,I}=1\). 
\end{proof}

\bibliographystyle{plain}
\bibliography{REF/bibliography}

\begin{thebibliography}{10}

\bibitem{Abouzaid2008}
Mohammed Abouzaid.
\newblock On the {F}ukaya categories of higher genus surfaces.
\newblock {\em Adv. Math.}, 217(3):1192--1235, 2008.

\bibitem{AL2017}
Akram Alishahi and Robert Lipshitz.
\newblock Bordered {F}loer homology and incompressible surfaces.
\newblock {\em Ann. Inst. Fourier (Grenoble)}, 69(4):1525--1573, 2019.

\bibitem{Auroux2010-ICM}
Denis Auroux.
\newblock Fukaya categories and bordered {H}eegaard-{F}loer homology.
\newblock In {\em Proceedings of the {I}nternational {C}ongress of
  {M}athematicians. {V}olume {II}}, pages 917--941. Hindustan Book Agency, New
  Delhi, 2010.

\bibitem{Auroux2010-Gokova}
Denis Auroux.
\newblock Fukaya categories of symmetric products and bordered
  {H}eegaard-{F}loer homology.
\newblock {\em J. G\"okova Geom. Topol. GGT}, 4:1--54, 2010.

\bibitem{AurouxSmith}
Denis Auroux and Ivan Smith.
\newblock Fukaya categories of surfaces, spherical objects and mapping class
  groups.
\newblock {\em Forum Math. Sigma}, 9:Paper No. e26, 50, 2021.

\bibitem{BakerMotegi}
Kenneth~L. Baker and Kimihiko Motegi.
\newblock Seifert vs. slice genera of knots in twist families and a
  characterization of braid axes.
\newblock {\em Proc. Lond. Math. Soc. (3)}, 119(6):1493--1530, 2019.

\bibitem{BS}
John~A. Baldwin and Steven Sivek.
\newblock Khovanov homology detects the trefoils.
\newblock {\em Duke Math. J.}, 171(4):885--956, 2022.

\bibitem{BBG}
Michel Boileau, Steven Boyer, and Cameron~McA. Gordon.
\newblock Branched covers of quasi-positive links and {L}-spaces.
\newblock {\em J. Topol.}, 12(2):536--576, 2019.

\bibitem{BC}
Steven Boyer and Adam Clay.
\newblock Foliations, orders, representations, {L}-spaces and graph manifolds.
\newblock {\em Adv. Math.}, 310:159--234, 2017.

\bibitem{BGW2013}
Steven Boyer, Cameron~McA. Gordon, and Liam Watson.
\newblock On {L}-spaces and left-orderable fundamental groups.
\newblock {\em Math. Ann.}, 356(4):1213--1245, 2013.

\bibitem{deSilvaRobbinSalamon}
Vin de~Silva, Joel~W. Robbin, and Dietmar~A. Salamon.
\newblock Combinatorial {F}loer homology.
\newblock {\em Mem. Amer. Math. Soc.}, 230(1080):v+114, 2014.

\bibitem{Eftekhary}
Eaman Eftekhary.
\newblock Bordered {F}loer homology and existence of incompressible tori in
  homology spheres.
\newblock {\em Compos. Math.}, 154(6):1222--1268, 2018.

\bibitem{FreedmanHassScott}
Michael Freedman, Joel Hass, and Peter Scott.
\newblock Closed geodesics on surfaces.
\newblock {\em Bull. London Math. Soc.}, 14(5):385--391, 1982.

\bibitem{Ghiggini2008}
Paolo Ghiggini.
\newblock Knot {F}loer homology detects genus-one fibred knots.
\newblock {\em Amer. J. Math.}, 130(5):1151--1169, 2008.

\bibitem{Gillespie}
Thomas Gillespie.
\newblock {L}-space fillings and generalized solid tori.
\newblock Preprint, arXiv:1603.05016.

\bibitem{GL2014-cor}
Cameron Gordon and Tye Lidman.
\newblock Corrigendum to ``{T}aut foliations, left-orderability, and cyclic
  branched covers''.
\newblock {\em Acta Mathematica Vietnamica}, Jun 2017.

\bibitem{GL2014}
Cameron~McA. Gordon and Tye Lidman.
\newblock Taut foliations, left-orderability, and cyclic branched covers.
\newblock {\em Acta Math. Vietnam.}, 39(4):599--635, 2014.

\bibitem{HKK}
F.~Haiden, L.~Katzarkov, and M.~Kontsevich.
\newblock Flat surfaces and stability structures.
\newblock {\em Publ. Math. Inst. Hautes \'{E}tudes Sci.}, 126:247--318, 2017.

\bibitem{Hanselman2016}
Jonathan Hanselman.
\newblock Bordered {H}eegaard {F}loer homology and graph manifolds.
\newblock {\em Algebr. Geom. Topol.}, 16(6):3103--3166, 2016.

\bibitem{Hanselman-splice}
Jonathan Hanselman.
\newblock Splicing integer framed knot complements and bordered {H}eegaard
  {F}loer homology.
\newblock {\em Quantum Topol.}, 8(4):715--748, 2017.

\bibitem{HRRW}
Jonathan Hanselman, Jacob Rasmussen, Sarah~Dean Rasmussen, and Liam Watson.
\newblock L-spaces, taut foliations, and graph manifolds.
\newblock {\em Compos. Math.}, 156(3):604--612, 2020.

\bibitem{HRW-companion}
Jonathan Hanselman, Jacob Rasmussen, and Liam Watson.
\newblock Heegaard {F}loer homology for manifolds with torus boundary:
  properties and examples.
\newblock {\em Proc. Lond. Math. Soc. (3)}, 125(4):879--967, 2022.

\bibitem{HW}
Jonathan Hanselman and Liam Watson.
\newblock {A calculus for bordered Floer homology}.
\newblock {\it Geom. Topol.}, to appear.

\bibitem{HL2014}
Matthew Hedden and Adam~Simon Levine.
\newblock Splicing knot complements and bordered {F}loer homology.
\newblock {\em J. Reine Angew. Math.}, 2016:129--154, 2014.

\bibitem{HN2010}
Matthew Hedden and Yi~Ni.
\newblock Manifolds with small {H}eegaard {F}loer ranks.
\newblock {\em Geom. Topol.}, 14(3):1479--1501, 2010.

\bibitem{HN2013}
Matthew Hedden and Yi~Ni.
\newblock Khovanov module and the detection of unlinks.
\newblock {\em Geom. Topol.}, 17(5):3027--3076, 2013.

\bibitem{Hom}
Jennifer Hom.
\newblock Satellite knots and {L}-space surgeries.
\newblock {\em Bull. Lond. Math. Soc.}, 48(5):771--778, 2016.

\bibitem{HLV2014}
Jennifer Hom, Tye Lidman, and Faramarz Vafaee.
\newblock Berge-{G}abai knots and {L}-space satellite operations.
\newblock {\em Algebr. Geom. Topol.}, 14(6):3745--3763, 2014.

\bibitem{HLW}
Jennifer Hom, Tye Lidman, and Liam Watson.
\newblock The {A}lexander module, {S}eifert forms, and categorification.
\newblock {\em Journal of Topology}, 10(1):22--100, 2017.

\bibitem{JuhaszSFH}
Andr\'{a}s Juh\'{a}sz.
\newblock Holomorphic discs and sutured manifolds.
\newblock {\em Algebr. Geom. Topol.}, 6:1429--1457, 2006.

\bibitem{Kotelskiy2019}
Artem Kotelskiy.
\newblock Bordered theory for pillowcase homology.
\newblock {\em Math. Res. Lett.}, 26(5):1467--1516, 2019.

\bibitem{KWZ-thin}
Artem Kotelskiy, Liam Watson, and Claudius Zibrowius.
\newblock Thin links and {C}onway spheres.
\newblock Preprint, arXiv:2105.06308.

\bibitem{KM2011}
P.~B. Kronheimer and T.~S. Mrowka.
\newblock Khovanov homology is an unknot-detector.
\newblock {\em Publ. Math. Inst. Hautes \'Etudes Sci.}, (113):97--208, 2011.

\bibitem{LP2011}
Yank{\i} Lekili and Timothy Perutz.
\newblock Fukaya categories of the torus and {D}ehn surgery.
\newblock {\em Proc. Natl. Acad. Sci. USA}, 108(20):8106--8113, 2011.

\bibitem{Levine2012}
Adam~Simon Levine.
\newblock Knot doubling operators and bordered {H}eegaard {F}loer homology.
\newblock {\em J. Topol.}, 5(3):651--712, 2012.

\bibitem{LidmanHFinfty}
Tye Lidman.
\newblock Heegaard {F}loer homology and triple cup products.
\newblock Preprint, arXiv:1011.4277.

\bibitem{Lipshitz-thesis}
Robert Lipshitz.
\newblock {\em {A Heegaard Floer invariant of bordered three-manifolds}}.
\newblock PhD thesis, Stanford University, 2006.

\bibitem{LOT-bimodules}
Robert Lipshitz, Peter~S. Ozsv{\'a}th, and Dylan~P. Thurston.
\newblock Bimodules in bordered {H}eegaard {F}loer homology.
\newblock {\em Geom. Topol.}, 19(2):525--724, 2015.

\bibitem{LOT}
Robert Lipshitz, Peter~S. Ozsvath, and Dylan~P. Thurston.
\newblock Bordered {H}eegaard {F}loer homology.
\newblock {\em Mem. Amer. Math. Soc.}, 254(1216):viii+279, 2018.

\bibitem{Mosher2003}
Lee Mosher.
\newblock What is a train track?
\newblock {\em Notices of the AMS}, 50(3):354--355, 2003.

\bibitem{Nemethi2017}
Andr\'as N\'emethi.
\newblock Links of rational singularities, {L}-spaces and {LO} fundamental
  groups.
\newblock {\em Invent. Math.}, 210(1):69--83, 2017.

\bibitem{OSz2004-survey}
Peter Ozsv\'ath and Zolt\'an Szab\'o.
\newblock Heegaard diagrams and holomorphic disks.
\newblock In {\em Different faces of geometry}, volume~3 of {\em Int. Math.
  Ser. (N. Y.)}, pages 301--348. Kluwer/Plenum, New York, 2004.

\bibitem{OSz2004}
Peter Ozsv{\'a}th and Zolt{\'a}n Szab{\'o}.
\newblock Holomorphic disks and genus bounds.
\newblock {\em Geom. Topol.}, 8:311--334 (electronic), 2004.

\bibitem{OSz2005}
Peter Ozsv\'ath and Zolt\'an Szab\'o.
\newblock On the {H}eegaard {F}loer homology of branched double-covers.
\newblock {\em Adv. Math.}, 194(1):1--33, 2005.

\bibitem{Petkova-decategorification}
Ina Petkova.
\newblock The decategorification of bordered {H}eegaard {F}loer homology.
\newblock {\em J. Symplectic Geom.}, 16(1):227--277, 2018.

\bibitem{RR}
Jacob Rasmussen and Sarah~Dean Rasmussen.
\newblock Floer simple manifolds and {L}-space intervals.
\newblock {\em Adv. Math.}, 322:738--805, 2017.

\bibitem{Rasmussen2017}
Sarah~Dean Rasmussen.
\newblock L-space intervals for graph manifolds and cables.
\newblock {\em Compos. Math.}, 153(5):1008--1049, 2017.

\bibitem{Scott1983}
Peter Scott.
\newblock The geometries of {$3$}-manifolds.
\newblock {\em Bull. London Math. Soc.}, 15(5):401--487, 1983.

\bibitem{Thorbergsson1978}
Gudlaugur Thorbergsson.
\newblock Closed geodesics on non-compact {R}iemannian manifolds.
\newblock {\em Math. Z.}, 159(3):249--258, 1978.

\bibitem{solid-tori}
Liam Watson.
\newblock {H}eegaard {F}loer homology solid tori.
\newblock {AMS special session: Knots, links, and three-manifolds, San Diego,
  January 2013}.

\bibitem{Xiu}
Yang Xiu.
\newblock Elliptic involution on knot complements.
\newblock Preprint, arXiv:1511.02298.

\bibitem{Zarev}
Rumen Zarev.
\newblock Bordered {F}loer homology for sutured manifolds.
\newblock Preprint, arXiv:0908.1106.

\end{thebibliography}

\end{document}